\title{On $p$-adic $L$-functions for $\GL_{2n}$ in finite slope Shalika families}
\author{Daniel Barrera Salazar, Mladen Dimitrov and Chris Williams}
\date{}
\newcommand{\s}{\setlength{\itemsep}{0pt}}
	\hfill\llap{($\dagger$)}\hfill\parbox{\textwidth-2cm}
	{\emph{\BODY}}
\newcommand{\disp}[1]{$#1$}
\renewcommand{\baselinestretch}{1}
\def\input@path{{../}} 
\newcommand{\DJR}{{\mathrm{DJR}}}
\newcommand{\sar}[2]{\ar@{}[#1]|-*[@]{#2}}
\newcommand{\UPS}{\theta}
\newcommand{\Ind}{\mathrm{Ind}}
\newcommand{\ep}{e_{\pri}(\tilde\pi,\chi,j)}
\newcommand{\epp}{e_{\pri}'(\tilde\pi,\chi,j)}
\newcommand{\einf}{e_{\infty}(\pi,\chi,j)}
\newcommand{\epy}{e_{\pri}(\tilde\pi_y,\chi,j)}
\newcommand{\einfy}{e_{\infty}(\pi_y,\chi,j)}
\renewcommand\tableofcontents{%
	\subsubsection*{\contentsname
		\@mkboth{%
			\MakeUppercase\contentsname}{\MakeUppercase\contentsname}}%
	\@starttoc{toc}%
}
\begin{document}

	\renewcommand{\thefootnote}{\fnsymbol{footnote}} 
\footnotetext{\today. \ \ \  2020 MSC: Primary 11F33, 11F67; Secondary 11R23. }

\maketitle

\begin{abstract}
	In this paper, we propose and explore a new connection in the study of $p$-adic $L$-functions and eigenvarieties. We use it to prove results on the geometry of the cuspidal eigenvariety for $\mathrm{GL}_{2n}$ over a totally real number field $F$ at classical points admitting Shalika models. We also construct $p$-adic $L$-functions over the eigenvariety around these points. Our proofs proceed in the opposite direction to established methods: rather than using the geometry of eigenvarieties to deduce results about $p$-adic $L$-functions, we instead show that non-vanishing of a (standard) $p$-adic $L$-function implies  smoothness  of the eigenvariety  at such points. Key to our methods are a family of distribution-valued functionals on (parahoric) overconvergent cohomology groups, which we construct via $p$-adic interpolation  of classical representation-theoretic branching laws for $\mathrm{GL}_n \times \mathrm{GL}_n \subset \mathrm{GL}_{2n}$. 
	
	More precisely, we use our functionals to attach a $p$-adic $L$-function to a non-critical refinement  $\tilde\pi$ of a regular algebraic cuspidal automorphic representation $\pi$ of $\mathrm{GL}_{2n}/F$ which is spherical at $p$ and admits a Shalika model. Our new parahoric distribution coefficients allow us to obtain optimal non-critical slope and growth bounds for this construction. When $\pi$ has regular weight and the corresponding  $p$-adic Galois representation is  irreducible, we exploit non-vanishing of our functionals to show that the parabolic eigenvariety for $\mathrm{GL}_{2n}/F$ is \'etale at $\tilde\pi$ over an $([F:\mathbb{Q}]+1)$-dimensional  weight space	and contains a dense set of classical  points admitting Shalika models. 
	Under a hypothesis on the local Shalika models at bad places which is empty for $\pi$ of level 1, we construct a $p$-adic $L$-function for the family.
\end{abstract}

\setcounter{tocdepth}{2}
	\renewcommand{\baselinestretch}{1.0}
	\setcounter{tocdepth}{1}
\footnotesize
\tableofcontents
\normalsize

\section{Introduction}

The arithmetic of $L$-functions has long been a topic of intense interest in number theory. Via the Bloch--Kato Conjecture, the special values of $L$-functions are expected to carry deep algebraic data. Most recent progress towards this conjecture has come through $p$-adic methods -- more precisely, through understanding all of (a) $p$-adic $L$-functions, (b) classical $p$-adic families, and (c) $p$-adic $L$-functions over classical $p$-adic families. Where one has all three, they have been crucial in proofs of Iwasawa Main Conjectures and cases of the Bloch--Kato Conjecture. It is therefore natural to ask whether one can obtain (a), (b) and (c) for any regular algebraic cuspidal automorphic representation (RACAR) of a reductive group $G$.

For (a), at least, this is expected to be possible in great generality, thanks to conjectures of Coates--Perrin-Riou and Panchishkin \cite{coatesperrinriou89, coates89, Pan94}. However, our understanding of fundamental cases -- for example, $\GL_N$ for $N > 2$ -- remains poor, with relatively few constructions of $p$-adic $L$-functions in this case, most of which assume a $p$-ordinarity condition. 

The theory of $p$-adic families is more subtle still. Singularly, (b) can \emph{fail} when moving beyond $\GL_2$; there exist RACARs of $\GL_N$ that are `arithmetically rigid', not varying in any classical $p$-adic family (that is, a positive-dimensional subspace of an eigenvariety containing a Zariski-dense set of classical points; see e.g.\  \cite{APS07}). This contrasts  sharply with the cases of, for example, Hilbert or Siegel modular forms, where it is expected all RACARs can be classically varied. 

To approach (c), one needs not only the existence of a classical family, but also a precise description of its geometry. For example, one needs to know  whether  such a family is  smooth or \'etale over the weight space. Well-established methods for studying eigenvarieties break down for $\GL_N$ with $N > 2$, owing to RACARs contributing to multiple degrees of cohomology and the underlying locally symmetric spaces not admitting any algebraic structure. The geometry of the $\GL_N$ eigenvariety is thus largely mysterious, meaning there are few instances in this setting where (c) is approachable at present.

\medskip

In this paper, we prove new cases of (a), (b) and (c) for
regular algebraic  \emph{symplectic-type} cuspidal automorphic representations  (RASCARs) $\pi$ of $\GL_N$ over a totally real number field $F$, described below in Theorems \ref{thm:intro non-ord}, \ref{thm:intro shalika families} and \ref{thm:intro 3} respectively. 

\medskip

The technical heart of our approach is a construction of `evaluation maps', a $p$-adic integration theory on overconvergent modular symbols for $\GL_N$. The special values of these maps compute explicit multiples of classical complex $L$-values of RASCARs. Such maps are very familiar in the setting of $\GL_2$, where they have been used in many papers to study $p$-adic $L$-functions (see \S\ref{sec:gl2}), but they had not previously been constructed for any higher-dimensional $\GL_N$. The $\GL_2$ constructions do not easily generalise; the relative simplicity of the $\GL_2$ setting hides substantial representation-theoretic obstructions that arise in higher dimension (see \S\ref{sec:non-ordinary intro}). A key new input in our constructions is a $p$-adic interpolation, in both cyclotomic and weight directions, of higher-dimensional branching laws in representation theory. This occupies \S\ref{sec:classical evaluations} and \S\ref{sec:galois evaluations}.

Once constructed, evaluation maps have powerful consequences. Their utility in constructing $p$-adic $L$-functions is already well-documented in the $\GL_2$ case, and similarly we use them to construct $p$-adic $L$-functions for RASCARs of $\GL_N$. However, we also push their use further than previous works. One particularly striking consequence is the following strong version of (b) in this setting, made precise in Theorem~\ref{thm:intro shalika families} (and Theorem~\ref{thm:shalika family}):
\begin{myquote} 
	Let $\pi$ be a RASCAR with regular weight and irreducible Galois representation. Then the parabolic $\GL_N$-eigenvariety is \'etale over the pure weight space at certain non-critical $p$-refinements $\tilde\pi$ of $\pi$. Hence $\tilde\pi$ varies in a unique classical $p$-adic family.
\end{myquote}

Our proof of this result turns traditional methods upside down. There is a long and storied history of applying the geometry of eigenvarieties to construct and study $p$-adic $L$-functions; for example, this is the central tenet of Bella\"{i}che's celebrated paper on critical $p$-adic $L$-functions \cite{Bel12}.  There are also some works connecting the \'etaleness of an eigenvariety to the non-vanishing of an \emph{adjoint} $p$-adic $L$-function (e.g.\ \cite[\S VIII]{Eigenbook}, \cite{BBL}, \cite{PHL-JFW}), but similarly all of these works require prior knowledge about existence and properties of $p$-adic families to study the $p$-adic $L$-function.

Our methods add to this rich story. However, they differ considerably in that unlike all previous works, we proceed in the opposite direction.
\emph{We first construct $p$-adic $L$-functions and then we use them to construct $p$-adic families}. Indeed, we use our evaluation maps to show that non-vanishing of the $p$-adic $L$-function of $\tilde\pi$ -- guaranteed by regular weight -- implies faithfulness of a Hecke algebra as a module over weight space, thus producing dimension in the eigenvariety and implying the existence of classical families.  In addition, rather than using the adjoint $p$-adic $L$-function, to our knowledge we give the first instance where non-vanishing of a \emph{standard} $p$-adic $L$-function is used to control the geometry of an eigenvariety.

\medskip

The methods we develop in this paper have more general applications. In particular, the proof of $(\dagger)$ -- which occupies all of \S\ref{sec:shalika families no new} -- shows that evaluation maps, through interpolation of branching laws, can be a powerful tool in understanding the geometry of classical $p$-adic families. We have explored this further in sequel papers \cite{BDGJW,classical-locus}. More generally, our methods suggest the natural setting to consider evaluation maps is that of spherical varieties, giving strong connections between the geometry of eigenvarieties and automorphic period integrals in the Gan--Gross--Prasad conjectures (see e.g.\ \cite{Zha14,PWZ19,Zyd19}), as well as to $p$-adic interpolation in Sakellaridis--Venkatesh's relative Langlands program \cite{SV17}. We will use the methods of this paper to construct and study $p$-adic interpolations of such period integrals in future work.

We expect there to be further arithmetic applications of our evaluation maps. In the $\GL_2$ setting, beyond their applications to $p$-adic $L$-functions, analogues of these maps have further been used to study periods and congruences between base-change and non-base-change Bianchi modular forms \cite{Hid99,TU18}, study $\cL$-invariants and trivial zero conjectures \cite{BW17,BDJ17}, construct Stark--Heegner cycles predicted by the Bloch--Kato conjecture \cite{VW19,Ven21}, and prove generalisations of Hida duality \cite{BetWil20}. Few of these results/constructions have been carried out for higher-dimensional $\GL_N$, by any method. We anticipate similar applications of evaluation maps are possible for RASCARs, and again hope to return to this in subsequent work.

Finally, we mention applications of our results themselves, which -- as explained above -- should ultimately have applications towards the Bloch--Kato and Iwasawa Main Conjectures. They have already led to research in this direction for $\mathrm{GL}_{2n}$ \cite{Roc20, LR-GL2n}. 
There are more immediate applications to other groups such as $\mathrm{GSp}_4$. In a sequel \cite{BDGJW} to this paper, we have crucially used the methods developed here to prove a result on the variation of $p$-adic $L$-functions required in \cite{LZ20}. This result -- which was announced as Theorem 17.6.2 \emph{ibid}., where it was deferred to future work of the present authors -- was used by Loeffler and Zerbes to prove cases of the Bloch--Kato Conjecture for $\mathrm{GSp}_4$ \cite{LZ20} and for symmetric cubes of modular forms \cite{LZ-cube}.

\subsection{Set-up and previous work} \label{sec:set-up and previous work}

Fix forever an isomorphism $i_p : \C \isorightarrow \overline{\Q}_p$.

We say a regular algebraic cuspidal automorphic representation (RACAR) $\pi$ of $\GL_N(\A_F)$ is \emph{essentially self-dual} if $\pi^\vee \cong \pi \otimes \eta^{-1}$ for a Hecke character $\eta$, which we henceforth fix. 
Such a $\pi$ is either  $\eta$-\emph{orthogonal} or $\eta$-\emph{symplectic},    
depending respectively on whether the twisted symmetric  square $L$-function $L(\mathrm{Sym}^2 \pi \times \eta^{-1}, s)$ or 
the twisted exterior  square $L$-function $L(\bigwedge^2 \pi \times \eta^{-1}, s)$  has a  pole at $s = 1$ (see \cite{MW89,Sha97}, summarised in detail in \cite[Lem.\ 2.1]{HS16}). 
Our focus is on the $\eta$-symplectic case. By \cite{AS06},  $\pi$ is $\eta$-symplectic if and only if 
$N=2n$ is  even and either (so both) of the following hold:
\begin{itemize}\setlength{\itemsep}{0pt}
	\item[(i)] $\pi$ admits an $(\eta,\psi)$-Shalika model (see \S\ref{sec:shalika models});
	\item[(ii)] $\pi$ is the transfer of a globally generic cuspidal automorphic representation $\Pi$ of $\mathrm{GSpin}_{2n+1}(\A_F)$ with central character $\eta$.
\end{itemize}
Henceforth we will mainly use  (i) and call such a representation $\pi$ a \emph{RASCAR}. 

\pagebreak[2]

Let $G = \mathrm{Res}_{\cO_F/\Z}\GL_{2n}$. Let $\Sigma$ be the set of real embeddings of $F$, and let $\lambda = (\lambda_\sigma)_{\sigma \in \Sigma}$ be a Borel-dominant weight for $G$, i.e.\ $\lambda_\sigma = (\lambda_{\sigma,1},\dots,\lambda_{\sigma,2n}) \in \Z^{2n}$ with $\lambda_{\sigma,1} \geqslant \cdots \geqslant \lambda_{\sigma,2n}$. Let $\pi$ be a RASCAR of $G(\A)$ of weight $\lambda$; our convention is that $\pi$ is cohomological with respect to the coefficient system $V_\lambda^\vee$, where $V_\lambda$ is the algebraic representation of highest weight $\lambda$. Let 
\begin{equation}\label{eq:deligne-critical}
\mathrm{Crit}(\lambda) \defeq \{ j \in \Z: -\lambda_{\sigma,n} \leqslant j \leqslant -\lambda_{\sigma,n+1}\ \forall \sigma  \in \Sigma \}.
\end{equation}
Then $j \in \mathrm{Crit}(\lambda)$ if and only if $L(\pi,j+\tfrac{1}{2})$ is a Deligne-critical value; and Grobner--Raghuram showed in \cite{GR2} that these $L$-values, and their twists by finite order Hecke characters, are algebraic multiples of a finite set of complex periods. 

Let $p$ be a prime such that $\pi_{\pri}$ is spherical for each $\pri|p$ (or more generally, each $\pi_{\pri}$ satisfies (C2) of Conditions \ref{cond:running assumptions}). A $p$-adic $L$-function for $\pi$ is a $p$-adic distribution of controlled growth that interpolates the algebraic parts of Deligne-critical $L$-values.  For such $p$-adic interpolation it is essential to take a $p$-refinement $\tilde\pi$ of $\pi$, i.e.\ to work at non-maximal level at $p$ (e.g.\ for $\GL_2$, this is the process of passing from a newform of level $\Gamma_1(M)$ to an eigenform of level $\Gamma_1(M) \cap \Gamma_0(p)$). A standard approach is to refine to Iwahori level at $\pri|p$, which in our case corresponds to choosing a full triangulation of the $2n$-dimensional local Galois representation. However, the Panchishkin condition  (see \cite[\S2.1]{Loe20}, inspired by \cite{Pan94}) predicts that the $p$-adic $L$-function should not depend on a full triangulation, but only on a suitable $n$-dimensional stable submodule. This suggests that the natural level to take at $\pri|p$ is not Iwahoric, but the parahoric subgroup $J_{\pri}$ relative to the  parabolic subgroup $Q$ of $G$ with Levi 
\[
H = \mathrm{Res}_{\cO_F/\Z}(\GL_n \times \GL_n).
\]
In this paper, we indeed show that parahoric level is optimal for this construction (see \S\ref{sec:parahoric level}).

Let $\cO_{\pri}$ be the ring of integers in the completion $F_{\pri}$ of $F$ at $\pri$, and fix a uniformiser $\varpi_{\pri}$ in $\cO_{\pri}$. 
 Of central importance is the Hecke operator $U_{\pri} = \left[J_{\pri}\smallmatrd{\varpi_{\pri}I_n}{}{}{I_n} J_{\pri}\right]$ and its optimal integral normalisation $U_{\pri}^\circ$ (see \S\ref{sec:slope-decomp}). A \emph{$Q$-refinement of $\pi_{\pri}$} is a choice of (non-zero) eigenvalue $\alpha_{\pri}$ of $U_{\pri}$ acting on  $\pi_{\pri}^{J_{\pri}}$, and a \emph{$Q$-refinement of $\pi$} is a choice $\tilde\pi=(\pi, (\alpha_{\pri})_{\pri|p})$ of $Q$-refinement of $\pi_{\pri}$ for each $\pri|p$. Following \cite[Definition~3.5]{DJR18}, we say the $Q$-refinement $\tilde\pi$ is 
\emph{Shalika} if each $\alpha_{\pri}$ is a simple eigenvalue that interacts well with the Shalika model in a precise sense (see \S\ref{ss:the U_p-refined line}).

The following $p$-adic reformulation of \cite{GR2} performed in \cite{DJR18} will be crucial for us. 
For an open compact subgroup $K = \prod_v K_v \subset G(\A_f)$ that is \emph{parahoric at $p$} (that is, with $K_{\pri} = J_{\pri}$ at each $\pri|p$), let $S_K$ be the associated locally symmetric space for $G$ (see \eqref{eq:loc sym space}). Consider the  compactly supported cohomology groups $\hc{t}(S_K,\sV_\lambda^\vee(\overline{\Q}_p))$ in degree  $t = d(n^2+n-1)$, where $\sV_\lambda^\vee$ is the ($p$-adic) algebraic local system attached to $V_\lambda^\vee$. Then \cite{DJR18} proved that
\begin{enumerate}[(a)]\setlength{\itemsep}{0pt}
	\item there exists a family of $p$-adic classical evaluation maps 
	\disp{
	\mathrm{Ev}_{\chi,j} : \hc{t}(S_K,\sV_\lambda^\vee(\overline{\Q}_p)) \to \overline{\Q}_p,
}
	indexed by finite order Hecke characters $\chi$ of $p$-power conductor and $j \in \mathrm{Crit}(\lambda)$, and
	\item for a sufficiently small but inexplicit level $K = K(\tilde\pi)$ (see \eqref{eq:level group}), attached to $\tilde\pi$ and $i_p$ there exists
	 a classical eigenclass \disp{\phi_{\tilde\pi} \in \hc{t}(S_{K(\tilde\pi)},\sV_\lambda^\vee(\overline{\Q}_p))}
	  whose image $\mathrm{Ev}_{\chi,j}(\phi_{\tilde\pi})$ equals $i_p\left(L(\pi\otimes\chi, j+\tfrac{1}{2}) / \Omega_{\pi}^\epsilon\right)$ 
	 up to a non-zero factor, where $\Omega_\pi^\epsilon$ is a complex period, depending only on $\pi$ and the multi-sign $\epsilon = (-1)^j\cdot (\chi\eta)_\infty \in \{\pm1\}^\Sigma$ (recalling $\pi$ is $\eta$-symplectic).  
\end{enumerate}
In Theorem~\ref{thm:critical value}, based on \cite{JST}, we improve this by making the factor  completely explicit.

In \cite{DJR18}, when further $\tilde\pi$ is $Q$-\emph{ordinary} -- that is, when the integral normalisation
$\alpha_{\pri}^\circ$ of $\alpha_{\pri}$ is a $p$-adic unit --  the authors constructed the corresponding $p$-adic $L$-function $\cL_p(\tilde\pi)$ by proving directly the so-called Manin relations.

\begin{figure}[h!]
	\[
	\xymatrix@R=10mm@C=1mm{
		(\text{T})&&&\Phi_{\sC}\ar@{|->}@/^2pc/[rrrrrrrrrr]\ar@{|->}[dd] &\in & \hc{t}(S_{K(\tilde\pi)},\sD_\Omega)\ar[dd]^{\mathrm{sp}_\lambda} \ar[rrrrrr]^-{\mathrm{Ev}_\Omega} &&&&& & \cD(\mathrm{Gal}_p,\cO_\Omega)\ar[dd]^{\mathrm{sp}_\lambda} & \ni & \cL_p^{\sC}\ar @{|->}[dd]\\
		&&&&&&&&&&&&&\\
		(\text{M})&&& \Phi_{\tilde\pi} \ar@{|->}@/^2pc/[rrrrrrrrrr] & \in & \hc{t}(S_{K(\tilde\pi)},\sD_\lambda)\ar[dd]^{r_\lambda} \ar[rrrrrr]^-{\mathrm{Ev}_\lambda} &&&& && \cD(\mathrm{Gal}_p,\overline{\Q}_p)\ar[dd]^{\int \chi\chi_{\cyc}^j} & \ni & \cL_p(\tilde\pi)\ar @{|->}[dd]\\
		&&&&&&&&&&&&&\\
		(\text{B}) &&&\phi_{\tilde\pi} \ar@{|->}@/^2pc/[rrrrrrrrrr]\ar@{-->}[uu]^{\rotatebox{90}{\text{classicality}}}  &\in &\hc{t}(S_{K(\tilde\pi)},\sV_\lambda^\vee) \ar[rrrrrr]^{\mathrm{Ev}_{\chi,j}} &&&&& & \overline{\Q}_p & \ni &i_p\left[(*)\frac{L(\pi \otimes \chi,j+1/2)}{\Omega_{\pi}^\epsilon}\right]\\
		&&&&&&&&&& \pi\ar @{.>}[lllllllu]\ar @{.>}[rrru] &&&&&
	}
	\]
	\caption{\label{main diagram}\emph{Strategy of our constructions}}
\end{figure}
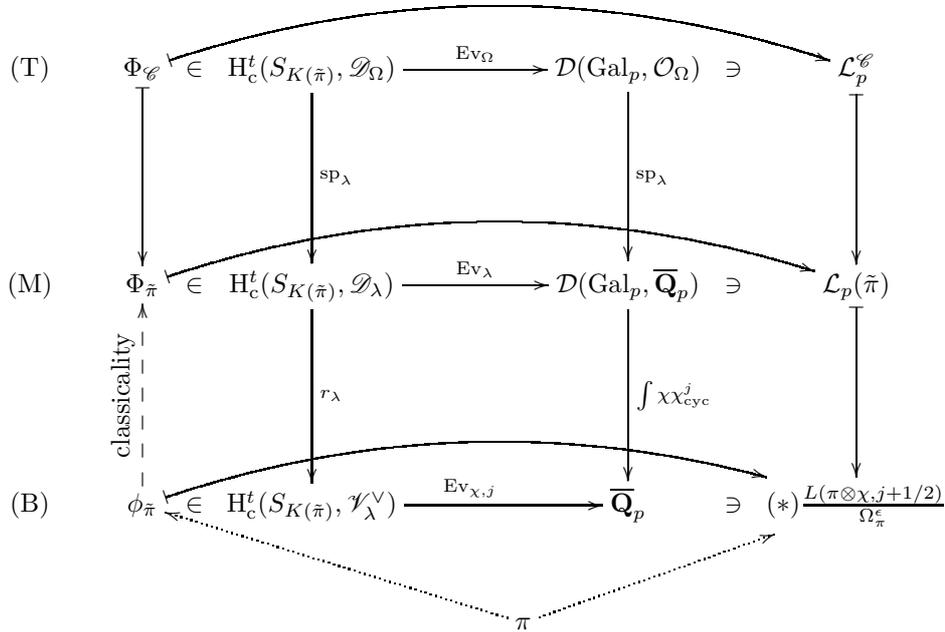

\subsection{Main results and methods} In this section, we state our three main results precisely (in Theorems \ref{thm:intro non-ord}, \ref{thm:intro shalika families} and \ref{thm:intro 3}). All three are proved using \emph{overconvergent cohomology}.

\subsubsection{Overconvergent cohomology and families beyond $\GL_2$}\label{sec:gl2}

The utility of overconvergent cohomology in constructing $p$-adic $L$-functions and $p$-adic families is very familiar. For example, 
\begin{enumerate}[(1)]
	\item it was the central method used in the papers \cite{GS93,PS11,PS12,Bel12,Bar15,Wil17,BW_CJM, BDJ17,BH17,BW18,BW-Iwasawa,Lee21}, including several by the present authors, to construct and study $p$-adic $L$-functions attached to modular forms on $\mathrm{GL}_2$ (in various settings, 
 ranging from ordinary modular forms over $\Q$ to finite slope families over general number fields);
	\item for a general quasi-split group $G$, it was used in \cite{AS08,Urb11,Han17,BW20} to construct eigenvarieties (and hence $p$-adic families of overconvergent systems of eigenvalues).
\end{enumerate}

Despite the huge generality for which overconvergent cohomology has been developed in (2), none of the program in (1) has been generalised from $\GL_2$ to higher $\GL_N$. From the papers above the \emph{strategy} for carrying out such a generalisation is clear; it is summarised in Figure \ref{main diagram} below.  The results of \cite{GR2,DJR18} comprise the bottom row (B). However,  fundamental obstacles and new features arise when trying to implement this strategy to construct the middle and top row  beyond $\GL_2$. For example, we have already commented that the theory of $p$-adic families (and hence row (T)) is not well-understood. Additionally, unlike for $\GL_2$ there is subtlety over the level at $p$ (Iwahoric vs.\ parahoric) at which one works; we describe this in detail in \S\ref{sec:parahoric level} below.

	 Before any of this, however, one must first construct (horizontal) \emph{evaluation maps}  in Figure \ref{main diagram}. In row (B), where there is no $p$-adic variation, the evaluation maps $\mathrm{Ev}_{\chi,j}$ depend on a separate choice of classical representation-theoretic branching law for each $j \in \mathrm{Crit}(\lambda)$.	For $\GL_2$, the classical coefficients $V_\lambda^\vee$ are just spaces of polynomial functions on $\cO_F\otimes_{\Z}\Zp$. This simplicity yields obvious canonical choices of branching laws, which are readily $p$-adically interpolated, making the construction of rows (M) and (T) straightforward. 
	 	
	For higher $\GL_N$, the coefficient modules are hard to describe explicitly. Choices of branching laws are no longer canonical, and must be carefully aligned for $p$-adic interpolation to even be possible. A key technical result of \cite{DJR18} was Theorem 2.4.1, where Januszewski, Raghuram and one of us carried out such an alignment, using finite-dimensional coefficient modules, for fixed $\lambda$ and $j$ varying in a finite set. However, their method does not generalise to our infinite-dimensional distribution coefficients. In this paper, we develop a new way of aligning branching laws, sketched in \S\ref{sec:non-ordinary intro} below, and use it to construct $\mathrm{Ev}_\lambda$ in our setting. We also explain how to further align these branching laws as $\lambda$ varies in a family $\Omega$, and use this to construct $\mathrm{Ev}_\Omega$.

\subsubsection{$p$-adic $L$-functions for finite slope RASCARs}\label{sec:non-ordinary intro}

Our first main result is the construction of a $p$-adic $L$-function attached to a finite slope RASCAR of $\GL_{2n}$.  More precisely, as predicted by Panchishkin \cite{Pan94} we construct a $p$-adic distribution on $\Galp$, the Galois group of the maximal abelian extension of $F$ unramified outside $p\infty$, satisfying growth and interpolation properties.

Overconvergent cohomology groups are defined by replacing the algebraic coefficients $V_\lambda^\vee$ with spaces of  $p$-adic distributions $\cD_\lambda$ and $\cD_\Omega$, where $\Omega$ is a (rigid analytic) family in which the weight $\lambda$ varies.  To work at $Q$-parahoric level, as mentioned above (see also \S\ref{sec:parahoric level} below), we use a new class of parahoric distributions from our companion paper \cite{BW20}, described here in \S\ref{sec:overconvergent cohomology}. These spaces are constructed as a `double induction': first we take an algebraic induction of $\lambda$ to $H$, and then a locally analytic induction to the parahoric $J_p \subset G(\Qp)$. These distributions are analytic along the unipotent radical of $Q$, but algebraic along all other variables (unlike Iwahoric distributions, which are analytic in all variables).

At any fixed $\lambda$, the space $\cD_\lambda$ admits $V_{\lambda}^\vee$ as a quotient, inducing a specialisation map 
\[
r_\lambda : \hc{\bullet}(S_{K(\tilde\pi)},\sD_\lambda) \to \hc{\bullet}(S_{K(\tilde\pi)},\sV_\lambda^\vee).
\]
We say the refinement $\tilde\pi = (\pi, \{\alpha_{\pri}\}_{\pri|p}\}$ is \emph{non-$Q$-critical} if $r_\lambda$ becomes an isomorphism after restricting to the generalised Hecke eigenspaces at $\tilde\pi$ (see Definition~\ref{def:non-Q-critical}); this guaranteed by having \emph{non-$Q$-critical slope} at $p$ (Theorem~\ref{thm:control}). For non-$Q$-critical $\tilde\pi$, lifting $\phi_{\tilde\pi}$ under the isomorphism $r_\lambda$, we obtain a class $\Phi_{\tilde\pi} \in \hc{t}(S_{K(\tilde\pi)},\sD_\lambda)$. 

We now come to our major new input: the construction of a family of \emph{evaluation maps} on overconvergent cohomology groups, comprising the horizontal maps in Figure \ref{main diagram} and occupying all of \S\ref{sec:abstract evaluation maps}--\S\ref{sec:galois evaluations}. More precisely, we construct a map
\[
\mathrm{Ev}_\lambda : \hc{t}(S_{K(\tilde\pi)},\cD_\lambda)[U_{\pri}^\circ - \alpha_{\pri}^\circ : \pri|p] \longrightarrow \cD(\Galp,\overline{\Q}_p)
\] 
on the $\{U_{\pri}^\circ = \alpha_{\pri}^\circ\}_{\pri|p}$ eigenspace, valued in a space $\cD(\Galp,\overline{\Q}_p)$ of locally analytic distributions, which interpolates all of the $\mathrm{Ev}_{\chi,j}$ simultaneously as $\chi$ and $j$ vary. 
We note that $\cD(\Galp,\overline{\Q}_p)$ is the space which Panchishkin predicts should contain the $p$-adic $L$-function of $\tilde\pi$.

The existence of the map $\mathrm{Ev}_\lambda$ is a `$p$-adic interpolation of branching laws' for $H \subset G$ that gives, for free, all of the classical Manin relations (as computed in \cite{DJR18}). More precisely, for $j_1,j_2 \in \Z$, let $V_{(j_1,j_2)}^H$ denote the $H$-representation $\det_1^{j_1}\det_2^{j_2}$, the algebraic representation of highest weight $(j_1,...,j_1,j_2,...,j_2)_{\sigma\in\Sigma}$. Let $\sw \in \Z$ be the purity weight of $\lambda$ (see \S\ref{sec:algebraic weights}). Then we have the following reinterpretation of the Deligne-critical $L$-values \eqref{eq:deligne-critical}: 

\begin{quote}
\textbf{Branching law:}\emph{ $j \in \mathrm{Crit}(\lambda) \iff V_{(-j,\sw+j)}^{H} \subset V_\lambda\big|_{H}$ with multiplicity one.}
\end{quote}

For each fixed $j \in \mathrm{Crit}(\lambda)$, the map $\mathrm{Ev}_{\chi,j}$ depends on a (non-canonical) choice of basis vector $v_j \in V_{(-j,\sw+j)}^H \subset V_\lambda|_H$. To construct $\mathrm{Ev}_\lambda$ these must be carefully aligned. We do this in \S\ref{sec:classical evaluations} by reinterpreting the module $V_\lambda$ as a double algebraic induction, collapsing all choices onto a single choice of branching law for $\mathrm{Res}_{F/\Q}\GL_n$ diagonally embedded inside $H$. In the process, we obtain an explicit description of the branching law for $H \subset G$.

In \S\ref{sec:galois evaluations}, we use our parahoric distributions to construct the required $p$-adic interpolation of the above branching laws in the cyclotomic direction. The parahoric setting allows us to isolate (in the first induction) the algebraic branching law for $\mathrm{Res}_{F/\Q}\GL_n$ fixed above, whilst allowing the second induction to vary $p$-adic analytically. Essential for this interpolation is a family of support conditions for evaluation maps, arising from a choice of open orbit representative $\xi$ for the spherical variety $G/H$. This is a representation-theoretic avatar of the familiar phenomenon in constructing $p$-adic $L$-functions, whereby one must modify the Euler factor at $p$.

\begin{definition}\label{intro:definition}
	Let $\pi$ be a RASCAR of $G(\A)$ spherical at $p$. Suppose $\pi$ admits a non-$Q$-critical, Shalika $Q$-refinement $\tilde\pi = (\pi,(\alpha_{\pri})_{\pri|p})$, and let $\Phi_{\tilde\pi}$ be the resulting overconvergent lift of $\phi_{\tilde\pi}$, which is a $U_{\pri}^\circ$-eigenclass for all $\pri|p$.  Define the \emph{$p$-adic $L$-function} of $\tilde\pi$ to be 
	\[
	\cL_p(\tilde\pi) \defeq \mathrm{Ev}_\lambda(\Phi_{\tilde\pi}) \in \cD(\Galp,\overline{\Q}_p).
	\]
	This depends on $\phi_{\tilde\pi}$, hence (by \eqref{eq:rational diagram}) on the restriction of $i_p$ to the number field $E$; but this is an expected indeterminacy (corresponding to \cite[(14)]{coates89}) which we largely suppress.
\end{definition}

Our first main result, proved in Theorem~\ref{thm:non-ordinary} and illustrated in the middle row (M) of Figure \ref{main diagram}, is that the distribution $\cL_p(\tilde\pi)$ satisfies suitable growth and interpolation properties, justifying the terminology `$p$-adic $L$-function'. Observe that finite order Hecke characters of $p$-power conductor, and the $p$-adic cyclotomic character $\chi_{\cyc}$, are characters of $\Galp$ (see \S\ref{sec:galois groups}).

\begin{theorem-intro}\label{thm:intro non-ord}
	Let $\pi$ and $\tilde\pi$ be as in Definition~\ref{intro:definition}. Then:
	\begin{itemize}\s
		\item[(1)] $\cL_p(\tilde\pi)$ is admissible of growth $h_p \defeq v_p\left(\prod_{\pri|p} (\alpha_{\pri}^\circ)^{e_{\pri}}\right)$ (see Definition \ref{def:admissible});
		\item[(2)] for all finite order Hecke characters $\chi$ of conductor $\prod_{\pri|p}\pri^{\beta_{\pri}}$ and all $j \in \mathrm{Crit}(\lambda)$, we have
		\begin{align*}
		i_p^{-1}(\mathcal{L}_p(\tilde\pi, \chi\chi_{\cyc}^j )) = A\cdot   \tau(\chi_f)^n   \mathrm{N}_{F/\Q}(\mathfrak{d})^{jn} \prod_{\pri|p} \ep \cdot \einf \cdot \frac{L^{(p)}\big(\pi\otimes\chi, j+\tfrac{1}{2}\big)}{\Omega_\pi^{\epsilon}},
		\end{align*}
		where $\ep$ is the Coates--Perrin-Riou factor at $\pri$ (defined in Theorem \ref{thm:non-ordinary}), $\einf$ is the modified Euler factor at infinity (Definition \ref{def:e_infty}), $A \in \Q^\times$ is a constant \eqref{eq:A}, $\mathfrak{d}$ is the different of $F/\Q$, $\epsilon = (\chi\chi_{\cyc}^j\eta)_\infty \in \{\pm 1\}^\Sigma$,  $\tau(\chi_f)$ is the Gauss sum, $L^{(p)}(-)$ is the (finite) $L$-function without factors at $p$, and $\Omega_\pi^\epsilon$ is a complex period. 
	\end{itemize}
	If $h_p < \#\mathrm{Crit}(\lambda)$, then the restriction of $\cL_p(\tilde\pi)$ to the cyclotomic line is unique with these properties; if further Leopoldt's conjecture holds for $F$ at $p$, then $\cL_p(\tilde\pi)$ itself is unique. 
\end{theorem-intro}

\begin{remark}
	Note that, exploiting work of Jiang--Sun--Tian \cite{JST}, we are able to prove the full expected period relations at infinity. As a consequence, both sides of (2) lie in an explicit number field $E(\chi)$, with $E$ defined in \S\ref{sec:hecke outside S}. Moreover $\mathcal{L}_p(\tilde\pi)$ can be taken with coefficients in a finite extension $L/\Qp$ containing $i_p(E)$. We have suppressed this here to ease notation.
	
	In the first draft of this paper, in the interpolation we restricted to characters ramified at all $\pri|p$; but in a separate paper \cite{BDGJW} with Graham and Jorza, we computed the relevant unramified zeta integrals which -- when combined with the construction here -- give the full Coates--Perrin-Riou/Panchishkin conjecture in this case. Even in the ordinary case Theorem \ref{thm:intro non-ord} upgrades \cite{DJR18}; indeed, it also corrects a small error in the interpolation formula \emph{ibid}.\ (see Appendix (2)).
\end{remark}

\subsubsection{Benefits of the parahoric approach}\label{sec:parahoric level}

Let us now precisely highlight the benefit of using parahoric (rather than Iwahori) distributions. Our primary motivation is the conjecture of Panchishkin \cite[Conj.\ 6.2]{Pan94}; using our approach we prove exactly the automorphic version of his conjecture, including the growth/unicity bounds. These would not follow from Iwahori methods. 

We illustrate via examples. Let $\pi$ be a RASCAR of $\GL_4(\A)$ spherical and regular at $p$. Then:
\begin{itemize}	
	\item The Iwahori-invariants $\pi_p^{\mathrm{Iw}}$ are 24-dimensional, a direct sum of 24 1-dimensional simultaneous eigenspaces for the Hecke operators $U_{p,1},U_{p,2},U_{p,3}$, with $U_{p,i}$ attached to $\smallmatrd{pI_i}{}{}{I_{4-i}}$. An Iwahori refinement $\tilde\pi' = (\pi,\alpha_{p,1},\alpha_{p,2},\alpha_{p,3})$ is a choice of one of these 24 eigensystems.
	
	\item The $Q$-parahoric invariants $\pi_p^{J_p}$ are 6-dimensional, giving at most 6 $Q$-refinements $\tilde\pi = (\pi_p,\alpha_p)$, where $\alpha_p$ is a choice of $U_p$-eigenvalue on $\pi_p^{J_p}$.
\end{itemize}
When there are 6 distinct $Q$-refinements, above each such $(\pi,\alpha_p)$ there are 4 Iwahori refinements $\tilde\pi_1', \tilde\pi_2', \tilde\pi_3',\tilde\pi_4'$, each with $\alpha_{p,2} = \alpha_p$. 

To work at Iwahori level, we must choose an Iwahori refinement. To lift eigenclasses to overconvergent cohomology, we must control all of the (normalised) $U_{p,i}^\circ$ operators; for example, the non-critical slope bound depends on all three of the slopes $h_i \defeq v_p(\alpha_{p,i}^\circ) \geqslant 0$. Working solely at Iwahori level, you bound the growth of the $p$-adic $L$-function only by the sum $h_1 + h_2 + h_3$. 

By contrast, working at $Q$-parahoric level, lifting requires control only of $U_p^\circ$ from \S\ref{sec:set-up and previous work}, the non-critical slope bound depends only on $h_2$, and we get growth bounded by $h_2$. 

In this paper, and its sequel \cite{BDGJW}, we show that $p$-adic $L$-functions depend only on the parahoric refinement. In particular, in \cite[\S12.4, \S14]{BDGJW} we attach $p$-adic $L$-functions $\cL_p(\tilde\pi_i')$ to the four Iwahori refinements $\tilde\pi_i'$ above $\tilde\pi$ (under stronger hypotheses, and with ostensibly weaker growth) and compare to this paper to prove that $\cL_p(\tilde\pi) = \cL_p(\tilde\pi'_1) = \cL_p(\tilde\pi_2') = \cL_p(\tilde\pi_3') = \cL_p(\tilde\pi_4')$ (up to rational scalar). Thus the parahoric $Q$-refinement is the exact amount of data required to construct a $p$-adic $L$-function; passing to deeper level requires additional but redundant hypotheses.

We give two explicit examples from the tables at \texttt{smf.compositio.nl} (cf.\ \cite[\S7]{classical-locus}).
\begin{itemize}\s
	\item There is a unique RASCAR $\pi$ of $\GL_4(\A)$ of weight $\lambda = (12,1,-1,-12)$ and level 1 (taking $j=2, k=14$ in the table). At $p=11$, $\pi_{11}$ is $Q$-ordinary, but not Iwahori-ordinary. Let $\tilde\pi$ be the unique $Q$-ordinary refinement. Using parahoric methods, we get an $11$-adic $L$-function $\cL_{11}(\tilde\pi)$ and can prove it is a bounded measure on $\Z_{11}^\times$ (i.e.\ growth bounded by 0), that is uniquely determined by growth and interpolation.
	
	There are four Iwahori refinements $\tilde\pi_i'$ above $\tilde\pi$, all non-critical and regular. Using only Iwahori methods, we obtain four $11$-adic $L$-functions $\cL_{11}(\tilde\pi_i')$, and can prove these are distributions on $\Z_{11}^\times$ with growth bounded by 22, 12, 12 and 2 respectively. Without further input all four of these might be unbounded, and three are not uniquely determined by interpolation and these growth bounds. 
	
Via this paper and \cite{BDGJW}, however, we know all of these $11$-adic $L$-functions are in fact equal.
	
	\item There is a unique RASCAR of $\GL_4(\A)$ of weight $(9,6,-6,-9)$ and level 1. The non-critical slope bounds here are $v_p(U_{p,1}^\circ), v_p(U_{p,3}^\circ) < 4$, and $v_p(U_{p,2}^\circ) < 13$.  At $p=3$, there exists a $Q$-refinement $\tilde\pi = (\pi, \alpha)$ of $\pi_3$ with $v_3(\alpha) = 5 < 13$; this is non-$Q$-critical slope, so our constructions give a $3$-adic $L$-function $\cL_3(\tilde\pi)$, uniquely determined by growth and interpolation. For each of the four Iwahori refinements $\tilde\pi' = (\pi,\alpha_1,\alpha_2,\alpha_3)$ above $\tilde\pi$, we have $v_p(\alpha_1), v_p(\alpha_3) \geqslant 5 \geqslant 4$, so each $\tilde\pi'$ has critical slope, and we have no unconditional construction of $\cL_3(\tilde\pi')$ at Iwahori level.	
\end{itemize}

From \cite{Pan94}, we expect Theorem \ref{thm:intro non-ord} is optimal, and one cannot improve the non-critical slope/growth bounds. We also expect that we construct \emph{all} examples of $p$-adic $L$-functions that are: (a) attached to RASCARs $\pi$ of $\GL_N(\A)$ that are spherical and regular at $p$, and (b) uniquely determined by their interpolation/growth. An Iwahori approach does not give this.

A final, substantial, benefit is that at parahoric level, we can study local zeta integrals for parahoric-invariant vectors at $\pri|p$. For such vectors the zeta integrals with unramified twists have been computed in \cite[\S9]{BDGJW}, so we can prove our $p$-adic $L$-function has the expected interpolation at all characters.

\subsubsection{Existence and uniqueness of Shalika families}
Our second main result is the use of the $p$-adic $L$-functions of Theorem \ref{thm:intro non-ord} to study the $\GL_{2n}$-eigenvariety.

We now wish to vary $\lambda$, so let $\lambda_\pi$ denote the weight of our (fixed)  RASCAR $\pi$. This is a point in a rigid analytic \emph{(parabolic) weight space} $\sW_{\lambda_\pi}^Q$ of dimension $d+1$ (see \S\ref{sec:weight spaces}). For any (parahoric-at-$p$) level $K$ (see \eqref{eq:general K}) and for any $h\geqslant h_p$ (defined in Theorem \ref{thm:intro non-ord}),  there exists a (slope\ $\leqslant h$)-adapted  affinoid neighbourhood $\Omega$ of $\lambda_\pi$ in $\sW_{\lambda_\pi}^Q$, a sheaf $\sD_\Omega$ on $S_{K}$ interpolating $\sD_\lambda$ as $\lambda$ varies in $\Omega$, and a local piece $\sE_{\Omega,h}(K)$ of the  global parabolic eigenvariety from  \cite{BW20}, endowed with a finite weight map $w: \sE_{\Omega,h}(K) \to \Omega.$ By construction $\sE_{\Omega,h}(K)$ parametrises slope $\leqslant h$ eigensystems appearing in $\hc{t}(S_{K},\sD_\Omega)$.
	 
	 A \emph{classical (Shalika) family} in $\sE_{\Omega,h}(K)$ is a positive-dimensional irreducible component containing a Zariski-dense set of points whose eigensystems appear in RA(S)CARs.

In our earlier works \cite{BDJ17,BW18,BW-Iwasawa}, we developed methods for studying $\hc{\bullet}(S_K,\sD_\Omega)$ as an $\cO_\Omega$-module. Cuspidal cohomology contributes to a continuous range of degrees $\{dn^2,dn^2+1,...,t\}$. As we work in top degree $t$, for appropriate $\tilde\pi$, these methods easily yield a (Shalika) point $x_{\tilde\pi}$ in $\sE_{\Omega,h}(K)$. To study the geometry around this point, it is crucial to understand the $\cO_\Omega$-torsion in $\hc{t}(S_K,\sD_\Omega)$. However, previous methods controlled this torsion only in bottom degree $dn^2$. For $n>1$, cuspidal cohomology is supported in multiple degrees; so existing methods say nothing about the local geometry around $x_{\tilde\pi}$, including the dimension of components through $x_{\tilde\pi}$. Indeed, such methods do not even rule out $x_{\tilde\pi}$ being an isolated point. It is thus a non-trivial question if there are any classical families, let alone Shalika families, containing $\tilde\pi$.

Let $K_1(\tilde\pi) \subset G(\A_f)$ be the open compact subgroup that is parahoric at $p$ and Whittaker new level (for $\pi$) away from $p$ (see \eqref{eq:K_1}).  Our second main result, proved in Theorem~\ref{thm:shalika family}, describes precisely the local geometry of $\sE_{\Omega,h}(K_1(\tilde\pi))$ at $x_{\tilde\pi}$ and, in particular, answers positively the above question for RASCARs under very mild technical assumptions.

\begin{theorem-intro}\label{thm:intro shalika families}
	Let $\pi$ be a RASCAR of $G(\A)$, and $\tilde\pi$ a Shalika $p$-refinement. Suppose that
	\begin{enumerate}\setlength{\itemsep}{0pt}
		\item[(a)] $\lambda_\pi$ is regular, 
		\item[(b)] $\tilde\pi$ has non-$Q$-critical slope,  and 
		\item[(c)] the $p$-adic Galois representation $\rho_{\pi}$ attached to $\pi$ is absolutely irreducible. 
	\end{enumerate}
	Then $\sE_{\Omega,h}(K_1(\tilde\pi))$ is \'etale over $\Omega$ at $x_{\tilde\pi}$. Up to shrinking $\Omega$,
	$w$ induces an isomorphism $\sC \isorightarrow \Omega,$ 
	where $\sC$ is the connected component of $\sE_{\Omega,h}(K_1(\tilde\pi))$ through $x_{\tilde\pi}$, and $\sC$ is a Shalika family.
\end{theorem-intro}

The same conclusions hold replacing (a) and (b) with the strictly weaker assumptions that 
\begin{itemize}\s
	\item[(a$'$)] $\lambda_\pi$ is $H$-regular (Definition~\ref{def:H-regular}) and $\cL_p(\tilde\pi)$ is non-zero, and 
	\item[(b$'$)] $\tilde\pi$ is strongly non-$Q$-critical (Definition~\ref{def:non-Q-critical}). 
\end{itemize}

 We control the torsion in $\hc{t}(S_K,\sD_\Omega)$, and deduce Theorem \ref{thm:intro shalika families}, via a novel application of our evaluation maps, suggested to us by Eric Urban.  We briefly summarise this argument.  We complete the construction of Figure \ref{main diagram}, including the map $\mathrm{Ev}_\Omega : \hc{t}(S_K,\sD_\Omega) \longrightarrow \cD(\Galp,\cO_\Omega)$, in \S\ref{sec:galois evaluations}. A standard argument provides a class $\Phi \in \hc{t}(S_{K(\tilde\pi)},\sD_\Omega)$ lifting $\Phi_{\tilde\pi}$ under the natural specialisation map. Then:

\begin{itemize}\setlength{\itemsep}{0pt}
		
	\item If $\cL_p(\tilde\pi) \neq 0$, then -- by the proof of Theorem \ref{thm:intro non-ord} -- we know $\mathrm{Ev}_\lambda(\Phi_{\tilde\pi}) \neq 0$. Hence, via the commutativity of the top square in Figure \ref{main diagram}, we deduce $\mathrm{Ev}_\Omega(\Phi) \neq 0$.
	
	\item The map $\mathrm{Ev}_\Omega$ is $\cO_\Omega$-linear, and valued in the torsion-free $\cO_\Omega$-module  $\cD(\Galp,\cO_\Omega)$. As $\mathrm{Ev}_\Omega(\Phi) \neq 0,$ we deduce $\Phi$ is non-$\cO_\Omega$-torsion.

	\item It follows that $\hc{t}(S_{K(\tilde\pi)},\sD_\Omega)$ is a faithful $\cO_\Omega$-module. We exploit this to deduce existence of a component in the eigenvariety of maximal dimension through $\tilde\pi$.
\end{itemize}

If there is a non-zero Deligne-critical $L$-value for $\pi$ (which always exists when $\lambda_\pi$ is regular), then the $p$-adic $L$-function is non-zero. The above argument then yields a classical family in the eigenvariety at the (sufficiently small) level $K(\tilde\pi)$ used in \S\ref{sec:non-ordinary intro}. To upgrade this to a Shalika family, we again exploit our evaluation maps, giving (via Proposition \ref{prop:shalika non-vanishing}) a criterion for being Shalika that is open over the eigenvariety.

It remains to prove uniqueness and \'etaleness. However, to exploit non-vanishing $L$-values, we must work at level $K(\tilde\pi)$. As this is inexplicit, it is difficult to further control the geometry of families of level $K(\tilde\pi)$. We can obtain more control by working at new tame level, that is at level $K_1(\tilde\pi)$. We perform a delicate level-switching argument -- using the local Langlands correspondence and $p$-adic Langlands functoriality (see \S\ref{sec:etaleness S}) -- to transfer the family to level $K_1(\tilde\pi)$, where we then complete the proof of Theorem \ref{thm:intro shalika families}.

We conclude \S\ref{sec:shalika families no new} with an application of Theorem \ref{thm:intro shalika families} to the global geometry of the eigenvariety. In Theorem \ref{thm:all classical shalika}, we show that if $\tilde\pi$ is as in Theorem \ref{thm:intro shalika families}, and $\sI$ is the (unique) irreducible component of the global eigenvariety through $\tilde\pi$, then every non-$Q$-critical slope classical point of $\sI$ is Shalika. We actually prove more: that every point (classical or not) of $\sI$ is symplectic, arising from $\mathrm{GSpin}_{2n+1}$. Our proof goes through $p$-adic Langlands functoriality and occupies all of \S\ref{sec:symplectic components}. 
	We thank the referee for pushing us to prove such a result.

\begin{remark}
Theorem \ref{thm:intro shalika families} describes the geometry of the $Q$-parabolic eigenvariety. This is natural in light of \S\ref{sec:parahoric level}. Eigenvarieties for non-minimal parabolics have been well-studied; for a summary of constructions and arithmetic applications,  see \cite[Intro]{BW20}. Major recent applications include Bloch--Kato for $\mathrm{GSp}_4$ \cite[\S17]{LZ20} and modularity of elliptic curves over imaginary quadratic fields \cite[\S2.2]{Caraiani-Newton}.

The most traditional flavour of eigenvariety comes attached to a minimal parabolic subgroup, the Borel subgroup $B\subset G$, corresponding to Iwahoric level at $p$. With appropriate adaptation, and stronger assumptions, our methods also apply to this case, and we can prove the analogue of Theorem \ref{thm:intro shalika families} for the Iwahori eigenvariety over the pure weight space (whose dimension grows with $n$). This -- and applications to families of $p$-adic $L$-functions -- is the subject of a follow-up paper \cite{BDGJW} with Graham and Jorza.
\end{remark}

\subsubsection{$p$-adic $L$-functions in Shalika families}

To prove Theorem~\ref{thm:intro non-ord}, we worked at a specific (inexplicit) level $K(\tilde\pi)$, at which we have a precise connection to $L$-values. In Theorem~\ref{thm:intro shalika families}, we worked at a second specific (explicit) level $K_1(\tilde\pi)$, where we obtain control over $p$-adic families.

When $\pi$ is everywhere spherical away from $p$ -- that is, when $\pi$ has tame level 1 -- these two levels coincide. In Chapter \ref{sec:families of p-adic L-functions}, we crucially exploit this to vary $p$-adic $L$-functions in families in the tame level 1 case (see Theorem \ref{thm:intro 3} below).

\medskip

 The obstruction to generalising to higher tame level arises from local representation theory: namely, given a place $v\nmid p\infty$ such that $\pi_v$ is ramified, we need to find explicit `test vectors' in the Shalika model of $\pi_v$ such that an attached Friedberg--Jacquet zeta integral computes the $L$-factor of $\pi_v$ (see \eqref{eq:jacquet-friedberg test vector}). It is known that such vectors always exist abstractly, but explicit vectors -- of the kind required for variation of $p$-adic $L$-functions -- have not yet been found.

In \S\ref{sec:hypothesis} and \S\ref{sec:shalika-new-line}, we describe, in very general terms, what kind of results would allow us to generalise Theorem \ref{thm:intro 3} to higher tame level. On a concrete level, we hypothesise a possible theory of explicit test vectors, via \emph{Shalika new vectors}, a Shalika analogue of the classical (Whittaker) newform theory of \cite{JPSS} (see Definition \ref{def:shalika new vectors}).  Ramified examples where these hypotheses are satisfied have been found in the work \cite{DJ-parahoric} of the second author and Jorza.

	Under this hypothesis, in \S\ref{sec:shalika families refined} we study a modified  eigenvariety $\sE_{\Omega,h}^\dec$, and prove  the following refinement of Theorem~\ref{thm:intro shalika families}.

\setcounter{lettprime}{1}
\begin{theorem-intro-2}\label{thm:intro shalika family 2} \emph{(Theorem~\ref{thm:section 7 main theorem})}. 
	Suppose that: (a) $\lambda_\pi$ is regular, (b) $\tilde\pi$ has non-$Q$-critical slope, and (c) for all $v$,   Hypothesis~\ref{ass:shalika} holds for $c = c(\pi_v)$.	Then:
	\begin{itemize}\s
		\item  $\sE_{\Omega,h}^\dec$ is \'etale over $\Omega$ at $\tilde\pi$, and (up to shrinking $\Omega$)  the connected component $\sC$ through $\tilde\pi$ is a Shalika family mapping isomorphically onto $\Omega$ under $w$.
		
		\item  $\sC$ contains a very Zariski-dense set $\sC_{\mathrm{nc}}$ of classical points satisfying the conditions of Definition~\ref{intro:definition} (see also Conditions \ref{cond:running assumptions}). For all $v$, every point in $\sC_{\mathrm{nc}}$ has a Shalika new vector of conductor $c(\pi_v)$.
		
		\item There exists an eigenclass $\Phi_{\sC} \in \hc{t}(S_{K(\tilde\pi)},\sD_\Omega)$, interpolating the classes $\Phi_{\tilde\pi_y}$ for $y \in {\sC}_{\mathrm{nc}}$ (upto scaling by $p$-adic periods).
	\end{itemize}
\end{theorem-intro-2}

When $\pi$ has tame level 1, condition (c) is automatically satisfied with each $c(\pi_v) = 0$, and the eigenvariety $\sE_{\Omega,h}^\dec$ is nothing but $\sE_{\Omega,h}$ from above; so there are a ready supply of RASCARs where this result is unconditional. In general, we may also weaken assumptions as in Theorem~\ref{thm:intro shalika families}.

\medskip

Given Theorem \ref{thm:intro shalika family 2}, standard methods give the analytic variation of $\cL_p(\tilde\pi)$ over $\sC$ as a formal consequence of our evaluation maps in families. The definition of the multi-variable $p$-adic $L$-function is summarised in row (T) of Figure \ref{main diagram}.  

\begin{definition}
	Under the hypotheses of Theorem~\ref{thm:intro shalika family 2}, let $\sC$ be the Shalika family through $\tilde\pi$. Define the \emph{$p$-adic $L$-function over $\sC$} to be 
	\[
	\cL_p^{\sC} \defeq \mathrm{Ev}_{\Omega}(\Phi_{\sC}) \in \cD(\Galp,\cO_\Omega).
	\]
\end{definition}

Let $\sX(\Galp)$ be the $\Q_p$-rigid space of characters on $\Galp$; then via the Amice transform (\cite[Def.~5.1.5]{BH17}, building on \cite{Ami64,ST01}), we may view $\cL_p^{\sC}$ as a rigid function 
\disp{
\cL_p^{\sC} : \sC \times \sX(\Galp) \to \C_p.
}
Our third main result (Theorem~\ref{thm:family p-adic L-functions}) is that $\cL_p^{\sC}$ interpolates $\cL_p(\tilde\pi_y)$ as $y$ varies in the set $\sC_{\mathrm{nc}}$.

\begin{theorem-intro}\label{thm:intro 3}
	Suppose the hypotheses of Theorem~\ref{thm:intro shalika family 2}. Then at every $y \in \sC_{\mathrm{nc}}$, there exists a set 
	$\{c_y^\epsilon \in L^\times: \epsilon \in \{\pm 1\}^\Sigma\}$ of $p$-adic periods such that for every $\chi \in \sX(\Galp)$, we have
	\begin{equation}\label{eq:int in C}
		\cL_p^{\sC}(y,\chi) = c_y^{(\chi\eta)_\infty} \cdot \cL_p(\tilde\pi_y, \chi).
	\end{equation}
\end{theorem-intro}

Let $\sX(\Galp^{\cyc}) \subset \sX(\Galp)$ be the cyclotomic line, i.e.\ the Zariski-closure of $\{\chi_{\cyc}^j : j \in \Z\}$. Via Theorem~\ref{thm:intro non-ord}, $\cL_p^{\sC}$ simultaneously interpolates the values $L(\pi_y\times \chi,j+\tfrac{1}{2})$ over the set of points
\begin{align*}
	\mathrm{Crit}(\sC) = \Big\{(y,\chi\chi_{\cyc}^j) \in \sC\times \sX(\Galp^{\cyc}&) : y \in \sC_{\mathrm{nc}}, j \in \mathrm{Crit}(w(y)), \chi \text{ finite order}\Big\}.
\end{align*}
The set $\mathrm{Crit}(\sC)$ is Zariski-dense in $\sC\times \sX(\Galp^{\cyc})$, so the restriction $\sL_p^{\sC}|_{\sC \times \sX(\Galp^{\cyc})}$ is uniquely determined by this interpolation. Specialising at $\tilde\pi$, we deduce: 
\begin{corollary} 
	Assume the hypotheses of Theorem~\ref{thm:intro shalika family 2}. Up to a non-zero scalar, the restriction of $\cL_p(\tilde\pi)$ to the cyclotomic line is uniquely determined by interpolation over $\mathrm{Crit}(\sC)$. 
\end{corollary}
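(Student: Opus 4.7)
The plan is to specialize the Zariski-density argument stated immediately before the corollary from the family $\sC$ down to the single point $x_{\tilde\pi}$, exploiting Theorem~\ref{thm:intro 3}, which identifies $\cL_p^{\sC}$ along the fibre above $x_{\tilde\pi}$ with $\cL_p(\tilde\pi)$ up to the finite collection of $p$-adic periods $\{c_{\tilde\pi}^\epsilon : \epsilon \in \{\pm 1\}^\Sigma\}$.

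First, as remarked in the excerpt, Theorems~\ref{thm:intro non-ord} and~\ref{thm:intro 3} together compute the value of $\cL_p^{\sC}$ at every point of $\mathrm{Crit}(\sC)$ explicitly in terms of the corresponding classical $L$-value, scaled by $c_y^{(\chi\chi_{\cyc}^j\eta)_\infty}$. Since $\mathrm{Crit}(\sC)$ is Zariski-dense in $\sC \times \sX(\Galp^{\cyc})$, the rigid function $\cL_p^{\sC}|_{\sC \times \sX(\Galp^{\cyc})}$ is the unique rigid function on $\sC \times \sX(\Galp^{\cyc})$ with these prescribed values. Restricting along the closed immersion $\{x_{\tilde\pi}\} \times \sX(\Galp^{\cyc}) \hookrightarrow \sC \times \sX(\Galp^{\cyc})$, the specialization $\cL_p^{\sC}(x_{\tilde\pi},-)$ is itself uniquely determined on $\sX(\Galp^{\cyc})$ by the collected interpolation data over $\mathrm{Crit}(\sC)$.

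Next, specializing the identity of Theorem~\ref{thm:intro 3} at $y = x_{\tilde\pi}$ gives
\[
\cL_p^{\sC}(x_{\tilde\pi}, \kappa) \;=\; c_{\tilde\pi}^{(\kappa\eta)_\infty} \cdot \cL_p(\tilde\pi, \kappa) \qquad \text{for all } \kappa \in \sX(\Galp^{\cyc}).
\]
The sign $(\kappa\eta)_\infty \in \{\pm 1\}^\Sigma$ is locally constant in $\kappa$ and takes only finitely many values on $\sX(\Galp^{\cyc})$, so this space breaks into finitely many open-closed sign-components; dividing $\cL_p^{\sC}(x_{\tilde\pi},-)$ componentwise by the matching period $c_{\tilde\pi}^\epsilon$ recovers $\cL_p(\tilde\pi)|_{\sX(\Galp^{\cyc})}$. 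Combined with the previous paragraph, this shows that, up to rescaling by the finite set $\{c_{\tilde\pi}^\epsilon\}$, the restriction $\cL_p(\tilde\pi)|_{\sX(\Galp^{\cyc})}$ is uniquely pinned down by the classical interpolation values over $\mathrm{Crit}(\sC)$.

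I do not expect a serious obstacle here: the corollary is essentially a formal consequence of Theorems~\ref{thm:intro non-ord} and~\ref{thm:intro 3} combined with the Zariski-density of $\mathrm{Crit}(\sC)$ already recorded in the excerpt. The only real care needed is bookkeeping, namely verifying that the finite collection of sign-periods $c_{\tilde\pi}^\epsilon$ is precisely what the statement absorbs into the phrase \emph{``up to rescaling the periods''}, and confirming that restriction of a uniquely determined rigid function along a closed subspace preserves its uniqueness modulo this same ambiguity.
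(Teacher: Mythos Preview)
Your proposal is correct and follows essentially the same route as the paper: the corollary is presented there as an immediate consequence of the Zariski-density of $\mathrm{Crit}(\sC)$ in $\sC\times\sX(\Galp^{\cyc})$ (which pins down $\cL_p^{\sC}$ on that subspace) together with specialisation at $x_{\tilde\pi}$ via Theorem~\ref{thm:intro 3}. Your added bookkeeping about the sign-components and the $p$-adic periods $c_{\tilde\pi}^\epsilon$ is exactly the content hidden in the phrase ``up to rescaling the periods''; in fact, in the body of the paper the normalisation is chosen so that $c_{x_{\tilde\pi}}^\epsilon=1$ for all $\epsilon$, which makes this step even simpler than you allow for.
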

If further Leopoldt's conjecture holds for $F$ at $p$, then we obtain a similar uniqueness statement for $\cL_p(\tilde\pi)$ itself, made precise in \S\ref{sec:uniqueness 2}. These results should be compared to Theorem~\ref{thm:intro non-ord}, where we showed $\cL_p(\tilde\pi)$ is determined by growth and interpolation, but only when $h_p < \#\mathrm{Crit}(\lambda_\pi)$. 

Finally, let us highlight some examples for which the assumptions of Theorem~\ref{thm:intro 3} are satisfied. Let $f$ be a classical cuspidal Hilbert eigenform of level $1$ and weights $\geqslant 3$. The symmetric cube $\mathrm{Sym}^3(f)$ is a RASCAR for $\GL_4$ of level 1 \cite[Prop.~8.1.1]{GR2}. When 
$\mathrm{Sym}^{3}(f)$ is non-$Q$-critical (e.g.\ if $f$ itself is $p$-ordinary), then Theorem~\ref{thm:intro 3} shows that its $p$-adic $L$-function, as constructed in Theorem~\ref{thm:intro non-ord}, can be interpolated over the Hilbert cuspidal eigenvariety from \cite{AIP-Asterisque}. More generally, Newton--Thorne recently showed that arbitrary symmetric powers of $f$ are RACARs in \cite{NT19,NT22}, and the odd symmetric powers are RASCARs.

\subsubsection*{Acknowledgements}
We would particularly like to thank Andrew Graham and Andrei Jorza, with whom we have shared many discussions on the subject of this paper. We also thank Mahdi Asgari, Jo\"el Bella\"{i}che, Harald Grobner, Rob Kurinczuk, Dongwen Liu, David Loeffler, James Newton, A.\ Raghuram and Dinakar Ramakrishnan for helpful comments and  many stimulating conversations, and Eric Urban for suggesting the elegant proof of Proposition~\ref{prop:ann = zero}. Finally we thank the anonymous referee, whose suggestions did much to improve the results and exposition of this paper. This project received a focused project grant from the Heilbronn Institute for Mathematical Research.  The first author   has received funding from the European Research Council grant $n^\circ$682152 and Fondecyt grants $n^\circ$77180007 and $n^\circ$11201025;   
the second author was partially supported  by the Agence Nationale de la Recherche grants ANR-18-CE40-0029 and 
ANR-16-IDEX-0004;  the  third author was supported by EPSRC Postdoctoral Fellowship EP/T001615/1 \& 2.

\section{Automorphic preliminaries} \label{sec:basic notation}

The following fixes notation and recalls how to attach a compactly supported cohomology class (with $p$-adic coefficients) to a suitable automorphic representation. Everything here is standard.

\subsection{Notation}\label{sec:notation}
Let $F$ be a totally real number field of degree $d$ over $\Q$, let $\cO_F$ be its ring of integers and $\Sigma$ the set of its real embeddings. Let $\A = \A_f\times \R$ denote the ring of adeles of $\Q$. For  $v$  a non-archimedean place of $F$, we let $F_v$ be the completion of $F$ at $v$, denote by $\cO_v$ the ring of integers in $F_v$, and fix a uniformiser $\varpi_v$. 

Let $n \geqslant 1$ and let $G$ be the algebraic group $\mathrm{Res}_{\cO_F/\Z}\mathrm{GL}_{2n}$, $B= \mathrm{Res}_{\cO_F/\Z}B_{2n}$ be the Borel subgroup of upper triangular matrices, with opposite $B^-$,
$N$ and $N^-$ be the unipotent radicals of $B$ and $B^-$ respectively, and $T= \mathrm{Res}_{\cO_F/\Z}T_{2n}$ be the maximal split torus of diagonal matrices. We have decompositions $B= TN$ and $B^- = N^-T$.  We let $K_\infty=C_\infty Z_G(\R),$ 
where $C_\infty = O_{2n}(\R)^d$ is the standard maximal compact subgroup of $G(\R)$ and $Z_G$ is the centre of $G$. 
For any reductive real Lie group $A$ we let $A^\circ$ denote the connected component of the identity.

Let $H$ denote the algebraic group $\mathrm{Res}_{\cO_F/\Z}(\GL_n \times \GL_n)$, which we frequently identify with its image under the natural embedding $\iota : H \hookrightarrow G$ given by $(h,h') \mapsto \smallmatrd{h}{0}{0}{h'}.$

Let $Z_H$ be the centre of $H$. We write $Q = \mathrm{Res}_{\cO_F/\Z} \smallmatrd{\GL_n}{\mathrm{M}_n}{0}{\GL_n}$ for the maximal standard parabolic subgroup of $G$ (containing $B$) whose Levi subgroup is $H$, and we denote by $N_Q$ its  unipotent radical. 

Fix a rational prime $p$ and an embedding $i_p : \overline{\Q} \hookrightarrow \overline{\Q}_p$. We fix an extension of $i_p$ to an isomorphism $i_p : \C \isorightarrow \overline{\Q}_p$. For each embedding $\sigma : F \hookrightarrow \R$ in $\Sigma$, there exists a unique prime $\pri|p$ in $F$ such that $\sigma$ extends to an embedding $F_{\pri} \hookrightarrow \overline{\Q}_p$; we write $\pri(\sigma)$ for this prime, and let $\Sigma(\pri) \defeq \{\sigma \in \Sigma: \pri(\sigma) = \pri\}.$
We let $\OFp \defeq \cO_F \otimes \Zp$.

Let $F^{p\infty}$ be the maximal abelian extension of $F$ unramified outside $p\infty$, and let $\Galp \defeq  \mathrm{Gal}(F^{p\infty}/ F)$, which has the structure of a $p$-adic Lie group. Let $\Galp^{\cyc} \defeq \mathrm{Gal}(\Q^{p\infty}F/F).$

Given an ideal $I\subset \cO_F$ we let $\sU(I) \defeq \{x \in \widehat{\cO}_F^\times: x \equiv 1 \newmod{I}\}$, and consider the  narrow ray class group $\Cl(I) = F^\times\backslash \A_F^\times/\sU(I)F_\infty^{\times\circ}.$

All our group actions will be on the left. If $M$ is a $R$-module, with a left action of a group $\Gamma$, then we write $M^\vee = \mathrm{Hom}_R(M,R)$, with associated left dual action 	$(\gamma \cdot\mu)(m) = \mu(\gamma^{-1} \cdot m).$

 For an affinoid rigid space $X$, we write $\cO_X$ (or, for clarity of notation, occasionally $\cO(X)$) for the ring of rigid functions on $X$, so $X = \mathrm{Sp}(\cO_X)$.

\subsection{The weights}\label{sec:algebraic weights}
Let $X^{\ast}(T)$ be the set of algebraic characters of $T$. Each element of $X^{\ast}(T)$ corresponds to an integral weight   $\lambda= (\lambda_{\sigma})_{\sigma \in \Sigma}$, where $\lambda_{\sigma}= (\lambda_{\sigma, 1}, \dots , \lambda_{\sigma, 2n}) \in \Z^{2n}.$
	
	We let $X^{\ast}_+(T) \subset X^{\ast}(T)$ be the subset of $B$-dominant weights. We say that $\lambda$ is \emph{pure} if there exists $\sw  \in \Z$, the \emph{purity weight} of $\lambda$, such that 
	\[
	\lambda_{\sigma, i}+ \lambda_{\sigma, 2n- i+ 1}= \sw \qquad\text{ for each $\sigma \in \Sigma$ and $i \in \{1, \dots , 2n\}$}.
	\] 
	Let $X^{\ast}_0(T) \subset X^{\ast}_+(T)$ be the subset of pure $B$-dominant  integral weights, i.e.\ those supporting  cuspidal cohomology \cite[Lem.~4.9]{Clo90}.   We say $\lambda$ is \emph{regular} if $\lambda_{\sigma,i} > \lambda_{\sigma,i+1}$ for all $\sigma$ and $i$.

For $\lambda \in X^{\ast}_{+}(T)$, we let $V_{\lambda}$ be the algebraic irreducible representation of $G$ of highest weight $\lambda$; for a sufficiently large field $L/\Qp$, the $L$-points $V_{\lambda}(L)$ can be explicitly realised as 
\begin{align}\label{eq:alg induction}
	V_\lambda(L) = \{ f : G(\Qp) \to L& \text{ algebraic}:\\
	& f(n^-tg) = \lambda(t)f(g) \text{ for all }n^- \in N^-(\Qp), t \in T(\Qp), g \in G(\Qp)\}.\notag
\end{align}
The (left) action of $G(\Qp)$ is by right translation, i.e. $(h\cdot f)(g) = f(gh)$ for $g,h \in G(\Qp)$ and $f \in V_\lambda$. Let $V_{\lambda}^\vee$ denote the linear dual, with its (left) dual action; we have an isomorphism $V_{{\lambda}}^{\vee} \cong V_{\lambda^\vee}$ where $\lambda^\vee = -w_{2n}(\lambda)$ is the contragredient of $\lambda$, for $w_{2n}$ the longest Weyl element for $G$. Note the central characters of $V_{\lambda}^\vee$ and $V_{\lambda}$ are inverse to each other, and if $\lambda$ is pure, then as $G$-modules we have (e.g. \cite[\S2.3]{GR2}) 
\[
V_{\lambda}^\vee \cong V_{\lambda} \otimes [\mathrm{N}_{F/\Q}\circ\det]^{-\sw}.
\]
	By Zariski-density any $f \in V_\lambda$ is uniquely determined by $f|_{G(\Zp)}$. We have a natural integral subspace $V_\lambda(\cO_L)$ of $f \in V_\lambda(L)$ such that $f(G(\Zp)) \subset \cO_L$; we let $V_\lambda^\vee(\cO_L) = \mathrm{Hom}_{\cO_L}(V_\lambda(\cO_L),\cO_L).$

Let $X^{\ast}(H)$ be the set of algebraic characters of $H$. Each element of $X^{\ast}(H)$ is identified with an integral weight   
\[
(j,j')= (j_{\sigma}, j'_{\sigma})_{\sigma \in \Sigma}, \qquad j_\sigma, j_\sigma' \in \Z.
\] 
We say $(j,j') \in X^{\ast}(H)$ is \emph{$Q$-dominant} if $j_{\sigma}\geqslant  j'_{\sigma}$ for each $\sigma \in \Sigma$, and let  $X^{\ast}_+(H) \subset X^{\ast}(H)$ be the subset of $Q$-dominant weights. 
We say that $(j,j') \in X^{\ast}(H)$  is \emph{pure} if there exists $\sw  \in \Z$
such that $j_{\sigma}+  j'_{\sigma}= \sw $ for all $\sigma \in \Sigma$, 
and let 	$X^{\ast}_0(H) \subset X^{\ast}_+(H)$ 
be the subset of pure $Q$-dominant weights.
Since $B\subset Q$, we naturally have
\[
X^{\ast}(H) \subset X^{\ast}(T), \qquad X^{\ast}_+(H) \subset X^{\ast}_+(T), \qquad X^{\ast}_0(H) \subset X^{\ast}_0(T).
\]
Given a pure $B$-dominant integral weight $\lambda = (\lambda_{\sigma})_{\sigma \in \Sigma}$, we define a set
\begin{equation}\label{eqn:crit lambda}
	\mathrm{Crit}(\lambda) \defeq \{j \in \Z: -\lambda_{\sigma,n} \leqslant j \leqslant -\lambda_{\sigma,n+1} \ \forall \sigma \in \Sigma\}.
\end{equation}
If $\pi$ is a RACAR for $G(\A)$ of weight $\lambda$ (which we take to mean cohomological with respect to $V_\lambda^\vee$, in the sense of \S\ref{sec:cuspidal cohomology}), then \cite[\S6.1]{GR2} proves
\begin{align*}
	j \in \mathrm{Crit}(\lambda) \iff \text{for all  finite } &\text{order Hecke characters  $\chi$ of $F$, the  $L$-value}\\ &\text{   $L(\pi\otimes\chi,j+\tfrac{1}{2})$ is  critical in the sense of Deligne.}
\end{align*}

\subsection{Local systems and Betti cohomology}\label{sec:classical cohomology}
Let $K \subset G(\A_f)$ be an open compact subgroup.
The \emph{locally symmetric space of level $K$} is the  $d(2n-1)(n+1)$-dimensional real orbifold
\begin{equation}\label{eq:loc sym space}
	S_K \defeq G(\Q)\backslash G(\A)/KK_{\infty}^{\circ}.		
\end{equation}

\subsubsection{Archimedean local systems}\label{sec:arch ls}
Let $M$ be a left $G(\Q)$-module such that $Z_G(\Q) \cap KK_\infty^\circ$ acts trivially (else, the local systems we define are zero). To $M$ we attach a local system $\cM = \cM_K$ on $S_K$, defined as the locally constant sections of 
\[
G(\Q)\backslash[G(\A) \times M]/KK_\infty^\circ \longrightarrow S_K,
\] with action $\gamma(g,m)kz = (\gamma gkz, \gamma \cdot m)$.  
We use calligraphic letters for such local systems.

	Applying to $M = V_{\lambda}^\vee(E)$ for a characteristic zero field $E$, we can consider compactly supported Betti cohomology groups $\rH^\bullet_c(S_K, \cV_{\lambda,K}^\vee(E))$. We let	
	$\rH^\bullet_*(S^G, \cV_{\lambda}^\vee(E)) \defeq \varinjlim_K \rH^\bullet_*(S_K, \cV_{\lambda,K}^\vee(E)).$ 	This admits a natural $G(\A_f)$-action, whose $K$-invariants are $\h^\bullet_*(S_K,\cV_{\lambda,K}^\vee(E))$. For ease of notation we henceforth drop the subscript $K$ and write only $\cV_{\lambda}^\vee(E)$.

\subsubsection{Non-archimedean local systems}\label{sec:non-arch ls}
Let $M$ be a left $K$-module on which the centre $Z_G(\Q) \cap KK_\infty^\circ$ acts trivially. To $M$, we attach a local system $\sM = \sM_K$ on $S_K$ as the locally constant sections of 
\[
G(\Q)\backslash [G(\A) \times M]/K K_\infty^\circ \longrightarrow S_K,
\]
with action $\gamma(g,m)kz = (\gamma gkz, k^{-1}\cdot m).$ For these we use script letters, e.g.\ $\sV, \sD$.

Suppose now $M$ has a left action of $G(\A_f)$. This gives left actions of $G(\Q)$ and $K$ on $M$, and we get associated (archimedean and non-archimedean) local systems $\cM$ and $\sM$ attached to $M$. One may check (see \cite[\S1.2.2]{Urb11})   there is an isomorphism 
\[
\cM \cong \sM,  \qquad \text{given on sections by }(g,m) \mapsto (g, g_f^{-1} \cdot m).
\]

The following is the example of most importance to us. If $L/\Qp$ contains the field of definition of $\lambda$, then $M = V_{\lambda}(L)$ can be realised as a space of functions $f : G(\Qp) \to L$. If $\mu \in V_{\lambda}^\vee(L)$,  $f \in V_{\lambda}(L)$, and $g \in G(\Qp)$, then $V_\lambda^\vee(L)$ carries an action of $h \in G(\A_f)$ by 
\[
(h\cdot \mu)[f(g)] \defeq \mu[f(gh_p^{-1})],
\] 
where $h_p$ is the image of $h$ under the projection $G(\A_f) \to G(\Qp)$. We get two local systems $\cV_\lambda^\vee(L)$ and $\sV_\lambda^\vee(L)$, and as above, we get an isomorphism $\cV_{\lambda}^\vee \cong \sV_{\lambda}^\vee$.

\subsubsection{Hecke operators}\label{sec:hecke operators 1}
Let $M$ be a left module for $G(\Q)$ (resp.\ $K$), and let $\gamma \in G(\A_f)$, which we suppose acts on $M$. As in \cite[\S1.4]{DJR18} define a Hecke operator on $\hc{\bullet}(S_K, \cM)$ by
\[
[K\gamma K] \defeq \mathrm{Tr}(p_{\gamma  K \gamma^{-1}\cap K, K}) \circ [\gamma] \circ p_{K\cap \gamma^{-1}K \gamma,K}^* : \hc{\bullet}(S_K, \cM) \to \hc{\bullet}(S_K, \cM),
\]
where $\mathrm{Tr}$ is the trace map attached to the finite cover $S_{\gamma K\gamma^{-1}\cap K} \to S_K$,  $p_{K',K}:S_{K'}\to S_K$ is the natural projection, and
\[
[\gamma] : \hc{\bullet}(S_{K\cap \gamma^{-1}K\gamma}, \cM) \to \hc{\bullet}(S_{\gamma K\gamma^{-1}\cap K}, \cM)
\]
is given on local systems by $(g,m) \mapsto (g\gamma^{-1}, \gamma \cdot m)$ (and similarly for $\sM$-coefficients).  

One can check that if $M$ is a $G(\A_f)$-module as in \S\ref{sec:non-arch ls}, then the isomorphism 
\begin{equation}\label{eq:sigma}
	\hc{\bullet}(S_K,\cM) \isorightarrow \hc{\bullet}(S_K,\sM)
\end{equation}
induced from the isomorphism $\cM \cong \sM$ is Hecke-equivariant \cite[\S1.2.5]{Urb11}.

\subsubsection{Operators at infinity}\label{sec:decomp at infinity}

If $\sigma \in \Sigma$, then $K_\sigma/K_\sigma^\circ = \{\pm1\}$, and thus $K_\infty/K_\infty^\circ = \{\pm 1\}^{\Sigma}$.  Any character $\epsilon : K_\infty/K_\infty^\circ \to \{\pm1\}$ can also be identified with an element of $\{\pm 1\}^{\Sigma}$. If $M$ is a module upon which $K_\infty/K_\infty^\circ$ acts, let $M^\epsilon$ be the submodule upon which the action is by $\epsilon$. If $M$ is a vector space over a field of characteristic $\neq 2$, then $M = \oplus_\epsilon M^\epsilon$. Since the group acts naturally on $S_K$ and its cohomology, and this action commutes with the $G(\A_f)$-action, we thus obtain decompositions of its cohomology into Hecke-stable submodules (see e.g. \cite[p.15]{GR2}).

\subsection{The spherical Hecke algebra}\label{sec:unramified H}
Let $\pi$ be a RACAR of $G(\A)$ of weight $\lambda$, and let 	$S=\{v\nmid p\infty: \pi_v\text{ not spherical}\}$ 
be the set of bad places for $\pi$. Let 
\[
K = \textstyle\prod_{v\nmid \infty} K_v \subset G(\A_f)
\]
be an open compact subgroup such that $\pi_f^K \neq 0$; for $v\notin S\cup\{\pri|p\}$, we take 
\disp{
K_v = K_v^\circ \defeq \GL_{2n}(\cO_v).
}
Let $X_*^+(T_{2n})$ denote the set of algebraic $B$-dominant cocharacters of $T_{2n} \subset \GL_{2n}$, identified with tuples $\nu = (\nu_1,\dots,\nu_{2n}) \in \Z^{2n}$ with
\[
\nu_{1} \geqslant \nu_{2} \geqslant \cdots \geqslant \nu_{2n}, \qquad \text{via} \qquad x \mapsto \mathrm{diag}(x^{\nu_1}, \dots, x^{\nu_{2n}}).
\]

\begin{definition} \label{def:spherical hecke algebra}
	For $v \notin S\cup\{\pri|p\}$, and any $\nu \in X_*^+(T_{2n})$, let 
	$T_{\nu,v} \defeq [K_v^\circ \nu(\varpi_v) K_v^\circ]$. The \emph{unramified Hecke algebra of level $K$} is the commutative algebra $\cH'$ generated by all such operators:
	\[
	\cH' \defeq \Z[T_{\nu,v} : \nu \in X_*^+(T_{2n}), v\notin S\cup\{\pri|p\}].
	\]
\end{definition}

For any choice of $K$ such that $K_v = K_v^\circ$ for $v \notin S\cup\{\pri|p\}$, the algebra $\cH'$ acts on $\pi^{K}$ via right translation, and on $\mathrm{H}^\bullet_*(S_K, -)$ as described in \S\ref{sec:hecke operators 1}.

\begin{definition}\label{def:gen eigenspace ur}
	Let $E$ be a number field containing the Hecke field of $\pi_f$. Attached to $\pi$ we have a homomorphism 
	\[
	\psi_{\pi} : \cH'\otimes E \rightarrow E
	\]
	which for $\nu \in X_*^+(T_{2n})$ and $v \notin S\cup\{\pri|p\}$ sends $T_{\nu,v}$ to its  eigenvalue acting on the line  $\pi_v^{K_v^\circ}$. Let $\m_{\pi} \defeq \ker(\psi_\pi)$, a maximal ideal in $\cH'\otimes E$. If $L$ is any field containing $E$, we get an induced maximal ideal in $\cH'\otimes L$, which in an abuse of notation we also denote $\m_\pi$. 
\end{definition}
Note in the set-up above, if $M$ is a finite-dimensional $L$-vector space with an action of $\cH'$, then the localisation $M_{\m_\pi}$ is the generalised eigenspace $M\lsem \m_{\pi}\rsem $ attached to $\psi_\pi$.

\subsection{Cohomology classes attached to RACARs}\label{sec:cuspidal cohomology}

We recall the standard attachment of compactly supported cohomology classes to RACARs (e.g.\ \cite{BC83, Clo90}). For a weight $\lambda \in X_0^*(T)$, we have an inclusion of $\cH'$-modules
\begin{equation}\label{eq:cuspidal cohomology}
	\bigoplus_{\pi}\rH^\bullet\big(\fg_\infty,K_\infty^\circ; \pi_\infty\otimes V_{\lambda}^{\vee}(\C) \big)\otimes \pi_f^K \subset \rH^\bullet_{\mathrm{cusp}}(S_K,\cV_{{\lambda}}^\vee(\C)) \subset \h^\bullet(S_K,\cV_{\lambda}^\vee(\C))
\end{equation}
where $\fg_\infty = \mathrm{Lie}(G(\R))$ and the sum is over all RACARs $\pi$ of $G(\A)$. If $\pi$ contributes non-trivially to \eqref{eq:cuspidal cohomology}, we say it has \emph{weight $\lambda$}, and it then (e.g.\ \cite[(3.4.2)]{GR2}) contributes to all degrees $i$ with
\begin{equation}\label{eq:t}
	dn^2 \leqslant i \leqslant d(n^2 + n - 1) \defeqrev t.
\end{equation}
 If we localise the resulting $\cH'$-module at $\m_{\pi} \subset \cH'$, then by Strong Multiplicity One 
\[
\h^\bullet_{\mathrm{cusp}}(S_K,\cV_{{\lambda}}^\vee(\C))_{\m_{\pi}} =\rH^\bullet\big(\fg_\infty,K_\infty^\circ; \pi_\infty\otimes V_{\lambda}^{\vee}(\C) \big)\otimes \pi_f^K.
\]
There is a natural action of $K_\infty/K_\infty^\circ$ on the factor at infinity, hence on cuspidal cohomology, and taking $\epsilon$-parts for $\epsilon \in \{\pm 1\}^\Sigma$ (as in \S\ref{sec:decomp at infinity}), we then obtain
\begin{equation}\label{eq:cuspidal cohomology 2}
	\h^\bullet_{\mathrm{cusp}}(S_K,\cV_{{\lambda}}^\vee(\C))_{\m_{\pi}}^\epsilon =\rH^\bullet\big(\fg_\infty,K_\infty^\circ; \pi_\infty\otimes V_{\lambda}^{\vee}(\C) \big)^\epsilon\otimes \pi_f^K.
\end{equation}
As in \cite[\S4.1]{GR2}, for degree $t$ (that is, at the top of the range \eqref{eq:t}) we have
\begin{equation}\label{eq:gK line}
	\dim_{\C} \ \rH^t(\fg_\infty,K_\infty^\circ; \pi_\infty\otimes V_{\lambda}^\vee(\C))^\epsilon = 1
\end{equation}
for all $\epsilon \in \{\pm1\}^\Sigma$. Fixing a basis $\Xi_\infty^\epsilon$ of \eqref{eq:gK line} fixes an $\cH'$-equivariant isomorphism
\begin{equation}\label{eq:Theta 2}
	\pi_f^K \isorightarrow \h^t_{\mathrm{cusp}}(S_K,\cV_{\lambda}^\vee(\C))^\epsilon_{\m_{\pi}} \isorightarrow \h^t_{\mathrm{c}}(S_K,\cV_{\lambda}^\vee(\C))^\epsilon_{\m_{\pi}},
\end{equation}
where the first map is $\varphi_f \mapsto \Xi_\infty^\epsilon \otimes \varphi_f$ and the second isomorphism follows as $\pi$ is cuspidal. Finally, via our fixed isomorphism $i_p : \C \cong \overline{\Q}_p$ and the isomorphism \eqref{eq:sigma}, we have isomorphisms
\begin{equation}\label{eq:Theta 3}
	\hc{\bullet}(S_K,\cV_{\lambda}^\vee(\C)) \labelisorightarrow{\ i_p\ } \hc{\bullet}(S_K,\cV_{\lambda}^\vee(\overline{\Q}_p))
	\labelisorightarrow{\ \text{\eqref{eq:sigma}}\ }\hc{\bullet}(S_K,\sV_{\lambda}^\vee(\overline{\Q}_p)). 
\end{equation}
As all the maps above are Hecke-equivariant, we finally deduce:

\begin{proposition}\label{prop:non-canonical}
	There is a Hecke-equivariant isomorphism
	\begin{equation}\label{eq:cohomology non-canonical}
		\pi_f^K \isorightarrow \hc{t}(S_{K}, \sV_{\lambda}^\vee(\overline{\Q}_p))_{\m_\pi}^\epsilon.
	\end{equation}
	This isomorphism is non-canonical, depending on the choice of basis $\Xi_\infty^\eps$ of \eqref{eq:gK line} and on $i_p$.
\end{proposition}

\subsection{Shalika models and Friedberg--Jacquet integrals}\label{sec:shalika models}
We recall some relevant facts about Shalika models (see e.g.\ \cite[\S1,\S3.1]{GR2}). Let 
\[
\cS_{/F} = \{s = \smallmatrd{h}{}{}{h}\cdot \smallmatrd{I_n}{X}{}{I_n}: h \in \GL_n, X \in \mathrm{M}_n\}
\]
be the Shalika subgroup of $\GL_{2n/F}$, and $\cS= \mathrm{Res}_{F/\Q}\cS_{/F}$. Let $\psi$ be the standard non-trivial additive character of $F\backslash \A_F$ from \cite[\S4.1]{DJR18}, and let $\eta$ be a Hecke character of $F^\times\backslash\A_F^\times$. For $s \in \cS$, write 
\[
(\eta\otimes\psi)(s) = \eta(\det(h))\psi(\mathrm{Tr}(X)).
\]
A cuspidal automorphic representation $\pi$ of $G(\A)$ (of weight $\lambda$) is said to have an $(\eta,\psi)$-\emph{Shalika model} if there exist $\varphi \in \pi$ and $g \in G(\A)$ such that
\begin{equation}\label{eq:shalika integral}
	\cS_{\psi}^\eta(\varphi)(g) \defeq \int_{Z_G(\A)\cS(\Q)\backslash\cS(\A)} \varphi(sg) \ (\eta \otimes \psi)^{-1}(s)ds \neq 0.
\end{equation}
This forces $\eta^n$ to be equal to the central character of $\pi$, and hence $\eta = \eta_0|\cdot|^{\sw}$, where $\eta_0$ has finite order and $\sw$ is the purity weight of $\lambda$. If \eqref{eq:shalika integral} holds, then $\cS_\psi^\eta$ defines an intertwining $\pi \hookrightarrow \mathrm{Ind}_{\cS(\A)}^{G(\A)}(\eta \otimes \psi)$, realising $\pi$ inside the space of functions $W : G(\A) \to \C$ satisfying
\begin{equation}\label{eq:shalika transform}
	W\left(\left(\begin{smallmatrix} h & 0 \\ 0 & h \end{smallmatrix}\right)  \left(\begin{smallmatrix} 1 & X \\ 0 & 1 \end{smallmatrix} \right) \bullet  \right)= \eta(\det(h)) \psi(\mathrm{tr}(X)) W(\bullet)\ \ \forall h\in \GL_n(F), X\in M_n(F).
\end{equation}
If $\pi$ has an $(\eta,\psi)$-Shalika model, then for each place $v$ of $F$ the local component $\pi_v$ has a local $(\eta_v,\psi_v)$-Shalika model \cite[\S3.2]{GR2}, that is, we have an intertwining 
\begin{equation}\label{eq:shalika integral local}
	\cS_{\psi_v}^{\eta_v} : \pi_v \hookrightarrow \mathrm{Ind}_{\cS(F_v)}^{\GL_{2n}(F_v)}(\eta_v\otimes\psi_v).
\end{equation}

\begin{remark}\label{rem:choice shalika model}
	Note \eqref{eq:shalika integral} defines a canonical global intertwining. We emphasise that the local intertwinings are \emph{not} canonical. However the local Shalika model is unique in the sense that 
	\[
	\dim_{\C}\ \mathrm{Hom}_{\GL_{2n}(F_v)}\left[\pi_v,\  \mathrm{Ind}_{\mathcal{S}(F_v)}^{\GL_{2n}(F_v)}(\eta_v\otimes\psi_v)\right] = 1
	\]
	(see \cite{Nia09,CS20}),  so the image $\cS_{\psi_v}^{\eta_v}(\pi_v)$ of $\cS_{\psi_v}^{\eta_v}$ is canonical. We henceforth fix a (non-canonical) choice of intertwining $\cS_{\psi_f}^{\eta_f}$ of $\pi_f$ (or equivalently, via \eqref{eq:shalika integral}, an intertwining $\cS_{\psi_\infty}^{\eta_\infty}$ of $\pi_\infty$).
\end{remark}

When $\pi_v$ is spherical it is shown in \cite[Prop.~1.3]{AG94} that it admits a $(\eta_v,\psi_v)$-Shalika model if and only if $\pi_v^\vee=\pi_v\otimes \eta_v^{-1}$. In this case we deduce $\eta_v$ is unramified.

Let $\pi$ be a cuspidal automorphic representation of $G(\A)$, and $\chi$ a finite order Hecke character for $F$. For $W \in \cS_{\psi}^{\eta}(\pi)$ (the image of $\pi$ under $\cS_\psi^\eta$) consider the \emph{Friedberg--Jacquet} zeta integral
\[
\zeta(s,W,\chi) \defeq \int_{\GL_n(\A_F)} W\left[\matrd{h}{}{}{I_n}\right] \ \chi(\det(h)) \ |\det(h)|^{s-\tfrac{1}{2}} \ dh,
\]
which converges absolutely in a right-half plane and extends to a meromorphic function in $s \in \C$.  When $W = \otimes_v W_v$ for $W_v \in \cS_{\psi_v}^{\eta_v}(\pi_v)$, this integral is a product of local zeta integrals $\zeta(s,W_v,\chi_v)$. 

\medskip

A \emph{Friedberg--Jacquet test vector} $W^{\mathrm{FJ}}_v \in \cS_{\psi_v}^{\eta_v}(\pi_v)$ is a vector such that for all unramified quasi-characters $\chi_v : F_v^\times \to \C^\times$, we have
\begin{equation}\label{eq:jacquet-friedberg test vector}
	\zeta_v\left(s+\tfrac{1}{2}, W^{\mathrm{FJ}}_v, \chi_v\right) = [\mathrm{N}_{F/\Q}(v)^s\chi_v(\varpi_v)]^{n\delta_v}\cdot L\left(\pi_v \otimes \chi_v,s+\tfrac{1}{2}\right),
\end{equation}
where $\delta_v$ is the valuation of the different of $F_v$ and $L(\pi_v \otimes \chi_v,s+\tfrac{1}{2})$ is the Langlands $L$-function. of $\pi_v \otimes \chi_v$. By \cite[Prop.~3.1]{FJ93}, if $\pi$ is a RACAR admitting a $(\eta,\psi)$-Shalika model, then for every finite place $v$ there exists there exists such a Friedberg--Jacquet test vector in $\cS_{\psi_v}^{\eta_v}(\pi_v)$. If $\pi_v$ is spherical, then one can take $W_v^{\mathrm{FJ}}$ to be a spherical vector, i.e.\ a vector fixed by $\GL_{2n}(\cO_v)$, normalised so that $W_{v}^{\mathrm{FJ}}(t_v^{-\delta_v}) = 1$ \cite[Prop.~3.2]{FJ93}, \cite[Prop.\ 3.3]{DJR18}.

\subsection{Parahoric $p$-refinements}\label{ss:the U_p-refined line}
Let 
\begin{equation}\label{eq:parahoric}
	J_{\pri} = \{g \in \GL_{2n}(\cO_{\pri}) : g \newmod{\pri} \in Q(\cO_{\pri}/\pri)\} \subset \GL_{2n}(F_{\pri})
\end{equation}
be the parahoric subgroup of type $Q$. We will always assume $\pi_{\pri}$ is $Q$-parahoric-spherical, that is, admits $J_{\pri}$-fixed vectors. Recall $\iota(h,h') = \smallmatrd{h}{0}{0}{h'}$, and let $t_{\pri} = \iota(\varpi_{\fp}I_n, I_n)$, recalling $\varpi_{\fp}$ is a uniformiser of $F_{\fp}$. On $\pi_{\pri}^{J_{\pri}}$, we have the Hecke operator $U_{\fp} := [J_\fp t_{\pri}  J_\fp].$

\begin{definition}	\label{def:regular Q-refinement}
	A \emph{$Q$-refinement} $\tilde\pi_{\pri} = (\pi_{\pri}, \alpha_{\pri})$ of $\pi_{\pri}$ is a choice of $U_{\pri}$-eigenvalue $\alpha_{\pri}$ on $\pi_{\pri}^{J_{\pri}}$. We say a $Q$-refinement $\tilde\pi_{\pri}$ is \emph{regular} if $\alpha_{\pri}$ is a simple $U_{\pri}$-eigenvalue on $\pi_{\pri}^{J_{\pri}}$; that is, 
		$\pi_{\pri}^{J_{\pri}}\lsem  U_{\pri} - \alpha_{\pri}\rsem$ is a line.
	We say $\tilde\pi_{\pri}$ is \emph{Shalika} if it is regular and
	if for any generator $W_{\pri}$ of 
	$\cS_{\psi_{\pri}}^{\eta_{\pri}}(\pi_{\pri}^{J_{\pri}})\lsem U_\pri - \alpha_{\pri}\rsem$,
	we have
	\begin{equation}\label{eq:shalika refinement}
		W_{\pri}(t_{\pri}^{-\delta_{\pri}}) \neq 0.
	\end{equation} 
		\end{definition}

\begin{remark}\label{rem:eta unramified}
	If $\tilde\pi_{\pri}$ is a Shalika $Q$-refinement, then for $h\in \GL_n(\cO_{\pri})$ 
		a simple check shows $\eta(\mathrm{det}(h))\cdot W_{\pri}(t_{\pri}^{-\delta_{\pri}}) = W_{\pri}(t_{\pri}^{-\delta_{\pri}}).$ 
	By non-vanishing, we have $\eta_{\pri}(\cO_{\pri}^\times) = 1$, so $\eta_{\pri}$ is unramified.
\end{remark}

Condition \eqref{eq:shalika refinement} is motivated by non-vanishing of a local zeta integral; see Proposition \ref{lem:zeta at p}. 
					The following stronger assumptions give a ready source of $\tilde\pi_{\pri}$ as above. Suppose $\pi_{\pri} = \Ind_B^G\theta_{\pri}$ is spherical, hence unramified principal series, for $\theta_{\pri} = (\theta_{\pri,1},...,\theta_{\pri,2n})$ an unramified character of $T(F_{\pri})$. Then \cite[Prop.\ 1.3]{AG94} shows that such an $\pi_{\pri}$ has an $(\eta_{\pri}, \psi_{\pri})$-Shalika model if and only if the $\theta_{\pri,i}$'s can be ordered so that $\theta_{\pri,i}\theta_{\pri,n+i} = \eta_{\pri}$ for $1 \leqslant i \leqslant n$. Then \cite[Lem.\ 3.6]{DJR18} shows:

\begin{proposition}\label{lem:Q-regular criterion} \cite[Lem.\ 3.6]{DJR18}.
	Suppose $\pi_{\pri} = \Ind_B^G\theta_{\pri}$ is spherical, that $\theta_{\pri,i}\theta_{\pri,n+i} = \eta_{\pri}$ for $1\leqslant i \leqslant n$. Let $\alpha_{\pri} = q_{\pri}^{n^2/2}\theta_{\pri,n+1}(\varpi_{\pri})\cdots \theta_{\pri,2n}(\varpi_{\pri})$, where $q_{\pri} = \mathrm{N}_{F/\Q}(\pri)$. Suppose $(\pi_{\pri}, \alpha_{\pri})$ is a regular $Q$-refinement. Then it is a Shalika $Q$-refinement. 
\end{proposition}

For spherical $\pi_{\pri}$, the $Q$-refinements that can be described as in Proposition \ref{lem:Q-regular criterion} are exactly the \emph{$Q$-spin} refinements from \cite{classical-locus}. If all the ${2n \choose n}$ possible $Q$-refinements of $\pi_{\pri}$ are different, then $2^n$ of them are $Q$-spin, so this condition covers a wide range of $\tilde\pi_{\pri}$.

Globally, a \emph{(Shalika) $Q$-refined RA(S)CAR} is a tuple $\tilde\pi = (\pi, \{\alpha_{\pri}\}_{\pri|p})$, for a RA(S)CAR $\pi$ where $\pi_{\pri}$ is $Q$-parahoric-spherical and  $(\pi_{\pri},\alpha_{\pri})$ is a (Shalika)  $Q$-refinement for each $\pri|p$.

\subsection{Running conditions on $\tilde\pi$}\label{sec:level group} 
We finally collect our running assumptions. Fix for the rest of the paper a finite order Hecke character $\eta_0$ of $F$.  We work with two levels of generality; our results apply under (C2), but are more precise under the stronger assumption (C2$'$).

\setcounter{thmprime}{\value{thm}}

\begin{conditions} Let $\pi$ be a RACAR of $G(\A)$ of weight $\lambda$ such that
	\label{cond:running assumptions} 
	\begin{itemize}\setlength{\itemsep}{0pt}
		\item[(C1)] $\pi$ admits a global $(\eta_0|\cdot|^{\sw},\psi)$-Shalika model, for $\sw$ the purity weight of $\pi$;
		\item[(C2)] for each $\pri|p$, $\pi_{\pri}$ is parahoric-spherical admitting  a Shalika $Q$-refinement $\tilde\pi_{\pri} = (\pi_{\pri}, \alpha_{\pri})$, i.e.\
		\begin{equation}\label{eq:Up refined line 2}
			\dim_{\C} \ \cS_{\psi_{\pri}}^{\eta_{\pri}}(\pi_{\pri}^{J_{\pri}})\lsem U_\pri - \alpha_{\pri}\rsem  = 1
		\end{equation} 
		(for $\eta_{\pri} = \eta_{0,\pri}|\cdot|^{\sw}_{\pri}$) and this line admits a generator $W_{\pri}$ such that $W_{\pri}(t_{\pri}^{-\delta_{\pri}}) =1$. 
	\end{itemize}
\end{conditions}

\begin{conditions2} Let $\pi$ be a RACAR of $G(\A)$ of weight $\lambda$ such that (C1) holds and
	\label{cond:running assumptions 2} 
	\begin{itemize}\setlength{\itemsep}{0pt}
		\item[(C2$'$)] for each $\pri|p$, $\pi_{\pri} = \Ind_B^G\theta_{\pri}$ is spherical, satisfies the hypotheses of Proposition \ref{lem:Q-regular criterion}, and $\tilde\pi_{\pri} = (\pi_{\pri},\alpha_{\pri})$ is the Shalika $Q$-refinement from that result.
	\end{itemize}
\end{conditions2}

By Proposition~\ref{lem:Q-regular criterion}, (C2) is automatic from (C2$'$). The $Q$-refined RACARs $\tilde\pi$ described in Theorems~\ref{thm:intro non-ord}, \ref{thm:intro shalika families} and \ref{thm:intro 3} of the introduction satisfy (C1-2$'$), hence (C1-2).

In general $\alpha_{\pri}$ is not $\pri$-integral. We define weight $\lambda$ integral normalisations
\begin{equation}
	U_{\pri}^\circ = \lambda(t_{\pri}) U_{\pri}, \hspace{12pt} \alpha_{\pri}^\circ = \lambda(t_{\pri})\alpha_{\pri}.
\end{equation}
We justify this in \S\ref{sec:slope-decomp}.  A $Q$-refinement $\tilde\pi_{\pri}$ is equivalent to a choice of $U_{\pri}^\circ$-eigenvalue $\alpha_{\pri}^\circ$ on $\pi_{\pri}^{J_{\pri}}$, and $\pi_{\pri}^{J_{\pri}}\lsem U_{\pri} - \alpha_{\pri}\rsem  = \pi_{\pri}^{J_{\pri}}\lsem U_{\pri}^\circ - \alpha_{\pri}^\circ\rsem.$
Occasionally we abuse notation and write $\tilde\pi_{\pri} = (\pi_{\pri},\alpha_{\pri}^\circ)$.

\subsection{The $p$-refined Hecke algebra}\label{sec:hecke outside S}
Let $\tilde\pi$ satisfy (C1-2), and let $K \subset G(\A_f)$ be an open compact subgroup with
\begin{equation}\label{eq:general K}
	K = \textstyle\prod_v K_v\ \text{ s.t.\ }K_v = \GL_{2n}(\cO_v)\text{ for }v \notin S\cup\{\pri|p\}, K_{\pri} = J_{\pri}\text{ for }\pri|p,\text{ and }\pi_f^K \neq 0.
\end{equation}
Recall $\cH'$ and $\psi_{\pi}$ (which implicitly are defined at level $K$) from \S\ref{sec:unramified H}.

\begin{definition} \label{def:hecke algebra} \label{def:gen eigenspace}
	Define $\cH = \cH'[U_{\pri}^\circ: \pri|p]$. Let $E = \Q(\tilde\pi,\eta)$ be the number field generated by the Hecke field of $\pi_f$, the rationality field of $\eta$, and $\alpha_{\pri}^\circ$ for $\pri|p$. The character $\psi_{\pi}$ extends to
	\[
	\psi_{\tilde\pi} : \cH\otimes  E \longrightarrow E
	\]
	sending $U_{\pri}^\circ$ to $\alpha_{\pri}^\circ$. Let $\mathfrak{m}_{\tilde\pi} \defeq \ker(\psi_{\tilde\pi})$. If $M$ is a finite dimensional vector space with an $\cH$-action, the localisation $M_{\m_{\tilde\pi}}$ is the generalised eigenspace 	at $\psi_{\tilde\pi}$, i.e.\ $M_{\mathfrak{m}_{\pi}}\lsem U_{\pri}^\circ - \alpha_{\pri}^\circ : \pri|p \rsem  \subset M_{\mathfrak{m}_{\pi}}.$
\end{definition}

\subsection{Automorphic cohomology classes and periods}
\label{sec:periods}
Recall in Remark~\ref{rem:choice shalika model} we fixed an intertwining $\cS_{\psi_f}^{\eta_f}$ of $\pi_f$. For $\epsilon \in \{\pm1\}^\Sigma$, composing $(\cS_{\psi_f}^{\eta_f})^{-1}$ and \eqref{eq:Theta 2} we obtain a $\cH$-equivariant isomorphism
\[
\Theta^{K,\epsilon} : \cS_{\psi_f}^{\eta_f}(\pi_f^K) \isorightarrow \hc{t}(S_K,\cV_{\lambda}^\vee(\C))^\epsilon_{\m_\pi};
\]
further composing with \eqref{eq:Theta 3}, we obtain a $p$-adic analogue
\begin{equation}\label{eq:cohomology class p-adic}
	\Theta_{i_p}^{K,\epsilon} : \cS_{\psi_f}^{\eta_f}(\pi_f^K) \isorightarrow \hc{t}(S_K,\sV_{\lambda}^\vee(\overline{\Q}_p))^\epsilon_{\m_\pi},
\end{equation}
which is again $\cH$-equivariant.

Finally we descend to rational coefficients. Recall the number field $E$ from Definition~\ref{def:gen eigenspace}. 
We have a natural action of $\mathrm{Aut}(\C)$ on $\cS_{\psi_f}^{\eta_f}(\pi_f)$ (see \cite[\S3.7]{GR2}), endowing it with an $E$-structure $\cS_{\psi_f}^{\eta_f}(\pi_f,E)$ by \cite[Lem.~3.8.1]{GR2}. We may (and do) take $W^{\mathrm{FJ}}_f$ to be an element of $\cS_{\psi_f}^{\eta_f}(\pi_f,E)$ (see \cite[Lem.~3.9.1]{GR2}). 
By \cite[Prop.\ 3.1]{Clo90}, \cite[Prop.~4.2.1]{GR2} and \cite[\S4.4]{JST}, there exist complex periods $\Omega_\pi^\epsilon$ 
 such that $\Theta^{K,\epsilon}\big/\Omega_\pi^\epsilon$
is $\mathrm{Aut}(\C)$-equivariant. In particular, if $L/\Qp$ is a finite extension containing $i_p(E)$, then 
\begin{equation}\label{eq:rational diagram}
	\xymatrix@C=6mm{ 
		\cS_{\psi_f}^{\eta_f}(\pi_f^K, E)  \ar[rr]^-{\Theta^{K,\epsilon}\big/\Omega_\pi^\epsilon}\ar@{^{(}->}[d] &&
		\hc{t}(S_K,\cV_{\lambda}^\vee(E))_{\m_\pi}^\epsilon \ar@{^{(}->}[rr]^-{\eqref{eq:Theta 3}}
		&& \hc{t}(S_K,\sV_\lambda^\vee(L))^\epsilon_{\m_\pi}\ar@{^{(}->}[d]\\
		\cS_{\psi_f}^{\eta_f}(\pi_f^K) \ar[rrrr]^-{\Theta^{K,\epsilon}_{i_p}\big/i_p(\Omega_\pi^\epsilon)}_-{\sim} &&&& \hc{t}(S_K,\sV_\lambda^\vee(\overline{\Q}_p))^\epsilon_{\m_\pi}
	}
\end{equation}
commutes, where the vertical arrows are the natural inclusions.

\medskip

Assume that $\tilde\pi$ satisfies Conditions \ref{cond:running assumptions}; we now produce specific cohomology classes attached to $\tilde\pi$. At each finite place $v$ of $F$, in \cite[\S6.5]{GR2} the authors define a (sufficiently small) open compact subgroup $K_v \subset \GL_{2n}(F_v)$ such that there exists a Friedberg--Jacquet test vector $W^{\mathrm{FJ}}_v \in \cS_{\psi_v}^{\eta_v}(\pi_v)^{K_v}$ as in \eqref{eq:jacquet-friedberg test vector}. 
As in \cite{DJR18}, we can (and do) take $K_v = \GL_{2n}(\cO_v)$ whenever $\pi_v$ is spherical, and define
\begin{equation}\label{eq:level group}
	K(\tilde\pi) \defeq \prod\limits_{\pri|p} \ J_{\pri} \cdot \prod\limits_{v\nmid p}\ K_v \subset G(\A_f).
\end{equation}
Note $K(\tilde\pi)$ satisfies \eqref{eq:general K}. For $\pri|p$, let $W_{\pri}$ be a generator of the line in \eqref{eq:Up refined line 2}, normalised so that $W_{\pri}(t_{\pri}^{-\delta_{\pri}}) = 1$. Write 
\[
W^{\mathrm{FJ}}_f = \otimes_{\pri|p}W_{\pri}\otimes_{v\nmid p\infty} W^{\mathrm{FJ}}_v \in \cS_{\psi_f}^{\eta_f}(\pi_f)^{K(\tilde\pi)}.
\]

\begin{definition}\label{def:phi}
	Let  
	\[
	\phi_{\tilde\pi}^{\epsilon} = \Theta^{K(\tilde\pi),\epsilon}_{i_p}(W_f^{\mathrm{FJ}})\big/i_p(\Omega_\pi^\epsilon) \in \hc{t}(S_{K(\tilde\pi)},\sV_\lambda^\vee(L))^\epsilon_{\m_\pi}.
	\]
\end{definition}
This is precisely the class defined in \cite[\S4.3.1]{DJR18}, where the scaling by $\Omega_\pi^\epsilon$ is implicit. Note that by construction, the class $\phi_{\tilde\pi}^\epsilon$ is a $U_{\pri}$-eigenclass with eigenvalue $\alpha_{\pri}$ for all $\pri|p$ (see also \cite[Lem.~3.6]{DJR18}), thus lies in the $p$-refined generalised eigenspace $\hc{t}(S_K,\sV_\lambda^\vee(L))^\epsilon\locpi$.

\section{Overconvergent cohomology and classicality}\label{sec:overconvergent cohomology}

We recall the $Q$-parahoric overconvergent cohomology and non-$Q$-critical slope conditions of \cite{BW20}, while 
making the theory  explicit in our setting.

\subsection{Weight spaces}\label{sec:weight spaces}

Recall $X^*(T),$ $X^*_0(T),$ $X^*(H)$ and $X^*_0(H)$ from \S\ref{sec:algebraic weights}.
\begin{definition}[Weights for $T$]
	The \emph{weight space} $\sW^{G}$ for $G$ is the rigid analytic space whose $L$-points, for $L \subset \C_p$ any sufficiently large extension of $\Q_p$, are given by
	\[ 
	\sW^{G}(L) = \Hom_{\mathrm{cont}}(T(\Z_p),L^\times).
	\]
	This space contains the set $X_+^*(T)$ of dominant integral weights in a natural way. We call any element of this subspace an \emph{algebraic weight}.		A weight $\lambda \in \sW^G$ decomposes as $\lambda = (\lambda_1,...,\lambda_{2n})$, where each $\lambda_i$ is a character of $(\OFp)^\times$. We see that $\sW^G$ has dimension $2dn$.
\end{definition}

\begin{definition}\label{def:weights for G}
	Let $\sW_0^{G}$ be the $(dn+1)$-dimensional \emph{pure weight space}, that is the Zariski closure of the pure, dominant, integral weights $X_0^\ast(T)$ in $\sW^{G}$. We have
	\begin{align*}		
		\sW_{0}^{G}(L):= \{ \lambda \in  \sW^{G}(L)\,  \ | \  \, & \exists\,  \sw_\lambda  \in  \Hom_{\mathrm{cont}}(\Z_p^{\times}, L^{\times}) \text{ s.t.\ }\\
		&\lambda_{i} \cdot \lambda_{2n+1-i} = \sw_\lambda \circ \mathrm{N}_{F/\Q} \ \forall \  1\leqslant i \leqslant n\}.
	\end{align*}
\end{definition}

\begin{definition}[Weights for $Q$] \label{def:weights for H}	 
	Define $\sW^Q \subset \sW^G$ to be the rigid subspace whose $L$-points are continuous characters that factor through a character $H(\Zp) \to L^\times$.
	Let $\sW^Q_0 \defeq \sW^Q\cap\sW^G_0$ be the pure subspace. These are the Zariski closures of $X^*(H)$ and $X^*_0(H)$ in $\sW^G$.
\end{definition}
The space $\sW^Q$ is the subspace of $\sW^G$ where 
	$\lambda_1 = \cdots = \lambda_n (= \nu_1, \text{ say})$ and $\lambda_{n+1} = \cdots =\lambda_{2n} (= \nu_2, \text{ say}).$ 
The association $\lambda \mapsto (\nu_1,\nu_2)$ identifies $\sW^Q$ isomorphically with the $2d$-dimensional (Hilbert) weight space of $\mathrm{Res}_{F/\Q}\GL_2$; the $(d+1)$-dimensional pure subspace $\sW^Q_0$ is  canonically identified with the pure Hilbert weights.

\begin{definition}\label{def:Wlam}
	For $\lambda_\pi \in X_0^\ast(T)$ a pure, dominant, algebraic `base' weight (implicitly, the weight of an automorphic representation $\pi$) let 
	\[
	\sW^Q_{\lambda_\pi} \defeq \{\lambda \in \sW_0^G \ | \ \lambda\lambda_\pi^{-1} \in \sW_0^Q\} = \lambda_\pi \sW_0^Q \subset \sW_0^G.
	\]
\end{definition}

\begin{remark} \label{rem:weights level}
	To get non-trivial weight $\lambda$ local systems on $S_K$ we need $\lambda(Z(\Q)\cap K)=1$. If $\pi$ is a RACAR of weight $\lambda_\pi$,  and $K$ satisfies \eqref{eq:general K} for $\pi$, this condition is satisfied by existence of an automorphic form fixed by $K$. As $\lambda(Z(\Q)\cap K) \subset \{\pm1\}$ is discrete for any weight $\lambda$, this must hold in any sufficiently small affinoid neighbourhood $\Omega \subset \sW_{\lambda_\pi}^Q$  of $\lambda_\pi$; and we will asume this for all $\Omega$ we consider henceforth.
\end{remark}

\subsection{Parahoric distribution modules}\label{sec:parabolic distributions} 
Recall for $L/\Qp$ sufficiently large, $V_\lambda(L)$ is the algebraic induction $\mathrm{Ind}_{B(\Zp)}^{G(\Zp)}\lambda$. Typically overconvergent cohomology coefficients are  dual to the locally analytic induction $\cA_\lambda^B$ of $\lambda$ to the Iwahori subgroup. We define $Q$-parahoric analogues. 

\label{sec:A_s}
If $X \subset \Qp^r$ is compact and $R$ is a $\Qp$-Banach algebra, let $\cA(X,R)$ be the  space of locally analytic functions $X \to R$, and $\cD(X,R)$ be its topological $R$-dual. If $W$ is a finite Banach $R$-module, then we say a function $f : X \to W$ is locally analytic if it is an element of $\cA(X,R)\otimes_R W$, and write $\cA(X,W)$ for the  space of such functions. (These definitions are explained in detail in \cite[\S3.2.2]{BW20}).

\subsubsection{Parahoric algebraic induction modules}
As motivation, we first give a parahoric description of $V_\lambda$. Let $G_n = \mathrm{Res}_{\cO_F/\Z}\GL_n$ and recall $H = G_n \times G_n$. Considering $\lambda \in X_0^*(T)$ as a weight for $H$, the algebraic representation of $H$ of  highest weight $\lambda$ is $V_{\lambda}^H(L) = V_{\lambda'}^{G_n}(L) \otimes V_{\lambda''}^{G_n}(L),$ 
where $\lambda' = (\lambda_1,...,\lambda_n)$ and $\lambda'' = (\lambda_{n+1},...,\lambda_{2n})$. 

The action of $H(\Zp)$ on $V_\lambda^H(L)$ yields a homomorphism
\begin{equation}\label{eq:langle-rangle}
	\langle \cdot \rangle_\lambda : H(\Zp) \to \mathrm{Aut}(V_{\lambda}^H(L)).
\end{equation} 

We say a function $\cF : G(\Zp) \to V_\lambda^H(L)$ is \emph{algebraic} if it is an element of $L[G] \otimes_L V_\lambda^H(L)$. Let
\begin{align}\label{eq:parahoric alg transform}
	\mathrm{Ind}_{Q^{-}(\Zp)}^{G(\Zp)} V_\lambda^H(L) \defeq \{\cF :\ &G(\Zp) \to V_\lambda^H(L)\ |\  \cF \text{ algebraic}, \cF(n_Q^- h g) = \langle h \rangle_\lambda \cF(g) \\  
	&\hspace{70pt}  \forall n_Q^- \in N_Q^-(\Zp), h \in H(\Zp), g \in G(\Zp)\}.\notag
\end{align}
This has a $G(\Zp)$ action by $(\gamma \cdot \cF)(g) = \cF(g\gamma)$.

\begin{lemma}\label{lem:induction transitive}
	\cite[\S I.3.5]{Jantzen}. There is a canonical isomorphism of $G(\Zp)$-representations
	\[
 \mathrm{Ind}_{Q^-(\Zp)}^{G(\Zp)} V_\lambda^H(L) \isorightarrow V_\lambda(L), \qquad \cF \mapsto [g \mapsto \cF(g)(\mathrm{id}_H)].
	\]
\end{lemma}

\subsubsection{Parahoric analytic induction modules}\label{sec:parabolic functions}
To define $Q$-parahoric analogues of $\cA_\lambda^B$, in Lemma \ref{lem:induction transitive} we replace the algebraic induction from $Q^-(\Zp)$ with locally analytic induction. Let $J_p \defeq \prod_{\pri|p} J_{\pri}$ denote the parahoric subgroup for $Q$, as defined in \eqref{eq:parahoric}. Let $\cA_{\lambda}^Q(L)$ denote the space of functions $f \in \cA(J_p,V_\lambda^H(L))$ such that
\[
f(n^-h g) = \langle h \rangle_\lambda f(g)\text{ for all }n^- \in N_Q^-(\Zp)\cap J_p, h \in H(\Zp),\text{ and }g \in J_p.
\]
Again restriction identifies $\cA_\lambda^Q(L)$ with $\cA(N_Q(\Zp),V_\lambda^H(L))$. Let $\cD_\lambda^Q(L)$ be the topological dual; this is a compact Fr\'{e}chet space \cite[\S3.2.3]{BW20}.

\begin{remark}
	Note that any $n \in N(\Zp)$ can be uniquely written as a product
	\[\def\arraystretch{0.4}
	n = h\cdot n_Q =		\footnotesize	\left(\begin{array}{cc|cc}
		1 & x_{ij}& &\\[-4pt]
		&\ddots &  & \\\hline
		&  & \ddots & 
		\text{\raisebox{6pt}{$x_{k\ell}$}}\\
		&  &  & 1
	\end{array}\right)	\left(\begin{array}{cc|cc}
		1 & & y_{1,n+1} & \cdots\\[-4pt]
		&\ddots & \cdots & \text{\raisebox{3pt}{$y_{n,2n}$}} \\\hline
		&  &  \ddots& 
		\\
		&  &  & 1
	\end{array}\right)\normalsize,
	\]
	of $h \in H(\Zp)$ and $n_Q \in N_Q(\Zp)$, where $x_{ij}, y_{ij} \in \OFp$. Then for any  $f \in \cA_\lambda^B$ the restriction $f|_{N(\Zp)}$ is locally analytic   in the $x_{ij,\sigma}$ and $y_{ij,\sigma}$.  On the other hand,  for any  $f \in \cA_{\lambda}^Q$ the 
	restriction $f|_{N_Q(\Zp)}$ is a  locally analytic function in the $y_{ij,\sigma}$
	with coefficients in $V_\lambda^H$. As $V_\lambda^H$ can be realised as a space of polynomials in the $x_{ij,\sigma}$, one sees that   $\cA_{\lambda}^Q$ is an intermediate space between  $V_{\lambda}$ and $\cA_{\lambda}^B$. 
	A precise description of the natural inclusion $\cA_{\lambda}^Q \subset \cA_{\lambda}^B$ is given in \cite[Props.\ 4.9, 4.11]{BW20}. 
\end{remark}

\begin{notation}
	Since throughout we will only be interested in $Q$-parahoric distributions, we will henceforth suppress superscript $Q$'s and write $\cA_{\lambda} \defeq \cA_{\lambda}^Q$ and $\cD_{\lambda} \defeq \cD_{\lambda}^Q.$
\end{notation}

\subsubsection{Distributions in families}\label{sec:distributions in families}
Let $\Omega \subset \Wlam$ be an affinoid, for a fixed $\lambda_\pi \in X_0^\ast(T)$. If $\lambda \in \Omega$ is algebraic, then by definition $\lambda\lambda_\pi^{-1} \in \sW^Q_0$ and there is an isomorphism
\begin{equation}\label{eq:changing lambda}
	V^H_{\lambda} = V^H_{\lambda_\pi} \otimes \lambda\lambda_\pi^{-1}
\end{equation}
of $H(\Zp)$-modules \cite[Lem.~3.8]{BW20}. In particular, the underlying spaces of $V^H_{\lambda}$ and $V^H_{\lambda_\pi}$ are the same, allowing analytic variation of the representation $V^H_\lambda$ as $\lambda$ varies in an affinoid of $\Wlam$.  

The space $\Omega_0 \defeq \{\lambda\lambda_\pi^{-1} : \lambda \in \Omega\}$ is an affinoid in $\sW^Q_0 \subset \sW^G$.

\begin{lemma}
	The character $\chi_{\Omega_0} : H(\Zp) \longrightarrow \cO_{\Omega_0}^\times$ given by $h \mapsto [\lambda_0 \mapsto \lambda_0(h)]$ is locally analytic. 
\end{lemma}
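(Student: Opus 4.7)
The plan is to reduce local analyticity of $\chi_{\Omega_0}$ to the classical fact that the universal continuous character of a compact abelian $p$-adic Lie group is locally analytic on any affinoid subset of its weight space.

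First, by the definition of $\sW^Q$ in Definition~\ref{def:weights for H}, every $\lambda_0 \in \Omega_0$ is a continuous character of $T(\Zp)$ that factors through $H(\Zp)$, and hence through the abelianization $(\OFp^\times)^2$ via the two determinants on the $\GL_n$-factors of $H = G_n \times G_n$. Writing $\lambda_0 \leftrightarrow (\nu_1,\nu_2)$ with $\nu_i \in \mathrm{Hom}_{\mathrm{cont}}(\OFp^\times, L^\times)$, we have
\[
\chi_{\Omega_0}(h_1,h_2)(\lambda_0) = \nu_1(\det h_1)\cdot \nu_2(\det h_2).
\]
Since the two determinants $H(\Zp) \to \OFp^\times$ are algebraic (hence locally analytic), and local analyticity is preserved by composition and pointwise products, it suffices to show that for $i=1,2$ the universal character $\chi_i \colon \OFp^\times \to \cO_{\Omega_{0,i}}^\times$, $u \mapsto [\lambda_0 \mapsto \nu_i(u)]$, is locally analytic, where $\Omega_{0,i}$ is the image of $\Omega_0$ under the $i$-th projection in the identification $\sW^Q \cong$ (Hilbert weight space of $\mathrm{Res}_{F/\Q}\GL_2$) recalled in \S\ref{sec:weight spaces}.

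Second, I would prove this by the standard $\exp/\log$ construction. Choose $N$ large enough that $U_N \defeq 1+p^N\OFp$ is a pro-$p$ subgroup of $\OFp^\times$ on which $\log$ converges to an isomorphism of topological $\Zp$-modules $U_N \isorightarrow p^N\OFp \cong \Zp^d$. The universal property of the Hilbert weight space (equivalently, Amice's description of its $L$-points via characters of the completed group algebra $\Zp\lsem U_N\rsem$) yields a tuple $c = (c_1, \dots, c_d) \in \cO_{\Omega_{0,i}}^d$ such that every $\nu \in \Omega_{0,i}$ satisfies
\[
\nu(u) = \exp\bigl(\textstyle\sum_j c_j(\nu) \log_j(u)\bigr) \quad \text{for all } u \in U_N,
\]
where $\log_1,\dots,\log_d$ are the coordinates on $p^N\OFp \cong \Zp^d$ determined by a choice of $\Zp$-basis. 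Enlarging $N$ so that $|c_j \log_j u|_p < p^{-1/(p-1)}$ uniformly over $\Omega_{0,i} \times U_N$ -- possible because $\Omega_{0,i}$ is affinoid and the $c_j$ are power-bounded in $\cO_{\Omega_{0,i}}$ -- the exponential series converges termwise and defines a locally analytic function $\chi_i|_{U_N} \colon U_N \to \cO_{\Omega_{0,i}}^\times$.

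Third, I extend to all of $\OFp^\times$ by multiplicativity: on a coset $u_0 U_N$ one has $\chi_i(u_0 u) = \chi_i(u_0) \cdot \chi_i(u)$, i.e.\ the product of a fixed element of $\cO_{\Omega_{0,i}}^\times$ and the locally analytic function of the previous step, so $\chi_i$ is locally analytic on each coset. Since $\OFp^\times / U_N$ is finite, $\chi_i$ is locally analytic on all of $\OFp^\times$, and combining with Step~1 concludes the argument. The only genuine technical point is the uniform choice of $N$ over the affinoid $\Omega_0$, which is automatic as noted; no real obstacle arises.
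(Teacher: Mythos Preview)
Your proposal is correct and is essentially the same approach as the paper's: the paper simply cites \cite[\S3.2.6]{BW20} and \cite[Prop.~8.3]{Buz07}, and your argument (reduce via determinants to the universal character on $\OFp^\times$, then use the $\exp/\log$ construction on a deep enough pro-$p$ subgroup, extending by multiplicativity) is exactly an unpacking of Buzzard's result in this setting. One tiny remark: you only need the $c_j$ to have finite sup-norm on the affinoid (automatic), not that they be power-bounded; this does not affect the argument since enlarging $N$ still forces $|c_j\log_j(u)|$ below the radius of convergence of $\exp$.
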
 
\begin{proof}
	This is proved in \cite[\S3.2.6]{BW20} using \cite[Prop.~8.3]{Buz07}.
\end{proof}

As $\chi_{\Omega_0}$ is a character of $H(\Zp)$, it factors through its abelianisation $H(\Zp) \to (\OFp^\times)^2$,
so there exists a character $(\chi_{\Omega_0}^1, \chi_{\Omega_0}^2)$ of $(\OFp^\times)^2$ such that 
\[
\chi_{\Omega_0}(h_1,h_2) = (\chi_{\Omega_0}^1 \circ \det(h_1)) \cdot (\chi_{\Omega_0}^2 \circ \det(h_2)).
\]  
As $\Omega_0$ is a subspace of the pure weights, there exists 
\disp{
\sw_{\Omega_0} : \Zp^\times \to \cO_{\Omega_0}^\times
} such that 
\disp{
\chi_{\Omega_0}^1(x) \cdot \chi_{\Omega_0}^2(x) = \sw_{\Omega_0} \circ \mathrm{N}_{F/\Q}(x)
}
for all $x \in \OFp^\times$, and hence
\begin{equation}\label{eq:pure family}
	\chi_{\Omega_0}(h,h) = 	\sw_{\Omega_0} \circ \mathrm{N}_{F/\Q}(\det(h)).
\end{equation}
If $\lambda_0 \in \Omega_0$, then evaluation at $\lambda_0$ sends $\sw_{\Omega_0}$ to $\sw_{\lambda_0}$, as defined in Definition \ref{def:weights for G}, so $\sw_{\Omega_0}$ interpolates purity weights over $\Omega_0$.

Now define $V^H_{\Omega} \defeq V^H_{\lambda_\pi}(L) \otimes_L \cO_{\Omega_0}$, a free $\cO_{\Omega_0}$-module of finite rank, and a homomorphism
\begin{align}\label{eq:action V_Omega}
	\langle \cdot \rangle_{\Omega} : H(\Zp) &\longrightarrow \mathrm{Aut}\big(V^H_{\lambda_\pi}(L)\big) \otimes \cO_{\Omega_0}^\times \subset \mathrm{Aut}\big(V^H_{\Omega}\big), \\
	h & \longmapsto \langle h \rangle_{\lambda_\pi} \otimes \chi_{\Omega_0}(h).\notag
\end{align}
This makes $V^H_{\Omega}$ into an $H(\Zp)$-representation. 

\begin{definition}\label{def:sp lambda}
	Let $\lambda \in \Omega(L)$, and let $\lambda_0 = \lambda\lambda_\pi^{-1} \in \Omega_0(L)$. Define a map $\mathrm{sp}_{\lambda_0} : \cO_{\Omega_0} \rightarrow L$ by evaluating functions at $\lambda_0$. This induces a map
	\begin{equation}\label{eq:parahoric transform}
		\mathrm{sp}_\lambda : V^H_{\Omega} \labelrightarrow{\mathrm{id}\otimes \mathrm{sp}_{\lambda_0}} V^H_{\lambda_\pi}(L) \otimes \lambda\lambda_\pi^{-1} \labelisorightarrow{\eqref{eq:changing lambda}} V^H_{\lambda}(L).
	\end{equation}
	Since $\mathrm{sp}_{\lambda_0}\circ \chi_{\Omega_0} = \lambda_0$ by \eqref{eq:changing lambda}, this map is $H(\Zp)$-equivariant. In particular, $V^H_{\Omega}$ interpolates the representations $V^H_{\lambda}$ as $\lambda$ varies in $\Omega$ (where if $\lambda$ is non-algebraic, $V_{\lambda}^H \defeq V_{\{\lambda\}}^H$).
\end{definition}

Choosing $\lambda_\pi$ fixes an isomorphism $\Omega \isorightarrow \Omega_0$, $\lambda \mapsto \lambda_\pi^{-1}\lambda$. This induces $\cO_\Omega \isorightarrow \cO_{\Omega_0}$, compatible with specialisation maps. Under this we may define characters
\begin{align}
	\chi_\Omega &\defeq \lambda_\pi \cdot \chi_{\Omega_0} : H(\Zp) \to \cO_\Omega^\times, \notag\\
	\sw_\Omega &\defeq \sw_{\lambda_\pi}\cdot \sw_{\Omega_0} : \Zp^\times \to \cO_\Omega^\times \label{eq:sw_Omega}
\end{align}
such that evaluation at $\lambda \in \Omega$ sends $\chi_\Omega$ to $\lambda$ and $\sw_\Omega$ to $\sw_\lambda$.   Henceforth we work only with $\Omega$, suppressing $\Omega_0$, and implicitly any transfer of structure is with respect to this identification.

\begin{definition}
	Define $\cA_{\Omega}$ to be the space of functions $f \in \cA(J_p,V_\Omega^H)$
	such that
	\begin{equation}\label{eq:parahoric transform A}
		f(n^-h g) = \langle h \rangle_\Omega f(g)\text{ for all }n^- \in N_Q^-(\Zp)\cap J_p, h \in H(\Zp),\text{ and }g \in J_p.
	\end{equation}
	Define $\cD_{\Omega} \defeq \Hom_{\mathrm{cont}}(\cA_{\Omega}, \cO_\Omega)$.  This is a compact Fr\'echet $\cO_\Omega$-module (see \cite[Lem.~3.16]{BW20}).
	
\end{definition}

\begin{remark}\label{rem:alternative description}
	As in \cite[Rem.~3.18]{BW20}, if $\Omega' \subset \Omega$ is a closed affinoid, then 
	\disp{
		\cD_{\Omega} \otimes_{\cO_{\Omega}}\cO_{\Omega'} \cong \cD_{\Omega'}.
	}
	As a special case, suppose $\Omega' = \{\lambda\}$ is a single point, whence $\cO_{\Omega'} = L$ is a field, of the form $\cO_\Omega/\m_\lambda$ for $\m_\lambda \subset \cO_\Omega$ the maximal ideal attached to $\lambda$. The map $\mathrm{sp}_\lambda : \cO_\Omega \to L$ is reduction modulo $\m_\lambda$. Then we see
	\[
	\mathrm{sp}_\lambda(\cD_\Omega) \defeq \cD_\Omega \otimes_{\cO_\Omega} \cO_\Omega/\m_\lambda \cong \cD_\lambda(L).
	\]
	In particular, $\cD_\Omega$ interpolates $\cD_\lambda$ as $\lambda$ varies in $\Omega$.
\end{remark}

\subsection{The action of $U_p^\circ$ and slope decompositions}\label{sec:slope-decomp}\label{sec:slope-decomp 2}
Fix any open compact subgroup $K \subset G(\A_f)$ such that $K_p\subset J_p$ (e.g.\  $K=K(\tilde\pi)$). 
Let $\Omega$ be an affinoid in $\sW^Q_{\lambda_\pi}$; we allow $\Omega = \{\lambda\}$ a single weight, in which case $\cO_\Omega=L$. 
We have a natural left action of $J_p$ on $\cA_{\Omega}$ by 
\[
(k * f)(g) = f(gk), \qquad k \in J_p, f \in \cA_\Omega, g \in J_p,
\] inducing a dual left action of $J_p$ on $\cD_\Omega$ by $(k * \mu)(f) \defeq \mu(k^{-1}* f)$. Thus $\cD_{\Omega}$ is a left $K$-module (via projection to $K_p$), giving a local system  $\sD_\Omega$ on the space $S_K$ as in \S\ref{sec:non-arch ls}.

Recall $t_{\pri} = \iota(\varpi_{\pri}I_n,I_n)$. Note 
\disp{
t_{\pri}N_Q(\Zp)t_{\pri}^{-1} \subset N_Q(\Zp).
}
For any $f \in \cA_\Omega$, define a function 
\[
t_{\pri}^{-1} * f : N_Q(\Zp) \to V_{\Omega}^H
\]
sending $n \in N_Q(\Zp)$ to $f(t_{\pri}n t_{\pri}^{-1})$. As $H(\Zp)$ commutes with $t_{\pri}$, using  \eqref{eq:parahoric transform A} and parahoric decomposition,  $t_{\pri}^{-1} * f$ extends to a unique function in $\cA_\Omega$. 

Let $\Delta_p \subset G(\Q_p)$ be the semigroup generated by $J_p$ and $t_{\pri}$ for $\pri| p$. 
One checks  (e.g. as in \cite[\S3.1.3]{Urb11}) that  the actions of $J_p$ and $t_{\pri}^{- 1}$ on $\cA_\Omega$ extend to a 
left action of $\Delta_p^{-1}$ on $\cA_\Omega$. 
We get a dual left action of $\Delta_p$ on  $\cD_\Omega$ by $(\delta * \mu)(f) = \mu(\delta^{-1} * f)$. We then equip the cohomology groups $\hc{i}(S_K,\sD_\Omega)$ with an $\cO_\Omega$-linear action of the Hecke operators $U_{\pri}^\circ \defeq [K_{\pri} t_{\pri} K_{\pri}]$ in the usual way.

\begin{remark}\label{rem:star vs dot}
	To justify the notation, note the $*$-action on $\cD_\lambda(L)$ preserves its natural integral subspace (see \cite[\S3.2.5]{BW20}). Also, the $*$-action on $\cA_{\lambda}$ preserves the subspace $V_{\lambda}$, so dualising we see $\cD_\lambda(L)$ admits $V_\lambda^\vee(L)$ as a $*$-stable quotient. We thus obtain an induced $*$-action on $V_{\lambda}^\vee(L)$. 
	
	On $V_{\lambda}^\vee(L)$, we also have the natural algebraic $\cdot$-action of $G(\Qp)$ from \S\ref{sec:non-arch ls}. The $*$- and $\cdot$-actions of $\Delta_p$ on $V_{\lambda}^{\vee}(L)$ coincide for $J_p \subset \Delta_p$, so give the same $p$-adic local system $\sV_{\lambda}^\vee(L)$. However, the actions of $t_{\pri}$ are different; analogously to \cite[Rem.\ 3.23]{BW20} one computes that
	\begin{equation}\label{eq:* vs dot}
		t_{\pri} * \mu = \lambda(t_{\pri}) \cdot \big(t_{\pri}\cdot \mu\big),\ \ \ \ \mu \in V_{\lambda}^\vee(L).
	\end{equation} 
	The two actions induce Hecke operators $U_{\pri}^*$ and $U_{\pri}^\cdot$ on the classical cohomology, related by $U_{\pri}^* = \lambda(t_{\pri}) U_{\pri}^\cdot$.	The isomorphism 
	\eqref{eq:cohomology class p-adic} is equivariant for the natural $U_{\pri}$-operator on $\pi_f^K$ and the operator $U_{\pri}^\cdot$ on cohomology, so the class $\phi_{\tilde\pi}^\epsilon$ from Definition~\ref{def:phi} is an eigenclass with $U_{\pri}^\cdot$-eigenvalue $\alpha_{\pri}$. However $U_{\pri}^\cdot$ does not preserve integrality, whilst $U_{\pri}^*$ does. Because of this it is standard to write		$U_{\pri} \defeq U_{\pri}^\cdot$ for the usual automorphic Hecke operator, and $U_{\pri}^\circ \defeq U_{\pri}^*$ for its integral normalisation. 
	Thus $\phi_{\tilde\pi}^\epsilon$ is a $U_{\pri}^\circ$-eigenclass with eigenvalue $\alpha_{\pri}^\circ \defeq \lambda(t_{\pri})\alpha_{\pri}$. As $U_{\pri}^\circ$ preserves integrality, $v_p(\alpha_{\pri}^\circ) \geqslant 0$.
\end{remark}

Let $t_p = \iota(pI_n, I_n)$. We have $t_p \in T_Q^{++}$ in the notation\footnote{As distributions in \cite{BW20} are right-modules, all conventions are opposite to those here: see \cite[Rem.~4.20]{BW20}.} of \cite[\S2.5]{BW20}, and (via the $*$-action) we get a $Q$-controlling operator $U_p^\circ \defeq [K_p t_p K_p]$ on the cohomology.   By \S3.5 \emph{ibid.}, for any $h \in \Q_{\geqslant 0}$, up to shrinking $\Omega$ the $\cO_{\Omega}$-module $\hc{\bullet}(S_K,\sD_{\Omega})$ admits a slope $\leqslant h$ decomposition with respect to $U_p^\circ$  (see \cite[Def.~2.3.1]{Han17}). 
We let  $\hc{\bullet}(S_K,\sD_{\Omega})^{\leqslant h}$ denote the subspace of elements of slope at most $h$, and note that it is an $\cO_{\Omega}$-module of finite type.

\subsection{Non-critical slope conditions for $Q$}\label{sec:non-critical}
Let $\lambda \in X_0^*(T)$ be a pure dominant integral weight and $K$ as in \S\ref{sec:slope-decomp 2}. The natural inclusion of $V_\lambda(L) \subset \cA_\lambda(L)$ induces dually a surjection $\cD_\lambda(L) \longrightarrow V_\lambda^\vee(L)$, which is equivariant for the $*$-actions of $\Delta_p$. This induces a map
\begin{equation}\label{eqn:specialisation}
	r_\lambda : \hc{\bullet}(S_K, \sD_\lambda(L)) \longrightarrow \hc{\bullet}(S_K,\sV_\lambda^\vee(L)),
\end{equation}
equivariant for the $*$-actions of $\Delta_p$ on both sides; hence by Remark~\ref{rem:star vs dot}, it is equivariant for the actions of $U_{\pri}^\circ$ on both sides.

Let $\tilde\pi$ be a $Q$-refined RACAR of $G(\A)$ of weight $\lambda$ and $h \gg 0$. As $\hc{\bullet}(S_K,\sD_\lambda(L))^{\leqslant h}$ is a finite dimensional vector space, the localisation $\hc{\bullet}(S_K,\sD_\lambda(L))^{\leqslant h}\locpi$ is the generalised eigenspace in $\hc{\bullet}(S_K,\sD_\lambda(L))$ where the Hecke operators act with the same eigenvalues as on $\tilde\pi$ (see \S\ref{sec:hecke outside S}). Abusing notation, we drop the $\leqslant h$ and just write $\hc{\bullet}(S_K,\sD_\lambda(L))\locpi$ for this generalised eigenspace.

\begin{definition}\label{def:non-Q-critical}
	Let $\tilde\pi$ be a $Q$-refined RACAR of $G(\A)$ of weight $\lambda$. 
	We say $\tilde\pi$ is \emph{non-$Q$-critical (at level $K$)} if the restriction of $r_\lambda$  to the generalised eigenspaces 
	\begin{align*}
		r_\lambda : \hc{\bullet}(S_{K}, \sD_\lambda(L))\locpi  \isorightarrow&\  \hc{\bullet}(S_{K},\sV_\lambda^\vee(L))\locpi
	\end{align*}
	is an isomorphism.  If $K$ is clear from the context we will not specify it.
	
	We say $\tilde\pi$ is \emph{strongly non-$Q$-critical} if this is true for all $K$ satisfying \eqref{eq:general K}, and also with $\hc{\bullet}$ replaced with $\h^\bullet$ (i.e., if $\tilde\pi$ is non-$Q$-critical for $\h^\bullet$ \emph{and} for $\hc{\bullet}$ as in \cite[Rem.~4.6]{BW20}). 
\end{definition}

Recall $\Sigma=\coprod_{\pri\mid p} \Sigma(\pri)$ from \S\ref{sec:notation}. For $\pri|p$, let  $e_{\pri}$ be the ramification degree of $\pri|p$.

\begin{definition}\label{def:non-critical slope}
	For $\pri|p$, we say that $\tilde\pi_\pri= (\pi_\pri,\alpha_{\pri})$ has \emph{non-$Q$-critical slope}  if
	\[	e_\pri\cdot  v_p\big(\alpha_{\pri}^\circ\big) < \mathrm{min}_{\sigma \in \Sigma(\pri)}(1 + \lambda_{\sigma,n} - \lambda_{\sigma, n+1}),
	\]
	where $\alpha_{\pri}^\circ = \lambda(t_{\pri})\alpha_{\pri}$. If this holds at all $\pri|p$, we say that $\tilde\pi$ has \emph{non-$Q$-critical slope}. 
\end{definition}

\begin{theorem}[Classicality] \label{thm:control}
	If $\tilde\pi$ has non-$Q$-critical slope, then it is strongly non-$Q$-critical.
\end{theorem}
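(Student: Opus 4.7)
The plan is to reduce the statement to a slope estimate on the kernel of the specialisation map, in the spirit of the classicality theorems of Stevens, Ash--Stevens and Urban, here in the $Q$-parahoric setting of \cite{BW20}.

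First, dualising the inclusion $V_\lambda(L) \hookrightarrow \cA_\lambda(L)$ yields a short exact sequence of $\Delta_p$-modules
\[
0 \longrightarrow N_\lambda \longrightarrow \cD_\lambda(L) \xrightarrow{\ r_\lambda\ } V_\lambda^\vee(L) \longrightarrow 0,
\]
which, for any level $K$ satisfying \eqref{eq:general K} and any $? \in \{\varnothing, c\}$, induces a long exact sequence
\[
\cdots \longrightarrow \h^i_?(S_K, \sN_\lambda) \longrightarrow \h^i_?(S_K, \sD_\lambda) \xrightarrow{\ r_\lambda\ } \h^i_?(S_K, \sV_\lambda^\vee) \longrightarrow \h^{i+1}_?(S_K, \sN_\lambda) \longrightarrow \cdots,
\]
equivariant for the $*$-action of each $U_\pri^\circ$. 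It therefore suffices to show that, under the non-$Q$-critical slope hypothesis, the localisations $\h^i_?(S_K, \sN_\lambda)\locpi$ vanish for all $i$, for both $?\in\{\varnothing,c\}$, and for all admissible $K$.

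Second, the central input is the following slope estimate: at each $\pri | p$, every $U_\pri^\circ$-eigenvalue on $\h^\bullet_?(S_K, \sN_\lambda)$ has $p$-adic valuation at least $\min_{\sigma \in \Sigma(\pri)}(1+\lambda_{\sigma,n} - \lambda_{\sigma,n+1})/e_\pri$. The underlying mechanism is a $Q$-parabolic BGG-type filtration of $N_\lambda$ whose graded pieces are locally analytic $Q$-parahoric inductions indexed by the non-trivial minimal-length representatives $w$ in $W_H \backslash W_G$. Each such $w$ involves a simple reflection crossing the $(n,n{+}1)$-block (the unique simple root of $G$ not in $H$); on the corresponding graded piece, the operator $U_\pri^\circ$ acts by an endomorphism of $\varpi_\pri$-adic valuation at least $\min_\sigma(1+\lambda_{\sigma,n}-\lambda_{\sigma,n+1})$, because of the twist by $\lambda^\vee(t_\pri)$ in the $*$-action (cf.\ \eqref{eq:* vs dot}). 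This slope estimate at the level of distribution coefficients is essentially the content of \cite{BW20}; the only work here is to transport it through the long exact sequence above.

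Third, the non-$Q$-critical slope hypothesis is the strict inequality $e_\pri \cdot v_p(\alpha_\pri^\circ) < \min_\sigma(1+\lambda_{\sigma,n} - \lambda_{\sigma,n+1})$ at each $\pri | p$. Hence the slope of $\tilde\pi$ at each $\pri$ is strictly below the lower bound on $\h^\bullet_?(S_K,\sN_\lambda)$, so the $\tilde\pi$-generalised eigenspace in the cohomology of $\sN_\lambda$ vanishes and $r_\lambda$ induces an isomorphism on $\tilde\pi$-generalised eigenspaces. Because both the slope bound and the long exact sequence are uniform in the level $K$ (subject only to $K_p \subset J_p$) and insensitive to $? \in \{\varnothing, c\}$, we conclude that $\tilde\pi$ is strongly non-$Q$-critical in the sense of Definition~\ref{def:non-Q-critical}.

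The entire difficulty lies in the slope estimate of the second step, namely the construction of a $Q$-parabolic BGG-type resolution of $V_\lambda^\vee$ inside $\cD_\lambda$ and the explicit valuation computation of $U_\pri^\circ$ on each non-trivial graded piece. This is the technical heart of parahoric overconvergent cohomology, carried out in \cite{BW20}; the payoff of the parahoric formulation, emphasised in the introduction, is that only the single Hecke operator $U_\pri^\circ$ per prime above $p$ must be tracked, rather than the full tuple $(U_{\pri,i}^\circ)_{i=1}^{2n-1}$ required at Iwahori level, which is precisely what yields the sharp bound appearing in Definition~\ref{def:non-critical slope}.
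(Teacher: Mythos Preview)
Your proposal is correct and follows essentially the same route as the paper: both reduce the theorem to the general parahoric classicality theorem of \cite[Thm.~4.4, Rem.~4.6]{BW20}. The paper's proof simply verifies that Definition~\ref{def:non-critical slope} matches the non-critical slope condition there by translating the root-system data (identifying the unique simple root $\beta_{n,\sigma}$ outside $\Delta_Q$, taking $t_i = t_\pri$, and computing $h^{\mathrm{crit}}(t_i,\alpha_i,\lambda) = (1+\lambda_{\sigma,n}-\lambda_{\sigma,n+1})/e_\pri$), whereas you additionally sketch the internal mechanism of \cite{BW20}---the long exact sequence from $0\to N_\lambda\to\cD_\lambda\to V_\lambda^\vee\to 0$ and the BGG-type slope bound on $N_\lambda$---before deferring the hard estimate there. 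Your exposition of the mechanism is accurate and the uniformity in $K$ and in $?\in\{\varnothing,c\}$ is exactly what gives the \emph{strongly} non-$Q$-critical conclusion, as the paper also notes.
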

\begin{proof}
	This is a special case of \cite[Thm.~4.4, Rem.~4.6]{BW20}, explained in Examples 4.5 \emph{ibid.}.
\end{proof}


\section{Abstract evaluation maps}\label{sec:abstract evaluation maps}

	We now describe abstract evaluation maps, linear functionals on the degree $t$ cohomology of $S_K$ with arbitrary coefficient systems, generalising those from \cite{GR2,DJR18} (where the coefficients were taken in $V_\lambda^\vee$). The underlying idea is to integrate classes over \emph{automorphic cycles} coming from $H \subset G$. Crucially, these cycles have real dimension equal to $t$, the `magical numerology' observed in \cite{GR2}. We show our constructions are functorial in the coefficient system.

\subsection{Automorphic cycles}\label{sec:auto cycles}

\begin{definition} 
	Let $L_H \subset H(\A_f)$ be an open compact subgroup. Define the \emph{automorphic cycle} of level $L_H$ to be the space
	\[
	X_{L_H} \defeq H(\Q)\backslash H(\A)/L_HL_\infty^\circ,
	\]
	where $L_\infty = H_\infty\cap K_\infty$ for $H_\infty \defeq H(\R)$ (note all intersections are taken with respect to $\iota$). Note $Z_G(\R)\cap H_\infty \subsetneq Z_H(\R)$, so this is not the  locally symmetric space for $H$. This is denoted $\tilde{S}_{L_H}^H$ in \cite{DJR18}, and is a real orbifold of dimension $t$ \cite[(23)]{DJR18}.
\end{definition}

We choose a specific $L_H$, as in \cite[\S2.1]{DJR18}. Let $K \subset G(\A_f)$ be an open compact subgroup.
\begin{definition}\label{def:xi}
	\begin{enumerate}[(i)]
		\item Define a matrix $\xi \in \GL_{2n}(\A_F)$ by setting $\xi_v = 1$ for all $v\nmid p$ and
		\[
		\xi_{\pri} = \smallmatrd{I_n}{w_n}{0}{w_n} \in \GL_{2n}(\cO_{F,\pri})
		\]
		for $\pri|p$, where $w_n$ is the antidiagonal $n\times n$ matrix whose $(i,j)$-th entry is $\delta_{i,n-j+1}$.
		
		\item For a multi-exponent $\beta = (\beta_{\pri})_{\pri|p}$, with $\beta_{\pri} \in \Z_{\geqslant 0}$, we write $p^{\beta} = \prod \varpi_{\pri}^{\beta_{\pri}}$ and $t_p^\beta = \prod t_{\pri}^{\beta_{\pri}}.$ Fix an ideal $\m \subset \cO_F$ prime to $p$. Then let $L_{\beta} = L^{(p)}\prod_{\pri|p}L_\pri^{\beta_{\pri}},$
			where
			\begin{enumerate}[(L1)]
				\item $L^{(p)} = \{h \in H(\widehat{\Z}^{(p)}) : h \equiv 1 \newmod{\m}\}$,
				\item at $\pri|p$, $L_\pri^{\beta_{\pri}} \defeq H(\Zp) \cap K_{\pri} \cap \xi_{\pri} t_{\pri}^{\beta_{\pri}}K_{\pri} t_{\pri}^{-\beta_{\pri}} \xi_{\pri}^{-1}.$
			\end{enumerate}

		Let $X_\beta \defeq X_{L_\beta}$  be the \emph{automorphic cycle of level $p^\beta$}.
	\end{enumerate}
\end{definition}

The ideal $\m$ will always be fixed large enough so that
\begin{equation}\label{eq:auxiliary m 1}
	L^{(p)}\text{ is contained in }K^{(p)} \cap H(\A_f),\ \text{ and}
\end{equation}
\begin{equation}\label{eq:auxiliary m 2}
	H(\Q) \cap hL_\beta L^\circ_\infty h^{-1} = Z_G(\Q) \cap L_\beta L_\infty^\circ\text{ for all }h \in H(\A).
\end{equation}
By \eqref{eq:auxiliary m 2}, $X_\beta$ is a real manifold \cite[(21)]{DJR18}. Changing $\m$ will scale all our constructions of $p$-adic $L$-functions by a fixed non-zero rational scalar (captured in the volume constant $\gamma_{p\m}$ of Theorem~\ref{thm:critical value} below); but each construction is only well-defined up to scaling the choice of periods $\Omega_\pi^\epsilon$, so changing $\m$ yields no loss in generality. We fix $\m$ to be the minimal such choice, dropping it from all notation.

By definition of $L_{\pri}^{\beta_{\pri}}$ and \eqref{eq:auxiliary m 1}, there is a proper map (see \cite[Lemma 2.7]{Ash80})
\begin{equation}\label{eq:iota beta}
	\iota_\beta : X_\beta \longrightarrow S_K,\qquad 	[h] \longmapsto [\iota(h)\xi t_p^\beta].
\end{equation}
The cycle $X_\beta$ decomposes into connected components \cite[(22)]{DJR18} indexed by 
\begin{equation}\label{eq:component group}
	\pi_0(X_\beta) \defeq \Cl(p^\beta\m) \times \Cl(\m),
\end{equation}
where the component of $[(h_1,h_2)] \in S_K$ is given by the class
\disp{
[(\det(h_1)/\det(h_2), \det(h_2))] \in \pi_0(X_\beta).
}
For $\delta \in H(\A_f)$, we write $[\delta]$ for its associated class in $\pi_0(X_\beta)$ and denote the corresponding connected component
\disp{
X_\beta[\delta] \defeq H(\Q)\backslash H(\Q)\delta L_\beta H_\infty^\circ/L_\beta L_\infty^\circ.
}

\begin{remark}
		The map $\iota : H \to G$ does \emph{not} induce a well-defined map on the true locally symmetric spaces, which is why we quotient by $(Z_G\cap H)^\circ(\R)$, not $Z_H^\circ(\R)$.
		
	 Whilst we do not make this explicit, underlying our later proof of $p$-adic interpolation is the following fact: $\xi$ is an open orbit representative for the spherical pair $H \subset G$, in the sense that $B^-(\Zp) \xi H(\Zp)$ is Zariski-dense in $G(\Zp)$.
\end{remark}

\subsection{Abstract evaluation maps} \label{sec:abstract evaluations}
 Let $K \subset G(\A_f)$ be open compact such that $N_Q(\cO_{\pri}) \subset K_{\pri} \subset J_{\pri}$ for $\pri|p$, and recall $\Delta_p$ from \S\ref{sec:slope-decomp}. Let $M$ be a left $\Delta_p$-module, with action denoted $*$. Then $K$ acts on $M$ via its projection to $K_p \subset \Delta_p$, giving a local system $\sM$ on $S_K$ via \S\ref{sec:non-arch ls}.

\subsubsection{Pulling back to cycles}\label{sec:pulling back to cycles}
We first pull back under $\iota_\beta$. As in \cite[\S2.2.2]{DJR18}, there is a twisting map of local systems 
	$\tau_\beta^\circ : \iota_\beta^*\sM \rightarrow \iota^*\sM$ given by $(h,m) \mapsto (h,\xi t_p^\beta * m)$, where $\iota^*\sM$ is the local system given by locally constant sections of 
\begin{align}\label{eq:projection}
	H(\Q)\backslash(H(\A) \times M)/L_\beta L_\infty^\circ \rightarrow X_\beta, \qquad 
	\zeta(h,m)\ell z = (\zeta h \ell z, \ell^{-1} * m).
\end{align}
On cohomology we get a map
\disp{
\tau_\beta^\circ \circ \iota_\beta^* : \htc(S_K,\sM) \longrightarrow\htc(X_\beta, \iota_\beta^*\sM) \longrightarrow  \htc(X_\beta,\iota^*\sM).
}

\subsubsection{Passing to components}\label{sec:passing to components}
We trivialise $\iota^*\sM$ by passing to connected components. Let $\delta \in H(\A_f)$ represent $[\delta] \in \pi_0(X_\beta)$. Define $\cX_H \defeq  H_\infty^\circ/L_\infty^\circ$. The congruence subgroup
\begin{align}\label{eq:gamma beta delta}
	\Gamma_{\beta,\delta}& \defeq H(\Q) \cap \delta L_\beta H_\infty^\circ \delta^{-1}\\
	& \subset H(\Q)^+ \defeq \{(h_1,h_2) \in H(\Q): \det(h_i) \in \cO_{F}^\times\cap F_\infty^{\times\circ}\}\notag
\end{align} 
acts on $\cX_H$ (by left translation by its embedding into $H_\infty^\circ$). Further if $\gamma \in \Gamma_{\beta,\delta}$ then $(\delta^{-1}\gamma\delta)_f \in L_\beta$, so $\Gamma_{\beta,\delta}$ acts on $M$ via 
\begin{equation}\label{eq:gamma action}
	\gamma *_{\Gamma_{\beta,\delta}}m \defeq (\delta^{-1}\gamma\delta)_f* m.
\end{equation}

If $[h_\infty] \in \cX_H$, write $[h_\infty]_\delta$ for its image in $\Gamma_{\beta,\delta}\backslash \cX_H$. 	There is a map $c_\delta : \Gamma_{\beta,\delta}\backslash \cX_H \isorightarrow X_\beta[\delta] \subset X_\beta$ given by
	$[h_\infty]_\delta \mapsto [\delta h_\infty]$. Pulling back gives a map of local systems $c_\delta^* : \iota^*\sM\to c_\delta^*\iota^*\sM$.

\begin{lemma} \label{lem:c_delta}
	The local system $c_\delta^*\iota^*\sM$ on $\Gamma_{\beta,\delta}\backslash\cX_H$ is given by locally constant sections of 
	\begin{align}\label{eq:component system}
		\Gamma_{\beta,\delta}\backslash [\cX_H \times M] \longrightarrow \Gamma_{\beta,\delta}\backslash \cX_H,
	\end{align}
	with action $\gamma([h_\infty],m) = ([\gamma_\infty h_\infty], \gamma *_{\Gamma_{\beta,\delta}} m)$. The map $c_\delta^*$ of local systems is induced by the map
	\begin{align*}
		c_{\delta}^* :  H(\Q)\backslash \big[H(\Q)\delta L_\beta H_\infty^\circ \times M\big]/ L_\beta L_\infty^\circ &\longrightarrow \Gamma_{\beta,\delta}\backslash[\cX_H \times M],\\
		(\zeta \delta \ell h_\infty , m) &\longmapsto ([h_\infty],  \ell * m).
	\end{align*}
\end{lemma}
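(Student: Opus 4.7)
The statement is a book-keeping exercise identifying the pullback of the local system $\iota^*\sM$ along the covering map $c_\delta \colon \Gamma_{\beta,\delta}\backslash \cX_H \isorightarrow X_\beta[\delta]$ with an explicit quotient of $\cX_H \times M$. My plan is to work one connected component at a time: first restrict $\iota^*\sM$ to $X_\beta[\delta]$, then give an explicit map between the two proposed total spaces, and finally check that it is well-defined, covers $c_\delta$, and is bijective on fibres.

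\emph{Step 1 (restriction).} The restriction of $\iota^*\sM$ to $X_\beta[\delta]$ has total space
\[
H(\Q)\backslash\bigl[H(\Q)\delta L_\beta H_\infty^\circ \times M\bigr]/L_\beta L_\infty^\circ,
\]
with the action inherited from \eqref{eq:projection}.

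\emph{Step 2 (the map).} I propose the map of total spaces
\[
\Gamma_{\beta,\delta}\backslash[\cX_H \times M] \longrightarrow H(\Q)\backslash\bigl[H(\Q)\delta L_\beta H_\infty^\circ \times M\bigr]/L_\beta L_\infty^\circ, \qquad [h_\infty, m]_\delta \longmapsto [\delta h_\infty, m],
\]
which visibly covers $c_\delta$. The candidate inverse is $c_\delta^*$ from the lemma: $(\zeta\delta\ell h_\infty, m) \mapsto ([h_\infty]_\delta, \ell * m)$. The main point to check is that the source map descends from $\cX_H \times M$. For $\gamma \in \Gamma_{\beta,\delta}$, write $(\delta^{-1}\gamma\delta)_f = \ell_\gamma \in L_\beta$ (which lies in $L_\beta$ by the definition \eqref{eq:gamma beta delta} of $\Gamma_{\beta,\delta}$, since $\delta_\infty = 1$ forces $(\delta^{-1}\gamma\delta)_\infty = \gamma_\infty$). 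Then the identity $\gamma\delta = \delta\ell_\gamma\gamma_\infty$ in $H(\A)$, together with left-multiplication by $\gamma \in H(\Q)$ and right-multiplication by $\ell_\gamma^{-1} \in L_\beta$ (noting $\ell_\gamma^{-1}$ and $\gamma_\infty h_\infty$ commute because one is purely finite and the other purely archimedean), gives
\[
(\delta h_\infty, m) \;\sim\; (\gamma\delta h_\infty, m) \;=\; (\delta\ell_\gamma\gamma_\infty h_\infty, m) \;\sim\; (\delta\gamma_\infty h_\infty, \ell_\gamma * m),
\]
and the last entry equals $\gamma *_{\Gamma_{\beta,\delta}} m$ by \eqref{eq:gamma action}. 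This is precisely the required $\Gamma_{\beta,\delta}$-invariance.

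\emph{Step 3 (bijectivity and well-definedness of the inverse).} That the candidate inverse $c_\delta^*$ is well-defined on $H(\Q) \backslash [\,\cdot\,]/L_\beta L_\infty^\circ$-equivalence classes is the reverse calculation: given two decompositions $\zeta_1\delta\ell_1 h_{\infty,1} = \zeta_2\delta\ell_2 h_{\infty,2}$ with $\zeta_i \in H(\Q)$, $\ell_i \in L_\beta$, $h_{\infty,i} \in H_\infty^\circ$, projecting to the finite and infinite places yields $\gamma \defeq \zeta_2^{-1}\zeta_1 \in \Gamma_{\beta,\delta}$ with $h_{\infty,2} = \gamma_\infty h_{\infty,1}$ and $\ell_2 = \delta^{-1}\gamma_f\delta\, \ell_1$, so that $\ell_2 * m = \gamma *_{\Gamma_{\beta,\delta}}(\ell_1 * m)$. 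This exhibits $([h_{\infty,2}]_\delta, \ell_2 * m) = \gamma \cdot ([h_{\infty,1}]_\delta, \ell_1 * m)$ in $\Gamma_{\beta,\delta}\backslash[\cX_H \times M]$. Checking that the two maps are mutually inverse is immediate from their explicit formulas, and fibrewise bijectivity follows since both maps restrict to the identity on the $M$-factor.

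The only real obstacle is the finite/infinite bookkeeping in Step 2: one must systematically use $\delta_\infty = 1$ to split $\delta^{-1}\gamma\delta$ into its finite and infinite components and then commute finite adelic elements past archimedean ones inside $H(\A)$. Once this split is set up, every verification reduces to a short manipulation of the defining equivalence relations.
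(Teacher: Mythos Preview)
Your proof is correct and follows essentially the same approach as the paper's: the key computation in both is the identity $\gamma\delta = \delta\ell_\gamma\gamma_\infty$ (with $\ell_\gamma = (\delta^{-1}\gamma\delta)_f$), which yields $(\delta h_\infty, m) \sim (\delta\gamma_\infty h_\infty, \gamma *_{\Gamma_{\beta,\delta}} m)$ in the quotient and thereby identifies the $\Gamma_{\beta,\delta}$-action. Your presentation is slightly more explicit in checking well-definedness of both directions and mutual inversity, whereas the paper packages the same calculation into a single commutative-diagram verification, but there is no substantive difference.
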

\begin{proof}
	In 	$H(\Q)\backslash [H(\Q)\delta L_\beta H_\infty^\circ \times M]/ L_\beta L_\infty^\circ$, we have 
	\[
	(\zeta\delta \ell h_\infty, m) = \zeta(\delta h_\infty, \ell * m)\ell = (c_\delta(h_\infty), \ell*m),
	\]
	so the map is as claimed. To see  $c_\delta^*\iota^*\sM$ is given by \eqref{eq:component system}, let $\gamma \in \Gamma_{\beta,\delta}$; then $\gamma_f = \delta\ell\delta^{-1}$ for some $\ell \in L_\beta$, and $\gamma *_{\Gamma_{\beta,\delta}} m = \ell * m$ by definition. In $H(\Q)\backslash [H(\Q)\delta L_\beta H_\infty^\circ \times M]/ L_\beta L_\infty^\circ$ we have
	\begin{align*}
		\gamma(\delta h_\infty, m) &= (\gamma_f\gamma_\infty\delta h_\infty, m) = (\delta\ell \cdot \gamma_\infty h_\infty, m) = (\delta \gamma_\infty h_\infty, \ell * m).
	\end{align*}
	Thus
	\[
	\xymatrix@R=3mm@C=8mm{
		\gamma([h_\infty], m)\sar{d}{=} & \ar@{|->}[l]_-{c_\delta^*} \gamma(\delta h_\infty, m) \sar{d}{=} \\
		([\gamma_\infty h_\infty], \gamma *_{\Gamma_{\beta,\delta}} m) & \ar@{|->}[l]_-{c_\delta^*}(\delta \gamma_\infty h_\infty, \gamma *_{\Gamma_{\beta,\delta}} m)
	}
	\]
	commutes, from which we deduce the action must be by \eqref{eq:component system}.
\end{proof}

\subsubsection{Trivialising and integration over a fundamental class}\label{sec:coinvariants} 
Let $M_{\Gamma_{\beta,\delta}}$ be the coinvariants of $M$ by $\Gamma_{\beta,\delta}$. Since $\Gamma_{\beta,\delta}$ acts trivially on $M_{\Gamma_{\beta,\delta}}$, the quotient $M \rightarrow M_{\Gamma_{\beta,\delta}}$, $m \mapsto (m)_\delta$ induces a trivialisation map (over $\Gamma_{\beta,\delta}\backslash \cX_H$)	$\coinv_{\beta,\delta} : \Gamma_{\beta,\delta}\backslash [\cX_H \times M] \to [\Gamma_{\beta,\delta}\backslash\cX_H]
	\times M_{\Gamma_{\beta,\delta}}$ given by $([h_\infty], m) \mapsto ([h_\infty]_\delta, (m)_\delta).$	 We get
\[
\coinv_{\beta,\delta} : \htc(\Gamma_{\beta,\delta}\backslash\cX_H, c_\delta^*\iota^*\sM) \rightarrow \htc(\Gamma_{\beta,\delta}\backslash\cX_H, \Z)\otimes M_{\Gamma_{\beta,\delta}}.
\]
 In \cite[\S2.2.5]{DJR18}, a class 
\disp{
\theta_{[\delta]} \in \h^{\mathrm{BM}}_t(X_\beta[\delta],\Z) \cong \Z
}
is chosen for each class $[\delta]$, and we take 
\disp{
\theta_{\delta} \defeq c_\delta^*(\theta_{[\delta]}).
}
Cap product induces an isomorphism
\begin{align*}
	(- \cap \theta_\delta) : \htc(\Gamma_{\beta,\delta}\backslash\cX_H, \Z)\otimes M_{\Gamma_{\beta,\delta}} &\isorightarrow M_{\Gamma_{\beta,\delta}}, \qquad (\phi, m) \longmapsto (\phi \cap \theta_\delta, m).
\end{align*}

\begin{definition}\label{def:abstract evaluation}
	Define the \emph{evaluation map of level $p^\beta$} to be the composition
	\begin{align}\label{eq:evaluation definition}
		\mathrm{Ev}_{\beta,\delta}^{M}   : \htc(S_K,\sM)  \xrightarrow{\ \tau_\beta^\circ \circ \iota_\beta^*\ } \hc{t}(X_\beta,&\iota^*\sM) \xrightarrow{c_\delta^*} \hc{t}(\Gamma_{\beta,\delta}\backslash \cX_H, c_\delta^*\iota^*\sM)\\
		& \xrightarrow{\coinv_{\beta,\delta}}  \hc{t}(\Gamma_{\beta,\delta}\backslash\cX_H, \Z) \otimes M_{\Gamma_{\beta,\delta}} \labelisorightarrow{ - \cap \theta_\delta} M_{\Gamma_{\beta,\delta}}. \notag
	\end{align}
\end{definition}

\subsection{Variation of $M$, $\delta$ and $\beta$}\label{sec:variation}

\subsubsection{Variation in $M$} The functoriality in $M$ is the object of the following statement. 
\begin{lemma}\label{prop:pushforward}
	Let $\kappa : M \rightarrow N$ be a map of $\Delta_p$-modules. There is a commutative diagram
	\[
	\xymatrix@C=18mm@R=6mm{
		\hc{t}(S_K, \sM) \ar[r]^-{\mathrm{Ev}_{\beta,\delta}^M} \ar[d]^-{\kappa} & M_{\Gamma_{\beta,\delta}}\ar[d]^-{\kappa}\\
		\hc{t}(S_K,\sN) \ar[r]^-{\mathrm{Ev}_{\beta,\delta}^N} & N_{\Gamma_{\beta,\delta}}.
	}
	\]
\end{lemma}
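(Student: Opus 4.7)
The plan is to verify commutativity of the diagram by checking naturality of each of the four stages comprising $\mathrm{Ev}_{\beta,\delta}^M$ in \eqref{eq:evaluation definition}. First I would note that $\kappa$, being $\Delta_p$-equivariant, restricts to a $K_p$-equivariant map and hence (via the projection $K \to K_p \subset \Delta_p$) a $K$-equivariant map. This induces a morphism of non-archimedean local systems $\sM \to \sN$ on $S_K$, and in turn a map $\kappa_* : \hc{t}(S_K, \sM) \to \hc{t}(S_K, \sN)$; similarly for the local systems appearing on the cycles $X_\beta$.

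For the first stage $\tau_\beta^\circ \circ \iota_\beta^*$: the proper pullback $\iota_\beta^*$ is manifestly functorial in the coefficient local system. For the twist map $\tau_\beta^\circ : (h,m) \mapsto (h, \xi t_p^\beta * m)$, commutativity with $\kappa$ is exactly the $\Delta_p$-equivariance applied to $\xi t_p^\beta \in \Delta_p$. The pullback $c_\delta^*$ of the second stage is similarly functorial in local systems.

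For the third stage, I need $\kappa$ to descend to a well-defined map $M_{\Gamma_{\beta,\delta}} \to N_{\Gamma_{\beta,\delta}}$; this requires verifying that $\kappa$ intertwines the actions $*_{\Gamma_{\beta,\delta}}$ defined in \eqref{eq:gamma action}. Given $\gamma \in \Gamma_{\beta,\delta}$, we have $(\delta^{-1}\gamma\delta)_f \in L_\beta$ by definition, and its $p$-component lies in $\prod_{\pri\mid p} L_{\pri}^{\beta_{\pri}} \subset J_p \subset \Delta_p$ (using $K_{\pri} \subset J_{\pri}$). Thus $\Delta_p$-equivariance of $\kappa$ gives the required compatibility, and $\kappa$ descends to coinvariants; combined with naturality of the map \eqref{eq:coinvariants map} defining $\coinv_{\beta,\delta}$, this handles the third stage. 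The final stage, capping with $\theta_\delta \in \h_t^{\mathrm{BM}}(\Gamma_{\beta,\delta}\backslash\cX_H,\Z)$, is $\Z$-linear and operates only on the $\hc{t}(\Gamma_{\beta,\delta}\backslash\cX_H,\Z)$ factor, hence trivially commutes with $\kappa$ on the $M_{\Gamma_{\beta,\delta}}$ factor.

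The only real point of delicacy is the third stage, where one must be careful that $L_\beta$ acts on $M$ only through its (well-defined) image in $\Delta_p$; this is ensured by the hypothesis $K_{\pri} \subset J_{\pri}$ built into the construction. Chaining together the four commutative subdiagrams yields the result.
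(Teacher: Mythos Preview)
Your proof is correct and follows the same approach as the paper, which simply states that the claim is immediate from the definitions and that $\kappa$ commutes with each of the maps in the composition \eqref{eq:evaluation definition}. You have merely unpacked this in greater detail, stage by stage.
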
 
\begin{proof}
	Writing out the definitions, it is immediate that $\kappa$ induces a map on cohomology and a map on coinvariants, and $\kappa$ commutes with each of the maps in \eqref{eq:evaluation definition} (compare \cite[Lem.~3.2]{BDJ17}).
\end{proof}

\subsubsection{Variation in $\delta$}\label{sec:changing delta}

For fixed $\delta$, the action of $\ell \in L_\beta$ need not preserve $M_{\Gamma_{\beta,\delta}}$. Nevertheless:

\begin{lemma}\label{lem:action on coinvariants} Let $\delta \in H(\A)$ and $\ell \in L_\beta$. If $\delta' = \zeta\delta\ell h_\infty \in H(\Q)\delta \ell H_\infty^\circ \cap H(\A_f)$ is another representative of $\delta$, then:

	\begin{itemize}\s
		\item[(i)]The action of $\ell$ on $M$ induces a map $M_{\Gamma_{\beta,\delta'}} \to M_{\Gamma_{\beta,\delta}}$ given by $(m)_{\delta'} \mapsto \ell * (m)_{\delta'} \defeq (\ell * m)_{\delta}.$

		\item[(ii)] There is a well-defined map 				$[\zeta_\infty^{-1}-] : \Gamma_{\beta,\delta'}\backslash \cX_H \to \Gamma_{\beta,\delta}\backslash \cX_H$ induced by 	$[h_\infty]_{\delta'} \mapsto [\zeta^{-1}_\infty h_\infty]_{\delta}.$
	\end{itemize}	 

\end{lemma}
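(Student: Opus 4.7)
The plan is to unwind the definitions and to exploit the key identity $\Gamma_{\beta,\delta'} = \zeta\, \Gamma_{\beta,\delta}\, \zeta^{-1}$. Indeed, since $\delta' = \zeta \delta \ell h_\infty$ and conjugation by $\ell$ (resp.\ $h_\infty$) preserves $L_\beta$ (resp.\ $H_\infty^\circ$), we have
\[
\delta' L_\beta H_\infty^\circ (\delta')^{-1} = \zeta\, \delta L_\beta H_\infty^\circ \delta^{-1}\, \zeta^{-1},
\]
and intersecting with $H(\Q)$ gives the desired identity. Thus writing $\gamma' = \zeta \gamma \zeta^{-1}$ with $\gamma \in \Gamma_{\beta,\delta}$ puts the elements of the two congruence subgroups in explicit bijection.

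For (i), the content is well-definedness: I would check that if $n \in M$ and $\gamma' \in \Gamma_{\beta,\delta'}$ then $\ell * n - \ell *(\gamma' *_{\Gamma_{\beta,\delta'}} n)$ vanishes in $M_{\Gamma_{\beta,\delta}}$. Using $(\delta')_f = \zeta_f \delta_f \ell$ and writing $\omega := (\delta^{-1}\gamma\delta)_f \in L_\beta$, a short computation gives
\[
(\delta')_f^{-1}\gamma'_f (\delta')_f = \ell^{-1} \omega\, \ell,
\]
hence
\[
\ell * (\gamma' *_{\Gamma_{\beta,\delta'}} n) = \ell \ell^{-1}\omega \ell * n = \omega \ell * n = \gamma *_{\Gamma_{\beta,\delta}} (\ell * n),
\]
which is congruent to $\ell * n$ modulo $\Gamma_{\beta,\delta}$.

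For (ii), I would verify the analogous compatibility at infinity: if $h_\infty' = \gamma'_\infty h_\infty$ with $\gamma' = \zeta \gamma \zeta^{-1} \in \Gamma_{\beta,\delta'}$, then
\[
\zeta_\infty^{-1} h_\infty' = \zeta_\infty^{-1} \gamma'_\infty \zeta_\infty \cdot \zeta_\infty^{-1} h_\infty = \gamma_\infty \cdot \zeta_\infty^{-1} h_\infty,
\]
so $[\zeta_\infty^{-1} h_\infty']_\delta = [\zeta_\infty^{-1} h_\infty]_\delta$, giving the well-definedness of $[\zeta_\infty^{-1}-]$.

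The whole lemma is essentially a bookkeeping exercise; there is no real obstacle beyond carefully distinguishing the finite and infinite components of $\delta'$ and keeping track of the action \eqref{eq:gamma action}. The most delicate point is simply to remember that $h_\infty$ only affects the infinite place (so $(\delta')_f = \zeta_f \delta_f \ell$), which is what makes the conjugation identity $(\delta')_f^{-1}\gamma'_f(\delta')_f = \ell^{-1}\omega\ell$ collapse correctly when one subsequently acts by $\ell$ on the left.
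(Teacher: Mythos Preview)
Your proof is correct and follows essentially the same approach as the paper. Both arguments hinge on the conjugation identity $\Gamma_{\beta,\delta'} = \zeta\,\Gamma_{\beta,\delta}\,\zeta^{-1}$ and then unwind the definition \eqref{eq:gamma action} of the $\Gamma_{\beta,\delta}$-action; the paper performs the computation for (i) first and extracts the conjugation identity as a byproduct, whereas you state it upfront, but the content is identical.
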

\begin{proof} 
This is a simple explicit check.
\end{proof}

Combining Lemma \ref{lem:c_delta} with coinvariants, the composed map of local systems is induced by
\begin{align}\label{eq:triv beta delta}
	\mathrm{coinv}_{\beta,\delta} \circ c_{\delta}^* : H(\Q)\backslash \big[H(\Q)\delta L_\beta H_\infty^\circ \times M\big]/L_\beta L_\infty^\circ &\longrightarrow \Gamma_{\beta,\delta}\backslash\cX_H \times M_{\Gamma_{\beta,\delta}},\\
	(\zeta \delta \ell h_\infty , m) &\longmapsto ([h_\infty]_\delta,\  (\ell * m)_\delta).\notag
\end{align}

\begin{lemma}\label{lem:change of delta}
	Let $\delta'  \in H(\Q)\delta \ell H_\infty^\circ \cap H(\A_f)$ be another representative of $[\delta]$, with $\ell \in L_\beta$. Then for any class $\phi$, we have
	\[
	\ell *  \mathrm{Ev}_{\beta,\delta'}^M(\phi) = \mathrm{Ev}_{\beta,\delta}^M(\phi) \ \in M_{\Gamma_{\beta,\delta}}.
	\]
\end{lemma}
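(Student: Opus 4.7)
The strategy is to observe that $\tau_\beta^\circ\circ\iota_\beta^*$ is manifestly independent of $\delta$, so it suffices to compare the last three steps of \eqref{eq:evaluation definition} applied to $\phi' \defeq \tau_\beta^\circ\circ\iota_\beta^*(\phi) \in \hc{t}(X_\beta, \iota^*\sM)$, for the two choices $\delta$ and $\delta'$ of representative of $[\delta] \in \pi_0(X_\beta)$. Since $\delta$ and $\delta'$ index the same connected component $X_\beta[\delta] = X_\beta[\delta']$, the two parameterizations $c_\delta, c_{\delta'}$ and trivializations $\mathrm{coinv}_{\beta,\delta}, \mathrm{coinv}_{\beta,\delta'}$ give two \emph{a priori} different presentations of the same cohomology class; the goal is to show these are intertwined precisely by the maps $([\zeta_\infty^{-1}-]_*,\ \ell*-)$ from Lemma~\ref{lem:action on coinvariants}(ii) and (i).

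The key step is a direct geometric computation. Writing the identity $\delta' = \zeta\delta\ell h_\infty$ separately in the finite and archimedean components, we have $\delta' = \zeta_f\delta\ell$ in $H(\A_f)$, while the constraint that $\delta'$ has trivial archimedean part forces $h_\infty = \zeta_\infty^{-1}$ with $\zeta_\infty \in H_\infty^\circ$. A typical representative $(\zeta_0\delta\ell_0 h_\infty^0, m_0)$ of a point in $H(\Q)\delta L_\beta H_\infty^\circ \times M$ can then be rewritten, using $\delta = \zeta^{-1}\delta'\ell^{-1}\zeta_\infty$ and the fact that $\zeta_\infty\in H_\infty$ commutes with $\ell_0\in H(\A_f)$, as $(\zeta_0\zeta^{-1}\cdot\delta'\cdot\ell^{-1}\ell_0\cdot\zeta_\infty h_\infty^0,\ m_0)$. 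Applying the explicit trivialization formula \eqref{eq:triv beta delta} for $\delta$ and $\delta'$ to this representative yields $([h_\infty^0]_\delta, (\ell_0 * m_0)_\delta)$ and $([\zeta_\infty h_\infty^0]_{\delta'}, (\ell^{-1}\ell_0 * m_0)_{\delta'})$ respectively, which are indeed identified by $([\zeta_\infty^{-1}-]_*, \ell*-)$ of Lemma~\ref{lem:action on coinvariants}. In particular $c_\delta\circ[\zeta_\infty^{-1}-] = c_{\delta'}$ as maps $\Gamma_{\beta,\delta'}\backslash\cX_H \to X_\beta[\delta]$, and on cohomology
\[
(\mathrm{coinv}_{\beta,\delta}\circ c_\delta^*)(\phi') = \big([\zeta_\infty^{-1}-]_*\otimes(\ell*)\big)\big((\mathrm{coinv}_{\beta,\delta'}\circ c_{\delta'}^*)(\phi')\big).
\]

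To finish, the identity $c_\delta\circ[\zeta_\infty^{-1}-]=c_{\delta'}$ together with the construction $\theta_\delta = c_\delta^*(\theta_{[\delta]})$ and $\theta_{\delta'} = c_{\delta'}^*(\theta_{[\delta]})$ implies $[\zeta_\infty^{-1}-]_*\theta_{\delta'}=\theta_\delta$ in Borel--Moore homology. The projection formula for the diffeomorphism $[\zeta_\infty^{-1}-]$, combined with $\Z$-linearity of the map $\ell*-$ on coinvariants, then gives $\mathrm{Ev}_{\beta,\delta}^M(\phi) = \ell*\mathrm{Ev}_{\beta,\delta'}^M(\phi)$ as required. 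The main technical delicacy is the careful unpacking of the adelic identity $\delta' = \zeta\delta\ell h_\infty$---in which $\zeta\in H(\Q)$ has non-trivial components at every place---to correctly identify the intertwining as $([\zeta_\infty^{-1}-], \ell*-)$; all subsequent steps are formal consequences of naturality.
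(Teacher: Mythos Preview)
Your proof is correct and follows essentially the same approach as the paper's: both establish the intertwining identity $(\mathrm{coinv}_{\beta,\delta}\circ c_\delta^*) = ([\zeta_\infty^{-1}-]\times[\ell*-])\circ(\mathrm{coinv}_{\beta,\delta'}\circ c_{\delta'}^*)$ by a direct computation on representatives (the paper packages this as a commutative square, you expand a generic point), and then use compatibility of the fundamental classes under $[\zeta_\infty^{-1}-]$ to conclude. The only cosmetic difference is that the paper phrases the fundamental-class compatibility via pullback and a cap-product diagram, while you use pushforward and the projection formula; for the diffeomorphism $[\zeta_\infty^{-1}-]$ these are equivalent.
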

\begin{proof}
	Write $\delta' = \zeta\delta \ell h_\infty$ with $\zeta \in H(\Q), h_\infty \in H_\infty^\circ$. By Lemma \ref{lem:action on coinvariants}, we may define a map
	\begin{align*}
		([\zeta_\infty^{-1}-]\times [\ell*-]) : \Gamma_{\beta,\delta'}\backslash \cX_H \times M_{\Gamma_{\beta,\delta'}} &\longrightarrow \Gamma_{\beta,\delta}\backslash \cX_H \times M_{\Gamma_{\beta,\delta}}
	\end{align*}
	given by $([h_\infty]_{\delta'},(m)_{\delta'}) \mapsto ([\zeta^{-1}_\infty h_\infty]_{\delta}, (\ell * m)_\delta)$. We claim there is an equality of maps
	\begin{equation}\label{eq:zeta_infty}
		([\zeta_\infty^{-1} -]\times [\ell * -]) \circ[\mathrm{coinv}_{\beta,\delta'} \circ c_{\delta'}^*] = [\mathrm{coinv}_{\beta,\delta} \circ c_{\delta}^*].
	\end{equation}
	To see this, note that as $\delta$ and $\delta'$ are both trivial at infinity, $h_\infty = \zeta_\infty^{-1}$, so $[h_\infty h'_\infty]_\delta = [\zeta_\infty^{-1}-]([h_\infty']_{\delta'})$ for all $h_\infty' \in H_\infty^\circ$. Then \eqref{eq:zeta_infty} follows from commutativity of  
	\[
	\xymatrix@C=4mm{
		(\gamma'\delta'\ell' h_\infty',m)\ar@{|->}[rrrr]^-{\mathrm{coinv}_{\beta,\delta'} \circ c_{\delta'}^*}\ar@{|->}[d]^-{\mathrm{id}} &&&&
		([h_\infty']_{\delta'},(\ell' * m)_{\delta'})\ar@{|->}[d]^-{[\zeta^{-1}_\infty -]\times [\ell * -] } \sar{r}{\in} & 
		\Gamma_{\beta,\delta'}\backslash \cX_H \times M_{\Gamma_{\beta,\delta'}} \ar[d]^-{[\zeta^{-1}_\infty -]\times [\ell * -] }\\
		(\gamma'\zeta\delta\ell\ell' h_\infty h_\infty' ,m)\ar@{|->}[rrrr]^-{\mathrm{coinv}_{\beta,\delta} \circ c_{\delta}^*}  &&&&
		([h_\infty h_\infty']_\delta,(\ell\ell'*m)_\delta ) \sar{r}{\in} &
		\Gamma_{\beta,\delta}\backslash \cX_H \times M_{\Gamma_{\beta,\delta}}\\
	}	
	\]
	Note pullback by $[\zeta_\infty^{-1}-]$ induces an isomorphism 
	\disp{
	\h^{\mathrm{BM}}_t(\Gamma_{\beta,\delta}\backslash \cX_H, \Z) \isorightarrow \h^{\mathrm{BM}}_t(\Gamma_{\beta,\delta'}\backslash \cX_H, \Z)
}
	that by definition sends $\theta_{\delta}$ to $\theta_{\delta'}$. In particular, on cohomology we get a commutative diagram
	\begin{equation}\label{eq:fund class compat}
		\xymatrix@R=5mm{
			\hc{t}(\Gamma_{\beta,\delta'}\backslash \cX_H, \Z) \ar[rr]_-{\sim}^-{[\zeta^{-1}_\infty-]_*} \ar[rd]_-{-\cap\theta_{\delta'}}^-{\sim} && \hc{t}(\Gamma_{\beta,\delta}\backslash \cX_H, \Z)\ar[ld]^-{-\cap \theta_{\delta}}_-{\sim}\\
			& \Z &
		}.
	\end{equation}
	Then we compute that 
	\begin{align*}
		\ell * [\mathrm{Ev}_{\beta,\delta'}^M(\phi)] &=
		\ell * [(-\cap \theta_{\delta'}) \circ \mathrm{coinv}_{\beta,\delta'}\circ c_{\delta'}^* \circ \tau_\beta^\circ \circ \iota_\beta^*(\phi)]\\ &=
		(-\cap \theta_{\delta})\circ ([\zeta^{-1}_\infty-]_* \times [\ell * -]) \circ \mathrm{coinv}_{\beta,\delta'}\circ c_{\delta'}^* \circ \tau_\beta^\circ \circ \iota_\beta^*(\phi)]\\
		&= (-\cap \theta_{\delta})\circ \mathrm{coinv}_{\beta,\delta}\circ c_{\delta}^* \circ \tau_\beta^\circ \circ \iota_\beta^*(\phi) =  \mathrm{Ev}_{\beta,\delta}^M(\phi),
	\end{align*}
	where the second equality is \eqref{eq:fund class compat} and the third is \eqref{eq:zeta_infty} combined with \eqref{eq:triv beta delta}.
\end{proof}

\begin{proposition}\label{prop:ind of delta}
	Let $N$ be a left $H(\A)$-module, with action denoted $*$, such that $H(\Q)$ and $H_\infty^\circ$ act trivially. Let $\kappa : M\to N$ be a map of $L_\beta$-modules (with $N$ an $L_\beta$-module by restriction). Then
	\[
	\mathrm{Ev}_{\beta,[\delta]}^{M,\kappa} \defeq \delta * \left[\kappa \circ \mathrm{Ev}_{\beta,\delta}^M\right] : \hc{t}(S_K,\sM) \longrightarrow N
	\]
	is well-defined and independent of the representative $\delta$ of $[\delta]$.
\end{proposition}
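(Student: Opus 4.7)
My plan is to split the statement into two claims: (i) that $\delta * (\kappa \circ \mathrm{Ev}_{\beta,\delta}^M)$ descends through the coinvariants map $M \twoheadrightarrow M_{\Gamma_{\beta,\delta}}$, and (ii) that the resulting map is unchanged under replacing $\delta$ by another representative $\delta' = \zeta\delta\ell h_\infty$ of $[\delta]$. Both arguments rest on the same principle: the elements of $H(\Q)$ that appear naturally here have infinite parts in $H_\infty^\circ$, so both their finite and infinite parts act trivially on $N$.

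For (i), I would take $\gamma \in \Gamma_{\beta,\delta}$ and $m \in M$ and compute, using $L_\beta$-equivariance of $\kappa$ and the identity $\delta \cdot (\delta^{-1}\gamma\delta)_f = \gamma_f \cdot \delta$ inside $H(\A_f)$ (where $\gamma_f$ is the diagonal image of $\gamma \in H(\Q)$ in $H(\A_f)$), that $\delta * \kappa(\gamma *_{\Gamma_{\beta,\delta}} m) = \gamma_f * \bigl(\delta * \kappa(m)\bigr)$. Decomposing $\gamma = \gamma_f \cdot \gamma_\infty$ inside $H(\A)$, the condition $\gamma \in \Gamma_{\beta,\delta} \subset H(\Q)^+$ from \eqref{eq:gamma beta delta} forces $\gamma_\infty \in H_\infty^\circ$; thus both $\gamma$ and $\gamma_\infty$ act trivially on $N$, so $\gamma_f = \gamma \cdot \gamma_\infty^{-1}$ does as well, yielding $\delta * \kappa(\gamma *_{\Gamma_{\beta,\delta}} m) = \delta * \kappa(m)$ and hence well-definedness on $M_{\Gamma_{\beta,\delta}}$.

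For (ii), comparing finite and infinite parts of $\delta' = \zeta\delta\ell h_\infty$ (using $\delta_\infty = \delta'_\infty = 1$) yields $\delta' = \zeta_f \delta \ell$ in $H(\A_f)$ and $\zeta_\infty = h_\infty^{-1} \in H_\infty^\circ$. Choosing a lift $m' \in M$ of $\mathrm{Ev}_{\beta,\delta'}^M(\phi)$, Lemma~\ref{lem:change of delta} says $\ell * m'$ lifts $\mathrm{Ev}_{\beta,\delta}^M(\phi)$. Then $L_\beta$-equivariance of $\kappa$ yields
\[
\delta' * \kappa(m') = \zeta_f \delta \ell * \kappa(m') = \zeta_f * \bigl(\delta * \kappa(\ell * m')\bigr),
\]
so the two expressions $\delta' * \kappa(\mathrm{Ev}_{\beta,\delta'}^M(\phi))$ and $\delta * \kappa(\mathrm{Ev}_{\beta,\delta}^M(\phi))$ differ only by the action of $\zeta_f$, which once again acts trivially on $N$ because $\zeta \in H(\Q)$ and $\zeta_\infty \in H_\infty^\circ$ both do.

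The main obstacle — more bookkeeping than genuine difficulty — is disentangling the finite/infinite decomposition of $H(\A)$ to verify that the naturally occurring elements of $H(\Q)$ have their archimedean parts in $H_\infty^\circ$. For elements of $\Gamma_{\beta,\delta}$ this is the totally-positive-determinant condition in \eqref{eq:gamma beta delta}; for the $\zeta$ appearing in the change of representative, it is forced automatically by $h_\infty \in H_\infty^\circ$. Once this is pinned down, the rest is a direct calculation combining Lemma~\ref{lem:change of delta} with $L_\beta$-equivariance of $\kappa$.
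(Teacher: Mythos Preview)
Your proof is correct and follows essentially the same route as the paper. The paper is terser: for well-definedness it simply notes that $\Gamma_{\beta,\delta} \subset H(\Q)$ acts trivially on $N$, so $\kappa$ factors through $M_{\Gamma_{\beta,\delta}}$ (your computation unpacks exactly why this terse statement is justified, given that the $\Gamma_{\beta,\delta}$-action on $M$ is via the conjugated elements $(\delta^{-1}\gamma\delta)_f$); and for independence the paper uses $\zeta \in H(\Q)$ and $h_\infty \in H_\infty^\circ$ directly to pass from $\delta'*$ to $\delta\ell*$, rather than decomposing $\zeta = \zeta_f\zeta_\infty$ as you do --- but this is the same idea, and your observation $\zeta_\infty = h_\infty^{-1} \in H_\infty^\circ$ is precisely what makes the two formulations agree.
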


\begin{proof}
	As $\Gamma_{\beta,\delta} \subset H(\Q)$ acts trivially on $N$, $\kappa$ factors through $M \twoheadrightarrow M_{\Gamma_{\beta,\delta}} \to N$, so $\kappa \circ \mathrm{Ev}_{\beta,\delta}^{M}$ (hence $\mathrm{Ev}_{\beta,[\delta]}^{M,\kappa}$) is well-defined.	If
	\disp{
	\delta' = \zeta\delta\ell h_\infty \in H(\Q)\delta\ell H_\infty^\circ \cap H(\A_f),
}
	then
	\begin{align*}
		\mathrm{Ev}_{\beta,[\delta']}^{M,\kappa} = \delta' * \left(\kappa \circ \mathrm{Ev}_{\beta,\delta'}^M\right)
		&= \delta\ell * \left(\kappa \circ \mathrm{Ev}_{\beta,\delta'}^M\right) \\
		&= \delta\ell * \left(\kappa \circ\left[\ell^{-1} * \mathrm{Ev}_{\beta,\delta}^M\right]\right) =  \delta * \left(\kappa \circ\mathrm{Ev}_{\beta,\delta}^M\right) = \mathrm{Ev}_{\beta,[\delta]}^{M,\kappa}.
	\end{align*}
	In the second equality we use that $\zeta$ and $h_\infty$ act trivially on $N$, and the third is Lemma~\ref{lem:change of delta}.
\end{proof}

\subsubsection{Variation in $\beta$}
We now investigate how evaluation maps behave as $\beta = (\beta_\mathfrak{q})_{\mathfrak{q}|p}$ varies.
Fix $\pri|p$, and define $\beta' = (\beta_\mathfrak{q}')_{\mathfrak{q}|p}$, where $\beta_{\pri}' = \beta_{\pri} + 1$ and $\beta_\mathfrak{q}' = \beta_\mathfrak{q}$ for $\mathfrak{q} \neq \pri$. We have a natural projection  
\disp{
\mathrm{pr}_{\beta,\pri} : X_{\beta'} \longrightarrow X_\beta,
}
inducing a projection 
\[
\mathrm{pr}_{\beta,\pri} : \pi_0(X_{\beta'}) \rightarrow \pi_0(X_\beta).
\]
Fix $\delta \in H(\A_f)$ and a set of representatives $D \subset H(\A_f)$ of the set $\mathrm{pr}_{\beta,\pri}^{-1}([\delta])\subset \pi_0(X_{\beta'})$. For each $\eta \in D$ there exists $\ell_\eta \in L_\beta$ such that $\eta \in H(\Q)\delta \ell_\eta H_\infty^\circ$. 
Via calculations directly analogous to those of Lemma~\ref{lem:action on coinvariants}, there is a map
\[
M_{\Gamma_{\beta', \eta}}\rightarrow M_{\Gamma_{\beta, \delta}}, \ \ (m)_\eta \mapsto \ell_{\eta}\ast (m)_\eta \defeq (\ell_\eta * m)_\delta.
\]
The action of $t_{\pri} \in \Delta_p$ yields an action of $U_{\pri}^\circ$ on $\hc{t}(S_K,\sM)$. 
Then we have the following direct generalisation of \cite[Thm.\ 2.2]{DJR18} to general coefficients (cf.\ \cite[Prop.~3.9]{BDJ17}):

\begin{proposition}\label{prop:evaluations changing beta} In the notation of the previous paragraph:
	\begin{itemize}
		\item[(i)] For each class $\Phi\in \hc{t}(S_K,\sM)$, we have 
		\disp{ \left[\mathrm{Ev}_{\beta,\delta}^{M} \circ U_\pri^\circ\right](\Phi)=	\sum_{\eta \in D} \ell_\eta\ast\mathrm{Ev}_{\beta',\eta}^{M}(\Phi).	
		}
		\item[(ii)] Let $N$ and $\kappa$ be as in Proposition~\ref{prop:ind of delta}. If $\beta_{\pri} \geqslant 1$, then as maps $\hc{t}(S_K,\sM) \to N$ we have
		\[
		\sum_{[\eta] \in \mathrm{pr}_{\beta,\pri}^{-1}([\delta])} \mathrm{Ev}_{\beta',[\eta]}^{M,\kappa}   =
		\mathrm{Ev}_{\beta,[\delta]}^{M,\kappa} \circ U_\pri^\circ.
		\]
	\end{itemize}
\end{proposition}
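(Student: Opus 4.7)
For part (i), the strategy is to unwind the Hecke operator $U_\pri^\circ=[K_\pri t_\pri K_\pri]$ via a coset decomposition and match it termwise with the sum over $\eta\in D$ using the geometric relationship between $X_{\beta'}$ and $X_\beta$. Fix a decomposition $K_\pri t_\pri K_\pri=\bigsqcup_j u_j t_\pri K_\pri$ with $u_j\in K_\pri$, so that by the definition of Hecke operators in \S\ref{sec:hecke operators 1}, the action of $U_\pri^\circ$ on $\hc{t}(S_K,\sM)$ is given by a weighted sum of pullbacks along the maps $[g]\mapsto[g u_j t_\pri]$ on $S_K$. The key geometric observation is that, over the component $X_\beta[\delta]$, the fiber of $\mathrm{pr}_{\beta,\pri}:X_{\beta'}\to X_\beta$ is naturally parametrised by (a quotient of) these cosets: concretely, the cosets $u_j t_\pri K_\pri$ correspond bijectively to the elements of $\mathrm{pr}_{\beta,\pri}^{-1}([\delta])$ via $u_j t_\pri\mapsto[\delta\ell_{\eta_j}]$, where $\ell_{\eta_j}\in L_\beta$ is the element of Definition~\ref{def:abstract evaluation}'s setup realising the representative $\eta_j\in D$.

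With this matching in hand, I would trace carefully through the four steps of $\mathrm{Ev}_{\beta,\delta}^M$ (pullback by $\iota_\beta$, twist by $\tau_\beta^\circ$, restriction to the component via $c_\delta^*$, coinvariants and cap with $\theta_\delta$) and check that each step for $\mathrm{Ev}_{\beta,\delta}^M\circ U_\pri^\circ$ reproduces the sum $\sum_\eta \ell_\eta * \mathrm{Ev}_{\beta',\eta}^M$. The essential compatibility is that $\iota_{\beta'}$ and $\iota_\beta$ differ by right-multiplication by $t_\pri$ in the $\pri$-component, so the extra twist in $\tau_{\beta'}^\circ=\tau_\beta^\circ\cdot[t_\pri*-]$ exactly absorbs the $t_\pri$ appearing in the Hecke correspondence. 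The twist $\ell_\eta*-$ on coinvariants then arises from the $u_j$ part of the coset representative, as in Lemma~\ref{lem:change of delta}. This is the same mechanism underlying \cite[Prop.~3.9]{BDJ17}, suitably adapted to the $Q$-parahoric setting.

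For part (ii), I would start from (i) and sum both sides over $[\eta]\in\mathrm{pr}_{\beta,\pri}^{-1}([\delta])$, choosing representatives $\eta=\delta\ell_\eta\in H(\A_f)$. Using $\mathrm{Ev}_{\beta,[\delta]}^{M,\kappa}=\delta*[\kappa\circ\mathrm{Ev}_{\beta,\delta}^M]$ from Proposition~\ref{prop:ind of delta} and the triviality of the $H(\Q)$ and $H_\infty^\circ$ actions on $N$ (so that $\eta*-$ and $\delta\ell_\eta*-$ agree on the image of $\kappa$), one obtains
\[
\sum_{[\eta]} \mathrm{Ev}_{\beta',[\eta]}^{M,\kappa}(\Phi) \;=\; \delta*\kappa\!\Bigl(\sum_{\eta\in D}\ell_\eta* \mathrm{Ev}_{\beta',\eta}^{M}(\Phi)\Bigr) \;=\; \delta*\kappa\bigl(\mathrm{Ev}_{\beta,\delta}^{M}(U_\pri^\circ\Phi)\bigr) \;=\; \mathrm{Ev}_{\beta,[\delta]}^{M,\kappa}(U_\pri^\circ\Phi),
\]
using (i) in the middle equality. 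The hypothesis $\beta_\pri>0$ enters precisely here: it guarantees that each fiber of $\mathrm{pr}_{\beta,\pri}$ over $[\delta]$ has exactly $\#(L_\pri^{\beta_\pri}/L_\pri^{\beta_\pri'}\cap t_\pri L_\pri^{\beta_\pri'} t_\pri^{-1})$ preimages and hence that $D$ is in bijection with the coset system indexing $U_\pri^\circ$ without any degeneracy coming from the stabiliser being enlarged at $\beta_\pri=0$ (where $L_\pri^0$ contains the full parahoric $J_\pri$ which is stabilised by additional Weyl elements).

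The main obstacle is the careful bookkeeping in part (i): matching the coset representatives $u_jt_\pri$ for the Hecke operator with the representatives $\ell_\eta$ parametrising the connected components of $X_{\beta'}$ above $X_\beta[\delta]$, and verifying that the twisting by $\xi t_p^\beta$ interacts correctly with the $*$-action through the four steps of the evaluation. This is largely combinatorial, but the interplay between the $\iota$-embedding, the matrix $\xi$, and the $*$-action on the parahoric distribution sheaves (with the integral normalisation discussed in Remark~\ref{rem:star vs dot}) means one must be scrupulous about conventions.
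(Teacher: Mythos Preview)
Your outline for part (ii) is correct and matches the paper's approach (the paper simply says ``(ii) follows from (i) directly following the proof of Proposition~\ref{prop:ind of delta}''). However, your plan for part (i) has a genuine gap.

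You claim that the Hecke cosets $u_j t_\pri K_\pri$ correspond bijectively to the elements of $\mathrm{pr}_{\beta,\pri}^{-1}([\delta])$. But the number of cosets in $J_\pri t_\pri J_\pri / J_\pri$ is $[J_\pri : J_\pri \cap t_\pri J_\pri t_\pri^{-1}] = q_\pri^{n^2}$ (coming from the upper-right $n\times n$ block), whereas $|D| = |\ker(\Cl(p^{\beta'}\m) \to \Cl(p^\beta\m))| \leqslant q_\pri$. So for $n>1$ no such bijection can exist. What \emph{is} true is that the covering $X_{\beta'}\to X_\beta$ has degree $[L_\beta:L_{\beta'}]=q_\pri^{n^2}$ (one checks $L_\pri^m=\{(h_1,h_2):h_1\equiv h_2\bmod\pri^m\}$), matching the Hecke degree; but restricted to each connected component, $X_{\beta'}[\eta]\to X_\beta[\delta]$ is a covering of degree $q_\pri^{n^2}/|D|\geqslant q_\pri^{n^2-1}$, not $1$. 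Your proposal is internally inconsistent on this point (you first say ``a quotient of these cosets'', then assert a bijection), and your explanation of the $\beta_\pri>0$ hypothesis is likewise built on this incorrect counting.

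The paper avoids coset bookkeeping entirely by working with the trace formalism. It writes $U_\pri^\circ = \mathrm{Tr}(\mathrm{pr}_2)\circ[t_\pri]\circ\mathrm{pr}_1^*$ on $\hc{t}(S_K,\sM)$ via the correspondence $S_K\leftarrow S_{K_0(\pri)}\to S_{K^0(\pri)}\to S_K$, and constructs an analogue $U_\pri^\circ=\mathrm{Tr}(\mathrm{pr}_{\beta,\pri})\circ[t_\pri]$ on the cycle cohomology $\hc{t}(X_{\beta'},\iota_{\beta'}^*\sM)\to\hc{t}(X_\beta,\iota_\beta^*\sM)$. A commutative diagram (this is where $\beta_\pri>0$ is used) shows these intertwine with $\iota_{\beta'}^*,\iota_\beta^*$. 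One then traces through the remaining steps $\tau_\beta^\circ,\, c_\delta^*,\, \mathrm{coinv},\, -\cap\theta$ of the evaluation, using Lemma~\ref{lem:change of delta} for the last square. The point is that after cap product with fundamental classes the trace over each component $X_{\beta'}[\eta]\to X_\beta[\delta]$ contributes a single term (since for a finite cover $p$ one has $(\mathrm{Tr}\,p)(\alpha)\cap\theta_\delta=\alpha\cap\theta_\eta$, with no degree factor), so the whole trace collapses to $\sum_{\eta\in D}\ell_\eta*-$. To salvage your coset approach you would have to group the $q_\pri^{n^2}$ cosets into $|D|$ packets and show each packet integrates to a single $\ell_\eta*$-term; this amounts to reproving the trace formalism by hand.
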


\begin{proof}
			We follow closely the proof of \cite[Proposition 3.4]{BDJ17}.  Recall $U_\pri^\circ$ from \S\ref{sec:hecke operators 1}, via $t_{\pri} = \iota(\varpi_{\pri}I_n,I_n) \in G(\A_f),$ and let $K_{0}(\pri)= K \cap t_{\pri}^{-1}Kt_{\pri}$ and $K^{0}(\pri)= t_{\pri}Kt_{\pri}^{-1}\cap K$. Denote the corresponding projections $\mathrm{pr}_1: S_{K_{0}(\pri)}\rightarrow S_K$ and $\mathrm{pr}_2: S_{K^{0}(\pri)}\rightarrow S_K.$ The action of $t_{\pri}$ on $M$ induces a morphism $[t_{\pri}]: \hc{t}(S_{K_0(\pri)},\sM) \rightarrow \hc{t}(S_{K^0(\pri)},\sM),$ 
		induced by the map $(g,m) \mapsto (gt_{\pri}^{-1}, t_{\pri} * m)$ on local systems, and then by definition we have
		\[
		U_\pri^\circ= \mathrm{Tr}(\mathrm{pr}_2)\circ [t_{\pri}] \circ \mathrm{pr}_1^{\ast}: \hc{t}(S_K,\sM) \rightarrow \hc{t}(S_K,\sM).
		\] 
		We give an analogue over automorphic cycles. Following the definition of $U_{\pri}^{\circ}$ we introduce maps
		\begin{align*}
			\iota_{\beta}^0 : X_{\beta'} &\longrightarrow S_{K^0(\pri)},\ \ \ \ \
			[h] \longmapsto [\iota(h)\xi t_p^{\beta}],\\
			\iota_{\beta', 0} : X_{\beta'} &\longrightarrow S_{K_0(\pri)},\ \ \ \ \
			[h] \longmapsto [\iota(h)\xi t_p^{\beta'}],
		\end{align*}
		which by definition fit into a commutative diagram
		\begin{equation}\label{eq:X Hecke 1}
			\xymatrix@C=15mm@R=7mm{
				S_K & S_{K^0(\pri)} \ar[l]^-{}&
				S_{K_0(\pri)}\ar[r]^{}\ar[l]_{\cdot \ t_{\pri}^{-1}}& S_K\\
				X_\beta \ar[u]^{\iota_{\beta}} &&
				X_{\beta'}\ar[u]^{\iota_{\beta', 0}}   \ar[ll]^-{\mathrm{pr}_{\beta,\pri}}\ar[ul]^{\iota_{\beta}^0}\ar[ru]_{\iota_{\beta'}}&.
			}
		\end{equation}
		Note that the left-hand quadrilateral is Cartesian.

		The action of $t_{\pri}$ on $M$ induces a morphism $\iota_{\beta', 0}^{\ast} \sM \rightarrow (\iota_{\beta}^0)^{\ast} \sM$  of sheaves over $X_{\beta'}$, giving
		\[
		[t_{\pri}]: \hc{t}(X_{\beta'}, \iota_{\beta', 0}^{\ast} \sM) \rightarrow \hc{t}(X_{\beta'}, (\iota_{\beta}^0)^{\ast} \sM). 
		\]
		Now define the analogue of $U_\pri^\circ$ on the cohomology of the automorphic cycles by
		\[ 
		U_\pri^\circ=  \mathrm{Tr}(\mathrm{pr}_{\beta,\pri})\circ [t_{\pri}]:  \hc{t}(X_{\beta'}, \iota_{\beta'}^{\ast} \sM)= \hc{t}(X_{\beta'}, \iota_{\beta', 0}^{\ast} \sM)  \rightarrow \hc{t}(X_{\beta}, \iota_{\beta}^{\ast} \sM) 
		\]
		From \eqref{eq:X Hecke 1}, the definition of $U_{\pri}^{\circ}$, and the fact that $\beta_{\pri}> 0$, we get another commutative diagram
		\[
		\xymatrix@C=15mm@R=7mm{
			\hc{t}(S_K,\sM) \ar[r]^{U_{\pri}^{\circ}}\ar[d]^{\iota_{\beta'}^\ast} & \hc{t}(S_K,\sM)  \ar[d]^{\iota_{\beta}^\ast}\\
			\hc{t}(X_{\beta'}, \iota_{\beta'}^{\ast} \sM) \ar[r]^{U_{\pri}^{\circ}}& \hc{t}(X_{\beta}, \iota_{\beta}^{\ast} \sM).
		}
		\]
		Tracing back each step of the construction of the evaluation maps, and using Lemma \ref{lem:change of delta} in the bottom square, we obtain the following commutative diagram, completing the proof of (i): 
		\[
		\xymatrix@C=17mm@R=7mm{
			\hc{t}(X_{\beta'}, \iota_{\beta'}^{\ast} \sM)\ar[d]^-{\tau^\circ_{\beta'}}\ar[r]^{U_{\pri}^{\circ}}& 	\hc{t}(X_{\beta}, \iota_{\beta}^{\ast} \sM)\ar[d]^-{\tau^\circ_{\beta}}\\
			\hc{t}(X_{\beta'}, \iota^{\ast} \sM)\ar[d]^-{\oplus_\eta c_{\eta}^\ast}\ar[r]^{\mathrm{Tr}(\mathrm{pr}_{\beta,\pri})} &
			\hc{t}(X_{\beta}, \iota^{\ast} \ar[d]^-{c_\delta^\ast}\sM)\\
			\oplus_{\eta\in D}\hc{t}(X_{\beta'}[\eta], c_{\eta}^\ast\iota^{\ast} \sM) \ar[r]^{\mathrm{Tr}(\mathrm{pr}_{\beta,\pri})}\ar[d]_-{\oplus_\eta (-\cap\theta_\eta) \circ \mathrm{coinv}_{\beta', \eta}}&
			\hc{t}(X_{\beta}[\delta], c_{\delta}^\ast\iota^{\ast} \sM) \ar[d]^-{(-\cap\theta_\delta) \circ \mathrm{coinv}_{\beta, \delta}}\\
			\oplus_{\eta \in D} M_{\Gamma_{\beta, \eta}}\ar[r]^{\sum_{\eta \in D}(\ell_{\eta}\ast -)} &
			M_{\Gamma_{\beta, \delta}}.
		}
		\]
		Finally (ii) follows from (i) directly following the proof of Proposition~\ref{prop:ind of delta}.
\end{proof}

\section{Classical evaluation maps and $L$-values}\label{sec:classical evaluations}

Let $K \subset G(\A_f)$ be an open compact  subgroup as in \S\ref{sec:abstract evaluations}. 
Now we take $M = V_\lambda^\vee(L)$, with $\Delta_p$ acting via the $*$-action defined in \S\ref{sec:slope-decomp}. We now rephrase the classical evaluation maps 
\[
\cE_{\beta,\delta}^{j,\sw} : \htc(S_K,\sV_{\lambda}^\vee(L)) \rightarrow L \subset \overline{\Q}_p
\]
of \cite[\S2.2]{DJR18} in the language of \S\ref{sec:abstract evaluation maps}. We give two main applications of these classical evaluation maps: firstly, they provide a criterion for the existence of a Shalika model (Proposition~\ref{prop:shalika non-vanishing}); and when such a model exists and $K  = K(\tilde\pi)$, they compute classical $L$-values (Theorem~\ref{thm:critical value}).

We use an opposite convention to \cite{DJR18}. They take $\pi$ to have weight $\lambda^\vee$ and use coefficients in $V_\lambda$. Our choices mean we replace $\sw$ from \emph{ibid}.\ with $-\sw$, and $\mu^\vee$ \emph{ibid}.\ with $\lambda$.

\subsection{Classical evaluation maps}\label{sec:classical evaluations 5.1}
We recap \cite[\S2.2]{DJR18}. The \emph{$p$-adic cyclotomic character} is
\begin{equation}\label{eq:cyc}
	\chi_{\cyc} : F^\times\backslash \A_F^\times/F_\infty^+ \longrightarrow \Zp^\times,  \qquad
	y \longmapsto \textstyle\prod_{\sigma \in \Sigma}\mathrm{sgn}(y_\sigma) \cdot |y_f| \textstyle\cdot \prod_{\pri|p} N_{F_{\pri}/\Q_p}(y_{\pri}).
\end{equation}	

\begin{definition}\label{def:action on Vjw}
	For $(j_1,j_2) \in \Z^2$, let $V^H_{(j_1,j_2)}$ be a 1-dimensional $L$-vector space with $H(\A)$-action
	\[
	(h_1,h_2) * v \defeq \chi_{\cyc}\big[\det(h_1)^{j_1}\det(h_2)^{j_2}\big]v, 
	\]
	for $h_1,h_2 \in \GL_n(\A_F)$ and $v \in V^H_{(j_1,j_2)}$. This is the set of $L$-points of the algebraic representation of $H$ of highest weight $(j_1,...,j_1,j_2,...,j_2)$. 
	Note that $H(\Q)$ and $H_\infty^\circ$ act trivially on $V^H_{(j_1,j_2)}$. For $(\ell_1,\ell_2) \in L_\beta$, as $|\det(\ell_i)_f| = 1$, we have
	\disp{
	(\ell_1,\ell_2) * v \defeq \mathrm{N}_{F/\Q}\big[\det(\ell_{1,p})^{j_1}\det(\ell_{2,p})^{j_2}\big]v.
	}
\end{definition}

The following branching law for $H \subset G$ gives a representation-theoretic description of $\mathrm{Crit}(\lambda)$.

\begin{lemma}\label{lem:hom line}
	Let $j \in \Z$. We have $j \in \mathrm{Crit}(\lambda)$ if and only if 
	\disp{
	\dim_L(\Hom_{H(\Zp)}(V_\lambda^\vee,V^H_{(j,-\sw-j)})) = 1.
}
\end{lemma}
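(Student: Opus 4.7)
The plan is to factor the Hom over $\Sigma$ and apply a classical branching law for the Levi embedding $\GL_n\times\GL_n\hookrightarrow \GL_{2n}$ restricted to characters.

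First I would split everything by real places. Choose $L$ large enough that $V_\lambda^\vee$ decomposes as a $G(\Zp)$-representation:
\[
V_\lambda^\vee(L) \cong \bigotimes_{\sigma \in \Sigma} V_{\lambda_\sigma^\vee}(L),
\]
where $G(\Zp) = \prod_{\pri\mid p}\GL_{2n}(\cO_\pri)$ acts on the $\sigma$-factor by projecting to $\GL_{2n}(\cO_{\pri(\sigma)})$ and applying $\sigma$. Since the $p$-adic cyclotomic character restricts on $\OFp^\times$ to $\mathrm{N}_{F/\Q}= \prod_{\sigma\in\Sigma}\sigma$, the defining character of $V^{(j,-\sw-j)}$ also factors as
\[
V^{(j,-\sw-j)} \cong \bigotimes_{\sigma\in\Sigma} L_{\chi_\sigma},
\]
where $\chi_\sigma$ is the $L$-valued character $(h_1,h_2)\mapsto \sigma\bigl(\det(h_1)^j\det(h_2)^{-\sw-j}\bigr)$ of $(\GL_n\times\GL_n)(\cO_{\pri(\sigma)})$. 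Since both actions are factor-by-factor, this gives
\[
\Hom_{H(\Zp)}\bigl(V_\lambda^\vee,\, V^{(j,-\sw-j)}\bigr) = \bigotimes_{\sigma\in\Sigma}\Hom_{\GL_n\times \GL_n}\bigl(V_{\lambda_\sigma^\vee},\, {\det}^j\otimes{\det}^{-\sw-j}\bigr).
\]

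Next I would compute each local Hom. Dualising, this equals the multiplicity in $V_{\lambda_\sigma}\bigl|_{\GL_n\times\GL_n}$ of the character $\det^{-j}\otimes \det^{\sw+j}$, i.e.\ the highest weight $\bigl((-j)^n,(\sw+j)^n\bigr)$. Purity makes the trace condition automatic: $n(-j)+n(\sw+j)=n\sw=\sum_i \lambda_{\sigma,i}$, matching the central character of $V_{\lambda_\sigma}$. By the classical branching law from $\GL_{2n}$ to the Levi $\GL_n\times\GL_n$, the multiplicity of $\det^a\otimes \det^b$ is the Littlewood--Richardson coefficient $c^{\lambda_\sigma}_{(a^n),(b^n)}$. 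For these rectangular indexing partitions the coefficient is $0$ or $1$, and equals $1$ precisely when $a+b=\sw$ and $\lambda_{\sigma,n+1}\leqslant a\leqslant \lambda_{\sigma,n}$. Setting $a=-j$ gives the required bound $-\lambda_{\sigma,n}\leqslant j\leqslant -\lambda_{\sigma,n+1}$.

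Putting the factors together, the tensor product is one-dimensional iff every factor is, iff the above inequality holds for every $\sigma\in\Sigma$, i.e.\ iff $j\in\mathrm{Crit}(\lambda)$; otherwise the Hom vanishes. The main technical input is the rectangular branching statement, namely that $c^{\lambda_\sigma}_{(a^n),(b^n)}\in\{0,1\}$ with the stated characterisation. This appears in closely related form in \cite{FJ93} and underlies the criticality criterion of \cite[\S6.1]{GR2}; alternatively, one may verify it directly by exhibiting the unique semistandard skew tableau of shape $\lambda_\sigma/(a^n)$ and content $(b^n)$ whose reverse reading word is Yamanouchi -- existence and uniqueness being forced by the column constraints coming from the rows of $\lambda_\sigma$ below row $n$.
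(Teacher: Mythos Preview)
Your argument is correct and takes a genuinely different route from the paper's. The paper does not factor over $\Sigma$ or invoke Littlewood--Richardson directly; instead it cites \cite[Prop.~6.3]{GR2} for the single case $j=0$ and then reduces an arbitrary $j$ to that case by the twist $\tilde\lambda=\lambda+j(1,\dots,1)$ (of purity weight $\sw+2j$), observing that $j\in\mathrm{Crit}(\lambda)\iff 0\in\mathrm{Crit}(\tilde\lambda)$ and that $\Hom_{H(\Zp)}(V_{\tilde\lambda}^\vee,V^{(0,-\sw-2j)})\cong\Hom_{H(\Zp)}(V_\lambda^\vee,V^{(j,-\sw-j)})$. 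Your approach is more explicit and essentially unpacks what lies behind the cited result in \cite{GR2}: the place-by-place factorisation and the rectangular branching bound are exactly the content of that proposition. What your version buys is transparency (one sees directly why the inequalities $-\lambda_{\sigma,n}\leqslant j\leqslant -\lambda_{\sigma,n+1}$ arise from the shape constraint $(a^n)\subseteq\lambda_\sigma$ together with the Yamanouchi condition), at the cost of having to justify the multiplicity-one statement for $c^{\lambda_\sigma}_{(a^n),(b^n)}$; the paper's twisting trick is shorter but pushes that work into the citation.
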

\begin{proof}
	By \cite[Prop.~6.3]{GR2}, we know $0 \in \mathrm{Crit}(\lambda)$ if and only if 
	\disp{
	\dim_L(\Hom_{H(\Zp)}(V_\lambda^\vee, V^H_{(0,-\sw)})) = 1.
}
	Note $L(\pi,j+\tfrac{1}{2})$ is critical if and only if $L(\pi\otimes |\cdot|^j,\tfrac{1}{2})$ is critical. Let 
	\disp{
	\tilde\lambda = \lambda + j(1,...,1),
} of purity weight $\sw + 2j$; then $j \in \mathrm{Crit}(\lambda)$ if and only if $0 \in \mathrm{Crit}(\tilde\lambda)$, and in this case 
		\[
			1 = \dim_L(\Hom_{H(\Zp)}(V_{\tilde\lambda}^\vee, V^H_{(0,-\sw-2j)}) = \dim_L(\Hom_{H(\Zp)}(V_\lambda^\vee,V^H_{(j,-\sw-j)})).\qedhere
	\]
\end{proof}	
Recall  the map
\disp{
\tau_\beta^\circ\circ \iota_\beta^* : \hc{t}(S_K,\sV_{\lambda}^\vee) \to \hc{t}(X_\beta, \iota^*\sV_{\lambda}^\vee)
}
from \S\ref{sec:abstract evaluations}. For $j \in \mathrm{Crit}(\lambda)$, fix a basis $\kappaj$ of $\Hom_{H(\Zp)}(V_\lambda^\vee,V^H_{(j,-\sw-j)})$. This induces a homomorphism
\[
\kappaj : \htc(X_\beta, \iota^*\sV_\lambda^\vee) \longrightarrow \htc(X_\beta,\sV^H_{(j,-\sw-j)}),
\]
where $\sV^H_{(j,-\sw-j)}$ is the local system defined as in \S\ref{sec:non-arch ls}. Let $\delta \in H(\A_f)$. As in \S\ref{sec:abstract evaluations}, applying $(-\cap \theta_{\delta})\circ \mathrm{coinv}_{\beta,\delta} \circ c_\delta^*$ and choosing a basis $u_j$ of $V^H_{(j,-\sw-j)}$ gives a map
	\[
		\htc(X_\beta, \sV^H_{(j,-\sw-j)}) \xrightarrow{\ \mathrm{coinv}_{\beta,\delta} \circ c_\delta^* \ } \htc(\Gamma_{\beta,\delta}\backslash \cX_H,\Z)\otimes V^H_{(j,-\sw-j)} \labelisorightarrow{\ (- \cap \theta_\delta) \otimes \mathrm{id} \ } V^H_{(j,-\sw-j)} \cong L.
	\]
Then in \cite[(33)]{DJR18}, the authors define 
\disp{
\cE_{\beta,\delta}^{j,\sw} \defeq  (-\cap \theta_{\delta})\circ \mathrm{coinv}_{\beta,\delta} \circ c_\delta^* \circ (\kappaj)_*\circ \tau_\beta^\circ \circ \iota_\beta^*.
}
The choice of basis $u_j$ of $V^H_{(j,-\sw-j)}$ identifies $V^H_{(j,-\sw-j)}$ with $L$, and we get a map $\kappaj^\circ$ of $H(\zp)$-modules defined via
\begin{align}\label{eq:kappa circ}
	\kappaj^\circ : V_\lambda^\vee(L) &\longrightarrow L,\qquad \kappaj(\mu) = \kappaj^\circ(\mu)\cdot u_j \ \ \text{for all} \ \mu \in V_\lambda^\vee(L). 
\end{align}
As $\Gamma_{\beta,\delta}$ acts trivially on $V^H_{(j,-\sw-j)}$, $\kappaj$ and $\kappaj^\circ$ factor through $(V_\lambda^\vee(L))_{\Gamma_{\beta,\delta}}$. It is easy to see that $\kappaj$ commutes with restricting to components, passing to coinvariants, and integrating against the fundamental class. We deduce the following description of $\cE_{\beta,\delta}^{j,\sw}$ via \S\ref{sec:abstract evaluations}:

\begin{lemma}\label{lem:abstract reformulation}
	We have $\cE_{\beta,\delta}^{j,\sw} = \kappaj^\circ \circ \mathrm{Ev}_{\beta,\delta}^{V_\lambda^\vee}$.
\end{lemma}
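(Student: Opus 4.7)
The plan is to unpack both sides of the claimed identity and observe that $\kappaj$ commutes with each stage of the evaluation recipe subsequent to $\tau_\beta^\circ\circ\iota_\beta^*$. By the construction of \cite[(33)]{DJR18}, $\cE_{\beta,\delta}^{j,\sw}$ is obtained by inserting the pushforward $(\kappaj)_*$ after $\tau_\beta^\circ\circ\iota_\beta^*$ in the definition of $\mathrm{Ev}^{V_\lambda^\vee}_{\beta,\delta}$. It therefore suffices to check that $\kappaj$ commutes with the three remaining operations $c_\delta^*$, $\coinv_{\beta,\delta}$ and $(-\cap\theta_\delta)$, and that once $V^{(j,-\sw-j)}$ is identified with $L$ via the chosen basis $u_j$, the resulting induced morphism $(V_\lambda^\vee)_{\Gamma_{\beta,\delta}}\to L$ is precisely $\kappaj^\circ$ from \eqref{eq:kappa circ}. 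This reduces the proof to the formal statement of Lemma~\ref{prop:pushforward}, adapted to local systems on $X_\beta$ rather than on $S_K$.

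The first step is to verify that $\kappaj$ genuinely induces a morphism of local systems $\iota^*\sV_\lambda^\vee\to\sV^{(j,-\sw-j)}$ on $X_\beta$. Both $L_\beta$-actions factor through the natural projection $L_\beta\twoheadrightarrow H(\Zp)$: for $V_\lambda^\vee$ this is because only the $p$-component of the composite $L_\beta \hookrightarrow H(\A_f)\to K_p$ acts nontrivially, and for $V^{(j,-\sw-j)}$ it follows from Definition~\ref{def:action on Vjw} together with the fact that $\chi_{\cyc}$ is trivial on finite-place units away from $p$. The required equivariance therefore reduces to the $H(\Zp)$-equivariance of $\kappaj$, which holds by construction.

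Once this is in place, commutativity with $c_\delta^*$ is the functoriality of pullback of local systems, and commutativity with $\coinv_{\beta,\delta}$ is automatic: since $\Gamma_{\beta,\delta}\subset H(\Q)$ acts trivially on $V^{(j,-\sw-j)}$, the map $\kappaj$ descends to $(V_\lambda^\vee)_{\Gamma_{\beta,\delta}} \to V^{(j,-\sw-j)} = (V^{(j,-\sw-j)})_{\Gamma_{\beta,\delta}}$. Commutativity with $(-\cap\theta_\delta)\otimes\mathrm{id}$ is automatic from $L$-linearity of the cap product. The identification of the resulting descended map with $\kappaj^\circ$ is then the definition \eqref{eq:kappa circ}, and composing these compatibilities yields the claimed equality.

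The argument is entirely formal and there is no real obstacle, once one observes the key point that $\kappaj$ is applied \emph{after} the twisting step $\tau_\beta^\circ\circ\iota_\beta^*$; consequently $\kappaj$ need only intertwine the $L_\beta$-actions on source and target, and not the full $\Delta_p$-actions (which $V^{(j,-\sw-j)}$ does not carry).
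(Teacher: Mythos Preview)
Your proof is correct and follows essentially the same approach as the paper, which simply records (in the sentence preceding the lemma) that $\kappaj$ commutes with restricting to components, passing to coinvariants, and integrating against the fundamental class. You have supplied the details the paper omits, in particular the verification that the $L_\beta$-actions on both source and target factor through $H(\Zp)$, which is exactly the point needed for $\kappaj$ to induce a map of local systems on $X_\beta$.
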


	Let $\cE_{\beta,[\delta]}^{j,\sw} \defeq \delta * \cE_{\beta,\delta}^{j,\sw}$; by Propositions~\ref{prop:ind of delta} and \ref{prop:evaluations changing beta} this is independent of $\delta$ (cf.\ \cite[(33)]{DJR18}).


Recall 
\disp{
\pi_0(X_\beta) = \Cl(p^\beta\m)\times \Cl(\m)
}
from \eqref{eq:component group}. Write $\mathrm{pr}_1, \mathrm{pr}_2$ for the projections of $\pi_0(X_\beta)$ onto the first and second factors respectively, and let $\mathrm{pr}_{\beta}$ denote the natural composition
\begin{equation}\label{eq:pr_beta}
	\mathrm{pr}_{\beta} : \Cl(p^\beta\m) \times \Cl(\m) \xrightarrow{ \ \mathrm{pr}_1 \ } \Cl(p^\beta\m) \longrightarrow \Cl(p^\beta).
\end{equation}

\begin{definition}\label{def:ev chi}
	Let $\eta_0$ be any finite order character of $\Cl(\m)$, and $\mathbf{x}\in \Cl(p^\beta)$. Define an $\eta_0$-averaged evaluation map
			\[
		\cE_{\beta,\mathbf{x}}^{j,\eta_0} : \hc{t}(S_K,\sV_\lambda^\vee(L)) \to L, \qquad 	\cE_{\beta,\mathbf{x}}^{j,\eta_0} \defeq \sum_{[\delta] \in \mathrm{pr}_\beta^{-1}(\mathbf{x})} \eta_0^{-1}\big(\mathrm{pr}_2([\delta])\big)\ \cE_{\beta,[\delta]}^{j,\sw}.
		\]
	In \cite{DJR18} this is denoted $\cE_{\beta,\mathbf{x}}^{j,\eta}$, where $\eta = \eta_0|\cdot|^{\sw}$; as later $\sw$ will vary whilst $\eta_0$ will not, we continue to use a superscript $\eta_0$ instead of $\eta$ throughout, with $\sw$ implicit in the source.
	
	Let $\chi$ be a finite order Hecke character of conductor (exactly) $p^{\beta'}$, for $\beta' = (\beta_{\pri}')_{\pri|p}$. Let $\beta_{\pri} = \mathrm{max}(\beta_{\pri}', 1)$ and $\beta = (\beta_{\pri})_{\pri|p}$. Then $\chi$ induces a character on $\Cl(p^\beta)$. Let $L(\chi)$ be the smallest extension of $L$ containing $\chi(\Cl(p^\beta))$. For $j\in \mathrm{Crit}(\lambda)$, define 
	\begin{align}\label{eq:classical evaluation}
		\cE_{\chi}^{j,\eta_0} = \sum_{\mathbf{x} \in \Cl(p^\beta)}& \chi(\mathbf{x})\ \cE_{\beta,\mathbf{x}}^{j,\eta_0}\ :\ \hc{t}(S_K, \sV_\lambda^\vee(L)) \longrightarrow L(\chi),\\
		\phi &\longmapsto \sum_{[\delta] \in \pi_0(X_\beta)} \chi\big(\mathrm{pr}_\beta([\delta])\big)\cdot  \eta_0^{-1}\big(\mathrm{pr}_2([\delta])\big)\cdot\left( \delta*\left[\kappaj^\circ \circ \mathrm{Ev}_{\beta,\delta}^{V_{\lambda}^\vee}(\phi)\right]\right).\notag
	\end{align}
\end{definition}

\begin{remark}\label{rem:classical diagram}
	Summarising,  $\cE_{\chi}^{j,\eta_0}$ is the composition
	\begin{equation}\label{eq:explicit classical}
		\xymatrix@R=10mm@C=5mm{
			\hc{t}(S_K,\sV_\lambda^\vee) \ar@/^3pc/[rrrrr]_-{\oplus \cE_{\beta,[\delta]}^{j,\sw}} \ar@/_3pc/[rrrrrrr]^-{\oplus \cE_{\beta,\mathbf{x}}^{j,\eta_0}} \ar[rr]^-{\oplus\mathrm{Ev}_{\beta,\delta}^{V_\lambda^\vee}} && \displaystyle\bigoplus_{[\delta]}(V_\lambda^\vee)_{\Gamma_{\beta,\delta}} \ar[rrr]^-{v \mapsto \delta * \kappaj^\circ(v)} &&&
			\displaystyle\bigoplus_{[\delta]} L \ar[rr]^-{\oplus\Xi_{\mathbf{x}}^{\eta_0}} && 
			\displaystyle\bigoplus_{\mathbf{x}} L \ar[rrr]^-{\ell \mapsto \Sigma \chi(\mathbf{x})\ell_\mathbf{x}} &&&
			L,
		}
	\end{equation}
	where the sums are over $[\delta] \in \pi_0(X_\beta)$ or $\mathbf{x}\in \Cl(p^\beta)$, and $\Xi_{\mathbf{x}}^{\eta_0}$ is the $\eta_0$-averaging map 
	\[
	\Xi_{\mathbf{x}}^{\eta_0} : (m_{[\delta]})_{[\delta]} \longmapsto \sum_{[\delta] \in \mathrm{pr}_{\beta}^{-1}(\mathbf{x})} \eta_0^{-1}(\mathrm{pr}_2([\delta])) \cdot m_{[\delta]}.
	\]
\end{remark}

\subsection{Compatible choices of bases: branching laws for $H \subset G$}\label{sec:choice of basis}

Let $j \in \mathrm{Crit}(\lambda)$. The map $\cE_{\chi}^{j,\eta_0}$ depends on choices of bases 
\[
u_j \text{ of }V^H_{(j,-\sw-j)} \cong L \qquad \text{and} \qquad\kappaj \text{ of }\mathrm{Hom}_{H(\Zp)}(V_\lambda^\vee(L), V^H_{(j,-\sw-j)}),
\]
which we combined into a single choice of non-zero $\kappaj^\circ$ in \eqref{eq:kappa circ}. At present, we have made a separate, independent choice for each $j$. For $p$-adic interpolation it is essential to make all these choices compatibly. We now do this via branching laws.

\subsubsection{Idea: critical integers via branching laws}
Dualising Lemma \ref{lem:hom line} gives a reinterpretation of the set $\mathrm{Crit}(\lambda)$ in terms of \emph{branching laws} for $H \subset G$, describing characters of $H$ that appear in $V_{\lambda}|_H$ with multiplicity 1. For each $j \in \mathrm{Crit}(\lambda)$, we obtain a line $V^H_{(-j,\sw+j)} \subset V_\lambda|_H$. Our key idea for $p$-adic interpolation is to reinterpret this again in terms of smaller groups; instead of considering branching laws for $H \subset G$, one can consider branching laws for $G_n = \mathrm{Res}_{\cO_F/\Z}\GL_n \subset H$, embedded diagonally. Indeed, recall $\lambda$ is pure with purity weight $\sw$, and $V_\lambda^H$ is the irreducible representation of $H$ of highest weight $\lambda$; then as $G_n$-representations we have
	\begin{align}\label{eq:V_lambda^H}
		V_\lambda^H|_{G_n} \cong V_{\lambda'}^{G_n}\otimes (V_{\lambda'}^{G_n})^\vee \otimes (\mathrm{N}_{F/\Q}\circ\det)^{\sw},
	\end{align}
where $\lambda' = (\lambda_1,\dots,\lambda_n)$. As $V_{\lambda'}^{G_n} \otimes (V_{\lambda'}^{G_n})^\vee$ contains the trivial representation with multiplicity 1, $V_\lambda^H|_{G_n}$ contains $(\mathrm{N}_{F/\Q}\circ\det)^{\sw}$ with multiplicity 1. In Notation \ref{not:v_lambda 1} and Lemmas \ref{lem:v_lambda non-vanishing} and \ref{lem:diagonal action}, we show that the $\#\mathrm{Crit}(\lambda)$ \emph{different} lines $V^H_{(-j,\sw+j)}$ in $V_\lambda|_{H}$ (given by Lemma \ref{lem:hom line}) can all be collapsed onto this \emph{single} line in $V_\lambda^H|_{G_n}$. Choosing a generator of this single line thus allows us to align generators of the distinct lines $V^H_{(-j,\sw+j)}$ for $j \in \mathrm{Crit}(\lambda)$.

\subsubsection{Passing from $H \subset G$ to $G_n \subset H$}\label{sec:G to H}

Let $j \in \mathrm{Crit}(\lambda)$, and 
\disp{
\kappaj \in \mathrm{Hom}_{H(\Zp)}(V_\lambda^\vee(L), V^H_{(j,-\sw-j)})
}
and 
\disp{
u_j \in V^H_{(j,-\sw-j)}
}
be auxiliary bases.
We have a dual basis 
\disp{
u_j^\vee \text{ of }V^H_{(-j,\sw+j)} \cong (V^H_{(j,-\sw-j)})^\vee.
}
Dualising $\kappaj$ gives a map
\[
\kappaj^\vee : V^H_{(-j,\sw+j)} \longrightarrow (V_\lambda^\vee(L))^\vee \cong V_\lambda(L)
\]
of $H(\Zp)$-modules. Then $\kappaj^\vee(u_j^\vee) \in V_\lambda(L)$ generates the unique $H(\zp)$-submodule isomorphic to $V^H_{(-j,\sw+j)}$ inside $V_\lambda(L)|_{H(\Zp)}$.

\begin{notation}\label{not:v_lambda 1}
	Viewing $\kappaj^\vee(u_j^\vee) \in V_\lambda(L)$ as an element of $\mathrm{Ind}_{Q^-(\Zp)}^{G(\Zp)} V_\lambda^H(L)$ by Lemma \ref{lem:induction transitive}, let 
	\[
	\sv_{\lambda,j} \defeq \kappaj^\vee(u_j^\vee)\left[\smallmatrd{I_n}{I_n}{0}{I_n}\right] \in V_\lambda^H(L).
	\]
\end{notation}

Let 
\disp{
N_Q^\times(\Zp) \defeq \left\{ \smallmatrd{1}{X}{0}{1} \in N_Q(\Zp) : X \in G_n(\Zp)\right\} \subset N_Q(\Zp).
}
\begin{lemma}\label{lem:v_lambda non-vanishing}\begin{itemize}\setlength{\itemsep}{0pt}
		\item[(i)] For each $\smallmatrd{1}{X}{0}{1} \in N_Q^\times(\Zp)$, we have
		\[
		\kappaj^\vee(u_j^\vee)\left[\smallmatrd{1}{X}{0}{1}\right] = \big[\mathrm{N}_{F/\Q}\circ \det(X)]^j  \bigg(\left\langle \smallmatrd{X}{}{}{1}\right\rangle_\lambda \cdot\sv_{\lambda,j}\bigg).
		\] 
		\item[(ii)] The vector $\sv_{\lambda,j} \in V_\lambda^H(L)$ is non-zero.
	\end{itemize}
\end{lemma}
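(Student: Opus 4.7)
The plan exploits two transformation laws satisfied by $F \defeq \kappaj^\vee(u_j^\vee) \in V_\lambda(L)$, viewed via the isomorphism of Lemma~\ref{lem:induction transitive} as an element of $\mathrm{Ind}_{Q^-(\Zp)}^{G(\Zp)} V_\lambda^H(L)$, i.e.\ as a function $G(\Zp)\to V_\lambda^H(L)$. On the one hand, $F$ satisfies the induced-representation identity $F(n_Q^- h g) = \langle h\rangle_\lambda F(g)$ from \eqref{eq:parahoric alg transform}. On the other hand, since $\kappaj^\vee$ is $H(\Zp)$-equivariant and $u_j^\vee \in V^{(-j,\sw+j)}$, the $G(\Zp)$-action by right translation yields $F(g\iota(h_1,h_2)) = [\mathrm{N}_{F/\Q}(\det h_1)]^{-j}[\mathrm{N}_{F/\Q}(\det h_2)]^{\sw+j} F(g)$ for $(h_1,h_2)\in H(\Zp)$, using that for elements supported purely at $p$ the cyclotomic character $\chi_{\cyc}$ from \eqref{eq:cyc} reduces to $\mathrm{N}_{F/\Q}\circ\det$.

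For (i), I would apply both laws to the matrix identity $\smallmatrd{1}{I_n}{0}{I_n}\,\iota(X^{-1},I_n) = \smallmatrd{X^{-1}}{I_n}{0}{I_n} = \iota(X^{-1},I_n)\,\smallmatrd{1}{X}{0}{1}$. Evaluating the leftmost side via the right-translation law gives $[\mathrm{N}_{F/\Q}\circ\det(X)]^j F(\smallmatrd{1}{I_n}{0}{I_n}) = [\mathrm{N}_{F/\Q}\circ\det(X)]^j\, \sv_{\lambda,j}$; evaluating the rightmost side via the induced-representation law gives $\langle \iota(X^{-1},I_n)\rangle_\lambda\, F(\smallmatrd{1}{X}{0}{1})$. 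Equating the two and inverting $\langle\cdot\rangle_\lambda$ yields (i).

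For (ii), I would proceed by contradiction: suppose $\sv_{\lambda,j}=0$. Then (i) forces $F(\smallmatrd{1}{X}{0}{1}) = 0$ for every $X\in \GL_n(\OFp)=G_n(\Zp)$. Since $F\in L[G]$ is algebraic, its restriction to $N_Q \cong \mathrm{Res}_{\cO_F/\Z}M_n$ is a polynomial morphism to $V_\lambda^H$; and $\GL_n(\OFp)$ is $p$-adically open in $M_n(\OFp)$, hence Zariski-dense in $N_Q$ (the zero set of a non-zero polynomial is nowhere dense in the $p$-adic topology). So $F|_{N_Q}\equiv 0$; the induced-representation relation propagates this vanishing to the big cell $N_Q^- H N_Q$, and Zariski-density of the big cell in $G$ then forces $F=0$. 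This contradicts $F = \kappaj^\vee(u_j^\vee)\neq 0$, since $\kappaj^\vee$ is a nonzero linear map out of the one-dimensional space $V^{(-j,\sw+j)}$, hence injective, and $u_j^\vee\neq 0$.

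The main obstacle is bookkeeping around duality and conventions: (a) on $V^{(-j,\sw+j)} = (V^{(j,-\sw-j)})^\vee$ the $*$-action twists by the inverse character, so the sign of the exponent flips when passing from $u_j$ to $u_j^\vee$; (b) one must check that $\chi_{\cyc}$ restricted to $H(\Zp)$ embedded purely at $p$ reduces to $\mathrm{N}_{F/\Q}\circ\det$, a direct computation from \eqref{eq:cyc}; and (c) the $H(\Zp)$-action on $\mathrm{Ind}_{Q^-(\Zp)}^{G(\Zp)} V_\lambda^H$ via right translation in the source must be reconciled with left multiplication by $\langle h\rangle_\lambda$ on $V_\lambda^H$ through the induced-representation relation. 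Once these signs and conventions are aligned, the remaining calculations are routine.
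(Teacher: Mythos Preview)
Your proposal is correct and follows essentially the same approach as the paper: the matrix identity you use is a rearrangement of the paper's factorisation $\smallmatrd{1}{X}{0}{1} = \smallmatrd{X}{}{}{1}\smallmatrd{1}{1}{0}{1}\smallmatrd{X^{-1}}{}{}{1}$, and both proofs apply the same two transformation laws (the induced-representation identity \eqref{eq:parahoric alg transform} on the left, $H(\Zp)$-equivariance of $\kappaj^\vee$ on the right). For (ii) the paper passes from $N_Q^\times(\Zp)$ to $N_Q(\Zp)$ by Zariski-density, then to $J_p$ by parahoric decomposition, then to $G(\Zp)$ by Zariski-density of $J_p$---which is the same as your big-cell argument.
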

\begin{proof} 
	(i) For $\smallmatrd{1}{X}{0}{1} \in N_Q^\times(\Zp)$, we have
		\begin{align*}
			\kappaj^\vee(u_j^\vee)\left[\smallmatrd{1}{X}{0}{1}\right] &= 	\kappaj^\vee(u_j^\vee)\left[\smallmatrd{X}{}{}{1}\smallmatrd{1}{1}{0}{1}\smallmatrd{X^{-1}}{}{}{1}\right] = 	\left\langle\smallmatrd{X}{}{}{1}\right\rangle_\lambda \cdot \bigg(\kappaj^\vee(u_j^\vee)\big[\smallmatrd{1}{1}{0}{1}\smallmatrd{X^{-1}}{}{}{1}\big]\bigg),
		\end{align*}
	where the last equality follows by \eqref{eq:parahoric alg transform}. Moreover, $\smallmatrd{X^{-1}}{}{}{1} \in H(\Zp)\subset G(\Zp)$ acts on $\kappaj^\vee(u_j^\vee)$ by right translation, and $\kappaj^\vee$ is $H(\Zp)$-equivariant, whence we see
			\begin{align*}
			\kappaj^\vee(u_j^\vee)\left[\smallmatrd{1}{1}{0}{1}\smallmatrd{X^{-1}}{}{}{1}\right]	&= \bigg(\smallmatrd{X^{-1}}{}{}{1} \cdot \kappaj^\vee(u_j^\vee)\bigg)\left[\smallmatrd{1}{1}{0}{1}\right] = \kappaj^\vee\big(\smallmatrd{X^{-1}}{}{}{1}\cdot u_j^\vee\big)\left[\smallmatrd{1}{1}{0}{1}\right]\\
			& = (\mathrm{N}_{F/\Q}\circ\det(X))^{j} \cdot \kappaj^\vee(u_j^\vee)\left[\smallmatrd{1}{1}{0}{1}\right] = (\mathrm{N}_{F/\Q}\circ\det(X))^{j} \cdot \sv_{\lambda,j},
		\end{align*}
	using that $u_j^\vee \in V^H_{(-j,\sw+j)}$. Combining these equalities proves (i).
	
	(ii) Suppose $\sv_{\lambda,j} = 0$. By (i), we see 
	\disp{
	\kappaj^\vee(u_j^\vee)|_{N_Q^\times(\Zp)} = 0.
}
	Since $N_Q^\times(\Zp)$ is Zariski-dense in $N_Q(\Zp)$, we deduce $\kappaj^\vee(u_j^\vee)$ vanishes on $N_Q(\Zp)$, hence on $J_p$ by the parahoric decomposition; but by Zariski-density of $J_p \subset G(\Zp)$ this forces $\kappaj^\vee(u_j^\vee) = 0$. This is absurd by its definition.
\end{proof}

Alternative choices of $\kappaj$ or $u_j$ scale $\sv_{\lambda,j}$ by $L^\times$-multiple. As $\sv_{\lambda,j}$ is non-zero, we see choosing $\kappaj$ and $u_j$ is equivalent to fixing a basis of the line $L \cdot \sv_{\lambda,j}$. This line is independent of $j$:

\begin{lemma}\label{lem:diagonal action}
	\begin{itemize}\setlength{\itemsep}{0pt}
		\item[(i)] Let $h \in G_n(\Zp)$. Then 
		\disp{
		\left\langle\smallmatrd{h}{}{}{h}\right\rangle_\lambda \cdot  \sv_{\lambda,j} = (\mathrm{N}_{F/\Q}\circ\det(h))^{\sw}\ \sv_{\lambda,j}.
	}
		\item[(ii)] The line $L \cdot \sv_{\lambda,j} \subset V_\lambda^H(L)$ is independent of $j$.
	\end{itemize}
\end{lemma}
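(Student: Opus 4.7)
For part (i), the strategy is a direct computation using the transformation law \eqref{eq:parahoric alg transform} together with the $H(\Zp)$-equivariance of $\kappaj^\vee$. The key observation is the elementary matrix identity
\[
\smallmatrd{h}{0}{0}{h}\smallmatrd{I_n}{I_n}{0}{I_n}=\smallmatrd{I_n}{I_n}{0}{I_n}\smallmatrd{h}{0}{0}{h},
\]
which lets us trade left-multiplication by the diagonal element (governed by $\langle\cdot\rangle_\lambda$ via \eqref{eq:parahoric alg transform}) for right-multiplication (governed by the $G(\Zp)$-action $(\gamma\cdot\mathcal{F})(g)=\mathcal{F}(g\gamma)$ on $\mathrm{Ind}_{Q^-(\Zp)}^{G(\Zp)}V_\lambda^H(L)$). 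Then the $H(\Zp)$-equivariance of $\kappaj^\vee$ transfers this right-action to the action of $\smallmatrd{h}{}{}{h}$ on $u_j^\vee\in V^{(-j,\sw+j)}$. By Definition~\ref{def:action on Vjw}, this latter action is the dual of the $*$-action on $V^{(j,-\sw-j)}$, namely multiplication by $\chi_{\cyc}[\det(h)^{-j}\det(h)^{\sw+j}]=\chi_{\cyc}(\det(h))^{\sw}$. Since $\det(h)\in\OFp^\times$, one checks directly from the formula \eqref{eq:cyc} defining $\chi_{\cyc}$ that $\chi_{\cyc}(\det(h))=\mathrm{N}_{F/\Q}(\det(h))$, yielding the claimed scalar $(\mathrm{N}_{F/\Q}\circ\det(h))^{\sw}$.

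For part (ii), the plan is to leverage (i) and the branching law for $G_n\subset H$ sketched just before Notation~\ref{not:v_lambda 1}. By (i), the non-zero vector $\sv_{\lambda,j}$ (non-vanishing being Lemma~\ref{lem:v_lambda non-vanishing}(ii)) lies in the $(\mathrm{N}_{F/\Q}\circ\det)^{\sw}$-isotypic component of $V_\lambda^H(L)$ for the diagonal embedding $G_n\hookrightarrow H$. The purity hypothesis on $\lambda$ gives $\lambda''=\sw(1,\ldots,1)-w_n(\lambda')$, so $V_{\lambda''}^{G_n}\cong V_{\lambda'}^{G_n,\vee}\otimes(\mathrm{N}_{F/\Q}\circ\det)^{\sw}$, and hence
\[
V_\lambda^H(L)\big|_{G_n}\cong V_{\lambda'}^{G_n}\otimes V_{\lambda'}^{G_n,\vee}\otimes(\mathrm{N}_{F/\Q}\circ\det)^{\sw}\cong\mathrm{End}_L(V_{\lambda'}^{G_n})\otimes(\mathrm{N}_{F/\Q}\circ\det)^{\sw}.
\]
Under the adjoint $G_n$-action, the trivial subrepresentation of $\mathrm{End}_L(V_{\lambda'}^{G_n})$ is precisely the scalar endomorphisms (by Schur's lemma, using irreducibility of $V_{\lambda'}^{G_n}$), and hence is $1$-dimensional. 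Therefore the $(\mathrm{N}_{F/\Q}\circ\det)^{\sw}$-isotypic component of $V_\lambda^H(L)\big|_{G_n}$ is one-dimensional, and must equal $L\cdot\sv_{\lambda,j}$. This description is manifestly independent of $j$, proving (ii).

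The only real subtlety in the plan is bookkeeping: matching the algebraic $\langle\cdot\rangle_\lambda$-action on $V_\lambda^H(L)$ with the $*$-action on $V^{(j_1,j_2)}$ defined via $\chi_{\cyc}$, which requires the elementary check that $\chi_{\cyc}$ restricted to $\OFp^\times$ equals $\mathrm{N}_{F/\Q}$. Apart from this, both parts reduce to a routine computation (for (i)) and a standard branching argument using purity and Schur's lemma (for (ii)); I do not anticipate any serious obstacle.
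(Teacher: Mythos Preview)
Your proposal is correct and follows essentially the same approach as the paper. The only minor difference is that in (i) you route the $H(\Zp)$-action on $u_j^\vee$ through the adelic character $\chi_{\cyc}$ from Definition~\ref{def:action on Vjw} and then observe it collapses to $\mathrm{N}_{F/\Q}$ on $\OFp^\times$; the paper instead uses the algebraic action $\mathrm{N}_{F/\Q}(\det(h_1)^{-j}\det(h_2)^{\sw+j})$ directly, but these are the same and your check is harmless. For (ii) your argument via $\mathrm{End}_L(V_{\lambda'}^{G_n})$ and Schur is exactly the branching law the paper invokes from \eqref{eq:V_lambda^H}.
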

\begin{proof}
	(i) By definition: $\langle \cdot \rangle_\lambda$ acts on $\kappaj^\vee(u_j^\vee)$ by left translation; the $\cdot$-action of $H(\Zp)$ on $\kappaj^\vee(u_j^\vee)$ is by right translation; and $(h_1,h_2) \in H(\Zp)$ acts on $u_j^\vee$ by $\mathrm{N}_{F/\Q}(\det(h_1)^{-j}\det(h_2)^{\sw+j})$. Then
	\begin{align*}	
		\left\langle\smallmatrd{h}{}{}{h}\right\rangle_\lambda \cdot \sv_{\lambda,j} &= \kappaj^\vee(u_j^\vee)\left[\smallmatrd{h}{}{}{h}\smallmatrd{1}{1}{0}{1}\right]=
		\kappaj^\vee(u_j^\vee)\left[\smallmatrd{1}{1}{0}{1}\smallmatrd{h}{}{}{h}\right] \\
		&=
		\big(\smallmatrd{h}{}{}{h} \cdot \kappaj^\vee(u_j^\vee)\big)\left[\smallmatrd{1}{1}{0}{1}\right] =	\kappaj^\vee\big(\smallmatrd{h}{}{}{h} \cdot u_j^\vee\big)\left[\smallmatrd{1}{1}{0}{1}\right] \\
		&=
		(\mathrm{N}_{F/\Q}\circ \det(h))^{\sw}  \kappaj^\vee(u_j^\vee)\left[\smallmatrd{1}{1}{0}{1}\right] =
		(\mathrm{N}_{F/\Q}\circ \det(h))^{\sw} \sv_{\lambda,j}. 
	\end{align*}
	In the first equality, we use 	 \eqref{eq:parahoric alg transform} for $\kappa_{\lambda,j}^\vee(u_j^\vee) \in V_\lambda$ (via Lemma \ref{lem:induction transitive}).
	
	(ii) As after \eqref{eq:V_lambda^H}, the restriction $V_\lambda^H|_{G_n}$ contains $(\mathrm{N}_{F/\Q}\circ\det)^{\sw}$ as a unique summand. This summand visibly has no dependence on $j$, but by (i), for each $j$ it coincides with $L \cdot \sv_{\lambda,j}$.
\end{proof}

Thus evaluation at $\smallmatrd{1}{1}{0}{1}$ collapses all the lines $V^H_{(-j,\sw+j)} \subset V_\lambda|_H$ onto the \emph{same} line in $V_\lambda^H|_{G_n}$.

\subsubsection{From $G_n\subset H$ back to $H\subset G$}\label{sec:from G_n to H}
We now use \S\ref{sec:G to H} to align our initial choices of $\kappa_{\lambda,j}^\circ$.

\begin{notation}\label{not:v_lambda}
	Fix a generator $\sv_\lambda$ of $(\mathrm{N}_{F/\Q}\circ\det)^{\sw} \subset V_\lambda^H|_{G_n}$. We take 
	\[
	\sv_\lambda \in V_\lambda^H(\cO_L)
	\]
	optimally integrally normalised (in the sense that $\varpi_L^{-1}\sv_\lambda \notin V_\lambda^H(\cO_L)$).
\end{notation}

\begin{definition}\label{def:kappa_j} 
	Using Lemma \ref{lem:diagonal action}(ii), rescale $\kappaj$ and $u_j$ so that 
	\[
	\sv_{\lambda,j} = (-1)^{dnj} \sv_\lambda.
	\]
	Then let $\kappaj^\circ: V_\lambda^\vee(L) \longrightarrow L$ be the map determined by the property \eqref{eq:kappa circ}.
\end{definition}
From the definitions, and using duality, we can describe $\kappaj^\circ$ as the map
\begin{align}\label{eq:explicit kappa}
	\kappaj^\circ : V_\lambda^\vee(L) &\longrightarrow L,\qquad \mu \longmapsto \mu[\kappaj^\vee(u_j^\vee)].
\end{align}	
It is easy to see $\kappaj^\circ$ is uniquely determined by these properties and the (single) choice of $\sv_\lambda$
We now give an alternative description of $\kappaj^\circ$ better suited to $p$-adic interpolation.

\begin{lemma}\label{lem:nu_j}\begin{itemize}\setlength{\itemsep}{0pt}
		\item[(i)] For each $j$, there exists a unique 
		\disp{
		[\nuj : G(\Zp) \to V_\lambda^H(L)] \in V_\lambda(L)
	}
		with 
		\begin{equation}\label{eq:xi_j}
			\nuj\left[\smallmatrd{1}{X}{0}{1}\right] = (-1)^{dnj} \big[\mathrm{N}_{F/\Q}\circ \det(X)]^j\bigg( \left\langle \smallmatrd{X}{}{}{1}\right\rangle_\lambda \cdot   \sv_\lambda\bigg)
		\end{equation}
		for $\smallmatrd{1}{X}{0}{1} \in N_Q^\times(\Zp)$. 
		\item[(ii)] For $(h_1,h_2) \in H(\Zp)$, we have 
		\disp{
		\smallmatrd{h_1}{}{}{h_2} \cdot \nuj = \mathrm{N}_{F/\Q}[\det(h_1)^{-j}\det(h_2)^{\sw+j}]\nuj.
	}
		\item[(iii)] The map 
		\disp{
		\kappaj^\circ : V_\lambda^\vee(L) \longrightarrow L
	}
		from Definition \ref{def:kappa_j} is 
		given by $\mu \mapsto \mu(\nuj).$
	\end{itemize}
\end{lemma}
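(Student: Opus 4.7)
The plan is to take $\nuj := \kappaj^\vee(u_j^\vee) \in V_\lambda(L)$, where $\kappaj$ and $u_j$ have been rescaled as in Definition~\ref{def:kappa_j}; then parts (i), (ii), (iii) should all follow essentially immediately from the preceding lemmas. The main work has already been done; what remains is bookkeeping.

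For existence in (i), I would combine Lemma~\ref{lem:v_lambda non-vanishing}(i) with the normalization $\sv_{\lambda,j} = (-1)^{dnj}\sv_\lambda$: substituting into Lemma~\ref{lem:v_lambda non-vanishing}(i) gives
$$\nuj\left[\smallmatrd{1}{X}{0}{1}\right] = (-1)^{dnj}\big[\mathrm{N}_{F/\Q}\circ\det(X)\big]^j\bigg(\left\langle\smallmatrd{X}{}{}{1}\right\rangle_\lambda\cdot \sv_\lambda\bigg),$$
which is exactly~\eqref{eq:xi_j}. For uniqueness, I would argue by Zariski density: any $\nu \in V_\lambda(L) = \mathrm{Ind}_{Q^-(\Zp)}^{G(\Zp)}V_\lambda^H(L)$ is algebraic on $G$, and, by the transformation law~\eqref{eq:parahoric alg transform}, is determined by its restriction to any set of coset representatives for $Q^-(\Zp)\backslash G(\Zp)$. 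Since the big cell $Q^- \cdot N_Q$ is open and Zariski dense in $G$, and $N_Q^\times(\Zp) \subset N_Q(\Zp)$ is in turn Zariski dense (being the complement of the closed subscheme $\{\det X = 0\}$), prescribing $\nu$ on $N_Q^\times(\Zp)$ pins it down uniquely.

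Part (ii) is the $H(\Zp)$-equivariance of $\kappaj^\vee$. For $(h_1,h_2)\in H(\Zp)$,
$$\smallmatrd{h_1}{}{}{h_2}\cdot\nuj = \kappaj^\vee\!\left(\smallmatrd{h_1}{}{}{h_2}\cdot u_j^\vee\right),$$
and since $u_j^\vee$ generates $V^{(-j,\sw+j)}$, Definition~\ref{def:action on Vjw} gives the action on it as multiplication by $\chi_{\cyc}\!\big[\det(h_1)^{-j}\det(h_2)^{\sw+j}\big]$. The only small point to note is that, restricted to $H(\Zp)$ (where the archimedean and away-from-$p$ finite components are trivial), the cyclotomic character $\chi_{\cyc}$ coincides with $\mathrm{N}_{F/\Q}$ on determinants, yielding precisely the scalar in the statement.

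Finally, part (iii) is the dualisation of the definition: by the explicit description~\eqref{eq:explicit kappa}, $\kappaj^\circ(\mu) = \mu\big[\kappaj^\vee(u_j^\vee)\big] = \mu(\nuj)$. There is no serious obstacle anywhere in this argument; the whole lemma is a direct consequence of Lemma~\ref{lem:v_lambda non-vanishing}, Lemma~\ref{lem:diagonal action}, and the normalization in Definition~\ref{def:kappa_j}, assembled into the more invariant form of $\nuj$ that will be used later for $p$-adic interpolation.
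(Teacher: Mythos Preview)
Your proposal is correct and follows essentially the same approach as the paper: both take $\nuj = \kappaj^\vee(u_j^\vee)$, derive \eqref{eq:xi_j} from Lemma~\ref{lem:v_lambda non-vanishing}(i) combined with the normalisation in Definition~\ref{def:kappa_j}, deduce uniqueness by Zariski-density of $N_Q^-(\Zp)H(\Zp)N_Q^\times(\Zp)$ in $G(\Zp)$, obtain (ii) from $H(\Zp)$-equivariance of $\kappaj^\vee$ together with $u_j^\vee \in V^{(-j,\sw+j)}$, and read off (iii) from \eqref{eq:explicit kappa}. Your extra remark about $\chi_{\cyc}$ reducing to $\mathrm{N}_{F/\Q}$ on $H(\Zp)$ is correct and makes explicit a point the paper leaves implicit.
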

\begin{proof}
	(i) We take 
	\disp{
	\nuj \defeq \kappaj^\vee(u_j^\vee).
}
	Then \eqref{eq:xi_j} is exactly Lemma \ref{lem:v_lambda non-vanishing}(i). Note the values of $\nuj$ on $N_Q^-(\Zp)H(\Zp)N_Q^\times(\Zp)$ are determined by \eqref{eq:xi_j} and the transformation property of $\mathrm{Ind}_{Q^-(\Zp)}^{G(\Zp)} V_\lambda^H(L)$; and this is Zariski-dense in $G(\Zp)$. Hence $\nuj$ is unique with this property.
	
	(ii) Since $\kappaj^\vee$ is $H(\Zp)$-equivariant and $u_j^\vee \in V^H_{(-j,\sw+j)}$, we compute that 
			\begin{align*}
			\smallmatrd{h_1}{}{}{h_2} \cdot \nuj &= \kappaj^\vee\left(\smallmatrd{h_1}{}{}{h_2}\cdot u_j^\vee\right) = \mathrm{N}_{F/\Q}[\det(h_1)^{-j}\det(h_2)^{\sw+j}]\nuj.
		\end{align*}	
	
	(iii) This follows directly from \eqref{eq:explicit kappa}. 
\end{proof}

\subsubsection{Comparison with previous work}

In \cite[(40)]{DJR18}, the authors choose a lowest weight vector $v_0 \in V_\lambda^\vee(L)$, and use this choice and Lie theory to define an integral lattice 
\disp{
V_\lambda^\vee(\cO_L)^{\mathrm{DJR}} \subset V_\lambda^\vee(L)
}
(which may be different from the lattice $V_\lambda(\cO_L)$ defined in \S\ref{sec:algebraic weights}). For $j \in \mathrm{Crit}(\lambda)$, they construct a map 
\[
\kappa_j^{\mathrm{DJR}} : V_\lambda^\vee(\cO_L)^{\mathrm{DJR}} \to V^H_{(j,-\sw-j)}(\cO_L) \cong \cO_L,
\]
normalised so that $\kappa_j^{\mathrm{DJR}}(\xi \cdot v_0) = 1$ (which they prove is possible in results analogous to \S\ref{sec:G to H}). This map is denoted $\kappa_j^\circ$ \emph{ibid}. 

We freely identify $\kappa_j^{\mathrm{DJR}}$ with its scalar extension $V_\lambda^\vee(L) \to L$. By Lemma \ref{lem:hom line}, for each $j \in \mathrm{Crit}(\lambda)$ the maps $\kappaj^\circ$ and $\kappa_j^{\mathrm{DJR}}$ agree up to $L^\times$-multiple. Fix $j_0 \in \mathrm{Crit}(\lambda)$; we can align the choice of $v_0$ (and hence the integral structure $V_\lambda^\vee(\cO_L)^{\mathrm{DJR}}$) in \cite{DJR18} so that $\kappajn^\circ = \kappa_{j_0}^\DJR$. Then:

\begin{proposition}\label{prop:comp to DJR}
	For each $j \in \mathrm{Crit}(\lambda)$, we have $\kappaj^\circ = \kappa_j^\DJR$. 
\end{proposition}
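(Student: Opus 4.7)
The plan is to exploit the parallel rigidity of the two constructions. By Lemma~\ref{lem:hom line}, the space $\mathrm{Hom}_{H(\Zp)}(V_\lambda^\vee(L), V^{(j,-\sw-j)}) \cong L$ is one-dimensional, so for each $j \in \mathrm{Crit}(\lambda)$ there exists a unique scalar $c_j \in L^\times$ with $\kappaj^\circ = c_j \kappa_j^\DJR$. The task is to show $c_j = 1$ for all $j$, given that by the chosen alignment $c_{j_0} = 1$.

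The key observation is that both families $\{\kappaj^\circ\}_j$ and $\{\kappa_j^\DJR\}_j$ are \emph{coherent systems}, each uniquely determined by a single one-dimensional datum independent of $j$: on our side this is the vector $\sv_\lambda \in V_\lambda^H(L)$ from Notation~\ref{not:v_lambda}, while on the DJR side it is the lowest weight vector $v_0 \in V_\lambda^\vee(L)$ fixed in \cite[(40)]{DJR18}. First I would make the comparison at the level of these generators. Via Lemma~\ref{lem:nu_j}(iii), the map $\kappaj^\circ$ equals evaluation at $\nuj \in V_\lambda(L)$, and $\nuj$ is characterized by the transformation law \eqref{eq:xi_j} together with the value $\sv_\lambda$ at $\smallmatrd{1}{1}{0}{1}$. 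I would write $\kappa_j^\DJR$ similarly as evaluation at some $\nuj^\DJR \in V_\lambda(L)$ built from $v_0$ by the parallel branching argument of \emph{loc.\ cit.}

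Next, both $\nuj$ and $\nuj^\DJR$ generate the (necessarily identical) unique $H(\Zp)$-submodule of $V_\lambda(L)$ isomorphic to $V^{(-j,\sw+j)}$, so $\nuj = c_j \nuj^\DJR$. To extract the $j$-dependence, I would collapse both to the diagonal $G_n$-line as in \S\ref{sec:G to H}: by Lemma~\ref{lem:diagonal action}(ii), the value $\nuj\bigl[\smallmatrd{1}{1}{0}{1}\bigr] = (-1)^{dnj}\sv_\lambda$, and by the analogous statement on the DJR side one obtains an expression of $\nuj^\DJR\bigl[\smallmatrd{1}{1}{0}{1}\bigr]$ in the same one-dimensional line $L\cdot\sv_\lambda \subset V_\lambda^H(L)$, with all $j$-dependence absorbed into an explicit normalization factor. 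The equality $c_{j_0} = 1$ pins down the ratio of the two generating data, and coherence of the two systems then forces $c_j = 1$ for all remaining $j \in \mathrm{Crit}(\lambda)$.

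The main obstacle will be verifying that the sign $(-1)^{dnj}$ in Definition~\ref{def:kappa_j} matches precisely the $j$-dependent factor implicit in the DJR construction, since this originates from the unipotent translate $\xi$ of Definition~\ref{def:xi} and the conventions on lowest weight vectors. Fortunately, the choice of $\xi$ in Definition~\ref{def:xi} is identical to that used in \cite{DJR18}, so a careful bookkeeping of the transformation laws on $N_Q^\times(\Zp)$ on both sides should yield the desired equality uniformly in $j$.
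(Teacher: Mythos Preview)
Your setup is correct and matches the paper: both $\kappaj^\circ$ and $\kappa_j^\DJR$ are evaluation at vectors $\nuj,\ \nuj^\DJR$ in the same line $V^{(-j,\sw+j)}\subset V_\lambda|_H$, so $\nuj = c_j\nuj^\DJR$ for some $c_j\in L^\times$, and $c_{j_0}=1$ by alignment. The gap is in how you pass from $c_{j_0}=1$ to $c_j=1$ for all $j$.

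You invoke ``coherence'': that on the DJR side, $\nuj^\DJR\bigl[\smallmatrd{1}{1}{0}{1}\bigr]$ lands in $L\cdot\sv_\lambda$ with an explicit $j$-dependent factor that you will match against $(-1)^{dnj}$ by bookkeeping. But the DJR construction is via Lie theory (raising operators applied to a lowest weight vector $v_0$), not via the parahoric induction picture of Lemma~\ref{lem:induction transitive}. There is no evident way to read off the value of $\nuj^\DJR$ at a specific unipotent element from that description, and ``careful bookkeeping of transformation laws on $N_Q^\times(\Zp)$'' does not apply to $\nuj^\DJR$ because the DJR side was never expressed in those terms. So your last paragraph correctly identifies the obstacle but does not supply an argument.

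The paper circumvents this entirely. Rather than computing $\nuj^\DJR$ at a point, it imports the key $p$-adic congruence \cite[Prop.~2.6]{DJR18}, which says $(\xi t_p^\beta)^{-1}*v_j^\DJR \equiv (\xi t_p^\beta)^{-1}*v_{j_0}^\DJR \pmod{p^\beta}$ for all $\beta\geqslant 1$; this was proved in \cite{DJR18} using their own Lie-theoretic machinery. The paper then computes the same quantity $(\xi t_p^\beta)^{-1}*\nuj$ directly from \eqref{eq:xi_j} and shows it is congruent to $\langle\smallmatrd{-1}{}{}{w_n}\rangle_\lambda\cdot\sv_\lambda \pmod{p^{\beta-\beta_0}}$, visibly independent of $j$. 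Matching the two congruences and letting $\beta\to\infty$ forces $c_j=1$. The $(-1)^{dnj}$ emerges precisely because $\xi$ sends $\smallmatrd{1}{0}{0}{1}$ to a matrix with $X\equiv -I_n$, and $\mathrm{N}_{F/\Q}\circ\det(-I_n)^j = (-1)^{dnj}$. So the sign is verified indirectly through this $p$-adic limit, not by any direct comparison of normalizations.
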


\begin{proof}
	Dualising the map $\kappa_j^{\mathrm{DJR}}$, and evaluating at $1 \in \cO_L$, one obtains an element 	$v_j^{\mathrm{DJR}} \in V_\lambda(\cO_L)^\DJR$ such that $\kappa_j^\DJR(\mu) = \mu(v_j^{\mathrm{DJR}}).$ 	Moreover 
	\[
	v_j^{\mathrm{DJR}} \in V^H_{(-j,\sw+j)} \subset V_{\lambda}|_H,
	\]
	so $v_j^{\mathrm{DJR}}$ is an $L^\times$-multiple of $v_{\lambda,j}$ from Lemma \ref{lem:nu_j}. In particular, there exists $c_j \in \cO_L$ such that either $v_{\lambda,j} = c_j v_{j}^{\mathrm{DJR}}$ or $c_j v_{\lambda,j} = v_j^{\mathrm{DJR}}$. We assume the latter; the proof is identical for the former. By the above and Lemma \ref{lem:nu_j}(iii), it suffices to prove that $c_j = 1$ for each $j \in \mathrm{Crit}(\lambda)$. By assumption $c_{j_0} = 1$.
	
	By \cite[Prop.\ 2.6]{DJR18}, for all $j \in \mathrm{Crit}(\lambda)$, $\mu \in V_\lambda^\vee(\cO_L)$, and $\beta \in \Z_{\geqslant 1}$, we have
	\[
	\mu[(\xi^{-1} t_p^\beta) * v_j^{\mathrm{DJR}}] \equiv \mu[(\xi^{-1} t_p^\beta) * v_{j_0}^{\mathrm{DJR}}] \newmod{p^\beta \cO_L}.
	\]
	As this holds for all $\mu$, by considering $\cO_L$-bases we deduce 
	\[
	(\xi^{-1} t_p^\beta) * \big[v_j^{\mathrm{DJR}} - v_{j_0}^{\mathrm{DJR}}\big] \in p^\beta V_\lambda(\cO_L)^\DJR.
	\]
	Any two integral lattices in $V_\lambda(L)$ are commensurable, so there exists $\beta_0 \in \Z_{\geqslant 0}$ such that
	\[
	(\xi^{-1} t_p^\beta) * \big[v_j^{\mathrm{DJR}} - v_{j_0}^{\mathrm{DJR}}\big] \in p^{\beta-\beta_0} V_\lambda(\cO_L),
	\]
	for all $\beta \geqslant \beta_0$, and in particular, our normalisations ensure we have
	\begin{equation}\label{eq:c_j 1}
		(\xi^{-1} t_p^\beta) * \big[c_jv_{\lambda,j} - v_{\lambda,j_0}\big] \in p^{\beta-\beta_0} V_\lambda(\cO_L).
	\end{equation}
	Thus, considering this element in $\mathrm{Ind}_{Q^-(\Zp)}^{G(\Zp)}V_\lambda^H(L)$ via Lemma \ref{lem:induction transitive}, for all $g \in G(\Zp)$ we have
	\begin{equation}\label{eq:c_j 2}
		(\xi^{-1} t_p^\beta) * \big[c_jv_{\lambda,j} - v_{\lambda,j_0}\big](g) \in p^{\beta-\beta_0} V_\lambda^H(\cO_L).
	\end{equation}

	Recall $\sv_{\lambda} \in L[H]$ (from Notation \ref{not:v_lambda}) is polynomial in the coordinates of $H$; after possibly enlarging $\beta_0$, we may assume that $\varpi_L^{\beta_0}\sv_\lambda \in \cO_L[H]$, that is, the coefficients are all integral. As the action $\langle\cdot\rangle_\lambda$ on $\sv_\lambda$ is by right-translation, one deduces easily that if $h,h' \in H(\Zp)$ with $h \equiv h' \newmod{p^\beta}$, then 
	\disp{
	\langle h\rangle_\lambda \cdot \varpi_L^{\beta_0}\sv_\lambda \equiv \langle h'\rangle_\lambda\cdot \varpi_L^{\beta_0}\sv_\lambda \newmod{p^{\beta}V_\lambda^H(\cO_L)},
}
	so
	\begin{equation}\label{eq:beta_0}
		\langle h\rangle_\lambda \cdot \sv_\lambda \equiv \langle h'\rangle_\lambda\cdot \sv_\lambda \newmod{p^{\beta-\beta_0}V_\lambda^H(\cO_L)}.
	\end{equation}

	Now, by \S\ref{sec:slope-decomp} note the action of $\xi^{-1} t_p^\beta$ on $ V_\lambda$ is induced by the action
	\begin{equation}\label{eq:inducing action}
		\smallmatrd{1}{X}{0}{1} \longmapsto \left[t_p^\beta \smallmatrd{1}{X}{0}{1}t_p^{-\beta}\right]\xi^{-1} = \smallmatrd{1}{p^\beta X}{0}{1}\smallmatrd{1}{-1}{0}{w_n} = \smallmatrd{1}{}{}{w_n}\smallmatrd{1}{-1 + p^\beta Xw_n}{0}{1}
	\end{equation}
	on $\smallmatrd{1}{X}{0}{1} \in N_Q^\times(\Z_p)$. In particular, we see
	\begin{align*}
		(\xi^{-1} t_p^{\beta} * \nuj)&[\smallmatrd{1}{X}{0}{1}] = \langle\smallmatrd{1}{}{}{w_n}\rangle_\lambda \nuj\left(\smallmatrd{1}{-1 + p^\beta Xw_n}{0}{1}\right) \ \ \ \   \text{(by defn.\ and \eqref{eq:inducing action})} \\
		&= (-1)^{dnj} (\mathrm{N}_{F/\Q}\circ \det(-1 + p^\beta Xw_n))^j \left\langle\smallmatrd{-1 + p^\beta Xw_n}{}{}{w_n}\right\rangle_\lambda \cdot   \sv_\lambda\\
		&\equiv \left\langle\smallmatrd{-1}{}{}{w_n}\right\rangle_\lambda \cdot \sv_\lambda \newmod{p^{\beta-\beta_0}V_\lambda^H(\cO_L)}   \ \ \ \ \ \ \ \ \ \ \ \  \text{(by \eqref{eq:beta_0}},
	\end{align*}
	which is visibly independent of $j$. Substituting this into \eqref{eq:c_j 2}, we obtain
	\begin{equation}
		(c_j-1) \cdot \big[\left\langle\smallmatrd{-1}{}{}{w_n}\right\rangle_\lambda \cdot \sv_\lambda\big] \in p^{\beta-\beta_0} V_\lambda^H(\cO_L).
	\end{equation}
	As 
	\disp{
	\langle\smallmatrd{-1}{}{}{w_n}\rangle_\lambda \cdot v_\lambda^H \neq 0,
}
	and this holds for all $\beta \geqslant \beta_0,$ we deduce $c_j = 1$, completing the proof.
\end{proof}

In particular, all of our choices, and hence the maps $\cE_{\chi}^{j,\eta_0}$, coincide with those in \cite{DJR18}, so we may freely use the later results \emph{ibid}.\ on the specific values of $\cE_{\chi}^{j,\eta_0}$.

\begin{remark}\label{rem:different xi}
	Proposition \ref{prop:comp to DJR} would fail without the scalar $(-1)^{dnj}$ in Definition \ref{def:kappa_j}. If we had defined $\xi = \smallmatrd{1}{-w_n}{0}{w_n}$ when defining $\mathrm{Ev}_{\beta,\delta}^{M}$, we would not need this scalar. However we choose $\xi = \smallmatrd{1}{w_n}{0}{w_n}$, as chosen in \cite{DJR18}, for compatibility with their results.
\end{remark}

We now compare with the alignment of Jiang--Sun--Tian, who in \cite{JST} proved period relations at infinity for RASCARs. They fix a highest weight vector $v_\infty \in V_\lambda^\vee$, let $u = \smallmatrd{1}{-w_n}{w_n}{1}$, and normalise the branching law\footnote{To translate between this statement and ours here: observe that the torus defined in \cite[(3.14)]{JST} is $uTu^{-1}$, where $T$ is the usual torus; so the space they denote $(F_{\mathbb{K}}^\vee)^{\mathfrak{u}_{\mathbb{K}}}$ is $u\cdot (V_{\lambda}^\vee)^N$ here. But the space $(V_{\lambda}^\vee)^N$ of $N$-invariants is the highest weight space, so their $v_0$ is $u\cdot v_\infty$ here.} $\kappa_j^{\mathrm{JST}} : V_\lambda^\vee \to V_{j,-\sw-j}^H$ so that $\kappa_j^{\mathrm{JST}}(u\cdot v_\infty) = 1$. Again, note all the $\kappa_j^{\mathrm{JST}}$ depend only on the choice of $v_\infty$, which is well-defined up to scalar. Then we have:

\begin{proposition}\label{prop:comp to JST}
We may choose $v_\infty$ such that
\disp{
\kappa_{\lambda,j}^\circ = (\det w_n)^{jd} \cdot \kappa_j^{\mathrm{JST}}
}
for each $j \in \mathrm{Crit}(\lambda)$.
\end{proposition}
\begin{proof}
By Proposition \ref{prop:comp to DJR} it suffices to show $\kappa_j^{\mathrm{JST}} = (\det w_n)^{jd} \cdot \kappa_j^{\DJR}.$ As both lie in the same line, we know at least there exists $C_j \neq 0$ such that $\kappa_j^{\DJR} = C_j \kappa_j^{\mathrm{JST}}$. We want $C_j = (\det w_n)^{jd}$. 

Note that 
\begin{equation}\label{eq:xi vs u}
	\xi = \smallmatrd{1}{}{}{w_n}u\smallmatrd{1}{-w_n}{0}{1}\smallmatrd{1_n}{}{}{-2^{-1}\cdot 1_n} w_{2n}
\end{equation}
Note $w_{2n}\cdot v_0$ is a highest weight vector. Thus any $t \in T$ acts on $w_{2n}\cdot v_0$ as $\lambda^\vee(t)$, and any $n \in N$ acts trivially. Letting both sides of \eqref{eq:xi vs u} act on $v_0$ thus gives
\[
	\xi\cdot v_0 = \lambda^\vee\left[\smallmatrd{1_n}{}{}{-2^{-1}\cdot 1_n}\right] \smallmatrd{1}{}{}{w_n}u \cdot w_{2n} \cdot v_0  = (\det w_n)^{\sw d}\smallmatrd{1}{}{}{w_n}u \cdot v_\infty,
\]
where we define $v_\infty \defeq (\det w_n)^{-\sw d}\lambda^\vee\left[\smallmatrd{1_n}{}{}{-2^{-1}\cdot 1_n}\right] \cdot w_{2n}\cdot v_0$. Then
\begin{align*}
1 = \kappa_j^{\mathrm{DJR}}&\left[\xi\cdot v_0\right] = \det(w_n)^{\sw d}\kappa_j^{\mathrm{DJR}}\left[\smallmatrd{1}{}{}{w_n}u \cdot v_\infty\right] = (\det w_n)^{-jd} \kappa_j^{\DJR}\left[u\cdot v_\infty\right]\\
& = (\det w_n)^{-jd} C_j\kappa_j^{\mathrm{JST}}[u \cdot v_\infty] = (\det w_n)^{-jd}C_j.
\end{align*}
For this choice of $v_\infty$ we have $C_j = (\det w_n)^{jd}$, as required.
\end{proof}

\subsection{Non-vanishing of evaluation maps and Shalika models}
We now show how classical evaluation maps can detect existence of Shalika models. Let $\pi$ be any RACAR with attached maximal ideal $\m_\pi \subset \cH'$ as in \S\ref{sec:unramified H}. Let $\lambda$ be the weight of $\pi$, with purity weight $\sw$. 
\begin{proposition}\label{prop:shalika non-vanishing}
	Suppose there exists $\phi \in \hc{t}(S_K,\sV_{\lambda}^\vee(\overline{\Q}_p))^\epsilon_{\m_\pi}$ such that 
	\begin{equation}\label{eq:non-vanishing 1}
		\cE_{\chi}^{j,\eta_0}(\phi) \neq 0
	\end{equation}
	for some $\chi, j$ and $\eta_0$. Then $\pi$ admits a global $(\eta,\psi)$-Shalika model, where $\eta = \eta_0|\cdot|^{\sw}$.
\end{proposition}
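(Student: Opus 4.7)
The plan is to unwind the classical evaluation $\cE_\chi^{j,\eta_0}$ into a global period integral of Friedberg--Jacquet type, and then to invoke the well-known characterization of the $(\eta,\psi)$-Shalika model in terms of non-vanishing of such periods.

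First, by Proposition~\ref{prop:non-canonical}, the space $\hc{t}(S_K,\sV_\lambda^\vee(\overline{\Q}_p))^\epsilon_{\m_\pi}$ is (non-canonically, and Hecke-equivariantly) isomorphic to $\pi_f^K$ via a map of the type $\Theta_{i_p}^{K,\epsilon}$ from \eqref{eq:cohomology class p-adic}, built from the archimedean class $\Xi_\infty^\epsilon$ and the complex period $\Omega_\pi^\epsilon$. By linearity, we may assume without loss of generality that $\phi$ corresponds to a pure tensor $\varphi = \Xi_\infty^\epsilon\otimes \varphi_f\in \pi$ and reduce to showing that the resulting cohomological pairing is a global period of $\varphi$.

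Next, I would unfold the composition of Remark~\ref{rem:classical diagram}, tracking each step explicitly on the cohomology class associated to $\varphi$. The pull-back $\iota_\beta^\ast$ restricts $\varphi$ along the automorphic cycles; the twist $\tau_\beta^\circ$ contributes the translate $\iota(h)\xi t_p^\beta$; the collapsing map $\kappa_{\lambda,j}^\circ$ becomes, via Lemma~\ref{lem:nu_j}(iii) and \eqref{eq:xi_j}, evaluation against the branching vector $\nu_{\lambda,j}$, which supplies the cyclotomic twist $|{\cdot}|^{j+\tfrac12}$ on $\det$; the integration against $\theta_\delta$ produces an integral over the real points of $H$; and the sum over $[\delta]\in \pi_0(X_\beta)$ reassembles the local integrals into an adelic integral, while the $\eta_0$-averaging and sum over $\chi(\mathbf{x})$ turn into the corresponding character on $H(\A_f)$. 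The outcome, up to a non-zero explicit constant (gathering the Gauss sum, volume factors, etc.), is a twisted Friedberg--Jacquet-type period
\[
\cE_\chi^{j,\eta_0}(\phi)\ =\ C\cdot \int_{Z_G(\A)H(\Q)\backslash H(\A)} \varphi\!\left(\iota(h)\,\xi t_p^\beta\right)\cdot \chi(\det h_1)\,\eta^{-1}(\det h_2)\,\bigl|\tfrac{\det h_1}{\det h_2}\bigr|^{j+\tfrac12} dh,
\]
with $C\ne 0$. This is the precise cohomology-to-automorphic dictionary underlying the $L$-value computation of \cite{GR2,DJR18}, but performed without assuming the existence of a global Shalika model.

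Finally, I would invoke the classical characterization due to Jacquet--Shalika and Friedberg--Jacquet: for a cuspidal automorphic representation $\pi$ of $\GL_{2n}(\A_F)$, the existence of a global $(\eta,\psi)$-Shalika model is equivalent to the non-vanishing, for some $\varphi\in\pi$ and some finite order Hecke character, of a twisted Friedberg--Jacquet period of the form displayed above. Concretely, one applies the standard unfolding of the Friedberg--Jacquet integral against the Fourier expansion of the cusp form along the unipotent radical of $Q$: this produces the Shalika functional $\cS_\psi^\eta$ from \eqref{eq:shalika integral}, and its non-vanishing on $\pi$ follows at once from the hypothesis $\cE_\chi^{j,\eta_0}(\phi)\neq 0$.

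The main technical obstacle is the unfolding step, where one must carefully book-keep the adelic integrand at $p$ coming from the translate by $\xi t_p^\beta$ in Definition~\ref{def:xi}, the branching-law normalisations fixed in \S\ref{sec:choice of basis}, and the character sums over $\pi_0(X_\beta)=\Cl(p^\beta\m)\times \Cl(\m)$; the final rewriting as a single adelic Friedberg--Jacquet integral (rather than a sum of local pieces) is what allows the classical non-vanishing criterion to be applied verbatim.
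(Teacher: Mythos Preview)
Your overall strategy is the same as the paper's: unwind $\cE_\chi^{j,\eta_0}$ into a Friedberg--Jacquet period integral $\Psi(j+\tfrac12,\varphi',\chi,\eta)$ as in \cite[Prop.~2.3]{FJ93}, and then appeal to \cite[Prop.~2.2]{FJ93} (non-vanishing of the inner integral in the unfolding of $\Psi$ forces the Shalika model). So the architecture is right.

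There is, however, a genuine imprecision in your handling of the archimedean component. You write ``$\phi$ corresponds to a pure tensor $\varphi=\Xi_\infty^\epsilon\otimes\varphi_f\in\pi$'' and then integrate $\varphi(\iota(h)\xi t_p^\beta)$. But $\Xi_\infty^\epsilon$ is \emph{not} a vector in $\pi_\infty$: it lives in
\[
\h^t\big(\fg_\infty,K_\infty^\circ;\pi_\infty\otimes V_\lambda^\vee(\C)\big)\ \subset\ \big[\textstyle\bigwedge^t(\fg_\infty/\ft_\infty)^\vee\otimes\pi_\infty\otimes V_\lambda^\vee(\C)\big]^{K_\infty^\circ},
\]
so $\Xi_\infty^\epsilon\otimes\varphi_f$ is not an automorphic form and your displayed period integral is not well-defined as written. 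The step you are missing is precisely how the branching law $\kappa_{\lambda,j}^\circ$ interacts with this archimedean data to produce an honest $\varphi'\in\pi$.

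The paper's fix (following \cite[Prop.~4.6]{DJR18}) is to choose bases $\{\omega_{\mathbf i}\}$ of $\bigwedge^t(\fg_\infty/\ft_\infty)^\vee$ and $\{e_\alpha\}$ of $V_\lambda^\vee(\C)$, write $\Xi_\infty^\epsilon=\sum_{\mathbf i,\alpha}\omega_{\mathbf i}\otimes\varphi_{\infty,\mathbf i,\alpha}^\epsilon\otimes e_\alpha$ with $\varphi_{\infty,\mathbf i,\alpha}^\epsilon\in\pi_\infty$, and set $\varphi_{\mathbf i,\alpha}^\epsilon=\varphi_{\infty,\mathbf i,\alpha}^\epsilon\otimes\varphi_f\in\pi$. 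Applying $\kappa_{\lambda,j}^\circ$ to the $e_\alpha$'s produces scalars $a_{\mathbf i,\alpha,j}^\epsilon$, and the correct automorphic form is $\varphi_{\phi,j}^\epsilon=\sum_{\mathbf i,\alpha}a_{\mathbf i,\alpha,j}^\epsilon\,\varphi_{\mathbf i,\alpha}^\epsilon$; one then shows $\cE_\chi^{j,\eta_0}(\phi)$ equals a non-zero constant times $\Psi(j+\tfrac12,(\xi t_p^\beta)\cdot\varphi_{\phi,j}^\epsilon,\chi,\eta)$. From there your final paragraph is correct: rewrite $\Psi$ as the double integral \eqref{eq:non-vanishing 3}, deduce the inner integral is non-zero, and conclude via \cite[Prop.~2.2]{FJ93}. (The attribution to Jacquet--Shalika is not needed here; the criterion is entirely from \cite{FJ93}.)
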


\begin{proof}
	By Proposition~\ref{prop:non-canonical}, there exists a unique $\varphi_{f} \in \pi_f^K$ mapping to $\phi$ under \eqref{eq:cohomology non-canonical}. This isomorphism depended on a choice $\Xi_\infty^\epsilon$ of generator of 
	\[
	\h^t(\fg_\infty, K_\infty^\circ; \pi_\infty \otimes V_{\lambda}^\vee(\C))^\epsilon \subset \left[\wedge{}^t (\fg_\infty/\ft_\infty)^\vee \otimes \pi_\infty \otimes V_{\lambda}^\vee(\C)\right]^{K_\infty^\circ},
	\]
	where $\ft_\infty = \mathrm{Lie}(T_\infty)$ and the inclusion is \cite[II.3.4]{BW00} (see also \cite[\S4.1]{GR2}). Fixing bases $\{\omega_i\}$ of $(\fg_\infty/\ft_\infty)^\vee$ and $\{e_\alpha\}$ of $V_\lambda^\vee(\C)$, there then exist unique vectors 
	\[
	\varphi_{\infty,\mathbf{i},\alpha}^\epsilon \in \pi_\infty \qquad\text{ such that } \qquad
	\Xi_{\infty}^\epsilon = \sum_{\mathbf{i}} \sum_\alpha \omega_{\mathbf{i}} \otimes \varphi_{\infty,\mathbf{i},\alpha}^\epsilon \otimes e_\alpha,
	\]
	where $\mathbf{i}$ ranges over tuples $(i_1,...,i_t)$ and $\omega_{\mathbf{i}} = \omega_{i_1} \wedge \cdots \wedge \omega_{i_t}$. Define 
	\disp{
	\varphi_{\mathbf{i},\alpha}^\epsilon\defeq \varphi_{\infty,\mathbf{i},\alpha}^\epsilon \otimes \varphi_f.
}
	By \cite[Prop.~4.6]{DJR18}, we see there exists an automorphic form
	\[
	\varphi_{\phi,j}^\epsilon = \sum_{\mathbf{i}}\sum_{\alpha} a_{\mathbf{i},\alpha,j}^\epsilon \cdot \varphi^\epsilon_{\mathbf{i},\alpha} \ \ \in \pi,
	\]
	where the scalars $a_{\mathbf{i},\alpha,j}^\epsilon \in \C$ depend on $\kappa_{\lambda,j}^\circ$, and with $\mathbf{i}$ and $\alpha$ ranging over the same sets as above, such that
	\[
	i_p^{-1}\big[\cE_{\beta,[\delta]}^{j,\sw}(\phi)\big] = \lambda(t_p^\beta)\int_{X_\beta[\delta]} \varphi_{\phi,j}^\epsilon(h\xi t_p^\beta)\ |\det(h_1^j h_2^{-\sw-j})|_F \ dh.
	\]
	Now arguing exactly as in the proof of \cite[Thm.~4.7]{DJR18}, we have an equality
	\begin{equation}\label{eq:non-vanishing 2}
		i_p^{-1}\big[\cE_{\chi}^{j,\eta_0}(\phi)\big] = \bigg[\gamma_{p\m}\cdot\lambda(t_p^\beta) \  \prod_{\pri|p}\mathrm{N}_{F/\Q}(\pri)^{n^2\beta_{\pri}} \bigg]\cdot \Psi\left(j+\tfrac{1}{2}, \ \varphi', \chi, \eta\right),
	\end{equation}
	where	$\gamma_{p\m}$ is a non-zero volume constant defined in \cite[(77)]{DJR18}, $\varphi' \defeq (\xi t_p^\beta)\cdot \varphi_{\phi,j}^\epsilon$, and $\Psi$ is the period integral defined in \cite[Prop.~2.3]{FJ93}. 	Now, as in the proof of \cite[Prop.~2.3]{FJ93}, we may write
	\begin{align}\label{eq:non-vanishing 3}
		&\Psi\left(j+\tfrac{1}{2}, \ \varphi', \chi, \eta\right)\\
		&= \int_{Z_n(\Q)\backslash Z_n(\A)}\left[\int_{H(\Q)\backslash H^0} \varphi'\left[\smallmatrd{h_1 x}{}{}{h_2}\right] \chi\left(\tfrac{\det h_1}{\det h_2}\right) \eta^{-1}(\det h_2) dh\right] \chi(x)|\det(x)|^{j} dx, \notag
	\end{align}
	where $Z_n$ is the centre of $\mathrm{Res}_{F/\Q}\GL_n$ and 
	\disp{
	H^0 = \{(h_1,h_2) \in H(\A) : |\det(h_1)| = |\det(h_2)| = 1\}.
	}
	By  \eqref{eq:non-vanishing 1}, both \eqref{eq:non-vanishing 2} and \eqref{eq:non-vanishing 3} do not vanish; hence the inner integral of \eqref{eq:non-vanishing 3} also does not vanish. But existence of such a $\varphi', \chi$ and $\eta$ implies $\pi$ admits an $(\eta,\psi)$-Shalika model by \cite[Prop.~2.2]{FJ93}.
\end{proof}

\subsection{Local zeta integrals}

In this and the next section, we state and prove Theorem \ref{thm:critical value}, relating evaluation maps to $L$-values for our $\tilde\pi$. This is a compilation of results from \cite{FJ93,GR2,DJR18, JST, BDGJW}. First we relate to local zeta integrals in a general setting.

Let $\pi$ be a RASCAR of $G(\A)$, and $\chi = \prod\chi_v$ a Hecke character of $F$ of conductor $p^\beta$. Recall $\Theta_{i_p}^{K,\epsilon} : \cS_{\psi_f}^{\eta_f}(\pi_f^K) \to \hc{t}(S_K,\sV_\lambda^\vee(\overline{\Q}_p))^\epsilon_{\m_\pi}$ from \eqref{eq:cohomology class p-adic}, depending on a choice $\Xi_{\infty}^\epsilon$ at infinity. Attached to $\Xi_\infty^\epsilon$ and $j \in \mathrm{Crit}(\lambda)$ is a `local zeta integral' $\zetainfty$, the quantity $\cP_{\infty,j}(\Xi_\infty^\epsilon)$ from \cite[(4.15)]{JST}. Recall the finite analogues $\zeta_v(-)$ from \S\ref{sec:shalika models}. Let 
\[
(\chi_{\cyc}^j\chi\eta)_\infty = [(-1)^j\chi_\sigma(-1)\eta_\sigma(-1)]_{\sigma \in \Sigma} \in \{\pm1\}^\Sigma.
\]

\begin{lemma}\label{lem:local zeta integrals}
	Let $W_f = \otimes_v W_v \in \cS_{\psi_f}^{\eta_f}(\pi_f)$. If $\epsilon \neq (\chi_{\cyc}^j\chi\eta)_\infty$, then $\cE_{\chi}^{j,\eta_0}(\Theta_{i_p}^{K,\epsilon}(W_f)) =0$. 
	
	If $\epsilon = (\chi_{\cyc}^j\chi\eta)_\infty$, then
	\begin{multline*}
	i_p^{-1}\left(\cE_{\chi}^{j,\eta_0}(\Theta_{i_p}^{K,\epsilon}(W_f))\right) = \bigg[\gamma_{p\m}\cdot\lambda(t_p^\beta) \  \prod_{\pri|p}\mathrm{N}_{F/\Q}(\pri)^{n^2\beta_{\pri}} \bigg]\\
	\times \zetainfty\cdot \prod_{v\nmid p\infty} \zeta_v\Big(j+1/2;W_v, \chi_v\Big) \cdot \prod_{\pri|p} \zeta_{\pri}\Big(j+1/2;W_{\pri}\big(-\cdot \xi t_{\pri}^{\beta_{\pri}}\big),\chi_{\pri}\Big).
	\end{multline*}
\end{lemma}
\begin{proof}
	When $\epsilon \neq (\chi_{\cyc}^j\chi\eta)_\infty$, we deduce $\cE_{\chi}^{j,\eta_0}(\phi_{\tilde{\pi}}^\epsilon) = 0$ as in the proof of \cite[Thm.~4.7]{DJR18}.
	
	Suppose the sign condition is satisfied. We start from \eqref{eq:non-vanishing 2} above, where $\varphi' = \xi t_p^\beta\cdot \varphi_{\phi,j}^\epsilon$ in the notation \emph{op.\ cit}, with $\phi = \Theta_{i_p}^{K}(W_f)$. Note $\cS_{\psi}^\eta(\varphi') = W_{\infty,j}^\epsilon \otimes [\xi t_p^\beta \cdot W_f]$ for some $W_{\infty, j}^\epsilon \in \cS_{\psi_\infty}^{\eta_\infty}(\pi_\infty)$.  Now \cite[\S4.1.2]{DJR18} shows that $\Psi(j+1/2,\varphi',\chi,\eta)$ equals the product of local zeta integrals, as required.
\end{proof}

\subsubsection{Local zeta integrals at infinity} 
At infinity, the following is a combination of Sun \cite[Thm.~5.5]{Sun19}, Jiang--Sun--Tian \cite[Thm.\ 3.12]{JST}, and Geng \cite[Thm.\ 8.6]{Geng}. 
\begin{theorem}\label{thm:JST}
	Up to rescaling the basis elements $\Xi_\infty^\epsilon \in \h^t(\fg_\infty,K_\infty^\circ; \pi_\infty \otimes V_\lambda^\vee(\C))^\epsilon$, if $\epsilon = (\chi_{\cyc}^j\chi\eta)_\infty$, we have
	\[
	\zetainfty =  i^{-jnd} \cdot L(\pi_\infty \otimes \chi_\infty, j+1/2).
	\]	
\end{theorem}

\begin{proof}
	 For each $j \in \mathrm{Crit}(\lambda)$, Jiang--Sun--Tian construct a zeta integral $\zeta_{\infty,j}^{\mathrm{JST}}(\Xi_\infty^\epsilon)$ at infinity, and show it arises from an evaluation map/modular symbol process as above. Their main result is existence of $\varepsilon(\pi_\infty) = \prod_{\sigma \in \Sigma} \varepsilon(\pi_\sigma) \in \{\pm1\}$ such that the quantity 
	\begin{equation}\label{eq:JST ratio}
	\frac{\zeta_{\infty,j}^{\mathrm{JST}}(\Xi_\infty^\epsilon)}{i^{-jnd} \cdot L(\pi_\infty\times\chi_\infty, j+1/2) \cdot \varepsilon(\pi_\infty)^j}
	\end{equation}
	is non-zero and independent of $j$ when $\epsilon = (\chi_{\cyc}^j\chi\eta)_\infty$.	Further, in \cite{Geng}, Geng shows that $\varepsilon(\pi_\sigma) = \det(w_n)$ for all $\sigma$, so $\varepsilon(\pi_\infty)^j = \det(w_n)^{jd}$.
	
	The map $\zeta_{\infty,j}^{\mathrm{JST}}$ differs from $\zeta_{\infty,j}$ only in the choice of branching law, so by Proposition \ref{prop:comp to JST}
		\begin{equation}\label{eq:JST vs BDW}
	  \zeta_{\infty,j}(\Xi_\infty^\epsilon) = \det(w_n)^{jd} \cdot	\zeta_{\infty,j}^{\mathrm{JST}}(\Xi_\infty^\epsilon).
		\end{equation}
		Combining \eqref{eq:JST ratio} and \eqref{eq:JST vs BDW}, we see 
			\begin{equation}\label{eq:BDW ratio}
			\frac{\zeta_{\infty,j}(\Xi_\infty^\epsilon)}{i^{-jnd} \cdot L(\pi_\infty\times\chi_\infty, j+1/2)}
		\end{equation}
		is non-zero and independent of $j$. Now note $\zetainfty$ scales linearly with $\Xi_\infty^\epsilon$; so by rescaling the latter, we may assume \eqref{eq:BDW ratio} equals 1 for some $j_0$, hence for all $j$, as required.
\end{proof}

\begin{definition}\label{def:e_infty}
	We let	$\einf \defeq i^{-jnd}  \cdot L(\pi_\infty \otimes \chi_\infty, j+1/2)$.
\end{definition}

\subsubsection{Local zeta integrals at $p$}

Recall  from \S\ref{sec:level group} that we work in two local settings at $p$:
\begin{itemize}\s
	\item[(C2)$_{\pri}$]  $\pi_{\pri}$ is parahoric spherical admitting a Shalika model, $\tilde\pi_{\pri} = (\pi_{\pri}, \alpha_{\pri})$ is a Shalika $Q$-refinement, and $W_{\pri} \in \cS_{\psi_{\pri}}^{\eta_{\pri}}(\pi_{\pri}^{J_{\pri}})\lsem U_{\pri} - \alpha_{\pri}\rsem$ a generator. 

	\item[(C2$'$)$_{\pri}$] $\pi_{\pri} = \Ind_B^G\theta_{\pri}$ is spherical, satisfies the hypotheses of Proposition \ref{lem:Q-regular criterion}, and $\tilde\pi_{\pri} = (\pi_{\pri},\alpha_{\pri})$ is the Shalika $Q$-refinement from that result.
\end{itemize}
We will assume (C2)$_{\pri}$ throughout, and (C2$')_{\pri}$ when considering unramified characters. For a quasi-character $\chi_{\pri}$ of $F_{\pri}^\times$, let $\tau(\chi_{\pri})$ be the local Gauss sum, normalised as in \cite[\S9.2]{BDGJW}.

\begin{definition}\label{def:e'_p} 
	Let $s \in \C$. If $\chi_{\pri}$ is ramified of conductor $\pri^{\beta_{\pri}}$, let $T(\chi_{\pri}) = \tau(\chi_{\pri})^n$ and
	\[
	e'_{\pri}(\tilde\pi,\chi,s) \defeq q_{\pri}^{\beta_{\pri} n\big(s+\tfrac{n}{2}-\tfrac12\big)+\delta_{\pri} n\big(s - \tfrac{n}{2}-\tfrac12\big)} \cdot \frac{q_{\pri}^n}{(q_{\pri}-1)^n}.
	\]
	This depends only on $\beta_{\pri}$ and $s$, but we denote it this way for later consistency.
	
		If $\chi_{\pri}$ is unramified and (C2$')_{\pri}$ holds, let $T(\chi_{\pri}) = \chi(\varpi_{\pri})^{-n\delta_{\pri}}$ and
	\[
	e_{\pri}'(\tilde\pi, \chi, s) \defeq q_{\pri}^{\delta_{\pri} n\big(s - \tfrac{n}{2}-\tfrac12\big)} \cdot 	\displaystyle{\frac{q_{\pri}^n}{(q_{\pri}-1)^n}\cdot \alpha_{\pri} \cdot \prod_{i=n+1}^{2n}
		\frac{1-\UPS_{\pri,i}^{-1}\chi_{\pri}^{-1}(\varpi_{\pri})q_{\pri}^{s-\tfrac12}}{1-\UPS_{\pri,i}\chi_{\pri}(\varpi_{\pri})q_{\pri}^{-s-\tfrac12}}}.
	\]
\end{definition}

\begin{proposition}\emph{(D.--Januszewski--Raghuram; B.--D.--Graham--Jorza--W.).} \label{lem:zeta at p}

	Let $W_{\pri}$ be a generator of $\cS_{\psi_{\pri}}^{\eta_{\pri}}\big(\pi_{\pri}^{J_{\pri}}\big)\lsem U_{\pri} - \alpha_{\pri}\rsem.$ 
\begin{itemize}
	\item[(i)] If (C2)$_{\pri}$ holds, then for all ramified quasi-characters $\chi_{\pri}$, we have
	\begin{equation}\label{eq:zeta_p spherical}
	\zeta_{\pri}\big(s;W_{\pri}(-\cdot \xi t_{\pri}^{\beta_{\pri}}),\chi_{\pri}\big) = T(\chi_{\pri}) \cdot e'_{\pri}(\tilde\pi,\chi,s-\tfrac12) \cdot W_{\pri}(t_{\pri}^{-\delta_{\pri}}).
	\end{equation}
\item[(i)] If (C2$')_{\pri}$ holds, \eqref{eq:zeta_p spherical} also holds for unramified $\chi_{\pri}$.
\end{itemize}
\end{proposition}
\begin{proof}
Given (C2)$_{\pri}$, (i) is \cite[Prop.\ 3.4]{DJR18} (with a corrected power of $q_{\pri}$; see Appendix (2)).
	
	If (C2$')_{\pri}$ holds, (ii) was proved by the present authors with Graham and Jorza in \cite[Prop.\ 9.3]{BDGJW}. The only differences are that instead of $\xi = \smallmatrd{1}{w_n}{0}{w_n}$ here, there is used $u^{-1} = \smallmatrd{1}{-w_n}{0}{1}$; but we can compare the two integrals by noting that the integrand in \cite{BDGJW} contains 
	\[
	\smallmatrd{h}{}{}{1}\smallmatrd{1}{-w_n}{0}{1}\smallmatrd{t_p^\beta}{}{}{1} = \smallmatrd{-hw_n}{}{}{1}\smallmatrd{1}{w_n}{0}{w_n}\smallmatrd{t_p^\beta}{}{}{1}\smallmatrd{-w_n}{}{}{w_n}.
	\]
	The change of variables $h \leftrightarrow -hw_n$ removes the factor of $\chi_{\pri}(\det(-w_n))$ appearing in \cite{BDGJW}, and $\smallmatrd{-w_n}{}{}{w_n}$ disappears by parahoric invariance. In \cite{BDGJW} the term $W_{\pri}(t_{\pri}^{-\delta_{\pri}})$ is denoted $F_0(w_{2n})$ and taken to be 1 (see \S9.1 \emph{ibid}.), so does not appear there. We also rearrange using  $\alpha_{\pri} = q_{\pri}^{n^2/2}\UPS_{\pri,n+1}\cdots\UPS_{\pri,2n}(\varpi_{\pri})$.
\end{proof}

\begin{remark}\label{rem:Q-regular zeta}
	Proposition \ref{lem:zeta at p}(i) holds assuming only $\tilde\pi_{\pri}$ is regular, rather than Shalika (i.e.\ without demanding that $W(t_{\pri}^{-\delta}) \neq 0$). In particular if $\tilde\pi_{\pri}$ is regular and $\zeta_{\pri}(s, W_{\pri}(-\cdot \xi t_{\pri}^{\beta_{\pri}}), \chi_{\pri}) \neq 0$ for some ramified $\chi_{\pri}$, then this result implies $\tilde\pi_{\pri}$ is Shalika.
\end{remark}

\subsection{Cohomological interpretation of $L$-values}
Now suppose $\tilde\pi$ satisfies Conditions  \ref{cond:running assumptions}  or \ref{cond:running assumptions 2}, and recall  
\[
\phi_{\tilde\pi}^{\epsilon} = \Theta_{i_p}^{K(\tilde\pi),\epsilon}(W_f^{\mathrm{FJ}})\big/i_p(\Omega_\pi^\epsilon)
\]
from Definition~\ref{def:phi}. It is important that we now work at level $K = K(\tilde\pi)$. The results from  \cite{FJ93,GR2,DJR18, JST, BDGJW} combine to show:

\begin{theorem}\label{thm:critical value}
	Suppose $\tilde\pi$ satisfies Conditions \ref{cond:running assumptions 2}. Fix  $\epsilon \in \{\pm 1\}^\Sigma$. Let $\chi$ be a finite order Hecke character of conductor $p^{\beta'}$, with $\beta' = (\beta_{\pri}')_{\pri|p}$ with each $\beta_{\pri}'\geqslant 0$. Let $\beta_{\pri} \defeq \mathrm{max}(\beta_\pri', 1).$ Let $j \in \mathrm{Crit}(\lambda)$. Then if $\epsilon \neq (\chi_{\cyc}^j\chi\eta)_\infty$, then $\cE_{\chi}^{j,\eta_0}(\phi_{\tilde{\pi}}^\epsilon) = 0$.  If $\epsilon = (\chi_{\cyc}^j\chi\eta)_\infty$, we have
		\begin{multline*}
			i_p^{-1}\left(\cE_{\chi}^{j,\eta_0}(\phi_{\tilde{\pi}}^\epsilon)\right) = \gamma_{p\m}\cdot\lambda(t_p^\beta)\cdot \mathrm{N}_{F/\Q}(\mathfrak{d}^{(p)})^{jn} \cdot\tau(\chi_f)^n  \\ 
			\times \big[\textstyle\prod_{\pri|p}\epp\big] \cdot \einf \cdot\displaystyle\frac{L^{(p)}(\pi\otimes\chi,j+\tfrac{1}{2})}{\Omega_\pi^\epsilon}.
		\end{multline*}
	If $\tilde\pi$ satisfies the (more general) Conditions \ref{cond:running assumptions} then the same is true when $\beta_{\pri}' \geqslant 1$ for all $\pri|p$.
\end{theorem} 
Here $\tau(\chi_f)$ is the Gauss sum, $\fd^{(p)}$ is the prime-to-$p$ part of the different, and the $e(-)$ terms are as in Definitions \ref{def:e_infty} and \ref{def:e'_p} above.

\begin{proof}
	By Lemma \ref{lem:local zeta integrals}, we have vanishing unless the sign condition is satisfied, whence the left-hand side is a product of local zeta integrals. The integral at infinity was computed in Theorem \ref{thm:JST}. At $v\nmid p\infty$, the integral is $\zeta_v(j+1/2,W_v^{\mathrm{FJ}},\chi_v)$, which was evaluated in \eqref{eq:jacquet-friedberg test vector}. In the product, we get the claimed $L$-value and $\mathrm{N}_{F/\Q}(\fd^{(p)})^{jn},$ and a product of $\chi_v(\varpi_v)$'s. 
		
		At $\pri|p$, if Conditions \ref{cond:running assumptions 2} hold, then we are in case (C2$')_{\pri}$ of Proposition \ref{lem:zeta at p}, and this computes the integral for all $\chi_{\pri}$. If only Conditions \ref{cond:running assumptions} hold, then we are in case (C2)$_{\pri}$ and Proposition \ref{lem:zeta at p} computes it whenever $\beta_{\pri} \geqslant 1$. 
		
		The $T(\chi_{\pri})$'s combine with the products of $\chi_v(\varpi_v)$'s at $v\nmid p\infty$ to give $\tau(\chi_f)^n$, as in \cite[Thm.\ 4.7]{DJR18}. The $L$-factors combine into $L^{(p)}(-)$. The other terms are as claimed.
	\end{proof}


\section{Finite slope $p$-adic $L$-functions} \label{sec:galois evaluations}

	We now use the formalism of evaluation maps to prove Theorem \ref{thm:intro non-ord} of the introduction.

\subsection{Distributions over Galois groups} 
\label{sec:galp}

\subsubsection{Definition of Galois distributions}\label{sec:galois groups} 	Throughout this section, fix $\lambda_\pi \in X_0^*(T)$ a pure classical `base' weight, and let 
\disp{
\Omega = \mathrm{Sp}(\cO_\Omega) \subset \Wlam
}
be an affinoid. We allow $\Omega = \{\lambda\}$ for $\lambda$ classical, in which case $\cO_\Omega = L$. Let $\chi_\Omega : T(\Zp) \to \cO_\Omega^\times$ be the tautological character attached to $\Omega$, and recall the purity weight $\sw_\Omega : \Zp^\times \to \cO_\Omega^\times$, all defined in \S\ref{sec:distributions in families}.

Recall $\cO_{F,p} = \cO_F\otimes\Zp$. For $\beta= (\beta_{\fp})_{\fp | p}$ with $\beta_{\fp}> 0$ for each $\fp \mid p$, let 
\[
\sU_\beta \defeq  [1+ p^{\beta}\OFp]/\overline{E(p^{\beta})},
\]
where $\overline{E(p^{\beta})}$ is the $p$-adic closure of 
\disp{
	E(p^{\beta})\defeq \{u \in \cO_{F}^{\times}\cap F_\infty^{\times\circ}  \arrowvert u\equiv 1 \newmod{p^{\beta}}\}.
}
Then by Class Field Theory we have an exact sequence
\begin{equation}\label{eq:CFT}
	1 \rightarrow \sU_\beta \xrightarrow{\iota} \Galp \xrightarrow{\jmath} \Cl(p^{\beta})\rightarrow 1.
\end{equation}
Recall the distribution modules $\cD(X,R)$ from \S\ref{sec:A_s}. The sum of the natural restriction maps induces a decomposition
	\begin{equation}\label{e:decomposition galois distributions}
		\cD(\Galp, \cO_\Omega)\cong \bigoplus_{\mathbf{x} \in \Cl(p^{\beta})}\cD(\Galp[\mathbf{x}], \cO_\Omega), \qquad \Galp[\mathbf{x}] \defeq \jmath^{-1}(\mathbf{x}).
	\end{equation}
The map $\iota$ induces a map 
\disp{
\iota_*: \cD(\sU_\beta,\cO_\Omega) \hookrightarrow \cD(\Galp,\cO_\Omega),
}
whose image can be identified with $\cD(\Galp[\mathbf{1}_\beta],L)$, where $\mathbf{1}_\beta$ is the identity element in $\Cl(p^\beta)$. 

In the limit, the Artin reciprocity map $\mathrm{rec} : \A_F^\times \to \Galp$ induces an isomorphism 
\begin{equation}\label{eq:galp ideles}
	\Galp \labelisoleftarrow{\ \mathrm{rec}\ } \Cl(p^\infty) \defeq F^\times\backslash \A_F^\times/\overline{\sU(p^\infty) F_\infty^{\times\circ}},
\end{equation}
where $\sU(p^\infty)= \prod_{v\nmid p}\cO_v^{\times}$. Note that the cyclotomic character $\chi_{\cyc}$ from \eqref{eq:cyc} is naturally a character on $\Galp$; it is the character attached to the adelic norm via \cite[\S2.2.2]{BW_CJM}.

\subsubsection{Group actions on Galois distributions}
If $c \in \A_F^\times$ and $x \in \Galp$, to simplify notation we write $cx \defeq \mathrm{rec}(c)x$. We define a left action of $(\delta_1,\delta_2) \in \A_F^\times \times \A_F^\times$ on $\cA(\Galp,\cO_\Omega)$ by 
\begin{equation}\label{eq:action ideles}
	(\delta_1,\delta_2) * f(x) = \chi_{\cyc}(\delta_2)^{\sw_\Omega}f(\delta_{1}^{-1}\delta_{2} x),
\end{equation}
and dually a left action on $\cD(\Galp,\cO_\Omega)$. Recall  $\mathrm{pr}_\beta: \pi_0(X_\beta) \to \Cl(p^\beta)$ from \eqref{eq:pr_beta}.

\begin{lemma}\label{lem:map to Galp[z]}
	Let $\delta = (\delta_1,\delta_2) \in \A_F^\times \times \A_F^\times$, representing an element $[\delta] \in \pi_0(X_\beta)$, and let 
	\disp{
	\mathbf{x} = \mathrm{pr}_\beta([\delta]) \in \Cl(p^\beta).
}
	The action of $\delta$ induces an isomorphism
	\[
	\cD(\sU_\beta,\cO_\Omega) \labelisorightarrow{\ \iota_*\ } \cD(\Galp[\mathbf{1}_\beta],\cO_\Omega) \xrightarrow{\ \mu \ \mapsto\  \delta * \mu \ } \cD(\Galp[\mathbf{x}],\cO_\Omega).
	\]
\end{lemma}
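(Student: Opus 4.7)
The plan is to verify the two arrows independently and then compose.

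For the first arrow, I would use the exact sequence \eqref{eq:CFT} to observe that $\iota$ is a topological embedding of $\sU_\beta$ onto $\ker(\jmath) = \Galp[\mathbf{1}_\beta]$. Pulling back locally analytic functions is then a topological isomorphism $\iota^* : \cA(\Galp[\mathbf{1}_\beta],\cO_\Omega) \isorightarrow \cA(\sU_\beta,\cO_\Omega)$ of Fr\'echet $\cO_\Omega$-modules, and taking continuous duals gives the isomorphism $\iota_*$ (this is essentially already asserted in the paragraph preceding the lemma; I would just spell out the details).

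For the second arrow, I would unpack the action. By \eqref{eq:action ideles} the dual action on a distribution $\mu$ is $(\delta * \mu)(f) = \mu(\delta^{-1} * f)$, and $\delta^{-1} * f(y) = \chi_{\cyc}(\delta_2^{-1})^{\sw_\Omega} f(\delta_1\delta_2^{-1} y)$. The scalar $\chi_{\cyc}(\delta_2^{-1})^{\sw_\Omega} = \sw_\Omega(\chi_{\cyc}(\delta_2^{-1}))$ lies in $\cO_\Omega^\times$, hence is a unit, so the map $f \mapsto \delta^{-1}*f$ on $\cA(\Galp,\cO_\Omega)$ is the composition of a unit scaling with left-translation by $\mathrm{rec}(\delta_1\delta_2^{-1})$. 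Since left-translation is a homeomorphism of $\Galp$ taking $\Galp[\mathbf{1}_\beta]$ bijectively onto $\Galp[\jmath(\mathrm{rec}(\delta_1\delta_2^{-1}))]$, this induces a topological isomorphism $\cA(\Galp[\jmath(\mathrm{rec}(\delta_1\delta_2^{-1}))],\cO_\Omega) \isorightarrow \cA(\Galp[\mathbf{1}_\beta],\cO_\Omega)$, and dualising yields an isomorphism $\cD(\Galp[\mathbf{1}_\beta],\cO_\Omega) \isorightarrow \cD(\Galp[\jmath(\mathrm{rec}(\delta_1\delta_2^{-1}))],\cO_\Omega)$.

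It then remains to identify $\mathbf{x} = \mathrm{pr}_\beta([\delta])$ with $\jmath(\mathrm{rec}(\delta_1\delta_2^{-1})) \in \Cl(p^\beta)$. Using the identification \eqref{eq:component group} of $\pi_0(X_\beta)$ with $\Cl(p^\beta\m) \times \Cl(\m)$ via $(h_1,h_2) \mapsto (\det(h_1)/\det(h_2), \det(h_2))$, so that $\mathrm{pr}_\beta([\delta])$ is the class of $\det(\delta_1)\det(\delta_2)^{-1}$ in $\Cl(p^\beta)$, and using the compatibility of Artin reciprocity \eqref{eq:galp ideles} with the natural surjection $\A_F^\times \twoheadrightarrow \Cl(p^\beta)$, this is a direct comparison of definitions. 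There is no substantive obstacle; the only thing that requires care is keeping conventions consistent (in particular whether $\delta$ acts by $\mathrm{rec}(\delta_1\delta_2^{-1})$ or its inverse, which is determined by the left-module conventions fixed at the end of \S\ref{sec:notation}).
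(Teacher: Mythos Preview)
Your proposal is correct and follows essentially the same route as the paper: unpack the dual action to see that $\delta^{-1}*f$ is a unit scalar times left-translation by $\delta_1\delta_2^{-1}$, observe via \eqref{eq:component group} that this element represents $\mathbf{x}$ and hence translation carries $\Galp[\mathbf{1}_\beta]$ onto $\Galp[\mathbf{x}]$, and dualise. One small notational slip: in the lemma $\delta=(\delta_1,\delta_2)$ already lies in $\A_F^\times\times\A_F^\times$, so the determinants in your final paragraph are superfluous (the relevant representative of $\mathbf{x}$ is simply $\delta_1\delta_2^{-1}$).
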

\begin{proof}
	The action of $\delta$ on $\mu \in \cD(\Galp,\cO_\Omega)$ is induced by the action of $\delta^{-1}$ on $\cA(\Galp,\cO_\Omega)$ by 
	\[
	(\delta^{-1} * f)(x) = \chi_{\mathrm{cyc}}(\delta_2)^{-\sw_\Omega}f(\delta_1\delta_2^{-1}x).
	\]
	By  \eqref{eq:component group} $\delta_1 \delta_2^{-1}$ is a representative of $\mathbf{x}$, so multiplication by $\delta_1\delta_2^{-1}$ on $\Galp$ sends $\Galp[\mathbf{1}_\beta]$ isomorphically to $\Galp[\mathbf{x}]$. Hence this action induces a map 
	\disp{
	\delta^{-1} * - : \cA(\Galp[\mathbf{x}],\cO_\Omega) \to \cA(\Galp[\mathbf{1}_\beta],\cO_\Omega)
}
	which dualises to the claimed map. 
\end{proof}

Via \eqref{eq:action ideles}, we have an action of $H(\A)$ on $f \in \cA(\Galp,\cO_\Omega)$ by
\begin{equation}\label{eq:action H}
	(h_1,h_2) * f \defeq  (\det(h_1), \det(h_2)) * f,
\end{equation}
and hence a dual action on $\cD(\Galp,\cO_\Omega)$. Note that both $H(\Q)$ and $H_\infty^\circ$ act trivially.

\subsection{$p$-adic interpolation of branching laws for $H \subset G$}\label{sec:p-adic branching}
	We now interpolate the branching law of Lemma \ref{lem:hom line}, and hence evaluation maps, in the following sense: for classical $\lambda \in \Omega$, by Lemma \ref{prop:pushforward} we have a commutative diagram
\begin{equation}\label{eq:motivation nu}
	\xymatrix@C=20mm{
		\hc{t}(S_K,\sD_\Omega) \ar[r]^-{\mathrm{Ev}_{\beta,\delta}^{\cD_\Omega}} \ar[d]^{r_\lambda\circ \mathrm{sp}_\lambda} & (\cD_\Omega)_{\Gamma_{\beta,\delta}} \ar[d]^{r_\lambda \circ \mathrm{sp}_\lambda} & \cD(\Galp,\cO_\Omega)\ar[d]^{\mu \mapsto \mathrm{sp}_\lambda(\mu)(\chi_{\cyc}^j)}\\
		\hc{t}(S_K,\sV_\lambda^\vee) \ar[r]^-{\mathrm{Ev}_{\beta,\delta}^{V_\lambda^\vee}} & (V_\lambda^\vee)_{\Gamma_{\beta,\delta}} \ar[r]^{\kappaj^\circ} & L,
	}
\end{equation}
and we now define the `missing' horizontal map in the top row so that the horizontal compositions commute with the outer vertical maps (see Proposition \ref{prop:kappa interpolate}).

The map $\kappaj^\circ$ was defined by an element $v_{\lambda,j} \in V_\lambda$, which we described explicitly in Lemma \ref{lem:v_lambda non-vanishing} (noting $v_{\lambda,j}$ was defined to be $\kappaj^\vee(u_j^\vee)$). Our definition of $v_\Omega^\beta$, given in  \eqref{eq:times}, is really an interpolation of this last description of $v_{\lambda,j}$.

\begin{remark*}
	Note restricting elements of $\cD_\Omega$ to the subspace $\cA_\Omega^{\Gamma_{\beta,\delta}}\subset \cA_\Omega$ induces a well-defined map $(\cD_\Omega)_{\Gamma_{\beta,\delta}} \to (\cA_\Omega^{\Gamma_{\beta,\delta}})^\vee$. Slightly abusing notation/terminology, we will identify elements of $(\cD_\Omega)_{\Gamma_{\beta,\delta}}$ with their image under this map, and continue to call them distributions (on $\cA^{\Gamma_{\beta,\delta}}$).
\end{remark*}

\subsubsection{Support conditions on distributions}

We want to define $v_\Omega^\beta$ to interpolate $v_{\lambda,j} \in V_\lambda$ from \S\ref{sec:choice of basis}. However, we have explicitly described the function $v_{\lambda,j} :G(\Zp) \to L$ only  on the subset $N_Q^\times(\Zp) \subset G(\Zp)$ (see Lemma \ref{lem:v_lambda non-vanishing}). The following support condition shows that for the outer vertical maps of \eqref{eq:motivation nu} to commute with the horizontal compositions, it is sufficient to specify $v_\Omega^\beta$ on subsets $N_Q^\beta(\Zp)$ of $N_Q^\times(\Zp)$.

For $\beta = (\beta_{\pri})_{\pri|p}$ with each $\beta_{\pri} \geqslant 1$, let
\begin{equation}\label{eq:Jp minus}
	N_Q^\beta(\Zp) \defeq \big\{\smallmatrd{1}{X}{0}{1} \in N_Q(\Zp) : X \equiv - I_n \newmod{p^\beta}\big\} \subset N_Q^\times(\Zp),
\end{equation}
and define 
\[
\Jpbeta \defeq (N_Q^-(\Zp)\cap J_p)\cdot H(\Zp) \cdot N_Q^\beta(\Zp) \subset J_p.
\]

\begin{lemma}\label{lem:support}
	Let $\Phi \in \hc{t}(S_K,\sD_\Omega)$, and let $\delta \in H(\A)$. The distribution $\mathrm{Ev}_{\beta,\delta}^{\cD_\Omega}(\Phi) \in (\cD_{\Omega})_{\Gamma_{\beta,\delta}}$ has support in $\Jpbeta$, in the sense that if $f \in \cA_{\Omega}^{\Gamma_{\beta,\delta}},$ then
	\[
	\mathrm{Ev}_{\beta,\delta}^{\cD_\Omega}(\Phi)(f) = \mathrm{Ev}_{\beta,\delta}^{\cD_\Omega}(\Phi)\left(f|_{\Jpbeta}\right)
	\]
	depends only on the restriction of $f$ to $\Jpbeta$.
\end{lemma}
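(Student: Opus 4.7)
The plan is to show that the twist operation $\mu \mapsto \xi t_p^\beta * \mu$ on $\cD_\Omega$ --- which enters the construction of $\mathrm{Ev}_{\beta,\delta}^{\cD_\Omega}$ via $\tau_\beta^\circ$ --- sends every distribution to one that annihilates all $f \in \cA_\Omega$ with $f|_{\Jpbeta}=0$. Equivalently, the action of $(\xi t_p^\beta)^{-1} = t_p^{-\beta}\xi^{-1}$ on $\cA_\Omega$ factors through restriction to $\Jpbeta$; once this is established, the support property propagates straightforwardly through the remaining steps of Definition~\ref{def:abstract evaluation}, none of which interact with the coefficient module.

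The first step is to compute $(\xi t_p^\beta)^{-1} * f$ on $N_Q(\Zp)$ by repeating verbatim the calculation displayed in the proof of Proposition~\ref{prop:comp to DJR}. That calculation is a manipulation of group elements, namely $\smallmatrd{1}{p^\beta X}{0}{1}\xi^{-1} = \smallmatrd{1}{}{}{w_n}\smallmatrd{1}{-I_n + p^\beta Xw_n}{0}{1}$, and is therefore valid for every $\smallmatrd{1}{X}{0}{1} \in N_Q(\Zp)$ rather than only the $X \in G_n(\Zp)$ considered there; it applies just as well in $\cA_\Omega$ since the $*$-action of $\Delta_p^{-1}$ defined in \S\ref{sec:slope-decomp} restricts to the one on $V_\lambda$. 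Combined with the parahoric transformation \eqref{eq:parahoric transform A}, this yields
\[
((\xi t_p^\beta)^{-1} * f)\smallmatrd{1}{X}{0}{1} \ =\ \big\langle \smallmatrd{I_n}{0}{0}{w_n} \big\rangle_\Omega \cdot f\smallmatrd{1}{-I_n + p^\beta Xw_n}{0}{1}.
\]
Since $-I_n + p^\beta X w_n \equiv -I_n \pmod{p^\beta}$, the argument of $f$ lies in $H(\Zp) \cdot N_Q^\beta(\Zp) \subset \Jpbeta$. If $f|_{\Jpbeta} = 0$, then $(\xi t_p^\beta)^{-1} * f$ vanishes on $N_Q(\Zp)$; applying the parahoric decomposition $J_p = (N_Q^-\cap J_p) H(\Zp) N_Q(\Zp)$ and \eqref{eq:parahoric transform A} once more forces $(\xi t_p^\beta)^{-1} * f = 0$ identically in $\cA_\Omega$, and hence $\mu((\xi t_p^\beta)^{-1} * f) = 0$ for every $\mu \in \cD_\Omega$.

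To conclude, unfold Definition~\ref{def:abstract evaluation}: fixing any cochain representative of $\Phi$, the distribution $\mathrm{Ev}_{\beta,\delta}^{\cD_\Omega}(\Phi)$ is a finite $\cO_\Omega$-linear combination of classes of twisted coefficient values $\xi t_p^\beta * \mu_\sigma \in \cD_\Omega$, indexed by top-dimensional cells $\sigma$ of $\Gamma_{\beta,\delta}\backslash\cX_H$, with combinatorial coefficients coming from $c_\delta^*$, passage to $\Gamma_{\beta,\delta}$-coinvariants, and cap product with $\theta_\delta$. Pairing with $f$ and applying the previous paragraph, every such term contributes $(\xi t_p^\beta * \mu_\sigma)(f) = \mu_\sigma((\xi t_p^\beta)^{-1} * f) = 0$ when $f|_{\Jpbeta} = 0$, so $\mathrm{Ev}_{\beta,\delta}^{\cD_\Omega}(\Phi)(f) = 0$, as required.

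No single step should present a serious obstacle; the computation is essentially elementary. The only subtlety worth flagging is the extension of the identity of Proposition~\ref{prop:comp to DJR} from the algebraic setting ($V_\lambda$, $N_Q^\times$) to the analytic one ($\cA_\Omega$, $N_Q$): the restriction to $N_Q^\times(\Zp)$ in that earlier proof was made purely to support a Zariski-density argument in $V_\lambda$, whereas both the matrix identity itself and the $*$-action of $\Delta_p^{-1}$ on $\cA_\Omega$ require no invertibility hypothesis on $X$.
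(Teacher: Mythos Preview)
Your proposal is correct and follows essentially the same approach as the paper's proof: both reduce to showing that the $*$-action of $(\xi t_p^\beta)^{-1}$ on $\cA_\Omega$ factors through restriction to $\Jpbeta$, via the identical matrix factorisation $\smallmatrd{1}{p^\beta X}{0}{1}\xi^{-1} = \smallmatrd{I_n}{0}{0}{w_n}\smallmatrd{1}{-I_n + p^\beta Xw_n}{0}{1}$ on $N_Q(\Zp)$ together with parahoric decomposition. The paper phrases the reduction more abstractly (observing directly that $\mathrm{Ev}_{\beta,\delta}^{\cD_\Omega}(\Phi) \in (\xi t_p^\beta * \cD_\Omega)_{\Gamma_{\beta,\delta}}$ via $\tau_\beta^\circ$) rather than unfolding into a sum over cells, but the content is the same.
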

\begin{proof}
	Via the map $\tau_\beta^\circ$, we see that 
	\[
	\mathrm{Ev}_{\beta,\delta}^{\cD_\Omega}(\Phi) \in (\xi t_p^\beta * \cD_\Omega)_{\Gamma_{\beta,\delta}}.
	\]
	It thus suffices to prove that for any $\mu \in \cD_\Omega$ and $f\in \cA_\Omega$, we have 
	\[
	(\xi t_p^\beta * \mu)(f) =  (\xi t_p^\beta * \mu)(f|_{J_p^\beta}),
	\]
	or equivalently that 
	\[
	(\xi t_p^{\beta})^{-1} * f = (\xi t_p^\beta)^{-1} * f|_{J_p^\beta}.
	\]
	By definition (see \S\ref{sec:slope-decomp}), the action of $(\xi t_p^\beta)^{-1}$ on $f \in \cA_\Omega$ is induced by the action
	\begin{align}\label{eq:action of t_p on X}
		\smallmatrd{1}{X}{0}{1} \longmapsto &\left[t_p^\beta \smallmatrd{1}{X}{0}{1}t_p^{-\beta}\right]\xi^{-1}\\ 
		&= \smallmatrd{1}{p^\beta X}{0}{1}\smallmatrd{I_n}{-I_n}{0}{w_n} = \smallmatrd{1}{0}{0}{w_n}\smallmatrd{1}{-I_n + p^\beta Xw_n}{0}{1} \in \Jpbeta\notag
	\end{align}
	on $\smallmatrd{1}{X}{0}{1} \in N_Q(\Z_p)$. Thus $((\xi t_p^\beta)^{-1} * f)|_{N_Q(\Zp)}$ depends only on $f|_{J_p^\beta}$. By parahoric decomposition \eqref{eq:parahoric transform A}, we deduce that $(\xi t_p^\beta)^{-1} * f$ depends only on $f|_{J_p^\beta}$, as claimed.
\end{proof}

\subsubsection{Interpolation of $v_\lambda^H$ in families}

Recall that the description of $v_{\lambda,j}$ in Lemma \ref{lem:v_lambda non-vanishing} was given in terms of a specific vector $v_\lambda^H \in V_\lambda^H$. We now interpolate $v_{\lambda,j}$ as $\lambda$ varies in $\Omega$.

\medskip

In Notation \ref{not:v_lambda}, we fixed $v_{\lambda_\pi}^H \in V_{\lambda_\pi}^H(\cO_L)$ to be an (optimally integral) generator of the unique line in $V_{\lambda_\pi}^H(L)$ on which the action of $\left\langle \smallmatrd{h}{}{}{h}\right\rangle_{\lambda_\pi}$ is multiplication by $(\mathrm{N}_{F/\Q}\circ\det)^{\sw_{\lambda_\pi}}$. 
\begin{notation}\label{not:v_omega}
	Let $\sv_\Omega \defeq v_{\lambda_\pi}^H \otimes 1 \in V_\Omega^H$.
\end{notation}

The following statement is an analogue of Lemma \ref{lem:diagonal action} for families. 

\begin{lemma}\label{lem:v_omega}
	Let $h \in G_n(\Zp)$. Then 
	\[
	\left\langle \smallmatrd{h}{}{}{h}\right\rangle_\Omega \cdot \sv_\Omega = \sw_\Omega(\mathrm{N}_{F/\Q}\circ \det(h))\  \sv_\Omega.
	\]
\end{lemma}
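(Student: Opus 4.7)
The statement is essentially a direct unpacking of the definitions, combining the known action on $v_{\lambda_\pi}^H$ at the base weight with the interpolation of purity weights encoded in $\sw_\Omega$. Concretely, the plan is to reduce the family statement to the single-weight statement of Lemma~\ref{lem:diagonal action}(i) applied at $\lambda_\pi$, together with the identity \eqref{eq:pure family} describing how $\chi_{\Omega_0}$ behaves on the diagonal copy of $G_n$.

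First, I would unpack $\langle\cdot\rangle_\Omega$ using \eqref{eq:action V_Omega}: for any $h \in G_n(\Zp)$, acting on the elementary tensor $\sv_\Omega = v_{\lambda_\pi}^H \otimes 1 \in V_{\lambda_\pi}^H(L)\otimes_L \cO_{\Omega_0} = V_\Omega^H$ yields
\[
\left\langle\smallmatrd{h}{}{}{h}\right\rangle_\Omega \cdot \sv_\Omega \;=\; \left(\left\langle\smallmatrd{h}{}{}{h}\right\rangle_{\lambda_\pi}\!\cdot v_{\lambda_\pi}^H\right)\;\otimes\;\chi_{\Omega_0}\!\left(\smallmatrd{h}{}{}{h}\right).
\]

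Second, the first tensor factor is computed by Lemma~\ref{lem:diagonal action}(i) applied to the algebraic weight $\lambda_\pi$ (with purity weight $\sw_{\lambda_\pi}$): it equals $(\mathrm{N}_{F/\Q}\circ\det(h))^{\sw_{\lambda_\pi}} v_{\lambda_\pi}^H$. The second tensor factor is computed by \eqref{eq:pure family}, which gives $\chi_{\Omega_0}(\smallmatrd{h}{}{}{h}) = \sw_{\Omega_0}\!\circ\mathrm{N}_{F/\Q}(\det(h))$, since $\Omega_0 \subset \sW_0^Q$ lies in the pure weight space.

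Finally, combining these two computations and pulling the scalar $(\mathrm{N}_{F/\Q}\circ\det(h))^{\sw_{\lambda_\pi}}$ across the tensor product into the $\cO_{\Omega_0}$-factor, we get
\[
\left\langle\smallmatrd{h}{}{}{h}\right\rangle_\Omega \cdot \sv_\Omega \;=\; v_{\lambda_\pi}^H \,\otimes\, \bigl[(\mathrm{N}_{F/\Q}\circ\det(h))^{\sw_{\lambda_\pi}}\cdot \sw_{\Omega_0}\!\circ\mathrm{N}_{F/\Q}(\det(h))\bigr].
\]
By the definition $\sw_\Omega = \sw_{\lambda_\pi}\cdot \sw_{\Omega_0}$ (under the identification $\Omega \isorightarrow \Omega_0$, $\lambda\mapsto \lambda_\pi^{-1}\lambda$), the scalar in brackets is exactly $\sw_\Omega\!\circ \mathrm{N}_{F/\Q}(\det(h))$, yielding the claim. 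There is no genuine obstacle: the lemma is a bookkeeping check that the construction of $V_\Omega^H$ and $\chi_\Omega$ is compatible with the base-weight identity of Lemma~\ref{lem:diagonal action}(i), as expected from the purity condition defining $\sW^Q_0$.
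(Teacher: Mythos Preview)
Your proposal is correct and follows essentially the same approach as the paper: unpack $\langle\cdot\rangle_\Omega$ via \eqref{eq:action V_Omega}, compute the first tensor factor using the defining property of $v_{\lambda_\pi}^H$, compute the second via \eqref{eq:pure family}, and combine using $\sw_\Omega = \sw_{\lambda_\pi}\cdot\sw_{\Omega_0}$. One minor point: the action property of $v_{\lambda_\pi}^H$ under $\langle\smallmatrd{h}{}{}{h}\rangle_{\lambda_\pi}$ is its \emph{definition} (Notation~\ref{not:v_lambda}), rather than an application of Lemma~\ref{lem:diagonal action}(i), which concerns the related vectors $\sv_{\lambda,j}$; but since these generate the same line this makes no material difference.
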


\begin{proof}
	By the definition of the action of $H(\Zp)$ on $V_{\Omega}^H$ (see \eqref{eq:action V_Omega}), we have
	\[
	\left\langle\smallmatrd{h}{}{}{h}\right\rangle_\Omega \cdot (\sv_{\lambda_\pi} \otimes 1) = \sw_{\lambda_\pi}(\mathrm{N}_{F/\Q}\circ \det(h))v_{\lambda_\pi}^H \otimes \sw_{\Omega_0}(\mathrm{N}_{F/\Q}\circ \det(h)),
	\]
	recalling 
	\disp{
	\chi_{\Omega_0}(h,h) = \sw_{\Omega_0}(\mathrm{N}_{F/\Q}\circ \det(h))
}
	from \eqref{eq:pure family}. We conclude as $\sw_\Omega = \sw_{\lambda_{\pi}}\sw_{\Omega_0}$.
\end{proof}

\begin{lemma}\label{lem:renormalise}
	If $\lambda \in \Omega$ is a classical weight, then $\mathrm{sp}_\lambda(\sv_\Omega) \in V_\lambda^H(\cO_L)$ is optimally integral, non-zero, and 
	\[
	\left\langle\smallmatrd{h}{}{}{h}\right\rangle_\lambda \cdot \mathrm{sp}_\lambda(\sv_\Omega) = (\mathrm{N}_{F/\Q}\circ\det(h))^{\sw_\lambda}\mathrm{sp}_\lambda(\sv_\Omega).
	\]
\end{lemma}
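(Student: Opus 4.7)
The plan is to deduce all three assertions by applying the specialisation map $\mathrm{sp}_\lambda$ of Definition~\ref{def:sp lambda} to the data defining $\sv_\Omega = v_{\lambda_\pi}^H \otimes 1$, and transporting the corresponding statements already known at the base weight $\lambda_\pi$ via the identification $V_\lambda^H = V_{\lambda_\pi}^H \otimes \lambda\lambda_\pi^{-1}$ from \eqref{eq:changing lambda}.

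First I would unwind the definition: by construction $\mathrm{sp}_\lambda(\sv_\Omega)$ is the image of $v_{\lambda_\pi}^H \otimes 1 \in V_{\lambda_\pi}^H(L) \otimes \cO_{\Omega_0}$ under $\mathrm{id}\otimes\mathrm{sp}_{\lambda_0}$ composed with \eqref{eq:changing lambda}. Since the underlying $L$-vector space of $V_\lambda^H(L)$ is the same as that of $V_{\lambda_\pi}^H(L)$ (the two differ only by the $H(\Zp)$-action, which is twisted by the character $\lambda_0 = \lambda\lambda_\pi^{-1}$), the vector $\mathrm{sp}_\lambda(\sv_\Omega)$ coincides with $v_{\lambda_\pi}^H$ viewed inside $V_\lambda^H(L)$. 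Non-vanishing is then inherited from the non-vanishing of $v_{\lambda_\pi}^H$ (which is fixed in Notation~\ref{not:v_lambda} as a generator of a line). For optimal integrality, one observes that the underlying $\cO_L$-lattices of $V_\lambda^H(\cO_L)$ and $V_{\lambda_\pi}^H(\cO_L)$ agree as subsets of this common $L$-space, since the twist $\lambda\lambda_\pi^{-1}$ takes values in $\cO_L^\times$ and hence preserves the integral structure; thus $\mathrm{sp}_\lambda(\sv_\Omega)$ remains optimally integral in $V_\lambda^H(\cO_L)$, inheriting this from Notation~\ref{not:v_lambda}.

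For the transformation property, I would apply $\mathrm{sp}_\lambda$ to the equality of Lemma~\ref{lem:v_omega}. Since $\mathrm{sp}_\lambda$ is $H(\Zp)$-equivariant (by Definition~\ref{def:sp lambda}), it intertwines $\langle\cdot\rangle_\Omega$ with $\langle\cdot\rangle_\lambda$; on the scalar side, evaluation at $\lambda$ sends $\sw_\Omega$ to $\sw_\lambda$, i.e.\ sends $\sw_\Omega(\mathrm{N}_{F/\Q}\circ\det(h))$ to $(\mathrm{N}_{F/\Q}\circ\det(h))^{\sw_\lambda}$, by the normalisation recorded immediately after \eqref{eq:pure family}. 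Combining these two remarks with Lemma~\ref{lem:v_omega} gives
\[
\left\langle\smallmatrd{h}{}{}{h}\right\rangle_\lambda \cdot \mathrm{sp}_\lambda(\sv_\Omega) \;=\; (\mathrm{N}_{F/\Q}\circ\det(h))^{\sw_\lambda}\,\mathrm{sp}_\lambda(\sv_\Omega),
\]
as required. There is no real obstacle here: the only mild subtlety is bookkeeping around the identification \eqref{eq:changing lambda} and checking that the normalisation of $\sw_\Omega$ specialises correctly; both are formal consequences of the set-up in \S\ref{sec:distributions in families}.
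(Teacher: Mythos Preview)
Your proposal is correct and follows essentially the same argument as the paper: non-vanishing is immediate from the definition, the transformation property is obtained by specialising Lemma~\ref{lem:v_omega} (using that $\mathrm{sp}_\lambda$ is $H(\Zp)$-equivariant and sends $\sw_\Omega$ to $\sw_\lambda$), and optimal integrality is deduced from the fact that the algebraic twist $\lambda\lambda_\pi^{-1}$ takes values in $\cO_L^\times$ on $H(\Zp)$, so multiplying by it preserves the integral lattice and its optimality. The paper phrases the integrality step slightly more concretely by evaluating $\mathrm{sp}_\lambda(\sv_\Omega) = \sv_{\lambda_\pi}\otimes\lambda\lambda_\pi^{-1}$ on $H(\Zp)$ directly, but your lattice-level formulation amounts to the same thing.
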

\begin{proof}
	Non-vanishing is immediate from the definition, and the action property follows from specialising Lemma \ref{lem:v_omega}.  To see $\mathrm{sp}_\lambda(\sv_\Omega)$ is integral, recall $\sv_{\lambda_\pi} \in V_{\lambda_\pi}^H(\cO_L)$ is integral, so $\sv_{\lambda_\pi}(H(\Zp)) \subset \cO_L$. Since $\lambda$ is algebraic, we have $\lambda\lambda_\pi^{-1}(H(\Zp)) \subset \cO_L^\times$. By definition $\mathrm{sp}_\lambda(\sv_\Omega) = \sv_{\lambda_\pi}\otimes \lambda\lambda_\pi^{-1}$, so we deduce 
	\[
	\mathrm{sp}_\lambda(\sv_\Omega)(H(\Zp)) \subset \cO_L.
	\]
	As $v_{\lambda_\pi}^H$ is optimally integral, it also follows that 
	\disp{
	\varpi_L^{-1}\mathrm{sp}_\lambda(\sv_\Omega)(H(\Zp)) \not \subset \cO_L,
}
	so $\mathrm{sp}_\lambda(v_\Omega^H)$ is optimally integral as claimed. 
\end{proof}

Lemma \ref{lem:renormalise} allows us to make the following renormalisation of the vectors from Notation \ref{not:v_lambda}, which aligns them in the family $\Omega$. 

\begin{definition}\label{def:v_lambda family}
	If $\lambda \in \Omega$ is a classical weight, let 
	\[
	\sv_\lambda \defeq \mathrm{sp}_\lambda(\sv_\Omega) \in V_\lambda^H(\cO_L).
	\] 
\end{definition}

\begin{remark}
	This does not change our earlier choice of $v_{\lambda_\pi}^H$, since $\mathrm{sp}_{\lambda_\pi}^H(v_\Omega^H) = v_{\lambda_\pi}^H$. 
	
	From $\sv_\lambda$, as in \S\ref{sec:choice of basis} we obtain compatible choices of $\kappaj^\circ$ as $j$ varies in $\mathrm{Crit}(\lambda)$. The definition of $\sv_\lambda$ depends only on $\sv_{\Omega}$, which depends only on the choice of $v_{\lambda_\pi}^H$. In particular, the (single) choice of $v_{\lambda_\pi}^H$ determines compatible choices of $\kappaj^\circ$ for all classical $\lambda \in \Omega$ and all $j \in \mathrm{Crit}(\lambda)$.
\end{remark}

\subsubsection{Construction of $v_\Omega^\beta$ and $\kappa_\Omega^\beta$}
Note 
\disp{
\cA(\sU_\beta,\cO_\Omega) \subset \cA(1+p^\beta\OFp, \cO_\Omega)
}
is the subset of functions invariant under $\overline{E(p^\beta)}$. Recall $N_Q^\beta(\Zp)$ from \eqref{eq:Jp minus}. We have a map 
		\[N_Q^\beta(\Zp) \longrightarrow 1+p^\beta\OFp, \qquad 	\smallmatrd{1}{X}{0}{1} \longmapsto (-1)^n\det(X).\]
Define a map
\[
v_\Omega^\beta: \cA(1+p^\beta\OFp, \cO_\Omega) \longrightarrow \cA_\Omega 
\]
as follows. For $f \in \cA(1+p^\beta\OFp, \cO_\Omega)$, define 
\[
\nubeta(f): N_Q(\Zp) \to V_{\Omega}^H
\]
by setting, for $X \in \mathrm{M}_{n}(\OFp)$, 
\begin{align}\label{eq:times}
	\nubeta(f) \smallmatrd{I_n}{X}{0}{I_n}= 
	\begin{cases} 	f\big((-1)^n\det(X)\big) \bigg(\left\langle\smallmatrd{X}{0}{0}{I_n} \right\rangle_{\Omega}\cdot\sv_\Omega\bigg)  &  : \smallmatrd{1}{X}{0}{1} \in N_Q^\beta(\Zp), \\ 
		\hfil 0 &  :\mbox{else}.
	\end{cases}
\end{align}
Extending under the parahoric decomposition using \eqref{eq:parahoric transform A} determines $\nubetaf$ as an element of $\cA_\Omega$.

\begin{definition}\label{def:kappa_beta}
Restricting $v_\Omega^\beta$ yields a map $v_\Omega^\beta : \cA(\sU_\beta, \cO_\Omega) \to \cA_\Omega$. Dualising gives a map 
	\begin{align} \label{e:distributions lambda to galois}
		\kappaObeta : \cD_\Omega &\longrightarrow \cD(\sU_\beta, \cO_\Omega).
	\end{align}
\end{definition}

\begin{remark}
	The map $\kappaObeta$, combined with Lemma \ref{lem:map to Galp[z]}, will induce the `missing' map in \eqref{eq:motivation nu}. To motivate \eqref{eq:times} and Definition \ref{def:kappa_beta}, compare to the description of $\kappaj^\circ$ in Lemma \ref{lem:nu_j}. For the support condition in  \eqref{eq:times}, note that for the outer maps of \eqref{eq:motivation nu} to commute, by Lemma \ref{lem:support} it suffices to  consider $v_\Omega^\beta(f)$ supported on $J_p^\beta$, and hence (by parahoric decomposition) on $N_Q^\beta(\Zp)$.
\end{remark}

Restricting under \eqref{eq:CFT}, we may see $\chi_{\cyc}$ as an element of $\cA(\sU_\beta,L)$, and thus make sense of $v_\lambda^\beta(\chi_{\cyc}^j)$, supported on $J_p^\beta$. Let $\lambda \in \Omega$ be classical. Recall  
\disp{
\nuj : N_Q^\times(\Zp) \to V_\lambda^H(L)
}
from Lemma \ref{lem:nu_j}. In \eqref{eq:xi_j} of this lemma, we normalise $v_\lambda^H$ as in Definition \ref{def:v_lambda family}. The following shows that $v_\lambda^\beta$ interpolates $v_{\lambda,j}$ as $j$ varies in $\mathrm{Crit}(\lambda)$, and hence interpolates branching laws in the `cyclotomic direction'.

\begin{lemma}\label{lem:kappa compatible}
	Let $\lambda \in \Omega$ be classical. For all $j \in \mathrm{Crit}(\lambda)$, we have 
	\[
	v_\lambda^\beta(\chi_{\cyc}^j)\big|_{\Jpbeta} = \nuj\big|_{\Jpbeta}.
	\]
\end{lemma}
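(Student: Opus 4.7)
My plan is to reduce to a check on the single subgroup $N_Q^\beta(\Zp)$, then match explicit formulas term-by-term.

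First I would exploit the parahoric decomposition $J_p^\beta = (N_Q^-(\Zp)\cap J_p)\cdot H(\Zp)\cdot N_Q^\beta(\Zp)$ to reduce the claim. Both objects live in (or embed into) $\cA_\lambda$: the function $v_\lambda^\beta(\chi_{\cyc}^j)$ by construction via the parahoric extension in the definition of $\nubeta(f)$, and $\nuj \in V_\lambda(L)$ via the identification of Lemma~\ref{lem:induction transitive}. Consequently both satisfy the transformation law \eqref{eq:parahoric transform A}, i.e.\ $f(n^- h g)=\langle h\rangle_\lambda f(g)$ for $n^-\in N_Q^-(\Zp)\cap J_p$ and $h\in H(\Zp)$. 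Thus each is determined on $J_p^\beta$ by its restriction to $N_Q^\beta(\Zp)$, and it suffices to prove equality on this last set.

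Next I would unwind both formulas on $N_Q^\beta(\Zp)$. For $\smallmatrd{1}{X}{0}{1} \in N_Q^\beta(\Zp)$, specialising \eqref{eq:times} at the classical weight $\lambda$ (using that $\mathrm{sp}_\lambda$ sends $\sv_\Omega$ to $\sv_\lambda$ by Definition~\ref{def:v_lambda family} and is $H(\Zp)$-equivariant by Definition~\ref{def:sp lambda}) yields
\[
v_\lambda^\beta(\chi_{\cyc}^j)\smallmatrd{I_n}{X}{0}{I_n}
= \chi_{\cyc}^j\bigl((-1)^n\det(X)\bigr)\,\Bigl(\bigl\langle\smallmatrd{X}{0}{0}{I_n}\bigr\rangle_\lambda \cdot \sv_\lambda\Bigr),
\]
while Lemma~\ref{lem:nu_j}(i) gives
\[
\nuj\smallmatrd{I_n}{X}{0}{I_n}
= (-1)^{dnj}\,\mathrm{N}_{F/\Q}(\det X)^j\,\Bigl(\bigl\langle\smallmatrd{X}{0}{0}{I_n}\bigr\rangle_\lambda \cdot \sv_\lambda\Bigr).
\]
Thus the remaining task is the scalar identity $\chi_{\cyc}^j((-1)^n\det X)=(-1)^{dnj}\mathrm{N}_{F/\Q}(\det X)^j$.

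For this I would directly compute $\chi_{\cyc}$ restricted to $\sU_\beta$ via \eqref{eq:CFT}: any $u\in 1+p^\beta\OFp$ enters $\A_F^\times$ via the diagonal embedding at the primes above $p$, so $u_\infty=1$, $|u_v|_v=1$ for every finite $v$ (as $u_v\in \cO_v^\times$ for all finite $v$), and $\mathrm{N}_{F/\Q}(u_p)\in \Zp^\times$. The definition \eqref{eq:cyc} then collapses to $\chi_{\cyc}(u)=\mathrm{N}_{F/\Q}(u_p)$. Since $X\equiv -I_n \newmod{p^\beta}$ forces $(-1)^n\det(X) \in 1+p^\beta\OFp$, and $\mathrm{N}_{F/\Q}(-1)=(-1)^d$, this gives
\[
\chi_{\cyc}^j\bigl((-1)^n\det(X)\bigr)=\bigl((-1)^d\bigr)^{nj}\mathrm{N}_{F/\Q}(\det X)^j=(-1)^{dnj}\mathrm{N}_{F/\Q}(\det X)^j,
\]
closing the computation.

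There is no real obstacle here: the lemma is essentially a bookkeeping statement verifying the compatibility of the normalisations made in Definitions~\ref{def:kappa_j} and~\ref{def:kappa_beta}. The only subtle point is that the sign $(-1)^{dnj}$ put by hand into $\nuj$ (in Definition~\ref{def:kappa_j}) is precisely balanced by the $(-1)^n$ appearing in $(-1)^n\det(X)$ inside the definition \eqref{eq:times} of $v_\Omega^\beta$; this is exactly the alignment flagged in Remark~\ref{rem:different xi} as arising from our choice of $\xi$.
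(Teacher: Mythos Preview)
Your proof is correct and follows essentially the same approach as the paper: reduce to $N_Q^\beta(\Zp)$ via the parahoric transformation law, then match the two explicit formulas by computing $\chi_{\cyc}^j((-1)^n\det X)=(-1)^{dnj}\,\mathrm{N}_{F/\Q}(\det X)^j$. Your treatment is in fact slightly more explicit than the paper's (you spell out how $\chi_{\cyc}$ restricts along $\iota:\sU_\beta\hookrightarrow\Galp$ and why $\mathrm{N}_{F/\Q}(-1)=(-1)^d$), but there is no substantive difference.
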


\begin{proof}  
	If $g = n_Q^- h \smallmatrd{1}{X}{0}{1} \in \Jpbeta$, then 
	\[
	\det(X) \in (-1)^n + p^\beta\OFp\subset (\OFp)^\times,
	\]
	where the last inclusion follows as $\beta_{\pri} \geqslant 1$ for all $\pri$. Hence $	\mathrm{N}_{F/\Q}\circ\det(X) \in \Zp^\times,$ so $\smallmatrd{1}{X}{0}{1} \in N_Q^\times(\Zp).$	On such $X$ we have 
	\[
	\chi_{\cyc}^j((-1)^n\det(X)) = (-1)^{dnj}\mathrm{N}_{F/\Q}\circ\det(X)^j.
	\]
	Combining this and the definition of $v_\lambda^\beta$ with Lemma \ref{lem:nu_j}, we see that
	\[
	v_\lambda^\beta(\chi_{\cyc}^j)\left[\smallmatrd{1}{X}{0}{1}\right] = \nuj\left[\smallmatrd{1}{X}{0}{1}\right]
	\]
	for all $\smallmatrd{1}{X}{0}{1} \in N_Q^\beta(\Zp)$. We conclude $v_\lambda^\beta(\chi_{\cyc}^j)$ and $\nuj$ agree on all of $J_p^\beta$, as they satisfy the same transformation law under parahoric decomposition.
\end{proof}

We now combine $v_\Omega^\beta$ with the formalism of evaluation maps developed in \S\ref{sec:variation}.

\begin{proposition}\label{prop:Lbeta action}
	\begin{enumerate}[(i)]\setlength{\itemsep}{0pt}
		\item The action of $\ell = (\ell_1, \ell_2) \in L_\beta$ on $\cA(\Galp,\cO_\Omega)$ under \eqref{eq:action H} is by
		\begin{align}\label{eq:action of ell}
			\left[\ell * f\right](x) &\defeq [(\det(\ell_1),\det(\ell_2)) * f](x)\\ 
			&= \mathrm{N}_{F/\Q}(\det(\ell_{2,p}))^{\sw_{\Omega}} f( \det(\ell_{1,p}^{-1}\ell_{2,p}) x).\notag
		\end{align}
		It preserves $\cA(\sU_\beta,\cO_\Omega)$, giving it the structure of an $L_\beta$-module.
		\item The map $v_\Omega^\beta : \cA(\sU_\beta,\cO_\Omega) \to \cA_\Omega$	is a map of $L_\beta$-modules.
		
		\item The image of $v_\Omega^\beta$ is a subspace of the $\Gamma_{\beta,\delta}$-invariants $\cA_\Omega^{\Gamma_{\beta,\delta}}$.
		
		\item The map $\kappaObeta$ from Definition \ref{def:kappa_beta} is a map of left $L_\beta$-modules, and factors through 
		\[
		\kappaObeta : (\cD_{\Omega})_{\Gamma_{\beta,\delta}} \to \cD(\sU_\beta,\cO_\Omega).
		\]
	\end{enumerate}
\end{proposition}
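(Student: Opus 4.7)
The plan is to prove the four parts in sequence: (i) unwinds the idelic action explicitly, (ii) is a parahoric computation, and (iii)--(iv) follow from (ii) together with a global reciprocity argument. Before starting, I would verify one preliminary: by a direct matrix computation of $\xi_{\pri}^{-1}\iota(\ell_1,\ell_2)\xi_{\pri}$ and matching it against $t_{\pri}^{\beta_{\pri}} J_{\pri} t_{\pri}^{-\beta_{\pri}}$, one obtains the explicit description
\[
L_{\pri}^{\beta_{\pri}} = \{(\ell_1, \ell_2) \in H(\cO_{\pri}) : \ell_1 \equiv \ell_2 \newmod{\pri^{\beta_{\pri}}}\}.
\]
In particular $\det(\ell_{1,p}^{-1}\ell_{2,p}) \in 1+p^\beta\OFp$ for any $\ell \in L_\beta$, and each $\det(\ell_i)$ differs from its $p$-component by an element of $\sU(p^\infty)$.

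For (i), the preliminary yields the explicit action formula directly from \eqref{eq:action ideles}: the $\sU(p^\infty)$-part of $\det(\ell_i)$ lies in the kernel of $\mathrm{rec}: \A_F^\times \to \Galp$ by \eqref{eq:galp ideles} and is trivial under $\chi_{\cyc}$ away from $p$, so only the $p$-parts survive. For preservation of $\cA(\sU_\beta,\cO_\Omega)$, the containment $\det(\ell_{1,p}^{-1}\ell_{2,p}) \in 1+p^\beta\OFp$ ensures that multiplication by it maps $\Galp[\mathbf{1}_\beta]$ to itself, hence preserves the corresponding subspace $\cA(\sU_\beta,\cO_\Omega) \cong \cA(\Galp[\mathbf{1}_\beta],\cO_\Omega)$ of $\cA(\Galp,\cO_\Omega)$.

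For (ii), both $v_\Omega^\beta(\ell * f)$ and $\ell * v_\Omega^\beta(f)$ transform identically under \eqref{eq:parahoric transform A}, so it suffices to compare their restrictions to $N_Q(\Zp)$. On $\smallmatrd{1}{X}{0}{1} \in N_Q(\Zp)$ I would use the identity
\[
\smallmatrd{1}{X}{0}{1}\iota(\ell_1,\ell_2) = \iota(\ell_1,\ell_2)\smallmatrd{1}{X'}{0}{1}, \qquad X' \defeq \ell_1^{-1}X\ell_2,
\]
together with the preliminary: the congruence $\ell_1 \equiv \ell_2 \newmod{p^\beta}$ gives $X \equiv -I_n \Longleftrightarrow X' \equiv -I_n \newmod{p^\beta}$, so both sides vanish outside $N_Q^\beta(\Zp)$. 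On $N_Q^\beta(\Zp)$, applying Lemma~\ref{lem:v_omega} to the diagonal factor $\iota(\ell_2,\ell_2)$ extracts the factor $\sw_\Omega(\mathrm{N}_{F/\Q}\det(\ell_{2,p}))$, and multiplicativity of $\det$ identifies the arguments of $f$ on the two sides.

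For (iii), the action of $\gamma \in \Gamma_{\beta,\delta}$ on $v_\Omega^\beta(f)$ factors through $\ell \defeq (\delta^{-1}\gamma\delta)_f \in L_\beta$ by \eqref{eq:gamma action}, so by (ii) it suffices to show that $\ell$ acts trivially on $\cA(\sU_\beta,\cO_\Omega)$. Conjugation invariance of $\det$ gives $\det(\ell_i) = \det(\gamma_i) \in \cO_F^\times \cap F_\infty^{\times\circ}$ (by $\Gamma_{\beta,\delta} \subset H(\Q)^+$); a totally positive global unit has $\mathrm{N}_{F/\Q} = 1$, trivialising the $\sw_\Omega$-prefactor in (i). For the translation, set $a \defeq \det(\gamma_1)^{-1}\det(\gamma_2) \in \cO_F^\times \cap F_\infty^{\times\circ}$; then $a$, viewed diagonally in $\A_F^\times$, maps trivially to $\Galp$ by global reciprocity, while its non-$p$ components lie in $\sU(p^\infty)F_\infty^{\times\circ}$ (the kernel of the quotient defining $\Galp$ in \eqref{eq:galp ideles}), forcing its $p$-component also to map to the identity. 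Finally (iv) is immediate by dualising (ii) (the dual left action automatically preserves $L_\beta$-equivariance), and the factorisation through coinvariants is the distribution-theoretic restatement of (iii). The main obstacle is the reciprocity argument in (iii), where the precise form $\Gamma_{\beta,\delta} \subset H(\Q)^+$ (rather than $H(\Q)$) is used essentially.
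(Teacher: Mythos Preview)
Your proof is correct and follows the same overall strategy as the paper's. The only notable difference is in part (iii): rather than unwinding the formula from (i) and invoking reciprocity by hand, the paper simply quotes the fact (recorded just after \eqref{eq:action H}) that both $H(\Q)$ and $H_\infty^\circ$ act trivially on $\cA(\Galp,\cO_\Omega)$; since $\Gamma_{\beta,\delta}\subset H(\Q)^+$ and conjugation by $\delta$ preserves determinants, the element $(\delta^{-1}\gamma\delta)_f\in L_\beta$ acts via the same idelic characters as $\gamma$ and $\gamma_\infty^{-1}$, hence trivially. Your explicit argument reproves exactly this, and your observation that $\Gamma_{\beta,\delta}\subset H(\Q)^+$ is the essential point is accurate --- it is what guarantees $\gamma_\infty\in H_\infty^\circ$ (equivalently, in your language, that $\mathrm{N}_{F/\Q}$ of the relevant unit is $+1$ and that its archimedean part lies in $F_\infty^{\times\circ}$). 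Your preliminary computation of $L_{\pri}^{\beta_{\pri}}$ is what the paper cites to \cite[Lem.~2.1]{DJR18}, and your support check in (ii) (case-splitting on $X\equiv -I_n$ rather than on $\det X\equiv (-1)^n$) is arguably cleaner than the paper's, since the latter condition is only necessary, not sufficient, for lying in $N_Q^\beta(\Zp)$.
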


\begin{proof}
	(i)  Since $\det(\ell_i) \in (\cO_F\otimes \widehat{\Z})^\times$, we have 
	\disp{
	\chi_{\cyc}(\det(\ell_2)) = \mathrm{N}_{F/\Q}(\det(\ell_{2,p}))
}
	and $\det(\ell_{i,v}) \in \sU(p^\infty)$ for all $v\nmid p\infty$. Hence 
	\[
	[\det(\ell_1^{-1}\ell_2)x] = [\det(\ell_{1,p}^{-1}\ell_{2,p}^{-1})x]
	\]
	in $\Galp$, and \eqref{eq:action ideles} induces the stated action. It preserves $\cA(\sU_\beta,\cO_\Omega)$ since $\det(\ell_{1,p}^{-1}\ell_{2,p}) \equiv 1 \newmod{p^\beta}$ by \cite[Lem.~2.1]{DJR18}.
	
	\medskip
	
	(ii) For $f \in \cA(\sU_\beta,L)$, we must show that 
	\[
	\ell* \nubetaf= \nubeta(\ell * f).
	\]
	Let $X \in \mathrm{M}_n(\OFp)$. If $\det(X) \neq (-1)^n \newmod{p^\beta}$, both sides are zero at $\smallmatrd{1}{X}{0}{1}$. If $\det(X) \equiv (-1)^n  \newmod{p^\beta}$, then
	\begin{align*}
		(\ell * \nubetaf)&\smallmatrd{I_n}{X}{0}{I_n} = \nubetaf\smallmatrd{\ell_{1,p}}{X\ell_{2,p}}{0}{\ell_{2,p}}\\
		&= 
		\left\langle\smallmatrd{\ell_{1,p}}{0}{0}{\ell_{2,p}} \right\rangle_{\Omega}\cdot \nubetaf\smallmatrd{I_n}{\ell_{1,p}^{-1}X\ell_{2,p}}{0}{I_n}\\
		&= \left\langle\smallmatrd{\ell_{1,p}}{0}{0}{\ell_{2,p}} \right\rangle_{\Omega}\cdot\left\langle\smallmatrd{\ell_{1,p}^{-1}X\ell_{2,p}}{0}{0}{I_n} \right\rangle_{\Omega}\cdot \bigg(f\big[(-1)^n\det(\ell_{1,p}^{-1}X\ell_{2,p})\big] \sv_\Omega\bigg)\\
		&= \left\langle\smallmatrd{X}{0}{0}{I_n}\right\rangle_{\Omega} \cdot \left\langle\smallmatrd{\ell_{2,p}}{0}{0}{\ell_{2,p}} \right\rangle_{\Omega} \cdot \bigg(f\big[(-1)^n\det(\ell_{1,p}^{-1}X\ell_{2,p})\big] \sv_\Omega\bigg)\\
		&= \left\langle\smallmatrd{X}{0}{0}{I_n}\right\rangle_\Omega \cdot  \bigg(\mathrm{N}_{F/\Q}(\det(\ell_{2,p}))^{\sw_\Omega} f\big[(-1)^n\det(\ell_{1,p}^{-1}\ell_{2,p})\det(X)\big] \sv_\Omega\bigg)\\
		&= \left\langle\smallmatrd{X}{0}{0}{I_n}\right\rangle_\Omega \cdot  \bigg((\ell* f)\big[(-1)^n\det(X)\big] \sv_\Omega\bigg)\\
		&= \nubeta(\ell * f)\smallmatrd{I_n}{X}{0}{I_n},
	\end{align*}
	proving (ii); the first equality is the $*$-action, the second is \eqref{eq:parahoric transform A}, the third is \eqref{eq:times}, the fifth is Lemma \ref{lem:v_omega}, the sixth by (i), and the last is \eqref{eq:times}. 
	
	\medskip
	
	(iii) Note $\Gamma_{\beta,\delta} \subset H(\Q)$ acts trivially on $\cA(\Galp,\cO_\Omega)$ (see \eqref{eq:action H}). Hence $\delta^{-1}\Gamma_{\beta,\delta}\delta \subset L_\beta$ acts trivially on $\cA(\Galp,\cO_\Omega)$, hence trivially on $\cA(\sU_\beta,\cO_\Omega)$. From (ii), it follows that $\delta^{-1}\Gamma_{\beta,\delta}\delta$ -- acting as a subgroup of $L_\beta$ -- acts trivially on the image of $v_\Omega^\beta$. But by definition of the $\Gamma_{\beta,\delta}$-action (see \eqref{eq:gamma action}), this means $\Gamma_{\beta,\delta}$ acts trivially on this image.
	
	\medskip
	
	(iv) That $\kappaObeta$ is a map of $L_\beta$-modules follows from (ii), and thus it factors through $(\cD_\Omega)_{\Gamma_{\beta,\delta}}$ since the target is $\Gamma_{\beta,\delta}$-invariant by (iii).
\end{proof}

\subsubsection{Proof that $\kappa_\Omega^\beta$ interpolates $\kappaj^\circ$}
The following is the main result of \S\ref{sec:p-adic branching}.

\begin{proposition}\label{prop:kappa interpolate}
	Let $\lambda \in \Omega$ classical and $j \in \mathrm{Crit}(\lambda)$. The following diagram commutes:
	\begin{equation}\label{eq:kappa interpolate}
		\xymatrix@C=20mm@R=7mm{
			\hc{t}(S_K,\sD_\Omega) \ar[r]^-{\mathrm{Ev}_{\beta,\delta}^{\cD_\Omega}}\ar[d]^{\mathrm{sp}_\lambda} & (\cD_\Omega)_{\Gamma_{\beta,\delta}} \ar[r]^{\kappaObeta}\ar[d]^{\mathrm{sp}_\lambda} & \cD(\sU_\beta,\cO_\Omega)\ar[d]^{\mathrm{sp}_\lambda}\\
			\hc{t}(S_K,\sD_\lambda) \ar[r]^-{\mathrm{Ev}_{\beta,\delta}^{\cD_\lambda}} \ar[d]^{r_\lambda} & (\cD_\lambda)_{\Gamma_{\beta,\delta}} \ar[r]^{\kappalbeta} & \cD(\sU_\beta,L)\ar[d]^{\int_{\sU_\beta} \chi_{\cyc}^j }\\
			\hc{t}(S_K,\sV_\lambda^\vee) \ar[r]^-{\mathrm{Ev}_{\beta,\delta}^{V_\lambda^\vee}} & (V_\lambda^\vee)_{\Gamma_{\beta,\delta}} \ar[r]^{\kappaj^\circ} & L.
		}
	\end{equation}
\end{proposition}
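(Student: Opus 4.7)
The plan is to verify commutativity of the four individual squares in turn, splitting the argument into a ``left half'' (involving only the evaluation maps $\mathrm{Ev}_{\beta,\delta}^M$) and a ``right half'' (involving the branching-law maps $\kappa$).

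\textbf{The two left squares.} Both the top-left and middle-left squares express functoriality of the abstract evaluation map $\mathrm{Ev}_{\beta,\delta}^M$ as $M$ varies. Indeed, the specialisation $\mathrm{sp}_\lambda \colon \cD_\Omega \to \cD_\lambda$ and the classicality map $r_\lambda \colon \cD_\lambda \to V_\lambda^\vee$ are both morphisms of left $\Delta_p$-modules for the $*$-action (the first is obtained by dualising \eqref{eq:parahoric transform A} in families, the second is the dual of the inclusion $V_\lambda \hookrightarrow \cA_\lambda$ and $*$-equivariance is built into \S\ref{sec:slope-decomp}). Hence these two squares commute by Lemma \ref{prop:pushforward}.

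\textbf{The top-right square.} This will reduce to the assertion that $\mathrm{sp}_\lambda \circ v_\Omega^\beta = v_\lambda^\beta$ as maps $\cA(\sU_\beta,L)\to \cA_\lambda$, since dualising gives $\kappalbeta\circ \mathrm{sp}_\lambda = \mathrm{sp}_\lambda \circ \kappaObeta$ on $\cD_\Omega$. To check the assertion, I will compare the two sides on $N_Q^\beta(\Zp)$ using the defining formula \eqref{eq:times}: by Definition \ref{def:v_lambda family} we have $\mathrm{sp}_\lambda(\sv_\Omega) = \sv_\lambda$, and by the construction in \eqref{eq:parahoric transform} the specialisation $\mathrm{sp}_\lambda$ intertwines $\langle \cdot\rangle_\Omega$ with $\langle \cdot\rangle_\lambda$ on $V_\Omega^H$. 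Applying these to \eqref{eq:times} gives $\mathrm{sp}_\lambda\bigl(v_\Omega^\beta(f)\bigr)|_{N_Q^\beta(\Zp)} = v_\lambda^\beta(f)|_{N_Q^\beta(\Zp)}$; both functions vanish off $N_Q^\beta(\Zp)$ and both transform identically under parahoric decomposition, so they agree on all of $J_p$. Descending to $\Gamma_{\beta,\delta}$-coinvariants using Proposition \ref{prop:Lbeta action}(iv) then gives commutativity of the square.

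\textbf{The bottom-right square.} This is the crucial square, and it does not commute on abstract coinvariants $(\cD_\lambda)_{\Gamma_{\beta,\delta}}$; we need that we are only considering distributions coming from $\mathrm{Ev}_{\beta,\delta}^{\cD_\lambda}$. Let $\Phi \in \hc{t}(S_K,\sD_\lambda)$ and set $\mu \defeq \mathrm{Ev}_{\beta,\delta}^{\cD_\lambda}(\Phi) \in (\cD_\lambda)_{\Gamma_{\beta,\delta}}$. Unwinding the definitions, we have
\[
\bigl(\kappaj^\circ\circ r_\lambda\bigr)(\mu) = \mu(\nu_{\lambda,j}) \quad\text{and}\quad \Bigl(\int_{\sU_\beta}\chi_{\cyc}^j\Bigr)\bigl(\kappalbeta(\mu)\bigr) = \mu\bigl(v_\lambda^\beta(\chi_{\cyc}^j)\bigr),
\]
using Lemma \ref{lem:nu_j}(iii) for the first identity (with $\nu_{\lambda,j}$ viewed in $V_\lambda \subset \cA_\lambda$) and Definition \ref{def:kappa_beta} for the second. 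The difference $\nu_{\lambda,j} - v_\lambda^\beta(\chi_{\cyc}^j) \in \cA_\lambda$ vanishes on $\Jpbeta$ by Lemma \ref{lem:kappa compatible}; but Lemma \ref{lem:support} tells us precisely that $\mu$, being in the image of $\mathrm{Ev}_{\beta,\delta}^{\cD_\lambda}$, is supported on $\Jpbeta$. Therefore $\mu\bigl(\nu_{\lambda,j} - v_\lambda^\beta(\chi_{\cyc}^j)\bigr) = 0$, as required.

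The main subtlety is this last step: commutativity of the bottom-right square relies on the support property of Lemma \ref{lem:support}, and this in turn is the reason for the indicator-type definition of $v_\Omega^\beta$ in \eqref{eq:times}. All the normalisations introduced in \S\ref{sec:choice of basis}--\S\ref{sec:G to H} were designed exactly so that Lemma \ref{lem:kappa compatible} gives an equality (not just a scalar proportionality) between $\nu_{\lambda,j}|_{\Jpbeta}$ and $v_\lambda^\beta(\chi_{\cyc}^j)|_{\Jpbeta}$, making the argument above go through without correction factors.
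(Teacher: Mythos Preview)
Your proof is correct and follows essentially the same approach as the paper: functoriality (Lemma~\ref{prop:pushforward}) for the left squares, the compatibility $\mathrm{sp}_\lambda(\sv_\Omega)=\sv_\lambda$ together with $\langle\cdot\rangle_\Omega\rightsquigarrow\langle\cdot\rangle_\lambda$ for the top-right square, and the support condition (Lemma~\ref{lem:support}) combined with Lemma~\ref{lem:kappa compatible} for the bottom rectangle. The only cosmetic point is that the diagram as drawn has no vertical arrow $(\cD_\lambda)_{\Gamma_{\beta,\delta}}\to (V_\lambda^\vee)_{\Gamma_{\beta,\delta}}$, so strictly there is a bottom \emph{rectangle} rather than two bottom squares; the paper makes this explicit by ``completing'' the diagram with $r_\lambda$ on coinvariants, exactly as you implicitly do.
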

\begin{proof}
	The top left-hand square commutes by Lemma \ref{prop:pushforward}. We next consider the top-right square. In the definition of $\kappaObeta$, note by definition that $\mathrm{sp}_\lambda(\sv_\Omega) = \sv_\lambda$ and the action $\langle \cdot \rangle_\Omega$ specialises to $\langle \cdot \rangle_\lambda$ under $\mathrm{sp}_\lambda$. In particular, if $f_\lambda \in \cA(\sU_\beta,L)$ and $f_\Omega \in \cA(\sU_\beta,\cO_\Omega)$ is any lift under $\mathrm{sp}_\lambda$, then 
	\[
	\mathrm{sp}_\lambda[\nubeta(f_\Omega)]= v_\lambda^\beta(f_\lambda) \in \cA_\lambda.
	\] 
	
	We describe the map $\mathrm{sp}_\lambda : \cD_\Omega \to \cD_\lambda$ directly. Let $\mu_\Omega \in \cD_\Omega$, and $g_{\lambda} \in \cA_{\lambda}$. Choose any $g_\Omega \in \cA_\Omega$ with $\mathrm{sp}_\lambda(g_\Omega) = g_{\lambda}$. Then 
	\[
	[\mathrm{sp}_{\lambda}(\mu_\Omega)](g_{\lambda})= \mathrm{sp}_\lambda[\mu_\Omega(g_{\Omega})] \in \cO_{\Omega}/\m_{\lambda},
	\]
	which is easily seen to be independent of lift. Here $\m_{\lambda} \subset \cO_\Omega$ is the maximal ideal attached to $\lambda$. In particular, for $f_\lambda,f_\Omega$ as above, 
	\[
	\mathrm{sp}_\lambda(\mu_\Omega)[v_\lambda^\beta(f_\lambda)] = \mathrm{sp}_\lambda[\mu_\Omega(\nubeta(f_\Omega))].
	\]
	
	Let $\mu_\Omega \in (\cD_\Omega)_{\Gamma_{\beta,\delta}}$; then the top right square commutes as
	\begin{align*}
		[\kappalbeta \circ \mathrm{sp}_\lambda(\mu_\Omega)](f_\lambda) &= \mathrm{sp}_\lambda(\mu_\Omega)\big[v_\lambda^\beta(f_\lambda)\big] = \mathrm{sp}_\lambda\big[\mu_\Omega\big(\nubeta(f_\Omega)\big)\big]\\
		&= \mathrm{sp}_\lambda\big[(\kappa_\Omega^\beta(\mu_\Omega))(f_\Omega) \big] = [\mathrm{sp}_\lambda\circ \kappaObeta(\mu_\Omega)\big](f_\lambda).
	\end{align*}
	We have used the previous paragraph in the second equality.
	
	Now consider the bottom rectangle. If we `complete' \eqref{eq:kappa interpolate} with the natural map $r_\lambda : (\cD_\lambda)_{\Gamma_{\beta,\delta}} \to (V_\lambda^\vee)_{\Gamma_{\beta,\delta}}$, then the bottom-left square would commute by Lemma \ref{prop:pushforward}, but the bottom-right square would \emph{not} commute, due to support conditions. However if $\mu \in \mathrm{Im}(\mathrm{Ev}_{\beta,\delta}^{\cD_\lambda})$, then $\mu$ is supported on $\Jpbeta$ by Lemma~\ref{lem:support}. For such $\mu$, compute
	\begin{align*}
		\int_{\sU_\beta}\chi_{\cyc}^j \cdot \kappalbeta(\mu) &= \int_{J_p}v_\lambda^\beta(\chi_{\cyc}^j) \cdot \mu \\
		&= \int_{\Jpbeta}v_\lambda^\beta(\chi_{\cyc}^j) \cdot \mu = \int_{\Jpbeta}\nuj \cdot \mu = \int_{G(\zp)}\nuj \cdot r_\lambda(\mu)
	\end{align*}
	(recalling from Lemma \ref{lem:nu_j} that $\kappaj^\circ$ is evaluation at $\nuj$). In the second equality, we use that $\mu$ has support on $\Jpbeta$, whence the third equality follows from Lemma~\ref{lem:kappa compatible}. In the last, because $\nuj \in V_\lambda$ we have $\mu(\nuj) = r_\lambda(\mu)(\nuj)$, and then we expand from $J_p^\beta$ to $G(\Zp)$ using that $\mu$ (hence $r_\lambda(\mu))$ is supported on $\Jpbeta$ again. Thus the bottom-right square is commutative on the image of $\mathrm{Ev}_{\beta,\delta}^{\cD_\lambda}$, and the bottom rectangle is commutative.
\end{proof}

\subsection{Distribution-valued evaluation maps}\label{sec:distribution evaluations}
We now define overconvergent analogues of $\cE_{\chi}^{j,\eta_0}$.  Let
	 $\delta = (\delta_1, \delta_2) \in H(\A)$, let $[\delta]$ be its class in $\pi_0(X_\beta)$, and  let $\mathbf{x} = \mathrm{pr}_\beta([\delta]) \in \Cl(p^{\beta})$, for $\mathrm{pr}_\beta$ as in \eqref{eq:pr_beta}. As above, $\det(\delta_{1}\delta_{2}^{-1}) \in \A_F^\times$ is a representative of $\mathbf{x}$. Recall the evaluation map $\mathrm{Ev}_{\beta,\delta}^{\cD_{\Omega}}$ from \S\ref{sec:coinvariants}, and define a `Galois evaluation' $\mathrm{Ev}_{\beta, [\delta]}$
as the composition 
	\begin{align}
		\mathrm{Ev}_{\beta, [\delta]} : \hc{t}(S_K,\sD_\Omega) \xrightarrow{\ \mathrm{Ev}_{\beta,\delta}^{\cD_{\Omega}}\ } (\cD_\Omega)_{\Gamma_{\beta,\delta}} \xrightarrow{\  \kappaObeta\  } \cD(\sU_\beta,\cO_\Omega)\xrightarrow{\ \mu\  \mapsto\  \delta * \mu \ }
		\cD(\Galp[\mathbf{x}],\cO_\Omega).\notag
	\end{align}
Here the action of $\delta$ on $\mu$ is by \eqref{eq:action H}, the map $\kappa_\beta^\Omega$ was defined in Definition \ref{def:kappa_beta}, and the target is $\cD(\Galp[\mathbf{x}],\cO_\Omega)$ by Lemma~\ref{lem:map to Galp[z]}.

\begin{lemma} 
	\label{l:evaluations independent of delta} 
	$\mathrm{Ev}_{\beta, [\delta]}$ is independent of the choice of the representative $\delta$ of $[\delta] \in  \pi_0(X_\beta)$.
\end{lemma}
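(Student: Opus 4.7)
The plan is to deduce this directly from the abstract independence statement Proposition~\ref{prop:ind of delta}, applied to the module $M = \cD_\Omega$ with the target $N = \cD(\Galp, \cO_\Omega)$ equipped with the $H(\A)$-action of \eqref{eq:action H}, and with $\kappa$ taken to be the composition $\iota_* \circ \kappa_\Omega^\beta : \cD_\Omega \to \cD(\Galp, \cO_\Omega)$, where $\iota_* : \cD(\sU_\beta,\cO_\Omega) \hookrightarrow \cD(\Galp,\cO_\Omega)$ is the canonical inclusion with image $\cD(\Galp[\mathbf{1}_\beta], \cO_\Omega)$.

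First I would verify the hypotheses of Proposition~\ref{prop:ind of delta}. The action \eqref{eq:action H} of $H(\A)$ on $\cA(\Galp, \cO_\Omega)$ factors through the determinant map and the cyclotomic character, both of which kill $H(\Q)$ and $H_\infty^\circ$ (this was noted immediately after \eqref{eq:action H}), so dually $H(\Q)$ and $H_\infty^\circ$ act trivially on $N$. For the $L_\beta$-equivariance of $\kappa$, Proposition~\ref{prop:Lbeta action}(iv) already establishes that $\kappa_\Omega^\beta$ is a map of left $L_\beta$-modules, and $\iota_*$ is tautologically $L_\beta$-equivariant since the $L_\beta$-action on $\cD(\Galp, \cO_\Omega)$ restricts on $\cD(\Galp[\mathbf{1}_\beta], \cO_\Omega) = \iota_*(\cD(\sU_\beta,\cO_\Omega))$ to the action used in Proposition~\ref{prop:Lbeta action}(i). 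Hence $\kappa$ is a map of $L_\beta$-modules between $M$ and $N$.

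With these hypotheses verified, Proposition~\ref{prop:ind of delta} provides a well-defined map
\[
	\mathrm{Ev}_{\beta,[\delta]}^{\cD_\Omega,\kappa} = \delta * \big[\kappa \circ \mathrm{Ev}_{\beta,\delta}^{\cD_\Omega}\big] : \hc{t}(S_K, \sD_\Omega) \longrightarrow \cD(\Galp, \cO_\Omega)
\]
independent of the representative of $[\delta]$. It remains to identify this with $\mathrm{Ev}_{\beta,[\delta]}$ and to verify that the image lies in $\cD(\Galp[\mathbf{x}], \cO_\Omega)$ for $\mathbf{x} = \mathrm{pr}_\beta([\delta])$. Unwinding the definition, $\kappa \circ \mathrm{Ev}_{\beta,\delta}^{\cD_\Omega} = \iota_*\circ \kappa_\Omega^\beta \circ \mathrm{Ev}_{\beta,\delta}^{\cD_\Omega}$ takes values in $\cD(\Galp[\mathbf{1}_\beta], \cO_\Omega)$; by Lemma~\ref{lem:map to Galp[z]}, the action of $\delta$ then carries this into $\cD(\Galp[\mathbf{x}], \cO_\Omega)$, which is precisely the composition defining $\mathrm{Ev}_{\beta,[\delta]}$.

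There is no real obstacle here: the only mildly delicate point is to ensure that Proposition~\ref{prop:Lbeta action}(iv) is the correct $L_\beta$-equivariance statement for the identification of $\cD(\sU_\beta, \cO_\Omega)$ with the summand $\cD(\Galp[\mathbf{1}_\beta], \cO_\Omega)$ in the decomposition \eqref{e:decomposition galois distributions}, i.e.\ that the action of $L_\beta$ induced by \eqref{eq:action H} matches \eqref{eq:action of ell} under $\iota_*$; this is immediate from comparing the two formulas on the level of $\cA$.
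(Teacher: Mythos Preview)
Your proof is correct and follows essentially the same approach as the paper: both apply Proposition~\ref{prop:ind of delta} with $M=\cD_\Omega$, $N=\cD(\Galp,\cO_\Omega)$ equipped with the action \eqref{eq:action H}, and $\kappa=\iota_*\circ\kappa_\Omega^\beta$, then identify the resulting $\mathrm{Ev}_{\beta,[\delta]}^{\cD_\Omega,\kappa}$ with $\mathrm{Ev}_{\beta,[\delta]}$. You are slightly more explicit in verifying the $L_\beta$-equivariance hypothesis (via Proposition~\ref{prop:Lbeta action}(iv)) and in checking the landing in $\cD(\Galp[\mathbf{x}],\cO_\Omega)$, but the argument is the same.
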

\begin{proof}
	Recall $\cD(\Galp,\cO_\Omega)$ is a $H(\A)$-module via \eqref{eq:action H}, with $H(\Q)$ and $H_\infty^\circ$ acting trivially. Let
	\[
	\kappa : \cD_\Omega\xrightarrow{\ \kappaObeta\ } \cD(\sU_\beta,\cO_\Omega) \labelisorightarrow{ \ \iota_*\ } \cD(\Gal[\mathbf{1}_{\beta}], \cO_\Omega) \subset \cD(\Galp,\cO_\Omega)
	\]
	denote the composition. From Proposition \ref{prop:ind of delta}, we have a map 
	\disp{
	\mathrm{Ev}_{\beta,[\delta]}^{\cD_{\Omega},\kappa}: \hc{t}(S_K,\cD_\Omega) \to \cD(\Galp,\cO_\Omega).
}
	If $\Phi \in \hc{t}(S_K,\sD_\Omega)$ then by definition we have 
	\[
	\mathrm{Ev}_{\beta,[\delta]}^{\cD_{\Omega},\kappa}(\Phi) = \delta * [\kappa \circ \mathrm{Ev}_{\beta,\delta}^{\cD_\Omega}(\Phi)] = \mathrm{Ev}_{\beta,[\delta]}(\Phi).
	\]
	Then independence of $\delta$ follows from Proposition~\ref{prop:ind of delta}.
\end{proof}

As in Definition~\ref{def:ev chi}, let $\eta_0$ be any finite order character of $\Cl(\m)$. Then define 
\begin{align*}
	\mathrm{Ev}_{\beta, \mathbf{x}}^{\eta_0} : \htc(S_K, \sD_\Omega) &\longrightarrow \cD(\Galp[\mathbf{x}], \cO_\Omega)\\
	\Phi &\longmapsto \sum_{[\delta] \in \mathrm{pr}_{\beta}^{-1}(\mathbf{x})}\eta_0^{-1}\big(\mathrm{pr}_2([\delta])\big) \  \mathrm{Ev}_{\beta, [\delta]}(\Phi).
\end{align*}
Using (\ref{e:decomposition galois distributions}), we finally obtain an evaluation map
\begin{align}\label{eq:final evaluation}
	\mathrm{Ev}_{\beta}^{\eta_0} \defeq \bigoplus_{\mathbf{x} \in \Cl(p^{\beta})}& \mathrm{Ev}_{\beta, \mathbf{x}}^{\eta_0} : \htc(S_K, \sD_\Omega) \longrightarrow \cD(\Galp, \cO_\Omega)\\
	\Phi\   &\longmapsto \sum_{[\delta] \in \pi_0(X_\beta)} \eta_0^{-1}\big(\mathrm{pr}_2([\delta])\big) \  \times \left(\delta * \left[\kappaObeta \circ \mathrm{Ev}_{\beta,\delta}^{\cD_\Omega}(\Phi)\right]\right).\notag
\end{align}

\begin{remark}\label{rem:overconvergent diagram}
	In the notation of Remark~\ref{rem:classical diagram}, $\mathrm{Ev}_{\beta}^{\eta_0}$ is the composition
	\begin{equation}\label{eq:explicit overconvergent}
		\xymatrix@R=10mm@C=1mm{
			\hc{t}(S_K,\sD_\Omega) \ar@/^3pc/[rrrrrrrrr]_-{\oplus \mathrm{Ev}_{\beta,[\delta]}} \ar@/_1.5pc/[rrrrrrrrrd]_-{ \mathrm{Ev}_{\beta}^{\eta_0}\ \ \ \ \ }\ar[rrrr]^-{\oplus\mathrm{Ev}_{\beta,\delta}^{\cD_\Omega}}  &&&& \displaystyle\bigoplus_{[\delta]}(\cD_\Omega)_{\Gamma_{\beta,\delta}} \ar[rrrrr]^-{\delta *\kappaObeta} &&&&&
			\displaystyle\bigoplus_{[\delta]}\cD(\Galp[\mathrm{pr}_\beta([\delta])],\cO_\Omega) \ar[d]^-{\Sigma_{\mathbf{x}}\Xi_{\mathbf{x}}^{\eta_0}} \\
			&&&&&&&&&\cD(\Galp, \cO_\Omega),
		}
	\end{equation}
	where again $\Xi_{\mathbf{x}}^{\eta_0}$ sends a tuple $(m_{[\delta]})_{[\delta]}$ to $\sum_{[\delta]\in\mathrm{pr}_\beta^{-1}(\mathbf{x})} \eta_0^{-1}(\mathrm{pr}_2([\delta])) \times m_{[\delta]}$.
\end{remark}

The maps $\mathrm{Ev}_\beta^{\eta_0}$ are functorial in $\Omega$. Let $\lambda \in \Omega$, and let $\mathrm{sp}_{\lambda} : \cO_\Omega \to L$ denote evaluation at $\lambda$. 

\begin{proposition}\label{prop:evaluations in families}
	Let $\beta = (\beta_{\pri})_{\pri|p}$ with $\beta_{\pri} > 0$ for each $\pri|p$. We have a commutative diagram
	\[
	\xymatrix@C=10mm{
		\hc{t}(S_K,\sD_\Omega) \ar[r]^-{\mathrm{Ev}_\beta^{\eta_0}}\ar[d]^{\mathrm{sp}_\lambda} & \cD(\Galp,\cO_\Omega)\ar[d]^{\mathrm{sp}_\lambda}\\
		\hc{t}(S_K,\sD_\lambda) \ar[r]^-{\mathrm{Ev}_\beta^{\eta_0}} & \cD(\Galp,L).
	}
	\]
\end{proposition}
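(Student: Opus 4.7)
The plan is to unpack the definition \eqref{eq:final evaluation} of $\mathrm{Ev}_\beta^{\eta_0}$ (equivalently, the factorisation in Remark~\ref{rem:overconvergent diagram}) into its three elementary constituents, verify that $\mathrm{sp}_\lambda$ commutes with each, and then glue. Concretely, for each $[\delta] \in \pi_0(X_\beta)$ with image $\mathbf{x} = \mathrm{pr}_\beta([\delta])$, the map $\mathrm{Ev}_{\beta,[\delta]}$ is the composition
\[
\hc{t}(S_K,\sD_\Omega) \xrightarrow{\mathrm{Ev}_{\beta,\delta}^{\cD_\Omega}} (\cD_\Omega)_{\Gamma_{\beta,\delta}} \xrightarrow{\kappa_\Omega^\beta} \cD(\sU_\beta,\cO_\Omega) \xrightarrow{\delta \,*\, -} \cD(\Galp[\mathbf{x}],\cO_\Omega),
\]
and $\mathrm{Ev}_\beta^{\eta_0}$ is the sum over $[\delta]$ of these maps, weighted by the scalars $\eta_0(\mathrm{pr}_2([\delta])) \in \overline{\Q}_p$.

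First, for the leftmost two arrows, the top rectangle of diagram \eqref{eq:kappa interpolate} in Proposition~\ref{prop:kappa interpolate} states exactly that
\[
\mathrm{sp}_\lambda \circ \kappa_\Omega^\beta \circ \mathrm{Ev}_{\beta,\delta}^{\cD_\Omega} = \kappa_\lambda^\beta \circ \mathrm{Ev}_{\beta,\delta}^{\cD_\lambda} \circ \mathrm{sp}_\lambda
\]
as maps $\hc{t}(S_K,\sD_\Omega) \to \cD(\sU_\beta,L)$. This handles the first two steps of the composition simultaneously.

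Next I will check that the action by $\delta \in H(\A_f)$ commutes with specialisation, i.e.\ that the diagram
\[
\xymatrix@C=14mm@R=7mm{
\cD(\sU_\beta,\cO_\Omega) \ar[r]^-{\delta\,*\,-}\ar[d]^-{\mathrm{sp}_\lambda} & \cD(\Galp[\mathbf{x}],\cO_\Omega) \ar[d]^-{\mathrm{sp}_\lambda} \\
\cD(\sU_\beta,L) \ar[r]^-{\delta\,*\,-} & \cD(\Galp[\mathbf{x}],L)
}
\]
commutes. This is an explicit check from the formula \eqref{eq:action ideles}: the only non-trivial dependence on $\Omega$ is via the scalar $\chi_{\cyc}(\delta_2)^{\sw_\Omega} \in \cO_\Omega^\times$, and by construction of $\sw_\Omega$ in \S\ref{sec:distributions in families} we have $\mathrm{sp}_\lambda \circ \sw_\Omega = \sw_\lambda$, so the two actions intertwine. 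Combining with the previous step gives
\[
\mathrm{sp}_\lambda \circ \mathrm{Ev}_{\beta,[\delta]}^{\Omega} = \mathrm{Ev}_{\beta,[\delta]}^{\lambda} \circ \mathrm{sp}_\lambda
\]
for each $[\delta] \in \pi_0(X_\beta)$, where the superscript indicates the coefficient base.

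Finally, summing over $[\delta] \in \pi_0(X_\beta)$ weighted by the constants $\eta_0(\mathrm{pr}_2([\delta]))$ commutes with $\mathrm{sp}_\lambda$ for the trivial reason that $\mathrm{sp}_\lambda$ is $\cO_\Omega$-linear (with $\overline{\Q}_p$ acting through $L$). This produces the required commutativity for $\mathrm{Ev}_\beta^{\eta_0}$. There is no substantive obstacle here beyond Proposition~\ref{prop:kappa interpolate}; the only thing one needs to be careful about is bookkeeping the $\sw_\Omega$-twist in the action \eqref{eq:action ideles}, which is why it was crucial to set things up so that $\sw_\Omega$ specialises correctly to $\sw_\lambda$.
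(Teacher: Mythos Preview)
Your proof is correct and follows essentially the same approach as the paper: decompose $\mathrm{Ev}_\beta^{\eta_0}$ via Remark~\ref{rem:overconvergent diagram}, use Proposition~\ref{prop:kappa interpolate} for the $\kappa_\Omega^\beta \circ \mathrm{Ev}_{\beta,\delta}^{\cD_\Omega}$ step, check that the $\delta*$-action commutes with $\mathrm{sp}_\lambda$ (the paper phrases this as ``$\mathrm{sp}_\lambda$ is $H(\A)$-equivariant'', which amounts to your observation about $\sw_\Omega$ specialising to $\sw_\lambda$), and note that taking $\eta_0$-weighted linear combinations commutes with $\mathrm{sp}_\lambda$.
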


\begin{proof}
	We check that every square in the following diagram is commutative, where the horizontal maps are as in Remark~\ref{rem:overconvergent diagram} (with the middle horizontal maps a composition of two of the maps in that remark) and every vertical map is induced from $\mathrm{sp}_\lambda$:
	\[
	\xymatrix@C=1mm@R=5mm{
		\hc{t}(S_K,\sD_\Omega) \ar[rrr] \ar[d] &&&
		\bigoplus_{[\delta]}\cD(\sU_\beta,\cO_\Omega) \ar[rr]\ar[d] &&
		\bigoplus_{\mathbf{x}} \cD(\Galp[\mathbf{x}],\cO_\Omega) \ar[rr]\ar[d]&&
		\cD(\Galp,\cO_\Omega)\ar[d]\\
		\hc{t}(S_K,\sD_\lambda) \ar[rrr] &&&
		\bigoplus_{[\delta]}\cD(\sU_\beta,L) \ar[rr] &&
		\bigoplus_{\mathbf{x}} \cD(\Galp[\mathbf{x}],L) \ar[rr]&&
		\cD(\Galp,L).
	}
	\]
	The first square commutes by Proposition \ref{prop:kappa interpolate}. The second horizontal arrows are induced by $\delta*-$, and $\mathrm{sp}_\lambda$ is $H(\A)$-equivariant, so the second square commutes. The remaining horizontal maps are given by taking linear combinations, which commutes with $\mathrm{sp}_\lambda$. 
\end{proof}

\begin{proposition}
	\label{p:compatibility galois evaluations} 
	Let $\beta \in (\Z_{\geqslant 0})_{\pri|p}$ and fix $\fp | p$ in $F$. Suppose that $\beta_{\mathfrak{q}}> 0$ for each $\mathfrak{q} | p$ and let $\beta'$ be the tuple defined by $\beta'_{\fp}= \beta_{\fp}+ 1$ and $\beta'_{\mathfrak{q}}= \beta_{\fq}$ for each prime $\mathfrak{q}|p$ other than $\fp$. Then 
	\[
	\mathrm{Ev}_{\beta'}^{\eta_0} = \mathrm{Ev}_{\beta}^{\eta_0}\circ U_{\fp}^\circ \ : \ \hc{t}(S_K,\sD_\Omega) \longrightarrow \cD(\Galp,\cO_\Omega).
	\]
\end{proposition}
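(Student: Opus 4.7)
The strategy is to reduce the identity to Proposition~\ref{prop:evaluations changing beta}(ii) applied to $M = \cD_\Omega$ and $N = \cD(\Galp, \cO_\Omega)$ (with the $H(\A)$-action on $N$ given by \eqref{eq:action H}, for which $H(\Q)$ and $H_\infty^\circ$ act trivially). The obstruction is that $\mathrm{Ev}_{\beta, [\delta]}$ and $\mathrm{Ev}_{\beta', [\eta]}$ are built from the \emph{different} maps $\kappa_\Omega^\beta$ and $\kappa_\Omega^{\beta'}$, whereas the proposition requires a single $\kappa : \cD_\Omega \to N$.

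The key step is a compatibility lemma: set $\bar\kappa_\beta \defeq \iota_* \circ \kappa_\Omega^\beta : \cD_\Omega \to \cD(\Galp[\mathbf{1}_\beta], \cO_\Omega) \subset \cD(\Galp, \cO_\Omega)$, and similarly $\bar\kappa_{\beta'}$. I claim that $\bar\kappa_\beta(\mu) = \bar\kappa_{\beta'}(\mu)$ for every $\mu \in \cD_\Omega$ whose support lies in $J_p^{\beta'}$ in the sense of Lemma~\ref{lem:support}. Dually, since the natural map $\sU_{\beta'}\to \sU_\beta$ is an inclusion, this amounts to the identity $v_\Omega^\beta(f)\big|_{J_p^{\beta'}} = v_\Omega^{\beta'}(f|_{\sU_{\beta'}})$ in $\cA_\Omega$ for all $f \in \cA(\sU_\beta, \cO_\Omega)$. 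By the parahoric decomposition this reduces to a check on $N_Q(\Zp)$: both sides vanish off $N_Q^{\beta'}(\Zp)$, and on $N_Q^{\beta'}(\Zp) \subset N_Q^{\beta}(\Zp)$ they are given by the same formula \eqref{eq:times}, with $(-1)^n\det(X) \in \sU_{\beta'} \subset \sU_\beta$ so that $f((-1)^n\det(X))$ is unambiguous.

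I would next verify that $\bar\kappa_\beta$ itself is a map of $L_\beta$-modules. This combines Proposition~\ref{prop:Lbeta action}(iv) with $L_\beta$-equivariance of $\iota_*$; the latter follows because for $\ell \in L_\beta$ the congruence conditions built into $L_\beta$ give $\chi_{\cyc}(\det(\ell_2))^{\sw_\Omega} = \mathrm{N}_{F/\Q}(\det(\ell_{2,p}))^{\sw_\Omega}$ (as $\det(\ell_{2,v})$ is a unit at finite $v\nmid p$ and $\det(\ell_{2,\infty})$ is positive), matching the formula \eqref{eq:action H} for the action on $\cD(\Galp, \cO_\Omega)$ with the $L_\beta$-action on $\cD(\sU_\beta, \cO_\Omega)$ of Proposition~\ref{prop:Lbeta action}(i). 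Proposition~\ref{prop:evaluations changing beta}(ii) applied with $\kappa = \bar\kappa_\beta$ then yields
\[
\sum_{[\eta] \in \mathrm{pr}_{\beta, \fp}^{-1}([\delta])} \mathrm{Ev}_{\beta', [\eta]}^{\cD_\Omega, \bar\kappa_\beta} = \mathrm{Ev}_{\beta, [\delta]}^{\cD_\Omega, \bar\kappa_\beta} \circ U_\fp^\circ = \mathrm{Ev}_{\beta, [\delta]}\circ U_\fp^\circ.
\]
For each summand on the LHS, $\mathrm{Ev}_{\beta', \eta}^{\cD_\Omega}(\Phi)$ is supported in $J_p^{\beta'}$ by Lemma~\ref{lem:support}, so the compatibility lemma identifies $\bar\kappa_\beta(\mathrm{Ev}_{\beta', \eta}^{\cD_\Omega}(\Phi)) = \bar\kappa_{\beta'}(\mathrm{Ev}_{\beta', \eta}^{\cD_\Omega}(\Phi))$, whence $\mathrm{Ev}_{\beta', [\eta]}^{\cD_\Omega, \bar\kappa_\beta} = \mathrm{Ev}_{\beta', [\eta]}$.

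Finally I would sum both sides over $[\delta] \in \pi_0(X_\beta)$ weighted by $\eta_0(\mathrm{pr}_2([\delta]))$. Since the map $\mathrm{pr}_{\beta, \fp} : \pi_0(X_{\beta'}) \to \pi_0(X_\beta)$ commutes with projection to $\Cl(\m)$, switching the order of summation in the resulting double sum gives $\mathrm{Ev}_\beta^{\eta_0} \circ U_\fp^\circ = \mathrm{Ev}_{\beta'}^{\eta_0}$, as desired. The main obstacle is the compatibility lemma in the second paragraph: $\kappa_\Omega^\beta$ and $\kappa_\Omega^{\beta'}$ are genuinely different as maps out of $\cD_\Omega$, and the identity only holds after restricting attention to distributions supported in the smaller set $J_p^{\beta'}$; once this is established, the rest is straightforward bookkeeping with the formalism of \S\ref{sec:abstract evaluation maps}.
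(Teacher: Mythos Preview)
Your proposal is correct and follows the same approach as the paper: reduce to Proposition~\ref{prop:evaluations changing beta}(ii) and then take the $\eta_0$-weighted sum over $[\delta]\in\pi_0(X_\beta)$. The paper's own proof is a two-line sketch that invokes Proposition~\ref{prop:evaluations changing beta} directly and leaves the compatibility between $\kappa_\Omega^\beta$ and $\kappa_\Omega^{\beta'}$ implicit; you have correctly identified this as the nontrivial point and supplied the missing argument (namely, that $\bar\kappa_\beta$ and $\bar\kappa_{\beta'}$ agree on distributions supported in $J_p^{\beta'}$, which follows from the formula \eqref{eq:times} since $N_Q^{\beta'}(\Zp)\subset N_Q^{\beta}(\Zp)$ and the value $(-1)^n\det(X)$ maps to the same element of $\Galp$ under either $\iota$).
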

\begin{proof} 
	For each $[\delta] \in \pi_0(X_\beta)$, from Proposition~\ref{prop:evaluations changing beta} we deduce 
	\disp{
	\sum_{[\delta'] \in \mathrm{pr}_{\beta,\pri}^{-1}([\delta])} \mathrm{Ev}_{\beta',[\delta']} = \mathrm{Ev}_{\beta,[\delta]} \circ U_{\pri}^\circ.
}
	Scaling the left-hand side by $\eta_0^{-1}(\mathrm{pr}_2([\delta]))$ and summing over $[\delta] \in \pi_0(X_\beta)$ gives $\mathrm{Ev}_{\beta'}^{\eta_0}$ (see \eqref{eq:final evaluation}), and doing the same on the right-hand side gives $\mathrm{Ev}_{\beta}^{\eta_0}\circ U_{\pri}^\circ$, from which we conclude.
\end{proof}

\begin{definition}\label{def:mu_Phi}
	Let $\Phi \in \htc(S_K, \sD_\Omega)$, and suppose that for every $\fp|p$, $\Phi$ is an eigenclass for $U_{\fp}^\circ$ with eigenvalue $\alpha_{\fp}^\circ \neq 0$. We define 
	\begin{equation}\label{eq:mu eta}
		\mu^{\eta_0}(\Phi) := \mathrm{Ev}_{\beta}^{\eta_0}(\Phi)/(\alpha_{p}^\circ)^\beta \in \cD(\Galp, \cO_\Omega),
	\end{equation}
	where $\beta$ is any tuple such that $\beta_{\fp}> 0$ for each $\fp | p$ and $(\alpha_{p}^\circ)^\beta:= \prod_{\fp | p}(\alpha_{\fp}^\circ)^{\beta_{\fp}}$. By Proposition~\ref{p:compatibility galois evaluations}, the distribution $\mu^{\eta_0}(\Phi)$ is independent of the choice of $\beta$.
\end{definition}

\subsection{Interpolation of classical evaluations}\label{sec:classical interpolation}

Fix $\lambda \in X_0^*(T)$. Via specialisation $\hc{t}(S_K,\sD_\lambda) \xrightarrow{r_\lambda} \hc{t}(S_K,\sV_\lambda^\vee)$, we now relate $\mathrm{Ev}_{\beta}^{\eta_0}$ from \eqref{eq:explicit overconvergent} to the evaluations $\cE_{\chi}^{j,\eta_0}$ of \eqref{eq:explicit classical} as $\lambda$ varies over $\Omega$, $j$ varies in $\mathrm{Crit}(\lambda)$ and $\chi$ varies over finite order characters of conductor $p^\beta$.

\begin{lemma}\label{lem:compatibility}
	Let $\Phi \in \hc{t}(S_K,\sD_\lambda)$. Let $\chi$ be a finite order Hecke character of $F$ of conductor $p^\beta$, with $\beta_{\pri} > 0$ for all $\pri|p$. For all $j \in \mathrm{Crit}(\lambda)$, we have 
	\[
	\int_{\Galp}\chi\ \chi_{\cyc}^j \cdot \mathrm{Ev}_{\beta}^{\eta_0}(\Phi) = \cE_{\chi}^{j,\eta_0}\circ r_\lambda(\Phi).
	\]
\end{lemma}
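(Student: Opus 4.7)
The plan is to match both sides term-by-term under the decomposition indexed by $\pi_0(X_\beta)$, using the compositional descriptions of $\mathrm{Ev}_\beta^{\eta_0}$ and $\cE_\chi^{j,\eta_0}$ from Remarks \ref{rem:overconvergent diagram} and \ref{rem:classical diagram}. Since $\chi$ has conductor exactly $p^\beta$, it factors through $\Cl(p^\beta)$ and is therefore constant equal to $\chi(\mathbf{x})$ on each component $\Galp[\mathbf{x}]$ of the decomposition \eqref{e:decomposition galois distributions}. Combined with the definition of $\mathrm{Ev}_\beta^{\eta_0}$ as an average over $\pi_0(X_\beta)$, this gives
\[
\int_{\Galp}\chi\chi_{\cyc}^j\cdot \mathrm{Ev}_\beta^{\eta_0}(\Phi) = \sum_{[\delta]\in \pi_0(X_\beta)} \chi\bigl(\mathrm{pr}_\beta([\delta])\bigr)\,\eta_0\bigl(\mathrm{pr}_2([\delta])\bigr)\int_{\Galp[\mathbf{x}]}\chi_{\cyc}^j\cdot \mathrm{Ev}_{\beta,[\delta]}(\Phi),
\]
with $\mathbf{x} = \mathrm{pr}_\beta([\delta])$. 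Comparing with Definition \ref{def:ev chi}, it then suffices to prove, for each $[\delta]$, the single-class identity
\[
\int_{\Galp[\mathbf{x}]} \chi_{\cyc}^j\cdot \mathrm{Ev}_{\beta,[\delta]}(\Phi) = \cE_{\beta,[\delta]}^{j,\sw}\circ r_\lambda(\Phi).
\]

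To verify this, I would unfold the definition $\mathrm{Ev}_{\beta,[\delta]}(\Phi) = \delta * [\iota_*\kappa_\lambda^\beta \mathrm{Ev}_{\beta,\delta}^{\cD_\lambda}(\Phi)]$ and apply the $*$-action formula \eqref{eq:action ideles} dually, using that $\iota_*$ restricts test functions to $\sU_\beta \hookrightarrow \Galp$. Writing $d_i = \det \delta_i$ and using multiplicativity of $\chi_{\cyc}$, the integral becomes the scalar $\chi_{\cyc}(d_1)^j\chi_{\cyc}(d_2)^{-\sw_\lambda-j}$ times $\int_{\sU_\beta}\chi_{\cyc}^j\cdot \kappa_\lambda^\beta \mathrm{Ev}_{\beta,\delta}^{\cD_\lambda}(\Phi)$. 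The bottom rectangle of Proposition \ref{prop:kappa interpolate}, together with Lemma \ref{lem:abstract reformulation}, identifies the remaining integral with $\cE_{\beta,\delta}^{j,\sw}\circ r_\lambda(\Phi)$, and the prefactor matches exactly the relation $\cE_{\beta,[\delta]}^{j,\sw} = \chi_{\cyc}(\det\delta_1^j\det\delta_2^{-\sw-j})\cE_{\beta,\delta}^{j,\sw}$ from \S\ref{sec:classical evaluations 5.1}.

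The main difficulty will be careful bookkeeping of the several parallel $*$-actions at play: the $H(\A)$-action on $\cD(\Galp,\cO_\Omega)$ defined via determinants in \eqref{eq:action H}, the $\Delta_p$-action on $\cD_\lambda$ from \S\ref{sec:slope-decomp}, and their compatibility under $\kappa_\lambda^\beta$ established in Proposition \ref{prop:Lbeta action}(ii). Once the $\det$-versus-idele translations are matched with the conventions used for $\cE_{\beta,[\delta]}^{j,\sw}$ on the classical side, the identity then follows from the commutative diagram \eqref{eq:kappa interpolate} by a direct formal calculation.
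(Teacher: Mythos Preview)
Your proposal is correct and follows essentially the same approach as the paper's proof. The paper packages the argument as a single large commutative diagram with four squares (verifying compatibility of $r_\lambda$ with each step of Remarks~\ref{rem:classical diagram} and~\ref{rem:overconvergent diagram}), while you unpack the composition term-by-term over $\pi_0(X_\beta)$ and reduce to the single-class identity; the key inputs---Proposition~\ref{prop:kappa interpolate} for the $\kappa_\lambda^\beta$-to-$\kappa_{\lambda,j}^\circ$ compatibility, the explicit $*$-action computation for the $\delta$-twist, and the decomposition of the integral over $\Cl(p^\beta)$---are identical in both.
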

\begin{proof}
	In view of Remarks~\ref{rem:overconvergent diagram} and \ref{rem:classical diagram}, the lemma follows directly from commutativity of the following diagram, since the maps $\mathrm{Ev}_{\beta}^{\eta_0}$ and $\cE_{\chi}^{j,\eta_0}$ are respectively the left and right columns:
	\[
	\xymatrix@C=40mm{
		\hc{t}(S_K,\sD_\lambda) \ar[d]^-{\oplus \big(\kappalbeta \circ\mathrm{Ev}_{\beta,\delta}^{\cD_\lambda}\big)} \ar[r]^{r_\lambda} & \hc{t}(S_K,\sV_\lambda^\vee) \ar[d]^-{\oplus\big(\kappaj^\circ \circ\mathrm{Ev}_{\beta,\delta}^{V_\lambda^\vee}\big)}\\
		\bigoplus\limits_{[\delta]}\cD(\sU_\beta,L) \ar[d]^-{\delta*}\ar[r]^{\oplus \int_{\sU_\beta}\chi_{\cyc}^j} & \bigoplus\limits_{[\delta]}L  \ar[d]^-{\delta*}\\
		\bigoplus\limits_{[\delta]}\cD(\Galp[\mathbf{x}],L) \ar[d]^-{\oplus\Xi_{\mathbf{x}}^{\eta_0}}\ar[r]^{\oplus \int_{\Galp[\mathbf{x}]}\chi_{\cyc}^j} & 	\bigoplus\limits_{[\delta]}L \ar[d]^-{\oplus\Xi_{\mathbf{x}}^{\eta_0}} \\
		\bigoplus\limits_{\mathbf{x}} \cD(\Galp[\mathbf{x}],L) \ar[d]^-{\Sigma}\ar[r]^{\oplus \int_{\Galp[\mathbf{x}]}\chi_{\cyc}^j}& \bigoplus\limits_{\mathbf{x}} L \ar[d]^-{\ell \mapsto \Sigma \chi(\mathbf{x})\ell_{\mathbf{x}}}\\
		\cD(\Galp ,L)\ar[r]^{\int_{\Galp}\chi\chi_{\cyc}^j} & L.
	}
	\]
	Recall $\Xi_{\mathbf{x}}^{\eta_0}$ was defined in \eqref{eq:explicit classical} and all the direct sums are over $[\delta] \in \pi_0(X_\beta)$ or $\mathbf{x}\in \Cl(p^\beta)$, related by $\mathbf{x} = \mathrm{pr}_\beta(\delta)$.
	The first square commutes by Proposition \ref{prop:kappa interpolate}. The second square commutes since for $\mu \in \cD(\sU_\beta ,L)$ we have
		\begin{align*}
			\int_{\Galp[\z]}\chi_{\cyc}^j \cdot \delta* \mu
			&= \chi_{\cyc}\bigg(\det(\delta_{1})^{j} \det(\delta_{2})^{-\sw-j}\bigg) \int_{\Galp[\z]}\chi_{\cyc}^j \cdot \mu = \delta * \int_{\Galp[\z]}\chi_{\cyc}^j \cdot \mu,
		\end{align*}
	where the action of $\delta$ in the left-hand term is \eqref{eq:action ideles}, and in the right-hand term is Definition \ref{def:action on Vjw}. The third square commutes by definition of $\Xi_{\mathbf{x}}^{\eta_0}$, and the fourth square commutes since
	\[
	\int_{\Galp}\chi \chi_{\cyc}^j \cdot \mu = \sum_{\mathbf{x}\in \Cl(p^\beta)} \chi(\mathbf{x}) \int_{\Galp[\mathbf{x}]}\chi_{\cyc}^j \cdot \mu. \qedhere 
	\]
\end{proof}

\subsection{Admissibility of $\mu^{\eta_0}(\Phi)$}\label{sec:admissible}
We now assume  $\Omega = \{\lambda\}$ is a single algebraic weight, in which case $\cO_\Omega = L$ is a finite extension of $\Qp$.  Let $\Phi$ and  $\{\alpha_{\pri}^\circ : \pri|p\}$ be as in Definition \ref{def:mu_Phi}, and let 
\[
\alpha_p^\circ \defeq \prod_{\pri|p}(\alpha_{\pri}^\circ)^{e_{\pri}}, \qquad h_p = v_p(\alpha_{p}^\circ).
\]
We show $\mu^{\eta_0}(\Phi)$ satisfies a growth condition depending on $h_p$ that importantly renders it unique for the very small slope case $h_p < \#\mathrm{Crit}(\lambda)$. 

As in \cite[\S3.4]{BDJ17}, the space $\cA(\Galp,L)$ of $L$-valued locally analytic functions on $\Galp$ is the direct limit $\varinjlim_m \cA_m(\Galp,L)$ of the spaces which are analytic on all balls of radius $|p|^{-m}$, and each of these is a Banach $L$-space with respect to a discretely valued norm $||\cdot||_m$. Dualising, we get a family of norms 
	\begin{align}
		||\mu||_m &\defeq \mathrm{sup}_{f\in\cA_m(\Galp,L)}\tfrac{|\mu(f)|}{||f||_m} = \mathrm{sup}_{||f||_m \leqslant 1}|\mu(f)|\label{eq:sup norm}
	\end{align}
on $\cD(\Galp,L)$, which thus obtains the structure of a Fr\'{e}chet module.

\begin{definition}(See \cite[Def.~3.10]{BDJ17}). \label{def:admissible}
	Let $h \in \Q_{\geqslant 0}$. We say $\mu \in \cD(\Galp,L)$ is \emph{admissible of growth $h$} if there exists $C \geqslant 0$ such that for each $m \in \Z_{\geqslant 1},$ we have $||\mu||_m \leqslant p^{mh}C.$
\end{definition}

\begin{proposition}\label{prop:admissible}
	Let $\Phi$ be as in Definition~\ref{def:mu_Phi}, and $h_p = v_p(\alpha_p^\circ)$. Then $\mu^{\eta_0}(\Phi)$ is admissible of growth $h_p$.
\end{proposition}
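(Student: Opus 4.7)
The strategy is to bound $\|\mathrm{Ev}_\beta^{\eta_0}(\Phi)\|_m$ integrally for a carefully chosen $\beta$ depending on $m$, and to transport this estimate through the identity $\mathrm{Ev}_\beta^{\eta_0}(\Phi) = (\alpha_p^\circ)^\beta \mu^{\eta_0}(\Phi)$ of Definition~\ref{def:mu_Phi}. After rescaling $\Phi$ by an $L^\times$-multiple (which is absorbed into the choice of period $\Omega_\pi^\epsilon$ and leaves admissibility unaffected), I may assume $\Phi$ lies in the image of $\hc{t}(S_K,\sD_\lambda^\circ)$ for some $\Delta_p$-stable integral lattice $\sD_\lambda^\circ \subset \sD_\lambda$ of $\cO_L$-valued distributions (cf.\ \cite[\S3.2.5]{BW20}). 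By functoriality of the abstract evaluation (Lemma~\ref{prop:pushforward}), each $\mathrm{Ev}_{\beta,\delta}^{\cD_\lambda}(\Phi)$ descends to an element of $(\sD_\lambda^\circ)_{\Gamma_{\beta,\delta}}$, and combined with the optimal integrality of $v_\lambda$ (Lemma~\ref{lem:renormalise}), the composition $\kappa_\lambda^\beta \circ \mathrm{Ev}_{\beta,\delta}^{\cD_\lambda}(\Phi)$ lies in $\cD(\sU_\beta,\cO_L)$; hence $\mathrm{Ev}_\beta^{\eta_0}(\Phi) \in \cD(\Galp,\cO_L)$ for every admissible $\beta$.

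For $f \in \cA_m(\Galp,L)$ with $\|f\|_m \leq 1$, the formula of Remark~\ref{rem:overconvergent diagram} gives
\[
\mathrm{Ev}_\beta^{\eta_0}(\Phi)(f) = \sum_{[\delta] \in \pi_0(X_\beta)} \eta_0\bigl(\mathrm{pr}_2([\delta])\bigr) \cdot \mathrm{Ev}_{\beta,\delta}^{\cD_\lambda}(\Phi)\Bigl[v_\lambda^\beta\bigl((\delta^{-1}*f)|_{\sU_\beta}\bigr)\Bigr].
\]
The ultrametric inequality bounds the $p$-adic absolute value of this sum by the maximum over $[\delta]$. Since each $\mathrm{Ev}_{\beta,\delta}^{\cD_\lambda}(\Phi)$ pairs integrally with $\cA_\lambda(\cO_L)$, and since the action \eqref{eq:action of ell} of $\delta^{-1} \in L_\beta$ is by an integral unit (up to a root of unity), it suffices to show that $(\delta^{-1} * f)|_{\sU_\beta}$ takes $\cO_L$-values for $\beta$ sufficiently large: by \eqref{eq:times} and the parahoric decomposition, this then forces $v_\lambda^\beta((\delta^{-1}*f)|_{\sU_\beta}) \in \cA_\lambda(\cO_L)$ via the fact that $\langle X\rangle_\lambda \cdot v_\lambda$ is integral for $X \in \GL_n(\OFp)$. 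Now the subgroup $1 + p^\beta \OFp$ (from which $\sU_\beta$ is formed) sits inside $\OFp$ at $p$-adic radius $\max_\pri p^{-\beta_\pri/e_\pri}$ from the identity, so it lies in a single $m$-analytic polydisc precisely when $\beta_\pri \geq m\, e_\pri$ for each $\pri|p$. Under this condition any $f \in \cA_m$ with $\|f\|_m \leq 1$ restricts to an $\cO_L$-valued function on $1 + p^\beta\OFp$, yielding $\|\mathrm{Ev}_\beta^{\eta_0}(\Phi)\|_m \leq C_0$ for a constant $C_0$ independent of $m$.

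Taking $\beta$ minimally with $\beta_\pri = m\, e_\pri$, Definition~\ref{def:mu_Phi} gives
\[
\|\mu^{\eta_0}(\Phi)\|_m = \bigl|(\alpha_p^\circ)^\beta\bigr|^{-1} \cdot \|\mathrm{Ev}_\beta^{\eta_0}(\Phi)\|_m \leq C_0 \cdot p^{\sum_\pri \beta_\pri v_p(\alpha_\pri^\circ)} = C_0 \cdot p^{m\sum_\pri e_\pri v_p(\alpha_\pri^\circ)} = C_0 \cdot p^{m h_p},
\]
which is exactly the $h_p$-admissibility claimed. The principal technical subtlety — and the expected main obstacle — is the calibration $\beta_\pri = m\, e_\pri$ synchronising the analyticity depth on $\Galp$ with the support depth of $v_\lambda^\beta$ at each prime above $p$. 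This ramification-sensitive matching is what causes $h_p = \sum_\pri e_\pri v_p(\alpha_\pri^\circ)$ to appear on the nose in the growth bound, and reflects why the integral normalisation $U_\pri^\circ = \lambda^\vee(t_\pri) U_\pri$ of Remark~\ref{rem:star vs dot} is the correct one in the ramified setting; any carelessness in the $e_\pri$-factors would degrade the slope estimate or require additional hypotheses.
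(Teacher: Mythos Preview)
Your overall strategy---take $\beta_\pri = m e_\pri$, bound $\|\mathrm{Ev}_{\beta_m}^{\eta_0}(\Phi)\|_m$ by an absolute constant, and divide by $(\alpha_p^\circ)^{\beta_m}$---matches the paper's, and you correctly identify the calibration $\beta_\pri = m e_\pri$ as the source of $h_p = \sum_\pri e_\pri v_p(\alpha_\pri^\circ)$ in the growth bound. However, the integrality step has a genuine gap.

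You assert that, after rescaling, $\mathrm{Ev}_{\beta,\delta}^{\cD_\lambda}(\Phi)$ lies in a $\Delta_p$-stable lattice $\cD_\lambda^\circ$ and hence ``pairs integrally with $\cA_\lambda(\cO_L)$'', and that it suffices for $(\delta^{-1}*f)|_{\sU_\beta}$ to take $\cO_L$-\emph{values}. But $\cD_\lambda = \varprojlim_m \cD_{\lambda,m}$ is Fr\'echet, and $v_\lambda^{\beta_m}(f_\delta)$ is only $m$-analytic (it is supported on $N_Q^{\beta_m}(\Zp)$ and extended by zero). Bounding $|\mu(g)|$ for such $g$ requires control of $\|\mu\|_m$, not of the sup-norm or of $\|\mu\|_1$. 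The two natural candidates for $\cD_\lambda^\circ$ each fail: the bounded distributions $\bigcap_m \cD_{\lambda,m}^\circ$ pair integrally with every $\|\cdot\|_m$-unit ball but do \emph{not} form a lattice in $\cD_\lambda$ (a generic locally analytic distribution has $\|\mu\|_m \to \infty$), so one cannot rescale $\Phi$ into it; conversely a genuine lattice such as $\cD_{\lambda,1}^\circ$ gives no uniform control on pairings with $m$-analytic functions for $m>1$. Your reduction to ``$\cO_L$-valued'' is a further conflation of sup-norm with analytic norm---distributions are not measures, and integrality of values of $g$ does not bound $\mu(g)$.

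The paper supplies the missing mechanism: one rescales once so that the image $\Phi_1$ of $\Phi$ in $\hc{t}(S_K,\sD_{\lambda,1})$ is integral, writes $\mathrm{Ev}_{\beta_m,\delta}^{\cD_\lambda}(\Phi) = (t_m * \mu)_\delta$ with $t_m = \xi t_p^{\beta_m}$, and then proves (Claim~6.10) that $\mu$ and its level-$1$ avatar $\mu_1 \in \cD_{\lambda,1}^\circ$ agree on $t_m^{-1} * v_\lambda^{\beta_m}[\cA_m(1+p^m\OFp,L)]$. The crucial point is that the twist $t_m$ built into $\tau_{\beta_m}^\circ$ \emph{improves analyticity}: it maps $N_Q(\Zp)$ into $N_Q^{\beta_m}(\Zp)$, so $t_m^{-1}*g$ lands in $\cA_{\lambda,1}$ even when $g$ is only $m$-analytic on its support, with $\|t_m^{-1}*v_\lambda^{\beta_m}(f_\delta)\|_1 \leq 1$. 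This allows the single level-$1$ integrality to control all $m$ simultaneously, and is exactly the ingredient your sketch is missing.
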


\begin{proof} We follow the proof of \cite[Prop.~3.11]{BDJ17}, where this is proved for $\GLt$. For $m\in  \Z_{\geqslant 1}$, put 
	\disp{
	\beta_m= (me_{\pri})_{\pri\mid p},
}
	so that 
	\[
	|(\alpha_p^\circ)^{-\beta_m}| = p^{mh_p} \qquad \text{ and } \qquad p^{\beta_m}\OFp = p^m\OFp.
	\]
	By definition of $\mu^{\eta_0}$, for $f \in \cA_m(\Galp,L)$ we have
			\begin{align}
			|\mu^{\eta_0}&(\Phi)(f)| = p^{mh_p}\bigg|  \sum_{[\delta] \in \pi_0(X_{\beta_m})} \eta_0^{-1}\big(\mathrm{pr}_2([\delta])\big)  \mathrm{Ev}_{\beta_m,\delta}^{\cD_\lambda}(\Phi)\left[v_\lambda^{\beta_m}\left(\delta^{-1}\ast f\big|_{\Galp[\mathrm{pr}_{\beta_m}([\delta])]}\right)\right]\bigg|.\label{eq:eta admissible}
		\end{align}
	By \eqref{eq:eta admissible} it suffices to find $C$ such that for all $\delta$, $m \in \Z_{\geqslant 1}$ and $f \in \cA_m(\Galp,L)$ with $||f||_m \leqslant 1$, we have 
	\begin{equation}\label{eq:bound C}
		\left|\mathrm{Ev}_{\beta_m,\delta}^{\cD_\lambda}(\Phi)\left[v_\lambda^{\beta_m}\left(\delta^{-1}\ast f\big|_{\Galp[\mathrm{pr}_{\beta_m}([\delta])]}\right)\right]\right| \leqslant C.
	\end{equation}
	
	We also have descriptions $\cA_\lambda = \varinjlim_m \cA_{\lambda,m}$ and $\cD_\lambda = \varprojlim_m \cD_{\lambda,m}$ as limits of Banach spaces (see \cite[\S3.2.2]{BW20}), and each of the $\cD_{\lambda,m}$ are preserved by the action of $\Delta_p$ (\S3.4 \emph{ibid}.). For every $m \geqslant 1$, restriction from $\cA_{\lambda}$ to $\cA_{\lambda,m}$ induces a map $\cD_{\lambda} \rightarrow \cD_{\lambda,m}$. We let $\cD_{\lambda,m}^{\circ}$ denote the $\cO_L$-module of distributions $\mu \in \cD_{\lambda, m}$ with $||\mu||_m \leqslant 1$, which is a lattice preserved by the action of $\Delta_p$. Note that rescaling $\Phi$ does not affect admissibility (it rescales $C$); so without loss of generality, we can suppose that the image $\Phi_1$ of $\Phi$ in $\htc(S_K, \sD_{\lambda, 1})$ is contained in the image of $\htc(S_K, \sD^{\circ}_{\lambda, 1})$, that is, there exists $\Phi_1^\circ$ such that we have
	\[
	\xymatrix{
		& \htc(S_K,\sD_{\lambda,1}^\circ) \ar[d]  				&& \Phi_1^\circ \ar@{|->}[d] \\
		\htc(S_K, \sD_{\lambda}) \ar[r] & \htc(S_K, \sD_{\lambda, 1})& \Phi \ar@{|->}[r] &\Phi_1
	}
	\]
	
	Fix $\delta$ and $m \in \Z_{\geqslant 1}$. For ease of notation, let 
	\[
	t_m \defeq \xi t_p^{\beta_m}, \qquad \Gamma_m \defeq \Gamma_{\beta_m,\delta}.
	\] As in the proof of  Lemma~\ref{lem:support} there exist $\mu \in \cD_{\lambda}$ and $\mu_1\in \cD_{\lambda, 1}^{\circ}$ such that 
	\begin{align*}
		\mathrm{Ev}_{\beta_m,\delta}^{\cD_\lambda}(\Phi) &= (t_m\ast \mu)_\delta \\ 
		\text{and} \qquad \mathrm{Ev}_{\beta_m,\delta}^{\cD_{\lambda, 1}}(\Phi_{1}) = \mathrm{Ev}_{\beta_m,\delta}^{\cD_{\lambda, 1}^\circ}(\Phi_{1}^\circ) & = (t_m\ast \mu_1)_\delta,
	\end{align*}
	where in the second equation, we have applied Lemma~\ref{prop:pushforward} with $\kappa$ the inclusion $\cD_{\lambda,1}^\circ \hookrightarrow \cD_{\lambda,1}$.  By Lemma~\ref{prop:pushforward} applied again, now with $\kappa$ the map $\cD_\lambda \to \cD_{\lambda,1}$, we deduce 
	\begin{equation}\label{eq:mu = mu1}
		\mu\Big|_{ t_m^{-1}\ast \cA_{\lambda, 1}^{\Gamma_m}}= \mu_1\Big|_{ t_m^{-1}\ast \cA_{\lambda, 1}^{\Gamma_m}}.
	\end{equation}
	Note if $g \in \cA_{\lambda,m}$, then by definition $g$ is analytic on 
	\[
	\{\smallmatrd{1}{X}{0}{1} : X \in -I_n + p^mM_n(\OFp)\} \subset N_Q(\Zp).
	\]
	Since the action of $t_m$ sends $N_Q(\zp)$ onto this subset (see \eqref{eq:action of t_p on X}), we have 
	\[
	t_m^{-1} * g \in t_m * \cA_{\lambda,m} \subset \cA_{\lambda,0} \subset \cA_{\lambda,1}
	\]
	(i.e.\ $t_m$ sends $m$-analytic functions to analytic functions). As $v_{\lambda}^{\beta_m}$ preserves $m$-analyticity, we thus have 
	\[
	t_m^{-1} * v_\lambda^{\beta_m}[\cA_m(1+p^m\OFp,L)] \subset \cA_{\lambda,1},
	\]
	and we can evaluate $\mu_1$ on this set. Then:
	
	\begin{claim}\label{claim:admissible}
		We have
		\begin{equation}\label{eq:mu = mu1 claim}
			\mu\Big|_{ t_m^{-1}\ast v_\lambda^{\beta_m}\big[\cA_m(1+p^m\OFp,L)\big]}= \mu_1\Big|_{t_m^{-1}\ast  v_\lambda^{\beta_m}\big[\cA_m(1+p^m\OFp,L)\big]}.
		\end{equation}
	\end{claim}
	
	We explain how Proposition \ref{prop:admissible} follows from the claim. For $f$ as above, let $f_\delta \defeq \delta^{-1}\ast f|_{\Galp[\mathrm{pr}_{\beta_m}([\delta])]}$. As in Lemma \ref{lem:map to Galp[z]}, we have 
	\[
	f_\delta \in \cA_m(\sU_{\beta_m},L)\subset \cA_m(1+p^m\OFp,L).
	\]
	Moreover $||t_m^{-1} * v_\lambda^{\beta_m}(f_\delta)||_1 \leqslant 1$: indeed	$||f||_m \leqslant 1$ by assumption; the action of $\delta^{-1}$ preserves integrality (as $\chi_{\cyc}$ is valued in $\Zp^\times$); $v_{\lambda}^{\beta_m}$ preserves integrality (as $v_{\lambda}^H$ was chosen integral); and $t_m^{-1}$ preserves integrality (as it acts only on the argument). 	Thus
	\begin{align*}
		\left|\mathrm{Ev}_{\beta_m,\delta}^{\cD_\lambda}(\Phi)\left[v_\lambda^{\beta_m}\left(\delta^{-1}\ast f\big|_{\Galp[\mathrm{pr}_{\beta_m}([\delta])]}\right)\right]\right| &=  \big| \mu\big(t_m^{-1} * v_{\lambda}^{\beta_m}(f_\delta)\big)\big|\\
		&= \big| \mu_1\big(t_m^{-1} * v_{\lambda}^{\beta_m}(f_\delta)\big)\big| \leqslant ||\mu_1||_1 \leqslant 1,
	\end{align*}
	where the first equality is by definition, the second is Claim \ref{claim:admissible}, the third inequality is by definition of $||\cdot||_1$ on $\cD_{\lambda,1}$ (the direct analogue of \eqref{eq:sup norm}) using $||t_m^{-1} * v_{\lambda}^{\beta_m}(f_\delta)||_1 \leqslant 1$, and the last inequality follows as $\mu_1 \in \cD_{\lambda,1}^\circ$. Since $\delta, m$ and $f$ were arbitrary, this shows \eqref{eq:bound C} and thus Proposition \ref{prop:admissible}. \medskip
	
	It remains to prove Claim \ref{claim:admissible}. We first motivate the statement, in line with the proof of \cite[Prop.\ 3.11]{BDJ17}. We might aim to prove the stronger statement that $\mu$ and $\mu_1$ agree on the set $t_m^{-1} * \cA_{\lambda,m}^{\Gamma_m}$ (which contains $t_m^{-1}*v_\lambda^{\beta_m}[\cA_m(1+p^m\OFp,L)]$); and to do this, it would suffice to show 
	\[
	t_m^{-1}*\cA_{\lambda,1}^{\Gamma_m} \subset t_m^{-1}*\cA_{\lambda,m}^{\Gamma_m}
	\]
	is dense, whence equality would follow from \eqref{eq:mu = mu1}. However it is not clear how to write down explicit bases of $\cA_{\lambda,m}^{\Gamma_m}$. Instead we essentially prove an analogous density for the smaller, but still sufficient, subset in the claim, using explicit bases for $\cA_m(1+p^m\OFp,L)$.
	
	We have coordinates $z = (z_\sigma)_{\sigma\in\Sigma}$ on $\OFp$. Note $m$-analytic functions on $1+p^m\OFp$ are analytic, and an orthonormal basis for $\cA_m(1+p^m\OFp,L)$ is given by the monomials 
	\[
	y_m^i \defeq \left.\left(\frac{z-1}{p^{\beta_m}}\right)^i\right|_{1+p^m\OFp} = \prod_{\sigma\in\Sigma}\left.\left(\frac{z_\sigma - 1}{\pi_{\pri(\sigma)}^{e_{\pri(\sigma)}m}}\right)^{i_{\sigma}}\right|_{1+p^m\OFp}
	\]
	for $i = (i_\sigma) \in \N[\Sigma]$. First we show that for any $i$, we have
	\begin{equation}\label{eq:mu = mu1 2}
		\mu\left(t_m^{-1} * v_{\lambda}^{\beta_m}(y_1^i)\right) = \mu_1\left(t_m^{-1} * v_{\lambda}^{\beta_m}(y_1^i)\right).
	\end{equation}
	To see this, note that $v_{\lambda}^{\beta_1}(y_1^i) \in \cA_{\lambda,1}^{\Gamma_1}$ exactly as in Proposition \ref{prop:Lbeta action}(iii), and we also have
	\begin{align*}
		t_m^{-1} * v_\lambda^{\beta_1}(y_1^i) &= t_m^{-1} * \left[\left.\Big(v_\lambda^{\beta_1}(y_1^i)\Big)\right|_{N_Q^{\beta_m}(\Zp)}\right]\\ 
		&= t_m^{-1} * v_\lambda^{\beta_m}(y_1^i),
	\end{align*}
	where the first equality follows as the action of $t_m$ sends $N_Q(\Zp)$ to $N_Q^{\beta_m}(\Zp)$, and the second from the definition of $v_{\lambda}^{\beta_m}$. Combining, we have 
	\[
	t_m^{-1} * v_{\lambda}^{\beta_m}(y_1^i) \in t_m^{-1} * \cA_{\lambda,1}^{\Gamma_m}
	\]
	and \eqref{eq:mu = mu1 2} follows by \eqref{eq:mu = mu1}.
	
	Now directly from the definitions we have
	\[
	p^{i(\beta_m - 1)}\big[t_m^{-1} * v_{\lambda}^{\beta_m}(y_m^i)\big] = \big[t_m^{-1} * v_{\lambda}^{\beta_m}(y_1^i)\big],
	\]
	and combining with \eqref{eq:mu = mu1 2} we deduce
	\[
	\mu\big(t_m^{-1} * v_{\lambda}^{\beta_m}(y_m^i)\big) = \mu_1\big(t_m^{-1} * v_{\lambda}^{\beta_m}(y_m^i)\big).
	\]
	Claim \ref{claim:admissible} and Proposition \ref{prop:admissible} follow as the $y_m^i$ are an orthonormal basis of $\cA_m(1+p^m\OFp)$.
\end{proof}

\subsection{Non-$Q$-critical $p$-adic $L$-functions} \label{sec:non-Q-critical p-adic L-functions}

We prove Theorem \ref{thm:intro non-ord} from the introduction. Let $\tilde\pi = (\pi, \{\alpha_{\fp}\}_{\fp| p})$ be a $Q$-refined RACAR of weight $\lambda$ satisfying Conditions~\ref{cond:running assumptions 2}. In particular, it admits an $(\eta,\psi)$-Shalika model, with $\eta = \eta_0|\cdot|^{\sw}$ and $\sw$ the purity weight of $\lambda$. Suppose that $\tilde{\pi}$ is non-$Q$-critical (Definition~\ref{def:non-Q-critical}). Fix $K = K(\tilde\pi)$ and $\epsilon \in \{\pm1\}^\Sigma$, and let $\phi_{\tilde{\pi}}^\epsilon \in \hc{t}(S_{K},\sV_\lambda^\vee)\locpi^\epsilon$ as in Definition~\ref{def:phi}. By definition of non-$Q$-criticality, $\phi_{\tilde{\pi}}^\epsilon$ lifts uniquely to an eigenclass $\Phi_{\tilde{\pi}}^\epsilon \in \hc{t}(S_K,\sD_\lambda)^\epsilon\locpi$ with $U_{\pri}^\circ$-eigenvalue $\alpha_{\pri}^\circ$, recalling $\alpha_{\pri}^\circ = \lambda(t_{\pri})\alpha_{\pri}$. As above, write $\alpha_p^\circ = \prod_{\pri|p}(\alpha_{\pri}^\circ)^{e_{\pri}}$.

\begin{definition}\label{def:non-critical slope Lp}
	Let $\cL_p^\epsilon(\tilde\pi) \defeq \mu^{\eta_0}(\Phi_{\tilde\pi}^\epsilon)$ be the distribution on $\Galp$ attached to $\Phi_{\tilde\pi}^\epsilon$. Let $\Phi_{\tilde\pi} = \sum_{\epsilon \in \{\pm 1\}^\Sigma} \Phi_{\tilde\pi}^\epsilon$, and define the \emph{$p$-adic $L$-function} attached to $\tilde{\pi}$ as
		\begin{align*}
			\cL_p(\tilde\pi) &\defeq \mu^{\eta_0}(\Phi_{\tilde\pi}) = \sum_{\epsilon \in \{\pm1\}^{\Sigma}} \cL_p^\epsilon(\tilde\pi) \in \cD(\Galp,L).
		\end{align*}
	For shorthand, for any $\psi \in \cA(\Galp,L)$ we write
	\[
	\cL_p(\tilde\pi, \psi) \defeq \int_{\Galp} \psi \cdot \cL_p(\tilde{\pi}).
	\]
	Let 
	\disp{
	\sX(\Galp) \defeq (\mathrm{Spf}\ \Zp\lsem \Galp\rsem )^{\mathrm{rig}}
}
	denote the rigid analytic space of $p$-adic characters on $\Galp$. Via the Amice transform we may view $\cL_p(\tilde\pi, -) : \sX(\Galp) \to \overline{\Q}_p$ as an element of $\cO(\sX(\Galp))$.
\end{definition}

\begin{theorem} \label{thm:non-ordinary}
	The distribution $\cL_p({\tilde{\pi}})$ is admissible of growth $h_p = v_p(\alpha_p^\circ)$, and satisfies the following interpolation property: for every finite order Hecke character $\chi$ of $F$ of conductor $p^{\beta}$, and all $j \in \mathrm{Crit}(\lambda)$, we have
	\begin{align}\label{eq:interpolation}
		i_p^{-1}(\cL_p(\tilde\pi, \chi\chi_{\cyc}^j)) = A \tau(\chi_f)^n  \mathrm{N}_{F/\Q}(\mathfrak{d})^{jn} \prod_{\pri|p} \ep \cdot\einf \cdot \frac{L^{(p)}\big(\pi\otimes\chi, j+\tfrac{1}{2}\big)}{\Omega_\pi^{\epsilon}},
	\end{align}
	where $\epsilon = (\chi\chi_{\cyc}^j\eta)_\infty$ and $\einf$ is as in Definition \ref{def:e_infty}. At $p$ we have
			\[
	\ep \defeq \left\{\begin{array}{cl}
	q_{\pri}^{\left(nj + \tfrac{n^2-n}{2}\right)\beta_{\pri}} \alpha_{\pri}^{-\beta_{\pri}} &: \chi_{\pri} \text{ ramified},\\
			\prod_{i=n+1}^{2n}
			\frac{1-\UPS_{\pri,i}^{-1}\chi_{\pri}^{-1}(\varpi_{\pri})q_{\pri}^{j-1/2}}{1-\UPS_{\pri,i}\chi_{\pri}(\varpi_{\pri})q_{\pri}^{-j-1/2}}. &: \chi_{\pri} \text{ unramified.}\end{array}\right.
		\]
	Finally $A$ is the global constant
	\begin{equation}\label{eq:A}
		A = \gamma_{p\m} \cdot  \prod_{\pri|p}\frac{q_{\pri}^n}{(q_{\pri}-1)^n} q_{\pri}^{-\delta_{\pri} \left(\tfrac{n^2+n}{2}\right)} \in \Q^\times.
	\end{equation}

\end{theorem}
\begin{proof}
	Admissibility is Proposition~\ref{prop:admissible}. For the interpolation, from Lemma~\ref{lem:compatibility} we know 
	\[
	\int_{\Galp}\chi\ \chi_{\cyc}^j \cdot \mu^{\eta_0}(\Phi_{\tilde\pi}) = (\alpha_p^\circ)^{-\beta} \times \cE_{\chi,j}^{\eta_0}(\phi_{\tilde\pi}),
	\]
	where we must replace $\beta_{\pri}$ with $\mathrm{max}(\beta_{\pri},1)$).  This is equal to the statement by Theorem~\ref{thm:critical value}, noting $\lambda(t_p^\beta)(\alpha_p^\circ)^{-\beta} = \alpha_p^{-\beta}$	and $\mathrm{N}_{F/\Q}(\fd)^{jn} = \mathrm{N}_{F/\Q}(\fd^{(p)})^{jn}\prod_{\pri|p}q_{\pri}^{\delta_{\pri}nj}$. Note that  $i_p^{-1}(\cL_p^\epsilon(\tilde\pi, \chi\chi_{\cyc}^j)) = 0$  if $\epsilon \ne (\chi\chi_{\cyc}^j\eta)_\infty$.
\end{proof}
\begin{remarks}\label{rem:signed interpolation}
	The same theorem holds under the weaker hypothesis of Conditions \ref{cond:running assumptions 2}, but with the additional assumption that $\beta_{\pri} \geqslant 1$ for all $\pri|p$, i.e.\ each $\chi_{\pri}$ is ramified. To include $\beta_{\pri} = 0$ requires a careful analysis of the local zeta integral at $\pri$ for ramified $\pi_{\pri}$ and unramified $\chi_{\pri}$, which was carried out by the second author with Jorza \cite{DJ-parahoric}.
\end{remarks}

Finally we consider uniqueness properties of $\cL_p(\tilde\pi)$. 
\begin{proposition}\label{prop:non-critical slope unique}
	Suppose Leopoldt's conjecture holds for $F$ at $p$, and that $h_p < \#\mathrm{Crit}(\lambda)$. Then $\cL_p(\tilde\pi)$ is uniquely determined by its interpolation and admissibility properties.
\end{proposition}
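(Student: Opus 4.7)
The plan is to exploit Leopoldt's conjecture to reduce $\sX(\Galp)$ to a finite disjoint union of cyclotomic lines, on each of which a Vishik--Amice--V\'elu-type criterion applies. By linearity, it would suffice to show that any $h_p$-admissible $\mu \in \cD(\Galp, L)$ vanishing at $\chi\chi_{\cyc}^j$ for every finite order $\chi$ of $p$-power conductor with all $\beta_{\pri} > 0$ and every $j \in \mathrm{Crit}(\lambda)$ is identically zero.

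Under Leopoldt at $p$ for $F$ totally real, the $\Zp$-rank of $\Galp$ equals $r_2+1=1$, so $\Galp \cong \Zp \oplus T$ for a finite torsion group $T$. After enlarging $L$ to split $L[T]$, the idempotents $e_\psi$ give a direct sum decomposition $\cD(\Galp, L) = \bigoplus_{\psi \in \hat T} \cD(\Zp, L) \cdot e_\psi$; writing $\mu = \sum_\psi \mu_\psi e_\psi$, each $\mu_\psi$ is $h_p$-admissible on $\Zp$, and it suffices to show every $\mu_\psi$ vanishes. Fixing $\psi \in \hat T$ and a non-trivial finite order character $\eta$ of $\Zp$, for each $j \in \mathrm{Crit}(\lambda)$ I would lift $\psi(\chi_{\cyc}|_T)^{-j} \in \hat T$ to a character $\tilde\psi_j$ of $\Galp$ trivial on the $\Zp$-factor, and set $\chi_j \defeq \tilde\psi_j\cdot \eta$, with $\eta$ viewed on $\Galp$ via projection to $\Zp$. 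Because $F^{\cyc}/F$ is totally ramified at every $\pri|p$, the composition $I_{\pri} \hookrightarrow \Galp \twoheadrightarrow \Zp$ is surjective at each $\pri|p$; thus $\eta$, and hence $\chi_j$, is ramified at every $\pri|p$, so $\chi_j$ satisfies the conductor hypothesis.

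By construction, $\chi_j\chi_{\cyc}^j$ has $T$-component $\psi$ and $\Zp$-component $\eta(\chi_{\cyc}|_{\Zp})^j$, so the vanishing $\mu(\chi_j\chi_{\cyc}^j) = 0$ will read $(\eta\cdot \mu_\psi)\big((\chi_{\cyc}|_{\Zp})^j\big) = 0$ for every $j \in \mathrm{Crit}(\lambda)$. Since $\mathrm{Crit}(\lambda)$ is a set of $\#\mathrm{Crit}(\lambda) > h_p$ consecutive integers and $\chi_{\cyc}|_{\Zp}$ is a topological generator of $\sX(\Zp)$ (its image in $\Galp^{\cyc} \cong \Zp$ has finite index under Leopoldt), the Vishik uniqueness criterion for $h_p$-admissible distributions on $\Zp$ will force $\eta\mu_\psi = 0$ and hence $\mu_\psi = 0$, concluding the proof. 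The main delicacy is constructing $\chi_j$ with conductor divisible by every $\pri|p$ uniformly in $j$; this is handled by fixing a single non-trivial $\eta$ whose ramification, propagated through the totally ramified cyclotomic $\Zp$-extension, absorbs the conductor condition at all primes above $p$ simultaneously, independently of the torsion twist $\tilde\psi_j$.
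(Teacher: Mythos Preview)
Your overall strategy coincides with the paper's: Leopoldt's conjecture forces $\Galp$ to be a $1$-dimensional $p$-adic Lie group, after which one invokes the Vishik/Amice--V\'elu uniqueness theorem. The paper's proof is simply a two-line citation of this fact, whereas you attempt to unpack the Vishik step by hand via the decomposition $\Galp \cong \Zp \times T$.

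There is, however, a genuine gap in your Vishik application. You fix a \emph{single} non-trivial finite-order $\eta$ and deduce $(\eta\mu_\psi)\big((\chi_{\cyc}|_{\Zp})^j\big)=0$ for $j\in\mathrm{Crit}(\lambda)$; this is only $\#\mathrm{Crit}(\lambda)$ scalar conditions on the $h_p$-admissible distribution $\eta\mu_\psi$, which is nowhere near enough to force it to vanish. The Vishik criterion requires vanishing at $\chi' z^j$ for \emph{all} finite-order characters $\chi'$ and $\lfloor h_p\rfloor+1$ consecutive $j$'s, not just at a single point per $j$. The fix is to let $\eta$ range over all non-trivial finite-order characters of the $\Zp$-factor; then for each $\psi$ you obtain $\mu_\psi(\eta\cdot\chi_{\cyc,0}^{\,j})=0$ for all such $\eta$ and all $j\in\mathrm{Crit}(\lambda)$, which (after re-indexing by $\eta' = \eta\cdot\chi_{\cyc,0}^{\,j}\cdot z^{-j}$ and noting that only finitely many finite-order characters are excluded) is exactly the input to Vishik. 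Once you vary $\eta$, your remark about $\chi_{\cyc}|_{\Zp}$ being ``a topological generator of $\sX(\Zp)$'' becomes unnecessary, and as phrased it does not quite parse.

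A secondary imprecision: the composition $I_{\pri}\hookrightarrow\Galp\twoheadrightarrow\Zp$ need not be surjective; its image has finite index (the cyclotomic $\Zp$-extension of $F$ is ramified at each $\pri\mid p$, but not always \emph{totally} ramified). Consequently, finitely many low-order $\eta$ may be unramified at some $\pri$, so the corresponding $\chi_j$ fails the conductor hypothesis. This is harmless---excluding finitely many finite-order characters does not affect the Vishik conclusion---but it should be acknowledged rather than asserted away.
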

\begin{proof}
	Leopoldt's conjecture implies that $\Galp$ is 1-dimensional as a $p$-adic Lie group. Uniqueness is then a result of Vishik \cite[Thm.~2.3, Lem.~2.10]{Vis76}, shown independently by Amice--Velu \cite{AV75}.
\end{proof}

When $h_p < \#\mathrm{Crit}(\lambda)$, the restriction of $\cL_p(\tilde\pi)$ to $\Galp^{\cyc}$ is unique even without Leopoldt's conjecture. This can be seen by arguments analogous to \cite[(78)]{BDJ17}.

When $h_p \geqslant \#\mathrm{Crit}(\lambda)$, we will prove analogous uniqueness results in \S\ref{sec:uniqueness 2}, as an application of our construction of $p$-adic $L$-functions in families.


\section{Shalika families}\label{sec:shalika families no new}

For the rest of the paper, we will be concerned with variation in families. In this section, we prove Theorem~\ref{thm:intro shalika families} of the introduction; namely, we show that: (1) the eigenvariety is \'etale at a non-$Q$-critical $Q$-refined RASCAR $\tilde\pi$, and (2) that the unique component through such a $\tilde\pi$ is a Shalika family. Since we believe these results to be of independent interest beyond our precise results on $p$-adic $L$-functions, we first present them in wide generality here, always working with Hecke operators away from the set $S$ of ramified primes from \S\ref{sec:unramified H}. In the process, we develop methods that will be crucially used in the next section, where we make an automorphic hypothesis and add further Hecke operators at each $v \in S$, and refine these results to better suit the study of $p$-adic $L$-functions.

Throughout, let $\tilde\pi$ be a $Q$-refined RACAR of weight $\lambda_\pi$ satisfying (C1-2) from Conditions~\ref{cond:running assumptions}. An undecorated $K$ will always mean an arbitrary subgroup satisfying the conditions of \eqref{eq:general K}. We will also consider more specific choices $K(\tilde\pi),K_1(\tilde\pi)$. Unless otherwise specified, we take all coefficients to be in a sufficiently large extension $L/\Qp$ as in \S\ref{sec:periods} and drop it from notation.

\subsection{Set-up, statement of Thm.\ \ref{thm:shalika family} and summary of proof}\label{sec:shalika no new statement}

\subsubsection{The eigenvarieties}
We introduce local charts around $\tilde\pi$ on a parabolic eigenvariety. Fix $h \in \Q_{\geqslant 0}$. Via \S\ref{sec:slope-decomp}, let $\Omega$ be an $L$-affinoid neigbourhood of $\lambda_\pi$ in  $\Wlam$ such that $\htc(S_K,\sD_{\Omega})$ admits a slope $\leqslant h$ decomposition with respect to $U_p^\circ$. Recall $\cH$ from \S\ref{sec:hecke outside S}.

\begin{definition}\label{def:local piece S}
	\begin{itemize} \setlength{\itemsep}{0pt}
		\item	Define $\T\Uha(K)$  to be the image of
		\disp{
		\cH\otimes\cO_\Omega \longrightarrow \End_{\cO_\Omega}\big(\htc(S_K,\sD_{\Omega})^{\leqslant h}\big).
	}
		
		\item Define 
		\disp{
		\sE\Uha(K) \defeq \mathrm{Sp}(\T\Uha(K)),
	}
		a rigid analytic space.\medskip
	\end{itemize}
	Let $w : \sE\Uha(K) \rightarrow \Omega$ be the \emph{weight map} induced by the structure map  $\cO_\Omega \rightarrow \T\Uha(K)$. For any $\epsilon \in \{\pm 1\}^\Sigma$, write $\bT^{\epsilon}_{\Omega,h}(K)$ and $\sE^{\epsilon}_{\Omega,h}(K)$ for the analogues using $\epsilon$-parts of the cohomology. As $\bT^{\epsilon}_{\Omega,h}$ is a quotient of $\bT_{\Omega,h}$, each $\sE_{\Omega,h}^{\epsilon}(K)$ embeds as a closed subvariety of $\sE_{\Omega,h}(K)$. Moreover 
	\[
	\sE_{\Omega,h}(K) =\textstyle\bigcup_\epsilon \sE_{\Omega,h}^{\epsilon}(K).
	\] 
\end{definition}

The local piece $\sE\Uha(K)$ is the space denoted $\cE_{\Omega,h}^{Q,t}$ in \cite[\S5]{BW20}. By definition, $\sE\Uha(K)$ is a rigid space whose $L$-points $y$ are in bijection with non-trivial algebra homomorphisms $\T\Uha(K) \rightarrow L$, or equivalently, with systems of eigenvalues  $\psi_y : \cH \to L$ appearing  in $\htc(S_K,\sD_{\Omega})\ssh$. 

We use the convention that $\sC$ (resp. $\sI$) denotes a connected (resp.\ irreducible) component of $\sE$ (with appropriate decorations).

\begin{definition}\label{def:shalika family} \begin{itemize}\setlength{\itemsep}{0pt}
		\item[(i)]
		We say a point $y \in \sE_{\Omega,h}(K)$ is \emph{classical} if there exists a cohomological automorphic representation $\pi_y$ of $G(\A)$ having weight $\lambda_y \defeq w(y)$ such that $\psi_y$ appears in $\pi_y^{K}$, whence $\tilde\pi_y = (\pi_y, \{\psi_y(U_{\pri}^\circ)\}_{\pri|p})$ is a $Q$-refined automorphic representation. Throughout we use the notation $\m_y = \m_{\tilde\pi_y}$ for the associated maximal ideal of $\T_{\Omega,h}(K)$.
		
		\item[(ii)] A classical point $y$ is \emph{cuspidal} (resp.\ \emph{essentially self-dual}) if $\pi_y$ is.
		\item[(iii)] For a finite order Hecke character $\eta_0$, an \emph{$(\eta_0,\psi)$-Shalika point} is a classical cuspidal point $y$ such that $\pi_y$ admits an $(\eta_0|\cdot|^{\sw_y},\psi)$-Shalika model, where $\sw_y$ is the purity weight of $\lambda_y$.
		\item[(iv)] A \emph{classical (resp.\ $(\eta_0,\psi)$-Shalika) family} in $\sE_{\Omega,h}(K)$ is an irreducible component $\sI$ in $\sE_{\Omega,h}(K)$ containing a Zariski-dense set of classical (resp.\  $(\eta_0,\psi)$-Shalika) points. 
	\end{itemize}
\end{definition}

To describe the geometry of $\sE_{\Omega,h}(K)$, we must be precise about the level $K$. In Theorem~\ref{thm:shalika family}, there will be two particularly important level groups: the group $K(\tilde\pi)$ from \eqref{eq:level group}, at which Friedberg--Jacquet test vectors exist; and a more explicit group $K_1(\tilde\pi)$, which we now define. For any place $v$ and $m \in \Z_{\geqslant 0}$ let 
\[
K_{1,v}(m) \subset \GL_{2n}(\cO_v)
\]
be the open compact subgroup of matrices whose bottom row is congruent to $(0,...,0,1) \ \mathrm{mod}\ \varpi_v^m$. The \emph{Whittaker conductor} $m(\pi_v)$ of $\pi_v$ is the minimal integer $m$ such that $\pi_v^{K_{1,v}(m)} \neq 0$, and by \cite[\S5]{JPSS} 
\begin{equation}\label{eq:new vector}
	\dim_{\C} \ \pi_v^{K_{1,v}(m(\pi_v))} = 1.
\end{equation}
Note $K_{1,v}(0) = \GL_{2n}(\cO_v)$, so $\pi_v$ is spherical if and only if $m(\pi_v) = 0$. We define
\begin{equation}\label{eq:K_1}
	K_1(\tilde\pi) \defeq \textstyle\prod_{\pri|p}J_{\pri} \textstyle\prod_{v\nmid p} K_{1,v}(m(\pi_v)) \subset G(\A_f).
\end{equation}

\subsubsection{Hypotheses on $\pi$}\label{sec:hypotheses}
Our results require hypotheses on $\pi$ that we now make precise.

\begin{definition}\label{def:non-vanishing}
	We say $\pi$ \emph{admits a non-zero Deligne-critical $L$-value at $p$} if there exists a pair $(\chi,j)$ such that 
	\disp{
	L(\pi\otimes\chi, j+\tfrac{1}{2}) \neq 0,
}
	where $j \in \mathrm{Crit}(\lambda_\pi)$ 
	and $\chi$ is a finite order Hecke character of conductor $p^\beta$ with $\beta_{\pri} \geqslant 1$ for all $\pri$. This $L$-value has \emph{sign $\epsilon$} if $\epsilon = (\chi\chi_{\cyc}^j\eta)_\infty \in \{\pm1\}^{\Sigma}$.
	
	Note that $L(\pi\otimes\chi,s) \neq 0 \iff L^{(p)}(\pi\otimes\chi,s) \neq 0$ (as the local factors at $p$ are non-vanishing).
\end{definition}
Conjecturally, this non-vanishing is true for all but finitely many such pairs $(\chi,j)$, so every $\pi$ should satisfy this hypothesis. In practice, this is guaranteed by the following  simple criterion.
\begin{lemma}\label{lem:regular weight implies non-vanishing}
	$\pi$ has a non-zero Deligne-critical $L$-value at $p$ if $(\lambda_\pi)_{\sigma,n} > (\lambda_\pi)_{\sigma,n+1}\ \forall \sigma \in \Sigma$.
\end{lemma}
\begin{proof}
	Let $j$ be the largest integer in $\mathrm{Crit}(\lambda_\pi)$, and $\chi$ any Hecke character satisfying the conditions of Definition \ref{def:non-vanishing}.  The hypothesis ensures that $\#\mathrm{Crit}(\lambda_\pi) > 1$, so that $j + \tfrac{1}{2} \geqslant \tfrac{\sw}{2} + 1$ (recalling $\sw$ is the purity weight), and hence $L(\pi\otimes\chi,j+\tfrac{1}{2}) \neq 0$ by the main result of \cite{JS76}.
\end{proof}
\begin{definition}\label{def:H-regular}
	Recall $\lambda$ is regular if $\lambda_{\sigma,i} > \lambda_{\sigma,i+1}$ for all $\sigma$ and $i$. Say it is \emph{$H$-regular} if 
	\begin{equation}\label{eq:H-regular}
		\lambda_{\sigma,1} > \cdots > \lambda_{\sigma,n}\ \ \text{ and } \ \ \lambda_{\sigma,n+1} > \cdots > \lambda_{\sigma,2n}
	\end{equation}
	for all $\sigma$ (allowing $\lambda_{\sigma,n} = \lambda_{\sigma,n+1}$). Such weights are regular as weights for $H$.
\end{definition}

For a field $E$, let $\mathrm{G}_E \defeq \mathrm{Gal}(\overline{E}/E)$. Attached to any RACAR $\pi'$ of $G(\A)$ we have a Galois representation $\rho_{\pi'} : \mathrm{G}_F \to \GL_{2n}(\overline{\Q}_p)$, depending on our fixed isomorphism $\iota_p: \C \cong \overline{\Q}_p$ (see \cite{HLTT}). For a finite prime $v$ of $F$, we say \emph{Local-Global Compatibility holds for $\pi'$ at $v$} if 
\[
\mathrm{WD}(\rho_{\pi'}|_{\mathrm{G}_{F_v}})^{\mathrm{F-ss}} = \iota_p\mathrm{rec}_{F_v}(\pi_v'\otimes |\cdot|^{(1-n)/2}),
\]
where $\mathrm{rec}_{F_v}$ denotes the local Langlands correspondence for $\GL_{2n}/F_v$. This is conjecturally always true; it is known in general up to semi-simplification \cite{VarLGC}, and is known when $\pi'$ is essentially self-dual (for self-dual RACARs this is shown in \cite{Shi11,Car12}; it is explained in \cite[\S4.3]{CHT08} why the essentially self-dual case follows). Hence it is known if $\pi'$ is a RASCAR.

\subsubsection{Statement}	Let $\tilde\pi$ be as in Conditions~\ref{cond:running assumptions}, of weight $\lambda_\pi$, and let $\alpha_p^\circ = \prod_{\pri|p}(\alpha_{\pri}^\circ)^{e_{\pri}}$. Recall $K(\tilde\pi)$ from  \eqref{eq:level group} and $\eta = \eta_0|\cdot|^{\sw}$ from \S\ref{sec:shalika models}. Fix $h \geqslant v_p(\alpha_p^\circ)$.	In the rest of \S\ref{sec:shalika families no new}, we will prove:

\begin{theorem}\label{thm:shalika family}\begin{itemize}\setlength{\itemsep}{0pt}
		\item[(a)] If $\tilde\pi$ is strongly non-$Q$-critical at $p$ (see Definition \ref{def:non-Q-critical}), then for any $K$ as in \eqref{eq:general K} there is a point $x_{\tilde\pi}(K) \in \sE\Uha(K)$ attached to $\tilde\pi$.
		
		\item[(b)] Suppose further that $\pi$ admits a non-zero Deligne-critical $L$-value at $p$. At level $K(\tilde\pi)$, there exists an irreducible component in $\sE\Uha(K(\tilde\pi))$ through $x_{\tilde\pi}(K(\tilde\pi))$ of dimension $\dim(\Omega)$.
		
		\item[(c)] Suppose further that $\lambda_\pi$ is $H$-regular. There exists an $(\eta_0,\psi)$-Shalika family $\sI(K(\tilde\pi))$ in $\sE\Uha(K(\tilde\pi))$ of dimension $\dim(\Omega)$.
		
		\item[(d)] Suppose further that $\rho_{\pi} : \mathrm{G}_F \to \GL_{2n}(\overline{\Q}_p)$ is irreducible. Then:
		\begin{itemize}\setlength{\itemsep}{0pt}
			\item[(d1)] at level $K_1(\tilde\pi)$, $\sE_{\Omega,h}(K_1(\tilde\pi))$ is \'etale over $\Omega$ at $x_{\tilde\pi}(K_1(\tilde\pi))$, and the (irreducible) connected component $\sC(K_1(\tilde\pi))$ through $x_{\tilde\pi}(K_1(\tilde\pi))$ is an $(\eta_0,\psi)$-Shalika family;
			\item[(d2)] at level $K(\tilde\pi)$, $\sI(K(\tilde\pi))$ is the unique Shalika family of $\sE_{\Omega, h}(K(\tilde\pi))$ through $x_{\tilde\pi}(K(\tilde\pi))$. Moreover the nilreduction of $\sI(K(\tilde\pi))$ is \'etale over $\Omega$ at $x_{\tilde\pi}(K(\tilde\pi))$.
		\end{itemize} 
		
		\item[(e)] Suppose further that Local-Global Compatibility holds at all $v\nmid p$ for all RACARs of $G$. Then in (d2), $\sI(K(\tilde\pi))$ is also the unique classical family of $\sE_{\Omega, h}(K(\tilde\pi))$ through $x_{\tilde\pi}(K(\tilde\pi))$.
		
	\end{itemize}
\end{theorem}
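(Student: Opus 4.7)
The plan for (a) is immediate from strong non-$Q$-criticality: the specialisation $r_{\lambda_\pi}$ is an isomorphism on generalised eigenspaces, so the class $\phi_{\tilde\pi}$ of Definition~\ref{def:phi} lifts uniquely to an overconvergent eigenclass $\Phi_{\tilde\pi}$, whose Hecke eigensystem defines $x_{\tilde\pi}(K) \in \sE_{\Omega,h}(K)$. For (b), I would follow Urban's ``annihilator $=$ zero'' strategy: after shrinking $\Omega$ so that $\cO_\Omega$ is a domain, the aim is to prove $\mathrm{Ann}_{\cO_\Omega}(M) = 0$, where $M := \hc{t}(S_{K(\tilde\pi)}, \sD_\Omega)^{\leqslant h}_{\m_{\tilde\pi}}$. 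The key tool is the evaluation map: take any $\Phi \in M$ specialising at $\lambda_\pi$ to $\Phi_{\tilde\pi}$ and apply the $\cO_\Omega$-linear map $\mathrm{Ev}_\beta^{\eta_0}$ of \eqref{eq:final evaluation}. By Proposition~\ref{prop:evaluations in families} its specialisation at $\lambda_\pi$ recovers a scalar multiple of $\cL_p(\tilde\pi)$, and Theorem~\ref{thm:non-ordinary} together with the non-vanishing hypothesis produces $(\chi,j)$ with $\cL_p(\tilde\pi, \chi\chi_{\cyc}^j) \neq 0$. An annihilator $a \in \cO_\Omega$ of $M$ then yields $a \cdot \mathrm{Ev}_\beta^{\eta_0}(\Phi)(\chi\chi_{\cyc}^j) \in \cO_\Omega$, an element vanishing identically but nonzero at $\lambda_\pi$, forcing $a = 0$. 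This gives a local embedding $\cO_\Omega \hookrightarrow \T_{\Omega,h}(K(\tilde\pi))_{\m_{\tilde\pi}}$, producing an irreducible component through $x_{\tilde\pi}$ of dimension exactly $\dim(\Omega)$ (local finiteness of $w$ gives the reverse inequality).

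For (c), the component from (b) contains a Zariski-dense set of non-$Q$-critical classical points by Theorem~\ref{thm:control} and standard density results. At each such $y$, Lemma~\ref{lem:compatibility} shows the rigid function $\mathrm{Ev}_\beta^{\eta_0}(\Phi)$ specialises at $(y, \chi\chi_{\cyc}^j)$ to (a nonzero rescaling of) $\cE_\chi^{j,\eta_0}(\phi_{\tilde\pi_y})$. Since $\mathrm{Ev}_\beta^{\eta_0}(\Phi)$ is nonzero (by the argument in (b)) and $H$-regularity keeps $\mathrm{Crit}(\lambda_y)$ nonempty for all classical $\lambda_y$ near $\lambda_\pi$, one can choose $(\chi,j)$ so that $\cE_\chi^{j,\eta_0}(\phi_{\tilde\pi_y}) \neq 0$ on a Zariski-dense subset of classical $y$. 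Proposition~\ref{prop:shalika non-vanishing} then upgrades each such $\pi_y$ to admit an $(\eta_0|\cdot|^{\sw_y}, \psi)$-Shalika model, so the component is an $(\eta_0,\psi)$-Shalika family.

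For (d), I would transfer to level $K_1(\tilde\pi)$, where by \eqref{eq:essential vector} the Whittaker essential line at every $v\nmid p$ is one-dimensional; via $p$-adic Langlands functoriality, the Shalika family at $K(\tilde\pi)$ matches a family at $K_1(\tilde\pi)$ with the same spherical Hecke eigenvalues. Irreducibility of $\rho_\pi$ allows the Galois pseudo-character on $\sE_{\Omega,h}(K_1(\tilde\pi))$ to be upgraded to a true family of Galois representations in a neighbourhood of $x_{\tilde\pi}(K_1(\tilde\pi))$; local-global compatibility (known for RASCARs) combined with the Whittaker rigidity then forces the weight map to be unramified at $x_{\tilde\pi}(K_1(\tilde\pi))$, proving \'etaleness and uniqueness (d1). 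Pulling back via the level switch transfers this to uniqueness and \'etaleness of the nilreduction of $\sI(K(\tilde\pi))$ at $x_{\tilde\pi}$, giving (d2). For (e), assuming LGC for all RACARs extends this rigidity argument to arbitrary classical points, ruling out any extra classical family.

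The main obstacle will be the level-switching argument in (d). Although $K_1(\tilde\pi)$ and $K(\tilde\pi)$ agree at spherical and $p$-adic places, they may differ substantially at the ramified primes $v \in S$, and establishing the precise correspondence between families at these two levels—via local Langlands and the distinct rigidity properties of the Whittaker and Shalika essential lines—without introducing spurious components or losing scheme-theoretic control at $x_{\tilde\pi}(K(\tilde\pi))$ is delicate. Making \'etaleness of $\sC(K_1(\tilde\pi))$ over $\Omega$ transfer cleanly to that of $\sI(K(\tilde\pi))^{\mathrm{red}}$ at $x_{\tilde\pi}(K(\tilde\pi))$ is the crux of the argument.
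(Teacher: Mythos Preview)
Your overall strategy for (b)--(e) is close to the paper's, but there are two genuine gaps.

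\textbf{Part (a): the passage from $\sD_{\lambda_\pi}$ to $\sD_\Omega$.} Non-$Q$-criticality only gives you $\Phi_{\tilde\pi} \in \hc{t}(S_K,\sD_{\lambda_\pi})_{\m_{\tilde\pi}}$, but $\sE_{\Omega,h}(K)$ is built from $\hc{t}(S_K,\sD_\Omega)^{\leqslant h}$, so you must show the eigensystem $\m_{\tilde\pi}$ appears in this larger module. This is not automatic: it requires the isomorphism
\[
\hc{t}(S_K,\sD_\Omega)_{\m_{\tilde\pi}} \otimes_{\Lambda} \Lambda/\m_{\lambda_\pi} \;\cong\; \hc{t}(S_K,\sD_{\lambda_\pi})_{\m_{\tilde\pi}}
\]
(Proposition~\ref{prop:surjection}), whose proof needs the vanishing $\hc{i}(S_K,\sD_{\Omega'})_{\m_{\tilde\pi}} = 0$ for all $i > t$ and all closed $\Omega' \subset \Omega$ containing $\lambda_\pi$. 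This vanishing is proved by a descending induction on a regular sequence generating $\m_{\lambda_\pi}$, using the long exact sequence attached to $0 \to \cD_{\Omega_j} \to \cD_{\Omega_j} \to \cD_{\Omega_{j+1}} \to 0$ together with the fact that $t$ is the \emph{top} degree of cuspidal cohomology. Without this step your argument for (a) is incomplete, and more importantly, this same isomorphism is what you implicitly invoke in (b) when you ``take any $\Phi \in M$ specialising at $\lambda_\pi$ to $\Phi_{\tilde\pi}$'' --- the existence of such a lift is exactly Proposition~\ref{prop:surjection}.

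\textbf{Part (c): the role of $H$-regularity and the density argument.} $H$-regularity is not used to keep $\mathrm{Crit}(\lambda_y)$ nonempty (that holds for all pure weights); it is used to ensure that \emph{regular} algebraic weights are Zariski-dense in $\sW^Q_{\lambda_\pi}$ (Lemma~\ref{lem:zariski regular}), so that non-$Q$-critical-slope classical points are dense. More subtly, you cannot directly conclude that $\cE_\chi^{j,\eta_0}(\phi_{\tilde\pi_y}) \neq 0$ for individual points $y$ on your component: at a weight $\lambda$, the specialisation $\mathrm{sp}_\lambda(\Phi_{\sC})$ decomposes as a sum $\oplus_y \Phi_y$ over \emph{all} points $y$ in the fibre $w^{-1}(\lambda) \cap \sC$ (Lemma~\ref{lem:reduction direct sum}), and non-vanishing of the evaluation only tells you that \emph{at least one} $y$ in each fibre is Shalika. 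The paper therefore argues via the weight image: $w(\sC_{\mathrm{nc}}^{\mathrm{Sha}}) \supset \Omega_{\mathrm{ncs}}$ is Zariski-dense in $\Omega$, so the Zariski-closure of Shalika points has full support and hence contains an irreducible component of dimension $\dim(\Omega)$.

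\textbf{Part (d): the mechanism for \'etaleness.} Your phrase ``forces the weight map to be unramified'' misidentifies the argument. The paper's mechanism is: at level $K_1(\tilde\pi)$, the one-dimensionality of the Whittaker essential line (Proposition~\ref{prop:mult one S}) plus Nakayama gives $\bT_{\Omega,\tilde\pi}^\epsilon(K_1(\tilde\pi)) \cong \Lambda/I$ for some ideal $I$ (Proposition~\ref{prop:cyclic S}); separately, the Galois/conductor argument (Propositions~\ref{prop:level lowering}--\ref{prop:artin constant}, where irreducibility of $\rho_\pi$ is used to lift the pseudo-character and purity controls monodromy) transfers the max-dimensional Shalika component from level $K(\tilde\pi)$ into $\sE_{\Omega,h}^\epsilon(K_1(\tilde\pi))$ via a closed immersion; combining these forces $I = 0$. \'Etaleness is thus ``cyclicity $+$ existence of a component of full dimension,'' not a direct rigidity statement about the weight map.
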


 If $K$ is completely unambiguous we will drop it from notation.

\begin{remark}
	Theorem~\ref{thm:intro shalika families} of the introduction is a special case of (d1). Indeed, non-$Q$-critical slope implies strongly non-$Q$-critical (Theorem~\ref{thm:control}), and if $\lambda_\pi$ is regular then it is $H$-regular (by definition) and $\pi$ admits a non-zero Deligne-critical $L$-value at $p$ (Lemma~\ref{lem:regular weight implies non-vanishing}), hence $\cL_p(\tilde\pi) \neq 0$. Conjecturally, if $\pi$ is cuspidal then $\rho_\pi$ is always irreducible.
\end{remark}

\subsection{Proof of Thm.\ \ref{thm:shalika family}(a): Existence of $x_{\tilde\pi}(K)$}\label{sec:existence of x S}
Recall $t$ from \eqref{eq:t}. If $\tilde\pi$ is non-$Q$-critical, then by Theorem~\ref{thm:control}  $\tilde\pi$ contributes to $\hc{t}(S_K,\sD_{\lambda_\pi})$. The character $\psi_{\tilde\pi} : \cH\otimes E \to E$ from Definition~\ref{def:gen eigenspace} induces a character $\cH \otimes \cO_\Omega \to \cO_\Omega$, and thus a map
\begin{equation} \label{eq:m pi families}
	\cH\otimes \cO_\Omega \longrightarrow \cO_\Omega \longrightarrow \cO_\Omega/\m_{\lambda_\pi} = L,
\end{equation}
where $\m_{\lambda_\pi}$ is the maximal ideal corresponding to $\lambda_\pi$. We also write $\m_{\tilde\pi}$ for the kernel of this composition. This is a maximal ideal of $\cH\otimes \cO_\Omega$, whose contraction to $\cO_\Omega$ is $\m_{\lambda_\pi}$.

For any sufficiently large $h$, the localisation $\hc{t}(S_K,\sD_\Omega)\ssh\locpi$ is independent of $h$, and in a slight abuse of notation, we denote this $\hc{t}(S_K,\sD_\Omega)\locpi$.

Let 
\disp{
\T_{\Omega,\tilde\pi}(K) = [\T_{\Omega, h}(K)]_{\m_{\tilde\pi}}
}
be the localisation of $\T\Uha(K)$ at $\m_{\tilde\pi}$, which acts on $\htc(S_K,\sD_\Omega)\locpi$.  Let $\Lambda$ denote the localisation of $\cO_\Omega$ at $\m_{\lambda_\pi}$.	Theorem~\ref{thm:shalika family}(a) follows from:

\begin{proposition}\label{prop:surjection}
	The map $\mathrm{sp}_{\lambda_\pi}: \cD_\Omega \to \cD_{\lambda_\pi}$ induces an isomorphism
	\begin{equation}\label{eq:surjection}
		\htc(S_K,\sD_{\Omega})\locpi \otimes_{\Lambda} \Lambda/\m_{\lambda_\pi} \isorightarrow \htc(S_K,\sD_{\lambda_\pi})\locpi.
	\end{equation}
	In particular, $\m_{\tilde\pi}$ is a maximal ideal of $\bT\Uha(K)$ and hence there exists a point $x_{\tilde\pi} \in \sE\Uha(K)$.
\end{proposition}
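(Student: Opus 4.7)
The plan is first to reduce the second assertion (existence of $x_{\tilde\pi}$) to the isomorphism in \eqref{eq:surjection}, and then to prove that isomorphism by a descending induction argument in the spirit of classical control theorems. For the reduction: granting \eqref{eq:surjection}, the nonvanishing of $\hc{t}(S_K, \sD_{\lambda_\pi})\locpi$ coming from \eqref{eq:non-critical non-zero} forces $\hc{t}(S_K, \sD_\Omega)\locpi$ to be nonzero, whence $\m_{\tilde\pi}$ is a maximal ideal of $\bT_{\Omega, h}(K)$ cutting out the desired point $x_{\tilde\pi}$ of $\sE_{\Omega, h}(K)$.

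To establish \eqref{eq:surjection}, I will realise $\hc{\bullet}(S_K, \sD_\Omega)\ssh$ as the cohomology of a bounded complex $M^\bullet$ of finite flat $\cO_\Omega$-modules whose base change $M^\bullet \otimes_{\cO_\Omega} L(\lambda)$ computes $\hc{\bullet}(S_K, \sD_\lambda)\ssh$ for each $\lambda \in \Omega$; this is the standard input from \cite{Han17, Urb11}. Localising at $\m_{\tilde\pi}$, one obtains a bounded complex of flat $\Lambda$-modules, and derived base change yields a Tor spectral sequence
\[
E_2^{p, q} = \mathrm{Tor}_\Lambda^{-p}\!\left(\hc{q}(S_K, \sD_\Omega)\locpi,\ L\right) \ \Longrightarrow\ \hc{p+q}(S_K, \sD_{\lambda_\pi})\locpi.
\]
The isomorphism \eqref{eq:surjection} will emerge as the edge map of this spectral sequence in degree $t$, provided I can verify that $\hc{i}(S_K, \sD_\Omega)\locpi = 0$ for all $i > t$.

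This vanishing I intend to prove by descending induction on $i$, starting from the top degree of $M^\bullet$. At each step, the vanishing already established in higher degrees kills all the positive Tor contributions appearing in the spectral sequence, reducing the edge map in degree $i$ to
\[
\hc{i}(S_K, \sD_\Omega)\locpi \otimes_\Lambda L \ \isorightarrow\ \hc{i}(S_K, \sD_{\lambda_\pi})\locpi.
\]
By strong non-$Q$-criticality (valid at weight $\lambda_\pi$ by Theorem~\ref{thm:control}), the right-hand side equals $\hc{i}(S_K, \sV_{\lambda_\pi}^\vee)\locpi$, which vanishes for $i > t$ because cuspidal representations contribute only to degrees $dn^2 \leqslant i \leqslant t$, as recalled in \eqref{eq:t}. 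Nakayama's lemma applied to the finite $\Lambda$-module $\hc{i}(S_K, \sD_\Omega)\locpi$ (noting $\m_{\lambda_\pi}$ lies in the Jacobson radical of $\Lambda$) then forces $\hc{i}(S_K, \sD_\Omega)\locpi = 0$, closing the induction.

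The step I expect to be most delicate is setting up the complex $M^\bullet$ with the required base change property, since $\cD_\Omega$ itself is a compact Fr\'echet module rather than a finitely generated one and the finite-rank model only appears after slope truncation; care is needed to ensure that slope decomposition is compatible with specialisation at an arbitrary $\lambda \in \Omega$, not merely classical ones. Once this foundational input is in place, the descending induction is essentially formal, relying crucially on the fact that $t$ is the \emph{top} degree of cuspidal cohomology so that the cascade of vanishing terminates.
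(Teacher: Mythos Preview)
Your proposal is correct and follows the same strategy as the paper: prove vanishing of $\hc{i}(S_K, \sD_\Omega)\locpi$ for $i > t$ by descending induction using non-$Q$-criticality and Nakayama, then read off the degree-$t$ isomorphism as an edge phenomenon. The paper packages this slightly differently---rather than invoking a Tor spectral sequence arising from a bounded finite flat complex, it chooses a regular sequence $T_1, \dots, T_m$ generating $\m_{\lambda_\pi}$ and runs the induction along the chain of affinoids $\Omega_j = \mathrm{Sp}(\cO_\Omega/(T_1, \dots, T_j))$ via the short exact sequences $0 \to \cD_{\Omega_j} \xrightarrow{\times T_{j+1}} \cD_{\Omega_j} \to \cD_{\Omega_{j+1}} \to 0$ and their long exact sequences in cohomology. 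The content is the same (indeed the regular-sequence argument is the Tor spectral sequence unwound by hand), and the paper's version has the mild advantage of sidestepping the finite-complex and base-change machinery you flagged as the delicate input, needing only exactness of slope truncation and Remark~\ref{rem:alternative description}.
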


\begin{proof}
		This exactly follows \cite[Lem.\ 2.9(i)]{BDJ17}, first proving vanishing for degrees $i > t$ (rather than $i \neq d$) and concluding as \emph{ibid}.\ using vanishing for degree $t+1$.
\end{proof}

We would also like an analogue of \cite[Lem.\ 2.9(ii)]{BDJ17}, to show that $\hc{t}(S_K,\sD_\Omega)_{\m_{\tilde\pi}}$ is $\cO_\Omega$-torsion-free. However the proof of that result does \emph{not} work here, as the cohomology is not concentrated in one degree. A key novelty of this paper is the use of evaluation maps to overcome this.

\begin{remark}\label{rem:non-Q-critical implies point}
	The same argument shows for \emph{any} non-$Q$-critical classical cuspidal eigensystem $\tilde\pi'$ in weight $\lambda$, $\mathrm{sp}_{\lambda}$ induces an isomorphism 
	\disp{
	\hc{t}(S_K,\sD_\Omega)_{\m_{\tilde\pi'}} \otimes_{\cO_{\Omega,\lambda}}\cO_{\Omega,\lambda}/\m_{\lambda} \isorightarrow  \hc{t}(S_K,\sD_{\lambda})_{\m_{\tilde\pi'}}.
}
\end{remark}

\subsection{Proof of Thm.\ \ref{thm:shalika family}(b): Components of maximal dimension}\label{sec:maximal dimension}
Let $\sC(K) \subset \sE\Uha(K)$ be the connected component containing $x_{\tilde\pi}(K)$. There exists an idempotent $e$ such that $\bT_{\Omega,\sC}(K) = e\bT\Uha(K)$ is a direct summand, with $\sC = \mathrm{Sp}(\bT_{\Omega,\sC}(K))$; then
\begin{equation}\label{eq:summand}
		\hc{t}(S_{K}, \sD_\Omega)\ssh\otimes_{\bT\Uha(K)} \bT_{\Omega,\sC}(K) = e	\hc{t}(S_{K}, \sD_\Omega)\ssh \subset 	\hc{t}(S_{K}, \sD_\Omega)\ssh.
\end{equation}
Now fix $K = K(\tilde\pi)$ from \eqref{eq:level group} and for convenience drop $K$ from notation. Let $\Phi_{\tilde\pi} \in \hc{t}(S_{K(\tilde\pi)}, \sD_{\lambda_\pi})\locpi$ be the class from Definition~\ref{def:non-critical slope Lp}. By Proposition~\ref{prop:surjection}, we can lift this to a class $\Phi_{\sC}' \in \hc{t}(S_{K(\tilde\pi)},\sD_\Omega)\ssh\locpi$ under the natural surjection $\mathrm{sp}_{\lambda_\pi}$. Possibly shrinking $\Omega$ and $\sC$, we may avoid denominators in $\bT_{\Omega,\sC}$, and assume $\Phi_{\sC}' \in \hc{t}(S_{K(\tilde\pi)},\sD_\Omega)\ssh$; then applying the idempotent $e$ attached $\sC$, we define
\begin{equation}\label{eq:phi con}
	\Phi_{\sC} \defeq e\Phi_{\sC}' \in 	\hc{t}(S_{K(\tilde\pi)}, \sD_\Omega)\ssh\otimes_{\bT\Uha} \bT_{\Omega,\sC}.
\end{equation}
As shrinking $\Omega$ and applying $e$ doesn't change local behaviour at $\tilde\pi$, we still have $\mathrm{sp}_{\lambda_\pi}(\Phi_{\sC}) = \Phi_{\tilde\pi}$. 

The following is the key step in all our constructions; we are very grateful to Eric Urban, who suggested the elegant proof we present here. Recall $\mathrm{Ev}_{\beta}^{\eta_0}$ from \eqref{eq:final evaluation}.

\begin{proposition}\label{prop:ann = zero}
	Suppose there exists $\beta$ such that $\mathrm{Ev}_\beta^{\eta_0}(\Phi_{\tilde\pi}) \neq 0$. Then $\mathrm{Ann}_{\cO_\Omega}(\Phi_{\sC}) = 0$, and in particular,
	\disp{
	\hc{t}(S_{K(\tilde\pi)}, \sD_\Omega)\ssh\otimes_{\bT\Uha} \bT_{\Omega,\sC} \text{ is a faithful $\cO_\Omega$-module.}
}
\end{proposition}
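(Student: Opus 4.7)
The strategy is to exploit the $\cO_\Omega$-linearity of the evaluation map $\mathrm{Ev}_\beta^{\eta_0}: \hc{t}(S_{K(\tilde\pi)}, \sD_\Omega) \to \cD(\Galp, \cO_\Omega)$ together with its compatibility with specialisation (Proposition~\ref{prop:evaluations in families}) to transfer the non-vanishing hypothesis at $\lambda_\pi$ into a torsion-freeness statement over the whole family.

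First, I would shrink the affinoid $\Omega$ around $\lambda_\pi$ so that $\cO_\Omega$ is an integral domain; this is permitted since $\sW^Q_{\lambda_\pi}$ is smooth of dimension $d+1$, so sufficiently small connected affinoid neighbourhoods are irreducible, and all constructions are local around $\lambda_\pi$. I would then observe that $\cD(\Galp, \cO_\Omega)$ is torsion-free as an $\cO_\Omega$-module: indeed, for a non-zero-divisor $T \in \cO_\Omega$ and $\mu \in \cD(\Galp, \cO_\Omega)$ with $T\mu = 0$, the identity $T\mu(f) = (T\mu)(f) = 0$ for every $f \in \cA(\Galp, \cO_\Omega)$ forces $\mu(f) = 0$, hence $\mu = 0$.

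Now suppose $T \in \cO_\Omega$ satisfies $T \Phi_\sC = 0$. By $\cO_\Omega$-linearity of $\mathrm{Ev}_\beta^{\eta_0}$, we get $T \cdot \mathrm{Ev}_\beta^{\eta_0}(\Phi_\sC) = 0$ in $\cD(\Galp, \cO_\Omega)$. By Proposition~\ref{prop:evaluations in families}, specialisation at $\lambda_\pi$ sends $\mathrm{Ev}_\beta^{\eta_0}(\Phi_\sC)$ to $\mathrm{Ev}_\beta^{\eta_0}(\mathrm{sp}_{\lambda_\pi}(\Phi_\sC)) = \mathrm{Ev}_\beta^{\eta_0}(\Phi_{\tilde\pi})$, which is non-zero by hypothesis. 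Hence $\mathrm{Ev}_\beta^{\eta_0}(\Phi_\sC) \neq 0$, and torsion-freeness forces $T = 0$. This proves $\mathrm{Ann}_{\cO_\Omega}(\Phi_\sC) = 0$.

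For the faithfulness statement, let $M \defeq \hc{t}(S_{K(\tilde\pi)}, \sD_\Omega)^{\leqslant h} \otimes_{\bT_{\Omega,h}} \bT_{\Omega,\sC}$. Since $\Phi_\sC = e \Phi_\sC' \in M$ by its construction in \eqref{eq:phi con}, any $T \in \cO_\Omega$ annihilating $M$ annihilates $\Phi_\sC$, so $T = 0$ by the first part. The main (and essentially only) subtlety is verifying torsion-freeness of $\cD(\Galp, \cO_\Omega)$; once this is in hand, everything else is a direct application of the $\cO_\Omega$-linearity and compatibility properties of the evaluation maps established in \S\ref{sec:galois evaluations}.
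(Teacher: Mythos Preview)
Your proof is correct and follows essentially the same route as the paper's own argument: use $\cO_\Omega$-linearity of $\mathrm{Ev}_\beta^{\eta_0}$, compatibility with specialisation (Proposition~\ref{prop:evaluations in families}) to see $\mathrm{Ev}_\beta^{\eta_0}(\Phi_\sC)\neq 0$, and then torsion-freeness of $\cD(\Galp,\cO_\Omega)$ to conclude. You are slightly more explicit than the paper in justifying torsion-freeness (via shrinking $\Omega$ to a domain and the pointwise argument on $\mu(f)$), whereas the paper simply asserts it.
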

\begin{proof}
	By restricting the evaluation map of \eqref{eq:final evaluation} to the summand \eqref{eq:summand}, we get a map 
	\begin{equation}\label{eq:ev on T}
		\mathrm{Ev}_\beta^{\eta_0} : \hc{t}(S_{K(\tilde\pi)},\sD_\Omega)\ssh\otimes_{\bT\Uha} \bT_{\Omega,\sC} \to \cD(\Galp,\cO_\Omega)
	\end{equation}
	of $\cO_\Omega$-modules. From Proposition~\ref{prop:evaluations in families}, we have 
	\[
	\mathrm{sp}_{\lambda_\pi}(\mathrm{Ev}_{\beta}^{\eta_0}(\Phi_{\sC}))= \mathrm{Ev}_{\beta}^{\eta_0}(\mathrm{sp}_{\lambda_\pi}(\Phi_{\sC}))= \mathrm{Ev}_{\beta}^{\eta_0}(\Phi_{\tilde{\pi}}).
	\]
	The right-hand side is non-zero by hypothesis, so we deduce that $\mathrm{Ev}_{\beta}^{\eta_0}(\Phi_{\sC}) \neq 0$.
	
	Now let $u \in \cO_\Omega$ such that $u\Phi_{\sC} = 0$. Since $\mathrm{Ev}_{\beta}^{\eta_0}$ is an $\cO_\Omega$-module map, we see 
		\[0 = \mathrm{Ev}_{\beta}^{\eta_0}(u\Phi_{\sC}) 
		= u \mathrm{Ev}_{\beta}^{\eta_0}(\Phi_{\sC}) \in \cD(\Galp,\cO_\Omega).
		\]
	As $\mathrm{Ev}_\beta^{\eta_0}(\Phi_{\sC}) \neq 0$ and $\cD(\Galp,\cO_\Omega)$ is $\cO_\Omega$-torsion-free, we see $u = 0$, from which we conclude.
\end{proof}

\begin{corollary}\label{cor:T is faithful}\label{cor:exists max dim}
	Suppose there exists $\beta$ such that $\mathrm{Ev}_\beta^{\eta_0}(\Phi_{\tilde\pi}) \neq 0$.  Then, at level $K(\tilde\pi)$, 
	\begin{itemize}\setlength{\itemsep}{0pt}
		\item[(i)] the $\cO_\Omega$-algebra $\bT_{\Omega,\sC}$ is faithful as an $\cO_\Omega$-module, and 
		\item[(ii)] there exists an irreducible component $\sI \subset \sC$ in $\sE\Uha$ through $x_{\tilde\pi}$ with $\dim(\sI) = \dim(\Omega)$.
	\end{itemize}
\end{corollary}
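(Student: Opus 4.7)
Set $M \defeq \hc{t}(S_{K(\tilde\pi)}, \sD_\Omega)\ssh \otimes_{\bT\Uha} \bT_{\Omega,\sC}$, so Proposition \ref{prop:ann = zero} makes $M$ a faithful $\cO_\Omega$-module. Before either part, I would record the companion observation that $M$ is also a faithful $\bT_{\Omega,\sC}$-module: by definition $\bT\Uha$ is the image of $\cH\otimes\cO_\Omega$ in $\End_{\cO_\Omega}\big(\hc{t}(S_{K(\tilde\pi)},\sD_\Omega)\ssh\big)$, and under the idempotent decomposition $\bT\Uha = \bT_{\Omega,\sC}\oplus (1-e)\bT\Uha$ compatible with the splitting $N = M \oplus (1-e)N$, any $b\in \bT_{\Omega,\sC}$ acting as zero on $M$ automatically annihilates $N$, hence is zero.

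Part (i) is then a one-liner: if $u \in \cO_\Omega$ is killed by the structure map $\cO_\Omega \to \bT_{\Omega,\sC}$, it acts as zero on $M$ via that structure map, and the $\cO_\Omega$-faithfulness of $M$ forces $u=0$. So $\cO_\Omega \hookrightarrow \bT_{\Omega,\sC}$ is injective, which is exactly the claim of (i).

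For part (ii), I would pass to local rings. Since $w\colon \sC \to \Omega$ is finite, $w^{-1}(\lambda_\pi)$ is a finite set containing $x_{\tilde\pi}$, and after shrinking $\Omega$ around $\lambda_\pi$ we may assume it equals $\{x_{\tilde\pi}\}$. Writing $\Lambda \defeq \cO_{\Omega,\m_{\lambda_\pi}}$ and $\bT_{\tilde\pi} \defeq (\bT_{\Omega,\sC})_{\m_{\tilde\pi}}$, this reduction identifies $\bT_{\tilde\pi}$ with $\bT_{\Omega,\sC} \otimes_{\cO_\Omega}\Lambda$, so localising the injection of (i) (via the exactness of localisation) gives a finite injective ring map $\Lambda \hookrightarrow \bT_{\tilde\pi}$ with $\Lambda$ a regular local domain of dimension $d \defeq \dim\Omega$. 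By Cohen--Seidenberg (dimension preservation under finite integral extensions), $\dim \bT_{\tilde\pi} = d$, so some minimal prime $\mathfrak{q}\subset \bT_{\tilde\pi}$ achieves $\dim(\bT_{\tilde\pi}/\mathfrak{q}) = d$. Unwinding, this prime corresponds to an irreducible component $\sI\subset \sC$ through $x_{\tilde\pi}$ with $\dim\sI = \dim\Omega$.

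The main delicate point is the reduction ``shrink $\Omega$ until $x_{\tilde\pi}$ is the unique point above $\lambda_\pi$'', without which $\bT_{\Omega,\sC}\otimes \Lambda$ would merely be a product $\bT_{\tilde\pi}\times\bT'$ of local factors and the projection $\Lambda \to \bT_{\tilde\pi}$ could fail to be injective; $\cO_\Omega$-faithfulness of $M$ would then only guarantee a component of maximal dimension \emph{somewhere}, rather than through $x_{\tilde\pi}$. Since the eigenvariety construction in \S \ref{sec:maximal dimension} already permits such shrinking (e.g.\ to clear denominators when lifting $\Phi_{\tilde\pi}$ to $\Phi_{\sC}$), this is harmless, and the rest of the argument is pure commutative algebra.
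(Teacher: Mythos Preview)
Your argument is correct. Part (i) is essentially the paper's proof: faithfulness of $M$ over $\cO_\Omega$ (from Proposition~\ref{prop:ann = zero}) plus the factorisation of the $\cO_\Omega$-action through $\bT_{\Omega,\sC}$ forces the structure map $\cO_\Omega \to \bT_{\Omega,\sC}$ to be injective. Your companion remark that $M$ is also $\bT_{\Omega,\sC}$-faithful is fine but is not used anywhere.

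Part (ii) takes a genuinely different route. The paper argues by contradiction on supports: if all components through $x_{\tilde\pi}$ had dimension $<\dim(\Omega)$, then $\mathrm{Supp}_{\cO_\Omega}(\bT_{\Omega,\sC}) = w(\sC)$ would be a proper closed subset of $\Omega$, contradicting faithfulness (citing \cite[Prop.~4.4.2]{Han17}, which records the elementary fact that a faithful finitely generated module has full support). You instead shrink $\Omega$ so that $x_{\tilde\pi}$ is the unique point of $\sC$ above $\lambda_\pi$, localise the injection from (i) to get a finite injective $\Lambda \hookrightarrow \bT_{\tilde\pi}$, and invoke Cohen--Seidenberg to find a minimal prime of the correct dimension, automatically lying under $\m_{\tilde\pi}$. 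Both are short and elementary; the paper's contrapositive is one line once you have the support fact, while your approach is slightly more explicit about the ``through $x_{\tilde\pi}$'' conclusion. Indeed, the paper's implication ``all components through $x_{\tilde\pi}$ have small dimension $\Rightarrow$ $w(\sC)$ has small dimension'' tacitly uses the same shrinking you make explicit (so that any component of $\sC$ missing $x_{\tilde\pi}$ has image missing $\lambda_\pi$, hence proper in the irreducible $\Omega$). Your care on this point is well placed, and the shrinking is indeed harmless, exactly as the paper does in Lemma~\ref{lem:reduction direct sum} and Proposition~\ref{prop:delocalise 1}.
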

\begin{proof}
	The $\cO_\Omega$-action on $\hc{t}(S_{K(\tilde\pi)},\sD_\Omega)^{\leqslant h}$ is faithful (by Proposition \ref{prop:ann = zero}) and factors through the action of $\cH\otimes \cO_\Omega$, hence (by definition) the action of $\bT_{\Omega,\sC}$. Part (i) follows.
	
	For (ii), as $\hc{t}(S_{K(\tilde\pi)},\sD_\Omega)\ssh$ and $\bT\Uha$ are finite $\cO_\Omega$-modules, we deduce there are finitely many irreducible components of $\cE\Uha$, each of dimension at most $\dim(\Omega)$. Suppose every component $\sI$ through $x_{\tilde\pi}$ has dimension $\dim(\sI) <\dim(\Omega)$. Then $\mathrm{Supp}_{\cO_\Omega}(\bT_{\Omega,\sC})$ (by definition, the image of $\sC$ in $\Omega$ under the weight map) is a closed subspace of $\Omega$ of dimension strictly less than $\dim(\Omega)$. In particular it is a \emph{proper} closed subspace. But by \cite[Prop.~4.4.2]{Han17}, since $\bT_{\Omega,\sC}$ is a faithful $\cO_\Omega$-module, we have $\mathrm{Supp}_{\cO_\Omega}(\bT_{\Omega,\sC}) = \Omega$, so we conclude by contradiction.
\end{proof}

\begin{corollary}\label{cor:ann = zero}
	Suppose $\pi$ admits a non-zero Deligne-critical $L$-value at $p$. Then (at level $K(\tilde\pi)$) there exists an irreducible component $\sI$ in $\sE\Uha$ through $x_{\tilde\pi}$ such that $\dim(\sI) = \dim(\Omega)$.
\end{corollary}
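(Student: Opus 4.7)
The proof plan is to reduce this corollary directly to Corollary \ref{cor:exists max dim}(ii) by producing a tuple $\beta$ with $\mathrm{Ev}_\beta^{\eta_0}(\Phi_{\tilde\pi}) \neq 0$. This is where the non-vanishing hypothesis on the Deligne-critical $L$-value enters: it is exactly the input needed to force a non-trivial evaluation.

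First I would invoke the hypothesis to choose a pair $(\chi, j)$, with $\chi$ of conductor $p^\beta$ (each $\beta_{\pri} \geqslant 1$) and $j \in \mathrm{Crit}(\lambda_\pi)$, such that $L(\pi \otimes \chi, j + \tfrac{1}{2}) \neq 0$. Set $\epsilon_0 \defeq (\chi \chi_{\cyc}^j \eta)_\infty \in \{\pm 1\}^\Sigma$. By Theorem~\ref{thm:critical value}, all the explicit factors $\gamma_{p\m}$, $\lambda_\pi^\vee(t_p^\beta)$, the local norm products, $\tau(\chi_f)^n$ and (via Sun's theorem) $\zeta_\infty$ are non-zero; hence
\[
\cE_{\chi}^{j,\eta_0}(\phi_{\tilde\pi}^{\epsilon_0}) \neq 0.
\]
Moreover, by the same theorem, $\cE_{\chi}^{j,\eta_0}(\phi_{\tilde\pi}^\epsilon) = 0$ for $\epsilon \neq \epsilon_0$, so summing yields $\cE_{\chi}^{j,\eta_0}(\phi_{\tilde\pi}) \neq 0$, where $\phi_{\tilde\pi} = \sum_\epsilon \phi_{\tilde\pi}^\epsilon$.

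Next, recall from Definition~\ref{def:non-critical slope Lp} that $\Phi_{\tilde\pi} \in \hc{t}(S_{K(\tilde\pi)}, \sD_{\lambda_\pi})\locpi$ is the (unique) lift of $\phi_{\tilde\pi}$ under the isomorphism $r_{\lambda_\pi}$ of \eqref{eq:non-critical non-zero}, so that $r_{\lambda_\pi}(\Phi_{\tilde\pi}) = \phi_{\tilde\pi}$. Applying Lemma~\ref{lem:compatibility} to this $\Phi_{\tilde\pi}$ gives
\[
\int_{\Galp} \chi\, \chi_{\cyc}^j \cdot \mathrm{Ev}_{\beta}^{\eta_0}(\Phi_{\tilde\pi}) \;=\; \cE_{\chi}^{j,\eta_0}(\phi_{\tilde\pi}) \;\neq\; 0,
\]
which forces $\mathrm{Ev}_{\beta}^{\eta_0}(\Phi_{\tilde\pi}) \neq 0$ in $\cD(\Galp, L)$ for this specific $\beta$.

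Finally, the hypothesis of Corollary~\ref{cor:exists max dim} is now satisfied, and part (ii) of that corollary produces the desired irreducible component $\sI$ in $\sE_{\Omega,h}(K(\tilde\pi))$ through $x_{\tilde\pi}$ with $\dim(\sI) = \dim(\Omega)$, completing the proof. There is no real obstacle here since all the hard work has already been done: the non-vanishing of the classical evaluation (Theorem~\ref{thm:critical value}), the interpolation property linking classical and overconvergent evaluations (Lemma~\ref{lem:compatibility}), and Urban's elegant torsion-freeness argument (Proposition~\ref{prop:ann = zero}) combine cleanly.
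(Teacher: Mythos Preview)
Your proposal is correct and follows essentially the same approach as the paper. The only cosmetic difference is that the paper cites Theorem~\ref{thm:non-ordinary} directly for the non-vanishing of the integral, whereas you unpack that reference into its two constituents (Theorem~\ref{thm:critical value} and Lemma~\ref{lem:compatibility}) and handle the sign $\epsilon_0$ explicitly; since Theorem~\ref{thm:non-ordinary} is itself proved by exactly this combination, the arguments are the same.
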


\begin{proof}
	By hypothesis (Definition~\ref{def:non-vanishing}) there exists $\beta$ with $\beta_{\pri} \geqslant 1$ for all $\pri|p$, a character $\chi$ of conductor $p^\beta$, and $j \in \mathrm{Crit}(\lambda_\pi)$ such that $L(\pi \times \chi, j+\tfrac{1}{2}) \neq 0$. By Theorem~\ref{thm:non-ordinary}, for an explicit (non-zero) constant $(*)$ we have 	
		\[		(\alpha_p^\circ)^{-\beta}\int_{\Galp} \chi\ \chi_{\mathrm{cyc}}^j \cdot \mathrm{Ev}_{\beta}^{\eta_0}(\Phi_{\tilde\pi}) \defeqrev  \cL_p(\tilde\pi,\chi\chi_{\cyc}^j) = i_p[(*)L^{(p)}(\pi\otimes\chi,j+\tfrac{1}{2})] \neq 0.\]
	Thus $\mathrm{Ev}_\beta^{\eta_0}(\Phi_{\tilde\pi}) \neq 0$. We conclude by Corollary~\ref{cor:exists max dim}(ii).
\end{proof}

\subsection{Proof of Thm.\ \ref{thm:shalika family}(c): (very) Zariski-density of Shalika points} \label{sec:zariski dense classical}
We still take $K = K(\tilde\pi)$.

\begin{lemma}\label{lem:zariski regular}
	If $\lambda_{\pi}$ is $H$-regular in the sense of \eqref{eq:H-regular}, then any neighbourhood $\Omega$ of $\lambda_\pi$ in $\Wlam$ contains a very Zariski-dense set of regular algebraic dominant weights.
\end{lemma}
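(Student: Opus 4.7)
My plan is to explicitly parametrise $\Wlam$, rewrite regularity as an open condition on the parameters, and then deduce the result from two standard facts: that algebraic weights are Zariski-dense in any neighbourhood of $\lambda_\pi$, and that the non-regularity locus is a proper closed analytic subvariety. Since $\Wlam = \lambda_\pi \sW^Q_0$ and $\sW^Q_0$ is the pure Hilbert weight space for $H$, any algebraic weight in $\Wlam$ can be written uniquely as $\lambda_\pi \chi$ for an algebraic pure character $\chi$ of $H(\Zp)$, determined by a tuple $(a_\sigma, \sw') \in \Z^{d+1}$ via $\chi^1(x) = \prod_{\sigma \in \Sigma}\sigma(x)^{a_\sigma}$ on $\OFp^\times$ and $\sw(x) = x^{\sw'}$ on $\Zp^\times$, with $\chi^2 = (\sw\circ \mathrm{N}_{F/\Q})/\chi^1$. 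The resulting weight has components $\lambda_{\sigma,i} = (\lambda_\pi)_{\sigma,i} + a_\sigma$ for $i \leqslant n$ and $\lambda_{\sigma,i} = (\lambda_\pi)_{\sigma,i} + \sw' - a_\sigma$ for $i > n$. Translating $\lambda_{\sigma,i} > \lambda_{\sigma,i+1}$, the inequalities for $i \neq n$ are independent of $(a_\sigma, \sw')$ and hence automatic by $H$-regularity of $\lambda_\pi$; only the conditions at $i = n$, namely $2 a_\sigma - \sw' > \delta_\sigma' \defeq (\lambda_\pi)_{\sigma,n+1} - (\lambda_\pi)_{\sigma,n} \leqslant 0$, can fail.

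I would then note that the non-regularity locus inside $\Omega$ is contained in the union $V \defeq \bigcup_{\sigma \in \Sigma} V_\sigma$, where $V_\sigma \defeq \{\lambda \in \Omega : 2a_\sigma - \sw' = \delta_\sigma'\}$ is cut out by a single non-trivial affine-linear condition on the coordinates of $\Wlam$; thus each $V_\sigma$ is a proper closed codimension-one analytic subvariety of $\Omega$, and so is their finite union $V \subsetneq \Omega$. Separately, I would invoke the standard fact that the set $A$ of algebraic weights in $\Omega$ is Zariski-dense. This follows, after shrinking $\Omega$ to an affinoid polydisc and using local coordinates on weight space, from the statement that a Tate power series in $d+1$ variables vanishing on the natural lattice of algebraic characters is identically zero, itself reduced via the Mahler/Amice transform to the one-variable assertion that a nonzero rigid function on the closed unit disc has only finitely many zeros.

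Combining the two ingredients, if $f \in \cO_\Omega$ vanishes on all regular algebraic dominant weights in $\Omega$, then $A \subset \{f = 0\} \cup V$; Zariski-density of $A$ gives $\{f = 0\} \cup V = \Omega$, and irreducibility of $\Omega$ (as a connected admissible open in the smooth rigid space $\Wlam$) combined with $V \subsetneq \Omega$ forces $\{f = 0\} = \Omega$, i.e.\ $f = 0$, proving density. The main obstacle is the careful justification of Zariski-density of $A$ in an arbitrarily small $p$-adic neighbourhood of $\lambda_\pi$: one must choose local coordinates on $\Omega$ so that the algebraic characters near $\lambda_\pi$ correspond to an explicit $p$-adically dense lattice of tuples and apply the appropriate one-variable density, a routine but notationally involved step; the rest of the argument is then elementary analytic geometry on an irreducible rigid space.
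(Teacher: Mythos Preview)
Your overall strategy matches the paper's: parametrise algebraic weights in $\Wlam$, observe that $H$-regularity of $\lambda_\pi$ handles all inequalities except at $i=n$, and argue that the failure locus is thin. However, there is a gap in your final step. You claim that if $f$ vanishes on all regular algebraic dominant weights then $A \subset \{f=0\} \cup V$, where $A$ is the set of \emph{all} algebraic weights in $\Omega$ and $V_\sigma = \{2a_\sigma - \sw' = \delta_\sigma'\}$. But an algebraic weight with $2a_\sigma - \sw' < \delta_\sigma'$ for some $\sigma$ is not dominant (so not regular dominant, hence $f$ need not vanish there), yet it does not lie on the hyperplane $V_\sigma$ either; such weights exist in abundance. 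Thus the inclusion $A \subset \{f=0\} \cup V$ fails as stated.

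The easy fix, which is precisely the paper's approach, is to take $A$ to be only the \emph{dominant} algebraic weights from the outset, i.e.\ those with $2a_\sigma - \sw' \geqslant \delta_\sigma'$ for all $\sigma$ (equivalently $a_\sigma \geqslant b_\sigma$ in the paper's notation $b_\sigma = \sw' - a_\sigma$). This translated lattice cone is still Zariski-dense in $\Omega$---your Amice/Mahler argument works equally well for a translate of $\Z_{\geqslant 0}^{d+1}$---and now a non-regular weight in $A$ genuinely satisfies the equality $2a_\sigma - \sw' = \delta_\sigma'$ for some $\sigma$, so your inclusion and the remainder of your irreducibility argument go through unchanged.
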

\begin{proof}
	Write $\lambda_\pi = (\lambda_{\pi}',\lambda_{\pi}'')$ as a weight for $H$. If a weight $\lambda \in \Wlam$ is of the form 
	\[
	\lambda = (\lambda_{\pi}'+ (a,...a), \lambda_{\pi}''+ (b,...,b)),
	\]
	where $a, b \in \Z^{\Sigma}$ are weights for $\mathrm{Res}_{\cO_F/\Z}(\GL_1)$ with $a_{\sigma} \geqslant b_{\sigma}$ for all $\sigma \in \Sigma$, then $\lambda$ is algebraic dominant and $H$-regular. The set of such $\lambda$ is very Zariski-dense in $\Wlam$. 
	
	Moreover, such a weight is regular if $a_\sigma > b_\sigma$ for all $\sigma$, as then $\lambda_{\pi,n,\sigma} + a_\sigma > \lambda_{\pi,n+1,\sigma} + b_\sigma$.  We conclude since $a_\sigma = b_\sigma$ is a closed condition.
\end{proof}

Recall we fixed $h \in \Q_{\geqslant 0}$. Now let $\Omega_{\mathrm{ncs}}$ be the subset of weights $\lambda \in \Omega$ such that
\begin{itemize}\setlength{\itemsep}{0pt}
	\item[(i)] $\lambda$ is algebraic, dominant and regular, and 
	\item[(ii)] $e_{\pri(\sigma)} h < 1 + \lambda_{\sigma,n} - \lambda_{\sigma,n+1}$ for all $\sigma \in \Sigma$ (in particular, $h$ is a non-$Q$-critical slope for $\lambda$).
\end{itemize}
Since failure of (ii) is a closed condition, $\Omega_{\mathrm{ncs}}$ is very Zariski-dense in $\Omega$ by Lemma~\ref{lem:zariski regular}.

\begin{proposition}\label{prop:zariski dense cuspidal}
	Suppose $\tilde\pi$ is strongly non-$Q$-critical and $\lambda_\pi$ is $H$-regular. Let $\sI = \mathrm{Sp}(\bT_{\Omega,\sI})$ be any irreducible component in $\sE\Uha(K(\tilde\pi))$ such that	$\cI$ contains $x_{\tilde\pi}(K(\tilde\pi))$ and $\dim(\sI) = \dim(\Omega)$.	Then the classical cuspidal non-$Q$-critical points are very Zariski-dense in $\sI$.
\end{proposition}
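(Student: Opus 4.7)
The plan is to propagate the Zariski-density of $\Omega_{\mathrm{ncs}}\subset\Omega$ to $\sI$ via the finite weight map, and then to verify, at each point of $\sI$ lying above a weight in $\Omega_{\mathrm{ncs}}$, each of the three required properties: non-$Q$-criticality, classicality, and cuspidality.

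First I would transport Zariski-density along $w:\sI\to\Omega$. Since $\bT_{\Omega,\sI}$ is a quotient of the finite $\cO_\Omega$-algebra $\bT_{\Omega,h}(K(\tilde\pi))$, the map $w$ is finite. Irreducibility of $\sI$ combined with the equality $\dim\sI=\dim\Omega$ forces $w(\sI)=\Omega$; moreover for any proper closed $Z\subsetneq\sI$, finiteness gives $\dim w(Z)\leqslant \dim Z<\dim\Omega$, so $w(Z)\subsetneq\Omega$ is a proper closed subset. As $\Omega_{\mathrm{ncs}}$ is Zariski-dense in $\Omega$ (shown immediately before the statement), it is not contained in any such $w(Z)$, and therefore $w^{-1}(\Omega_{\mathrm{ncs}})\cap\sI$ is Zariski-dense in $\sI$.

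Next, fix $y\in w^{-1}(\lambda)\cap\sI$ for some $\lambda\in\Omega_{\mathrm{ncs}}$. By condition (ii) in the definition of $\Omega_{\mathrm{ncs}}$, the slope $\leqslant h$ bound at weight $\lambda$ is a non-$Q$-critical slope. Applying Theorem~\ref{thm:control} to the generalised Hecke eigenspace at $\m_y$ yields
\[
r_\lambda:\hc{t}(S_{K(\tilde\pi)},\sD_\lambda)_{\m_y}\isorightarrow\hc{t}(S_{K(\tilde\pi)},\sV_\lambda^\vee)_{\m_y},
\]
from which non-$Q$-criticality of $\tilde\pi_y$ is immediate. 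The left-hand side is non-zero by the very existence of $y\in\sE_{\Omega,h}(K(\tilde\pi))$, so the right-hand side is non-zero, and $\psi_y$ appears in classical compactly-supported cohomology at weight $\lambda$. Arguing as in \S\ref{sec:cuspidal cohomology} (cf.\ Proposition~\ref{prop:non-canonical}), this gives a cohomological automorphic representation $\pi_y$ of weight $\lambda$ with $\psi_y$ appearing in $\pi_y^{K(\tilde\pi)}$, so $y$ is classical.

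It remains to upgrade $\pi_y$ to a cuspidal representation, using regularity of $\lambda$ (condition (i)). Franke's decomposition \cite{Fra98}, already recalled in \S\ref{sec:cuspidal cohomology}, realises $\h^\bullet_{\mathrm{cusp}}$ as a Hecke-stable direct summand. The non-cuspidal discrete spectrum of $\GL_{2n}/F$ consists of Speh-type residual representations (Moeglin--Waldspurger), which are cohomological only at non-regular weights and hence contribute nothing to cohomology at regular $\lambda$. The remaining Eisenstein contributions at regular weight come from proper Levi subgroups and — by the Franke--Schwermer structure theorem — contribute to $\hc{\bullet}$ only in a range of degrees that, for regular $\lambda$, excludes the top cuspidal degree $t$. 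Consequently every finite-slope Hecke eigensystem appearing in $\hc{t}(S_{K(\tilde\pi)},\sV_\lambda^\vee)$ at regular $\lambda$ is cuspidal, so $\pi_y$ may be chosen cuspidal. This completes the plan.

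The transport of density in Step 1 is formal once $w$ is known to be finite, and the classicality step is a direct consequence of Theorem~\ref{thm:control}; the genuine obstacle will be Step 3. Although cuspidality at regular weight is expected on general principles, one must carefully verify that no Eisenstein eigensystem of finite slope can occur in $\hc{t}$ at a regular weight $\lambda$, which requires invoking the detailed structure of Eisenstein cohomology for $\GL_{2n}/F$ (Franke--Schwermer, Grbac--Schwermer). One must also ensure the argument is uniform across $\lambda\in\Omega_{\mathrm{ncs}}$ rather than valid only at a generic point.
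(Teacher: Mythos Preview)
Your overall approach matches the paper's: both set $\sI_{\mathrm{ncs}}=\sI\cap w^{-1}(\Omega_{\mathrm{ncs}})$, show it is Zariski-dense in $\sI$, and then argue that each of its points is classical cuspidal non-$Q$-critical. The paper simply invokes \cite[Prop.~5.15]{BW20} (and its proof) for both assertions, whereas you try to unpack them. Your transport of density along the finite weight map is correct, and the classicality and non-$Q$-criticality via Theorem~\ref{thm:control} are fine (modulo the routine check, handled as in Remark~\ref{rem:non-Q-critical implies point}, that an eigensystem on $\sE_{\Omega,h}$ above $\lambda$ actually survives in $\hc{t}(S_{K(\tilde\pi)},\sD_\lambda)$).

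The genuine gap is the cuspidality step, and the specific mechanism you propose does not work. The claim that Eisenstein contributions from proper Levis avoid degree $t$ in $\hc{\bullet}$ at regular weight is false already for $\GL_2/\Q$: there $n=1$, $t=1$, and ordinary Eisenstein series of weight $\geq 2$ contribute to $\hc{1}$ with finite (indeed zero) $U_p$-slope. More generally, the Franke--Schwermer decomposition is a statement about $\h^\bullet$, not $\hc{\bullet}$; passing through the boundary exact sequence does not yield a clean degree exclusion for compactly supported cohomology. What \cite{BW20} supplies is a finer argument: at regular weight one identifies interior cohomology with cuspidal cohomology (Li--Schwermer), so any eigensystem in $\hc{t}$ with non-zero image in $\h^t$ is cuspidal, and one then rules out the remaining boundary contributions by analysing the $U_{\pri}^\circ$-slopes of parabolically induced eigensystems at parahoric level against the non-$Q$-critical slope bound. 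You correctly flagged this step as the real obstacle, but the degree-range shortcut you sketched does not resolve it.
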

\begin{proof}
	Let $\sI_{\mathrm{ncs}} \defeq \sI\cap w^{-1}(\Omega_{\mathrm{ncs}})$. By \cite[Prop.~5.15]{BW20} (and its proof), 
	$\sI_{\mathrm{ncs}}$ is very Zariski-dense in $\sI$ and every $y \in \sI_{\mathrm{ncs}}$ is classical cuspidal non-$Q$-critical, from which the result follows. 
\end{proof}

 Note $w^{-1}(\lambda) \cap \sC$ is a finite set for all $\lambda \in \Omega.$

\begin{lemma}\label{lem:reduction direct sum}
	Let $\sC$ be as in \S\ref{sec:maximal dimension} and $\lambda \in \Omega_{\mathrm{ncs}}$. Reduction modulo $\m_{\lambda}$ induces an isomorphism
	\[
	\hc{t}(S_{K(\tilde\pi)},\sD_\Omega)\ssh\otimes_{\bT\Uha(K(\tilde\pi))} \bT_{\Omega,\sC}/\m_\lambda \cong \bigoplus_{y \in w^{-1}(\lambda)\cap \sC} 	\hc{t}(S_{K(\tilde\pi)},\sD_\lambda)_{\m_{y}}.
	\]
\end{lemma}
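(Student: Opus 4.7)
The strategy is to reduce the problem to the argument of Proposition~\ref{prop:surjection}, applied separately at each classical point above $\lambda$ lying in $\sC$.

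First, since $\bT_{\Omega,\sC}$ is cut out as the direct summand $e\bT_{\Omega,h}(K(\tilde\pi))$ by an idempotent $e$, tensoring the cohomology with $\bT_{\Omega,\sC}$ is nothing more than the image under $e$, a direct summand whose support inside $\sE_{\Omega,h}(K(\tilde\pi))$ is contained in $\sC$. The quotient $\bT_{\Omega,\sC}/\m_\lambda\bT_{\Omega,\sC}$ is a finite-dimensional $L$-algebra whose maximal ideals are exactly the (finitely many) $\m_y$ for $y \in \sC\cap w^{-1}(\lambda)$, so it decomposes as an Artinian product
\[
\bT_{\Omega,\sC}/\m_\lambda\bT_{\Omega,\sC} \;\cong\; \prod_{y\in \sC\cap w^{-1}(\lambda)} \big(\bT_{\Omega,h}(K(\tilde\pi))\big)_{\m_y}\big/\m_\lambda\big(\bT_{\Omega,h}(K(\tilde\pi))\big)_{\m_y}.
\]
Plugging this into the left-hand side of the claimed isomorphism yields
\[
\hc{t}(S_{K(\tilde\pi)},\sD_\Omega)^{\leqslant h}\otimes_{\bT_{\Omega,h}(K(\tilde\pi))}\bT_{\Omega,\sC}/\m_\lambda \;\cong\; \bigoplus_{y\in \sC\cap w^{-1}(\lambda)} \hc{t}(S_{K(\tilde\pi)},\sD_\Omega)^{\leqslant h}_{\m_y}\otimes_{\cO_{\Omega,\lambda}}\cO_{\Omega,\lambda}/\m_\lambda,
\]
where $\cO_{\Omega,\lambda}$ denotes the localisation of $\cO_\Omega$ at $\lambda$.

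It therefore suffices to produce, for each $y\in \sC\cap w^{-1}(\lambda)$, an isomorphism
\[
\hc{t}(S_{K(\tilde\pi)},\sD_\Omega)^{\leqslant h}_{\m_y}\otimes_{\cO_{\Omega,\lambda}}\cO_{\Omega,\lambda}/\m_\lambda \;\isorightarrow\; \hc{t}(S_{K(\tilde\pi)},\sD_\lambda)_{\m_y}
\]
induced by $\mathrm{sp}_\lambda$. Since $\lambda \in \Omega_{\mathrm{ncs}}$, the bound $h$ satisfies $e_{\pri(\sigma)}h<1+\lambda_{\sigma,n}-\lambda_{\sigma,n+1}$ for all $\sigma$, so every $y \in w^{-1}(\lambda)$ of slope $\leqslant h$ has non-$Q$-critical slope in the sense of Definition~\ref{def:non-critical slope}. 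Theorem~\ref{thm:control} then forces $y$ to be strongly non-$Q$-critical, so $r_\lambda$ is an isomorphism on the $\m_y$-localised cohomology; in particular $\hc{t}(S_{K(\tilde\pi)},\sV_\lambda^\vee)_{\m_y}\neq 0$, and Proposition~\ref{prop:non-canonical} attaches to the Hecke eigensystem $\psi_y$ a $Q$-refined RACAR $\tilde\pi_y$ of weight $\lambda$. Applying Remark~\ref{rem:non-Q-critical implies point} — equivalently, rerunning the Koszul induction of Proposition~\ref{prop:surjection} with $(\tilde\pi_y,\lambda)$ in place of $(\tilde\pi,\lambda_\pi)$ — delivers the desired isomorphism.

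The step requiring genuine input, and the one most likely to be the sticking point, is the vanishing $\hc{t+1}(S_{K(\tilde\pi)},\sD_\lambda)_{\m_y}=0$ needed as the base case of the Koszul induction. This in turn follows from strong non-$Q$-criticality of $y$ combined with the fact that any $Q$-refined RACAR contributes to $\hc{\bullet}(S_{K(\tilde\pi)},\sV_\lambda^\vee)$ only in the range of degrees \eqref{eq:t}, forcing $\hc{t+1}(S_{K(\tilde\pi)},\sV_\lambda^\vee)_{\m_y}=0$ and hence (via strong non-$Q$-criticality) $\hc{t+1}(S_{K(\tilde\pi)},\sD_\lambda)_{\m_y}=0$. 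Summing the resulting local isomorphisms over $y\in \sC\cap w^{-1}(\lambda)$ yields the statement of the lemma.
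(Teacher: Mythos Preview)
Your approach is essentially the same as the paper's: decompose the reduction mod $\m_\lambda$ into pieces indexed by the points $y\in w^{-1}(\lambda)\cap\sC$, then apply the Koszul argument of Proposition~\ref{prop:surjection} (via Remark~\ref{rem:non-Q-critical implies point}) at each $y$. The only cosmetic difference is that you use the Artinian decomposition of $\bT_{\Omega,\sC}/\m_\lambda$ directly, whereas the paper shrinks $\Omega$ so that $\sC$ physically disconnects into pieces $\sC_y$; both yield the same direct-sum decomposition.

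One point to tighten: your appeal to Proposition~\ref{prop:non-canonical} to conclude that $\psi_y$ arises from a $Q$-refined RACAR is not quite right, since that proposition goes from a RACAR to cohomology, not the reverse. The fact that every $y\in w^{-1}(\Omega_{\mathrm{ncs}})$ is classical cuspidal is the content of \cite[Prop.~5.15]{BW20}, invoked in Proposition~\ref{prop:zariski dense cuspidal}; it uses the regularity of $\lambda$ (built into $\Omega_{\mathrm{ncs}}$) together with the control theorem. Once you cite this instead, the vanishing $\hc{t+1}(S_{K(\tilde\pi)},\sV_\lambda^\vee)_{\m_y}=0$ follows as you say, and the rest of your argument goes through verbatim.
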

\begin{proof}
	This is local at $\lambda \in \Wlam$, so we are free to shrink $\Omega$ to a neighbourhood of $\lambda$ with
	\[
	\sC = \textstyle\bigsqcup_{y \in w^{-1}(\lambda)\cap \sC} \sC_y,
	\]
	with each $\sC_y = \mathrm{Sp}(\bT_{y})$ connected affinoid, and with $w^{-1}(\lambda)\cap \sC_{y} = \{y\}$. Note that $\sC$ itself can be disconnected over this smaller $\Omega$. As
	\disp{
	\bT_{\Omega,\sC} = \oplus_{y\in w^{-1}(\lambda)\cap\sC} \bT_y,
}
	we have
	\begin{align*}
		\hc{t}(S_{K(\tilde\pi)},\sD_\Omega)\ssh&\otimes_{\bT\Uha(K(\tilde\pi))} \bT_{\Omega,\sC}/\m_\lambda\\
		&\cong \bigoplus_{y \in w^{-1}(\lambda)\cap \sC} \hc{t}(S_{K(\tilde\pi)},\sD_\Omega)\ssh\otimes_{\bT\Uha(K(\tilde\pi))} \bT_y/\m_\lambda.
	\end{align*}
	As $y$ is the unique point of $\sC_y$ above $\lambda$, in each summand of the right-hand side, reduction mod $\m_\lambda$ factors through localisation at $\m_y$; and since $\sC_y$ is the connected component through $y$, each 
	\disp{
	\hc{t}(S_{K(\tilde\pi)},\sD_\Omega)\ssh\otimes_{\bT\Uha} \bT_y \subset \hc{t}(S_{K(\tilde\pi)},\sD_\Omega)
}
	is a summand, and 
	\disp{
	[\hc{t}(S_{K(\tilde\pi)},\sD_\Omega)\ssh\otimes_{\bT\Uha} \bT_y]_{\m_{y}} = \hc{t}(S_{K(\tilde\pi)}, \sD_\Omega)_{\m_y}. 
}
	Thus
	\begin{align*}
		\hc{t}(S_{K(\tilde\pi)},\sD_\Omega)\ssh\otimes_{\bT\Uha(K(\tilde\pi))} \bT_y/\m_\lambda &\cong \hc{t}(S_{K(\tilde\pi)},\sD_\Omega)_{\m_y} \otimes_{\Lambda}\Lambda/\m_\lambda\\
		&\cong \hc{t}(S_{K(\tilde\pi)},\sD_\lambda)_{\m_{y}},
	\end{align*}
	where the last isomorphism is Proposition~\ref{prop:surjection}, as each such $y$ has non-$Q$-critical slope (since $\lambda \in \Omega_{\mathrm{ncs}}$). Combining the last two displayed equations gives the Lemma.
\end{proof}

We now complete the proof of Theorem~\ref{thm:shalika family}(c). Recall $\sC$ is the connected component of $\sE\Uha(K(\tilde\pi))$ containing $x_{\tilde\pi}$, and let $\sC_{\mathrm{nc}}$ denote the set of classical cuspidal non-$Q$-critical points $y \in \sC$. Let $\sC_{\mathrm{nc}}^{\mathrm{Sha}}$ be the subset of points $y \in \sC_{\mathrm{nc}}$ such that $\pi_y$ admits a global $(\eta_0|\cdot|^{\sw_y},\psi)$-Shalika model, where $\sw_y$ is the purity weight of $\lambda_y = w(y)$. Theorem~\ref{thm:shalika family}(c) then follows from:

\begin{proposition}\label{prop:zariski dense shalika no new}
	Suppose the hypotheses of Theorem~\ref{thm:shalika family}(c). Up to shrinking $\Omega$, there is an irreducible component $\sI \subset \sC \subset \sE\Uha(K(\tilde\pi))$ such that $\sI$ contains $x_{\tilde\pi}(K(\tilde\pi))$, $\dim(\sI) = \dim(\Omega)$, and $\sI \cap \sC_{\mathrm{nc}}^{\mathrm{Sha}}$ is very Zariski-dense in $\sI$.
\end{proposition}

\begin{proof} 
	Let $\sC^{\sha}$ be the Zariski-closure of $\sC_{\mathrm{nc}}^{\mathrm{Sha}}$ in $\sC$. We claim:
	
	\begin{claim}
		$w(\sC_{\mathrm{nc}}^{\mathrm{Sha}})$ is a (very) Zariski-dense subset of $\Omega$.
	\end{claim}
	The claim implies  that $w(\sC^{\sha}) = \Omega$, that is $\sC^{\sha}$ has full support in $\Omega$. Given this, we conclude that $\sC^{\sha}$ has an irreducible component $\sI$ of dimension $\dim(\Omega)$, as $\sE\Uha(K(\tilde\pi))$ has finitely many irreducible components (see Corollary~\ref{cor:exists max dim}). Then $\sI$ satisfies the conditions we require. Thus the proposition follows from the claim.
	
	\medskip	
	
	\emph{Proof of claim}.	Fix a character $\chi$ of conductor $p^\beta$ and $j \in \mathrm{Crit}(\lambda_\pi)$ such that $L^{(p)}(\pi\otimes\chi, j+\tfrac{1}{2}) \neq 0$ (by hypothesis). Considering $\chi\chi_{\cyc}^j \in \cA(\Galp,\cO_\Omega)$ via the structure map $L \to \cO_\Omega$, define
		\[\mathrm{Ev}_{\chi,j}^\Omega : \htc(S_{K(\tilde\pi)},\sD_\Omega) \longrightarrow \cO_\Omega, \qquad
		\Phi \longmapsto \int_{\Galp} \chi \chi_{\cyc}^j \cdot \mathrm{Ev}_\beta^{\eta_0}(\Phi),
		\]
	for $\mathrm{Ev}_\beta^{\eta_0}$ as in \eqref{eq:final evaluation}. Restricting under \eqref{eq:summand}, $\mathrm{Ev}_{\chi,j}^\Omega$ defines a map $\hc{t}(S_{K(\tilde\pi)}, \sD_\Omega)\ssh\otimes_{\bT\Uha} \bT_{\Omega,\sC} \to \cO_\Omega$, which we can evaluate at the class $\Phi_{\sC}$ from \eqref{eq:phi con}. By construction $\mathrm{sp}_{\lambda_\pi}(\Phi_{\sC}) = \Phi_{\tilde\pi}$, so 
	\[
	\mathrm{sp}_{\lambda_\pi} \circ \mathrm{Ev}_\beta^{\eta_0}(\Phi_{\sC}) = \mathrm{Ev}_\beta^{\eta_0}(\Phi_{\tilde\pi})
	\]
	by Proposition~\ref{prop:evaluations in families}. Thus
			\begin{align*}
			\big[\mathrm{Ev}_{\chi,j}^\Omega(\Phi_{\sC})\big](\lambda_\pi) = \int_{\Galp} \chi \chi_{\cyc}^j \cdot \mathrm{sp}_\lambda \left(\mathrm{Ev}_\beta^{\eta_0}(\Phi_{\sC})\right)= \int_{\Galp} \chi \chi_{\cyc}^j \cdot \mathrm{Ev}_\beta^{\eta_0}(\Phi_{\tilde\pi}) \neq 0,
		\end{align*}
	where non-vanishing follows as in the proof of Corollary~\ref{cor:ann = zero}.  As the non-vanishing locus is open, up to shrinking $\Omega$ we may assume that $\mathrm{Ev}_{\chi,j}^\Omega(\Phi_{\sC}^\epsilon) \in \cO_\Omega$ is everywhere non-vanishing.
	
	Now let $\lambda$ be any weight in $\Omega_{\mathrm{ncs}}$, the set from \S\ref{sec:zariski dense classical}. Since $\sE\Uha(K(\tilde\pi))$ is finite over $\Omega$, the preimage $w^{-1}(\lambda) \cap \sC$ is a finite set. From Lemma~\ref{lem:reduction direct sum}, for $\lambda \in \Omega_{\mathrm{ncs}}$ we may write
	\begin{equation}\label{eq:spec sum}
		\mathrm{sp}_\lambda(\Phi_{\sC}) = \oplus_y \Phi_y,
	\end{equation}
	with each $\Phi_y \in \hc{t}(S_{K(\tilde\pi)},\sD_\lambda)_{\m_y}$. Recalling $r_y$ from \eqref{eqn:specialisation}, for each such $y$, we have 
	\begin{align*}
		r_{\lambda}(\Phi_y) = \oplus_\epsilon r_{\lambda}(\Phi_y^\epsilon) \in  \bigoplus_{\epsilon} &\htc(S_{K(\tilde\pi)},\sV_{\lambda}^\vee)_{\m_y}^\epsilon = \htc(S_{K(\tilde\pi)},\sV_{\lambda}^\vee)_{\m_y},
	\end{align*}
	projecting into the decomposition over $\epsilon$ of \S\ref{sec:decomp at infinity}. Now by combining Proposition~\ref{prop:evaluations in families} and Lemma~\ref{lem:compatibility}, we have a commutative diagram
	\[
	\xymatrix@C=20mm{
		\htc(S_{K(\tilde\pi)},\sD_\Omega)^\epsilon \ar[r]^-{\mathrm{Ev}_{\chi,j}^\Omega} \ar[d]_{r_{\lambda} \circ \mathrm{sp}_{\lambda}} &
		\cO_\Omega\ar[d]^{\mathrm{sp}_\lambda}\\
		\htc(S_{K(\tilde\pi)},\sV_{\lambda}^\vee)^\epsilon \ar[r]^-{\cE_{\chi}^{j,\eta_0}} &
		L.
	}
	\]
	Combining this with \eqref{eq:spec sum}, and the fact that $\mathrm{Ev}_{\chi,j}^\Omega(\Phi_{\sC})$ is everywhere non-vanishing, we deduce
	\[
	[\mathrm{Ev}_{\chi,j}^\Omega(\Phi_{\sC})](\lambda)  = \sum_{y \in w^{-1}(\lambda)\cap \sC} \sum_{\epsilon} \cE_{\chi}^{j,\eta_0}\left(r_{\lambda}(\Phi_y^\epsilon) \right) \neq 0.
	\]
	Hence at least one of the terms in the sum is non-zero. By Proposition~\ref{prop:shalika non-vanishing}, we deduce that if this term corresponds to the point $y$, then $\pi_y$ admits an $(\eta_0|\cdot|^{\sw},\psi)$-Shalika model (where $\sw$ is the purity weight of $\lambda$). Thus above each $\lambda \in \Omega_{\mathrm{ncs}}$, there exists at least one classical point $y \in \sC$ corresponding to an automorphic representation $\pi_y$ admitting a Shalika model. In particular, we deduce that $\Omega_{\mathrm{ncs}} \subset w(\sC_{\mathrm{nc}}^{\mathrm{Sha}}).$ Thus $w(\sC_{\mathrm{nc}}^{\mathrm{Sha}})$ is very Zariski-dense in $\Omega$, as required.
\end{proof}

\subsection{Proof of Thm.\ \ref{thm:shalika family}(d--e): \'Etaleness of Shalika families}\label{sec:etaleness S}
At this point we perform a delicate switch in level to prove Theorem \ref{thm:shalika family}(d--e). Fix $\epsilon \in \{\pm 1\}^\Sigma$. 
A key fact about the  level $K_1(\tilde\pi)$  from \eqref{eq:K_1} is the following: 
\begin{proposition}\label{prop:mult one S}
	The vector space $\hc{t}(S_{K_1(\tilde\pi)}, \sV_{\lambda_\pi}^\vee)^\epsilon\locpi$ is 1-dimensional.
\end{proposition}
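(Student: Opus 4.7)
The plan is to reduce the statement to the classical automorphic description via Proposition~\ref{prop:non-canonical} and then use the Jacquet--Piatetski-Shapiro--Shalika essential vector together with the regularity of the $Q$-refinement $\tilde\pi_\pri$ at each $\pri|p$.

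First, I would apply Proposition~\ref{prop:non-canonical} at level $K=K_1(\tilde\pi)$, obtaining a $\cH$-equivariant isomorphism
\[
\pi_f^{K_1(\tilde\pi)} \;\isorightarrow\; \hc{t}(S_{K_1(\tilde\pi)},\sV_{\lambda_\pi}^\vee(\overline{\Q}_p))^{\epsilon}_{\m_\pi}.
\]
The ideal $\m_{\tilde\pi}$ contains $\m_\pi$ and differs from it only by the requirement that each $U_\pri^\circ$ act with eigenvalue $\alpha_\pri^\circ$. Since $U_\pri^\circ = \lambda_\pi^\vee(t_\pri)U_\pri$ is a non-zero scalar multiple of $U_\pri$ (Remark~\ref{rem:star vs dot}), the corresponding $U_\pri^\circ$-generalised eigenspace coincides with the $U_\pri$-generalised eigenspace at eigenvalue $\alpha_\pri$. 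So after further localising at $\m_{\tilde\pi}$, we obtain a $\cH$-equivariant isomorphism
\[
\pi_f^{K_1(\tilde\pi)}\Big\lsem U_{\pri}-\alpha_{\pri}\,:\,\pri\mid p\Big\rsem \;\isorightarrow\; \hc{t}(S_{K_1(\tilde\pi)},\sV_{\lambda_\pi}^\vee)^{\epsilon}\locpi.
\]

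Next, I would use the tensor product decomposition $\pi_f = \bigotimes_v' \pi_v$ to compute the left-hand side. Away from $p$, the level $K_{1,v}(m(\pi_v))$ is chosen so that, by Jacquet--Piatetski-Shapiro--Shalika (equation~\eqref{eq:essential vector}),
\[
\dim_{\C}\pi_v^{K_{1,v}(m(\pi_v))}=1.
\]
At each $\pri\mid p$, condition (C2) of Conditions~\ref{cond:running assumptions}, together with the definition of a regular $Q$-refinement (\S\ref{ss:the U_p-refined line}), gives
\[
\dim_{\C}\pi_\pri^{J_\pri}\lsem U_{\pri}-\alpha_{\pri}\rsem=1.
\]
Multiplying these over all places yields
\[
\dim_{\C}\pi_f^{K_1(\tilde\pi)}\lsem U_{\pri}-\alpha_{\pri}:\pri\mid p\rsem =1,
\]
and transporting through the isomorphism above gives the proposition.

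I do not anticipate any significant obstacle: the argument is essentially a bookkeeping exercise combining Proposition~\ref{prop:non-canonical}, the essential vector theorem for $\mathrm{GL}_{2n}$, and the regularity hypothesis on the $p$-refinement. The only subtle point is ensuring the Hecke-equivariance on both sides of Proposition~\ref{prop:non-canonical} is compatible with the normalisation $U_\pri^\circ = \lambda_\pi^\vee(t_\pri)U_\pri$; this is already noted in Remark~\ref{rem:star vs dot} (namely, the cohomological class $\phi_{\tilde\pi}^\epsilon$ is a $U_\pri^\circ$-eigenclass with eigenvalue $\alpha_\pri^\circ$, which corresponds exactly to the $U_\pri$-eigenvalue $\alpha_\pri$ on the automorphic side), so no additional input is needed.
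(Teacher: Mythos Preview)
Your proposal is correct and follows essentially the same approach as the paper: apply Proposition~\ref{prop:non-canonical} to identify the cohomology with $\pi_f^{K_1(\tilde\pi)}$ (localised appropriately), then use the essential vector theorem \eqref{eq:essential vector} at $v\nmid p$ and the regularity condition \eqref{eq:Up refined line 2} at $\pri\mid p$ to see this is a line. The paper's proof adds one sentence you omit: since the convention in \S\ref{sec:shalika families no essential} is that coefficients are in $L$ while Proposition~\ref{prop:non-canonical} is stated over $\overline{\Q}_p$, one must descend to $L$ via the rationality diagram in \S\ref{sec:periods}; this is routine and does not affect the argument.
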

\begin{proof}
	The $\m_{\tilde\pi}$-torsion in $\pi_f^{K_1(\tilde\pi)}$ is a line; locally, this follows for $v\nmid p$ by \eqref{eq:new vector}, and for $\pri|p$ by \eqref{eq:Up refined line 2}. By Proposition~\ref{prop:non-canonical}, 
	\disp{
	\dim_{\overline{\Q}_p}\ \hc{t}(S_{K_1(\tilde\pi)}, \sV_{\lambda_\pi}^\vee(\overline{\Q}_p))^{\epsilon}\locpi = 1.
}
	We descend to $L$ via \S\ref{sec:periods}.
\end{proof}

Taking $\epsilon$-parts of Proposition~\ref{prop:surjection} gives isomorphisms
\begin{align}\label{eq:double isomorphism}
	\hc{t}(S_{K_1(\tilde\pi)},\sD_{\Omega})\locpi^\epsilon \otimes_\Lambda \Lambda/\m_{\lambda_\pi} &\cong \hc{t}(S_{K_1(\tilde\pi)}, \sD_{\lambda_\pi})\locpi^\epsilon\\ &\cong \hc{t}(S_{K_1(\tilde\pi)},\sV_{\lambda_\pi}^\vee)\locpi^\epsilon\notag
\end{align}
of 1-dimensional vector spaces (where the second isomorphism is non-$Q$-criticality). In particular, there exists a point $x_{\tilde\pi}^{\epsilon}(K_1(\tilde\pi))$ in $\sE_{\Omega,h}^{\epsilon}(K_1(\tilde\pi))$ corresponding to $\tilde\pi$. 
Let 
\[
\bT_{\Omega,\tilde\pi}^{\epsilon}(K_1(\tilde\pi)) = \bT_{\Omega,h}^\epsilon(K_1(\tilde\pi))_{\m_{\tilde\pi}},
\] which acts on $\htc(S_{K_1(\tilde\pi)},\sD_\Omega)\locpi\sshe$ (see Definition~\ref{def:gen eigenspace}).

\begin{proposition}\label{prop:cyclic S}
	There exists a proper ideal $I_{\tilde\pi}^{\epsilon} \subset \Lambda$ such that 
	\[
	\bT_{\Omega,\tilde\pi}^{\epsilon}(K_1(\tilde\pi)) \cong \Lambda/I_{\tilde\pi}^{\epsilon}.
	\]			
\end{proposition}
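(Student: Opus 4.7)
The plan is to deduce this from the commutative-algebraic fact that the endomorphism ring of a cyclic module over a local ring $\Lambda$ is itself a cyclic quotient of $\Lambda$, with the multiplicity-one statement of Proposition~\ref{prop:mult one S} as the essential input.

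First, I will set $M := \hc{t}(S_{K_1(\tilde\pi)},\sD_\Omega)\locpi^\epsilon$, which is finitely generated over the local ring $\Lambda$ (since $\hc{t}(S_{K_1(\tilde\pi)},\sD_\Omega)^{\leqslant h,\epsilon}$ is finitely generated over $\cO_\Omega$ by \S\ref{sec:slope-decomp}). Proposition~\ref{prop:mult one S} combined with \eqref{eq:double isomorphism} shows that $M \otimes_\Lambda \Lambda/\m_{\lambda_\pi}$ is one-dimensional over $L$, so Nakayama's lemma yields an element $m_0 \in M$ with $M = \Lambda \cdot m_0$. Hence $M \cong \Lambda/J$, where $J := \mathrm{Ann}_\Lambda(m_0)$.

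Next, I will exploit the fact that $\bT_{\Omega,h}^\epsilon(K_1(\tilde\pi))$ acts faithfully on $\hc{t}(S_{K_1(\tilde\pi)},\sD_\Omega)^{\leqslant h,\epsilon}$ by Definition~\ref{def:local piece S}; localising at $\m_{\tilde\pi}$ preserves faithfulness, so the action of $\bT_{\Omega,\tilde\pi}^\epsilon(K_1(\tilde\pi))$ on $M$ is faithful. Cyclicity of $M$ together with $\Lambda$-linearity then forces every $T \in \bT_{\Omega,\tilde\pi}^\epsilon(K_1(\tilde\pi))$ to satisfy $T m_0 = \lambda_T m_0$ for some $\lambda_T \in \Lambda/J$, and $T$ is determined by $\lambda_T$. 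This gives an injective ring homomorphism $\Psi : \bT_{\Omega,\tilde\pi}^\epsilon(K_1(\tilde\pi)) \hookrightarrow \Lambda/J$. The composition of the structure map $\Lambda \to \bT_{\Omega,\tilde\pi}^\epsilon(K_1(\tilde\pi))$ with $\Psi$ is the quotient $\Lambda \twoheadrightarrow \Lambda/J$, forcing surjectivity as well. Setting $I_{\tilde\pi}^\epsilon := J$ yields the desired isomorphism; properness is immediate since $\bT_{\Omega,\tilde\pi}^\epsilon(K_1(\tilde\pi))$ is non-zero, as witnessed by the point $x_{\tilde\pi}^\epsilon(K_1(\tilde\pi))$ constructed from \eqref{eq:double isomorphism}.

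No substantial obstacle is expected, as the argument is essentially commutative algebra. The only point requiring minor care is compatibility of the two localisation operations -- at $\m_{\tilde\pi}$ over $\cH \otimes \cO_\Omega$ on the Hecke side, and at $\m_{\lambda_\pi}$ over $\cO_\Omega$ on the $\Lambda$-module side -- but this is automatic since $\bT_{\Omega,h}^\epsilon(K_1(\tilde\pi))$ is finite over $\cO_\Omega$, hence semi-local, and decomposes as a product of local factors after base change to $\Lambda$.
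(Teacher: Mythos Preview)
Your proof is correct and follows essentially the same approach as the paper: use Nakayama together with the multiplicity-one statement to show the cohomology module is cyclic over $\Lambda$, then observe that the localised Hecke algebra, being the image of $\cH\otimes\cO_\Omega$ in $\End_\Lambda(\Lambda/J)\cong\Lambda/J$ and containing $1$, must equal $\Lambda/J$. The paper's version is slightly more terse (it just says ``this image contains $1$, so $\T_{\Omega,\tilde\pi}^{\epsilon}(K_1(\tilde\pi))$ must be everything'') but the content is identical, and your explicit remark on the compatibility of the two localisations is a reasonable elaboration of a point the paper leaves implicit.
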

\begin{proof}
	Since $\htc(S_{K_1(\tilde\pi)},\sD_{\Omega})\ssh$ is a finitely generated $\cO_\Omega$-module, $\htc(S_{K_1(\tilde\pi)},\sD_{\Omega})^\epsilon\locpi$ is finitely generated over $\Lambda$; then Nakayama's lemma applied to \eqref{eq:double isomorphism} implies  that $\htc(S_{K_1(\tilde\pi)},\sD_{\Omega})^\epsilon\locpi$ is non-zero and generated by a single element over $\Lambda$. In particular, it is isomorphic to $\Lambda/I_{\tilde\pi}^{\epsilon}$ for some proper ideal $I_{\tilde\pi}^{\epsilon} \subset \Lambda$. Now, we know that $\T_{\Omega,\tilde\pi}^{\epsilon}(K_1(\tilde\pi))$ is the image of the Hecke algebra in 
	\[
	\End_{\Lambda}\left(\htc(S_{K_1(\tilde\pi)},\sD_\Omega)^\epsilon\locpi\right) \cong \End_{\Lambda}(\Lambda/I_{\tilde\pi}^{\epsilon}) \cong \Lambda/I_{\tilde\pi}^{\epsilon}.
	\]
	But this image contains 1, so $\T_{\Omega,\tilde\pi}^{\epsilon}(K_1(\tilde\pi))$ must be everything, giving the result. 
\end{proof}

To prove (d) and (e), we will combine Proposition~\ref{prop:cyclic S} with Corollary~\ref{cor:T is faithful}(i) (which implies $\bT_{\Omega,\tilde\pi}(K(\tilde\pi))$ is $\Lambda$-torsion free). To switch between levels $K(\tilde\pi)$ and $K_1(\tilde\pi)$, recall the connected component $\sC(K(\tilde\pi))$ from \S\ref{sec:maximal dimension}, and: 
\begin{itemize}
	\item Let $\sC_{\mathrm{nc}}^{\mathrm{lgc}}(K(\tilde\pi))$ be the set of classical cuspidal non-$Q$-critical points $y \in \sC(K(\tilde\pi))$ such that Local-Global Compatibility holds for $\pi_y$ at all $v$. Note (as explained in \S\ref{sec:hypotheses}) that $\sC^{\mathrm{Sha}}_{\mathrm{nc}}(K(\tilde\pi)) \subset \sC_{\mathrm{nc}}^{\mathrm{lgc}}(K(\tilde\pi))$. 
	\item Let $\sC^{\mathrm{lgc}}(K(\tilde\pi))$ be the Zariski-closure of $\sC_{\mathrm{nc}}^{\mathrm{lgc}}(K(\tilde\pi))$, equipped with the induced reduced rigid analytic structure. This contains (the nilreductions of) all Shalika families through $x_{\tilde\pi}(K(\tilde\pi)),$ so by Proposition~\ref{prop:zariski dense shalika no new}, it contains an irreducible component of dimension $\dim(\Omega)$. 
	
\end{itemize}
In the next subsection, we prove:

\begin{proposition}\label{prop:level lowering}
	Let $\tilde\pi$ satisfy the hypotheses of Theorem~\ref{thm:shalika family}(a--c), and suppose $\rho_{\pi}$ is irreducible. Then, up to shrinking $\Omega$, for all $y \in \sC_{\mathrm{nc}}^{\mathrm{lgc}}(K(\tilde\pi))$ and for all $v \in S$, the Whittaker conductors of $\pi_v$ and $\pi_{y,v}$ are equal. In particular, $\pi_{y,f}^{K_1(\tilde\pi)} \neq 0$.
\end{proposition}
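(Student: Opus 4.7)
The plan is to convert this statement at each $v \in S$ into one about Galois representations via Local--Global Compatibility, and then to deduce the conclusion from rigidity of ramification in $p$-adic families. First, since $\rho_\pi$ is absolutely irreducible and irreducibility is an open condition on continuous families, after shrinking $\Omega$ we may assume $\rho_{\pi_y}$ is absolutely irreducible for all $y \in \sC^{\mathrm{lgc}}_{\mathrm{nc}}(K(\tilde\pi))$. As this set is Zariski-dense in $\sC^{\mathrm{lgc}}(K(\tilde\pi))$ and the traces $\mathrm{Tr}(\rho_{\pi_y})$ are compatible, pseudocharacter theory (Chenevier--Nyssen) yields a genuine family
\[
\rho_\sC : \mathrm{G}_F \longrightarrow \mathrm{GL}_{2n}\bigl(\cO(\sC^{\mathrm{lgc}}(K(\tilde\pi))^{\mathrm{red}})\bigr)
\]
specialising to $\rho_{\pi_y}$ at every such $y$. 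By Local--Global Compatibility for essentially self-dual RACARs, for $v \in S$ and $y \in \sC^{\mathrm{lgc}}_{\mathrm{nc}}$ the Whittaker conductor $m(\pi_{y,v})$ equals the Artin conductor $a(\rho_{\pi_y}|_{\mathrm{G}_{F_v}})$, so it suffices to show this Artin conductor is locally constant at $x_{\tilde\pi}(K(\tilde\pi))$.

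Next, since $v \nmid p$ the image $\rho_\sC(I_{F_v})$ is a compact $p$-adic group factoring through a finite quotient of $I_{F_v}$; after a further shrinking of $\Omega$, the isomorphism class of $\rho_y|_{I_{F_v}}$ is constant and equal to $\rho_\pi|_{I_{F_v}}$ throughout the family. In particular, the Swan conductor and $\dim\rho_y^{I_{F_v}}$ are constant, so
\[
a(\rho_y|_{\mathrm{G}_{F_v}}) = C + \mathrm{rank}\bigl(N_y|_{\rho_y^{I_{F_v}}}\bigr)
\]
for some constant $C$, where $N_y$ denotes the monodromy operator. As $N_y$ is a nilpotent endomorphism of a fixed underlying vector space, its rank is lower semicontinuous in $y$: the locus of maximal rank is Zariski-open in $\sC^{\mathrm{lgc}}(K(\tilde\pi))$.

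The proof is then completed by temperedness. Since $\pi$ is an essentially self-dual cuspidal RACAR, the Ramanujan conjecture at finite places is known (via transfer to unitary groups and the work of Shin, Caraiani, et al.), so $\pi_v$ is tempered for $v \nmid p$, which by the Langlands classification forces $N_\pi$ to have \emph{maximal} rank among Weil--Deligne representations sharing the same inertia action and Frobenius eigenvalues. Hence $x_{\tilde\pi}(K(\tilde\pi))$ lies in the open maximal-rank locus, and a final shrinking of $\Omega$ gives $\mathrm{rank}(N_y|_{\rho_y^{I_{F_v}}}) = \mathrm{rank}(N_\pi|_{\rho_\pi^{I_{F_v}}})$ for all $y \in \sC^{\mathrm{lgc}}_{\mathrm{nc}}$ near $x_{\tilde\pi}(K(\tilde\pi))$, giving $m(\pi_{y,v}) = m(\pi_v)$. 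Doing this for each $v \in S$ (a finite set) completes the equality. The assertion $\pi_{y,f}^{K_1(\tilde\pi)} \neq 0$ then combines this with $\pi_{y,\pri}^{J_\pri} \neq 0$ at $\pri | p$ (automatic from $y \in \sC$, as $K(\tilde\pi)_\pri = J_\pri$) and sphericality at $v \notin S\cup\{\pri|p\}$. The main obstacle is the final step: establishing that temperedness of $\pi_v$ forces maximal monodromy rank of $N_\pi$ and that this maximality is compatible with the rank-semicontinuity argument in the family, requires care with the Langlands classification of tempered representations via essentially square-integrable representations of Levi subgroups.
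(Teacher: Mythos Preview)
Your approach is essentially the same as the paper's: reduce to constancy of Artin conductors via Local--Global Compatibility (the paper's Proposition~\ref{prop:conductors equal}), build a family of Galois representations from the pseudocharacter using irreducibility of $\rho_\pi$ (the paper's Lemma~\ref{lem:galois family}), then pass to the associated family of Weil--Deligne representations at $v$ and argue that both the inertia action and the monodromy operator are locally constant near $x_{\tilde\pi}$ (the paper's Proposition~\ref{prop:artin constant}).

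Two points of comparison. First, you assume all $\rho_{\pi_y}$ are absolutely irreducible after shrinking, but this is unnecessary and your phrasing is slightly circular (it presupposes a continuous family before constructing one). The paper uses irreducibility only at $\pi$ itself, which via \cite[Lem.~4.3.7]{BC09} already suffices to lift the pseudocharacter to a genuine representation over $\cO_{\sC^{\mathrm{lgc}}}$. Second, and more substantively, your formulation of the key monodromy step is not quite right. You say temperedness forces $N_\pi$ to have maximal rank ``among Weil--Deligne representations sharing the same inertia action and Frobenius eigenvalues,'' but the Frobenius eigenvalues \emph{vary} across the family, so this comparison class does not obviously control the generic fibre. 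What is actually needed is that purity at $x_{\tilde\pi}$ forces the nilpotent orbit of $N$ to be locally constant in the family; the paper obtains purity from temperedness (deduced for essentially self-dual RACARs via base-change to a CM extension and \cite{Car12}), and then invokes \cite[Prop.~7.8.19]{BC09} together with \cite[Thm.~3.1(2)]{Sah17}, which give precisely this constancy statement. Your instinct that this is ``the main obstacle requiring care'' is correct --- the content of that step is exactly what these cited results provide, and is not a formal consequence of rank semicontinuity plus temperedness alone.
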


\begin{corollary}\label{cor:langlands functoriality}
	For any $\epsilon \in \{\pm1\}^\Sigma$ there exists a closed immersion 
	\[
	\iota : \sC^{\mathrm{lgc}}(K(\tilde\pi)) \hookrightarrow \sE^{\epsilon}_{\Omega, h}(K_1(\tilde\pi))
	\]
	sending $x_{\tilde\pi}(K(\tilde\pi))$ to $x_{\tilde\pi}^{\epsilon}(K_1(\tilde\pi))$.
\end{corollary}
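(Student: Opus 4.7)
The plan is to construct $\iota$ via the $p$-adic Langlands interpolation theorem of Johansson--Newton \cite[Thm.~3.2.1]{JoNew} (invoked already in Definition~\ref{def:local piece S} for the $\epsilon$-decomposition), applied to the Hecke eigensystem parametrised by $\sC^{\mathrm{lgc}}(K(\tilde\pi))$. The crucial input at classical points is the level-lowering result of Proposition~\ref{prop:level lowering}.

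By construction $\sC^{\mathrm{lgc}}(K(\tilde\pi))$ is a closed reduced subspace of $\sE_{\Omega,h}(K(\tilde\pi))$, so it inherits a surjective $\cO_\Omega$-algebra map
\[
\Psi \,:\, \cH \otimes \cO_\Omega \,\twoheadrightarrow\, \cO\bigl(\sC^{\mathrm{lgc}}(K(\tilde\pi))\bigr)
\]
compatible with the weight morphism $w$ to $\Omega$; surjectivity holds because $\cO(\sE_{\Omega,h}(K(\tilde\pi)))$ is, by Definition~\ref{def:local piece S}, generated over $\cO_\Omega$ by the image of $\cH$. We will show that the datum $(\sC^{\mathrm{lgc}}(K(\tilde\pi)), w, \Psi)$ factors through a closed immersion into $\sE_{\Omega,h}^\epsilon(K_1(\tilde\pi))$.

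The interpolation hypothesis must be checked on the Zariski-dense set $\sC^{\mathrm{lgc}}_{\mathrm{nc}}(K(\tilde\pi))$. For $y$ in this set, Proposition~\ref{prop:level lowering} gives $\pi_{y,f}^{K_1(\tilde\pi)} \neq 0$, so Proposition~\ref{prop:non-canonical} applied at level $K_1(\tilde\pi)$ places the Hecke character $\psi_y$ in $\hc{t}(S_{K_1(\tilde\pi)}, \sV_{\lambda_y}^\vee)_{\m_y}^\epsilon$ for every sign character $\epsilon$. Since $y$ has non-$Q$-critical slope, Theorem~\ref{thm:control} lifts this to a non-zero class in $\hc{t}(S_{K_1(\tilde\pi)}, \sD_{\lambda_y})^{\leqslant h, \epsilon}_{\m_y}$, furnishing a classical point $y^\epsilon \in \sE^\epsilon_{\Omega,h}(K_1(\tilde\pi))$ above $\lambda_y$ cut out by $\psi_y$. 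Invoking \cite[Thm.~3.2.1]{JoNew} will then produce a unique morphism $\iota : \sC^{\mathrm{lgc}}(K(\tilde\pi)) \to \sE^\epsilon_{\Omega,h}(K_1(\tilde\pi))$ over $\Omega$ interpolating $y \mapsto y^\epsilon$; the image of $x_{\tilde\pi}(K(\tilde\pi))$ under $\iota$ is then forced to be $x_{\tilde\pi}^\epsilon(K_1(\tilde\pi))$, since both are characterised by $\psi_{\tilde\pi}$.

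Closedness of $\iota$ will follow from the commutative diagram in which $\cH \otimes \cO_\Omega$ surjects onto both $\T^\epsilon_{\Omega,h}(K_1(\tilde\pi))$ and $\cO(\sC^{\mathrm{lgc}}(K(\tilde\pi)))$, forcing the pullback $\iota^*$ to be surjective. The main obstacle is not any single step but the careful adaptation of \cite[Thm.~3.2.1]{JoNew} to the parahoric overconvergent setting (slope decompositions with respect to $U_p^\circ$, saturation, and compatibility of interpolation with the change of tame level); this adaptation should be routine since both eigenvarieties are built from the same abstract Hecke algebra $\cH$, the same weight affinoid $\Omega$, and the same slope bound $h$.
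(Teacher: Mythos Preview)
Your approach is correct and matches the paper's: apply \cite[Thm.~3.2.1]{JoNew} with the identity maps on the common Hecke algebra $\cH$ and weight affinoid $\Omega$, verifying the transfer on a Zariski-dense set of classical points via Proposition~\ref{prop:level lowering}, and deducing closedness from surjectivity of $\cH\otimes\cO_\Omega$ onto both sides.

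Two small imprecisions to tighten. First, points of $\sC^{\mathrm{lgc}}_{\mathrm{nc}}$ are by definition only non-$Q$-critical, not necessarily of non-$Q$-critical \emph{slope}, so you cannot invoke Theorem~\ref{thm:control} for arbitrary $y$ in this set; the paper restricts to the (still Zariski-dense) subset of non-$Q$-critical slope points, where strong non-$Q$-criticality at the new level $K_1(\tilde\pi)$ is guaranteed. Second, a non-zero eigenclass in $\hc{t}(S_{K_1(\tilde\pi)},\sD_{\lambda_y})^{\leqslant h,\epsilon}_{\m_y}$ does not by itself furnish a point of $\sE^\epsilon_{\Omega,h}(K_1(\tilde\pi))$, which is defined via $\sD_\Omega$-cohomology; you need Proposition~\ref{prop:surjection} (equivalently Remark~\ref{rem:non-Q-critical implies point}, as encoded in \eqref{eq:double isomorphism}) to lift from $\sD_{\lambda_y}$ to $\sD_\Omega$ and conclude that $\m_y$ is a maximal ideal of $\bT^\epsilon_{\Omega,h}(K_1(\tilde\pi))$.
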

\begin{proof}
	This is a straightforward application of \cite[Thm.~3.2.1]{JoNew}, with the same Hecke algebra $\cH$ and weight space $\Omega$ (by Remark~\ref{rem:weights level}) on both sides, with the identity maps between them. To apply this, it suffices to prove that we have this transfer on a Zariski-dense set of points. The subset of $y \in \sC_{\mathrm{nc}}^{\mathrm{lgc}}(K(\tilde\pi))$ that have non-$Q$-critical slope is Zariski-dense in $\sC^{\mathrm{lgc}}(K(\tilde\pi))$. For such $y$, by Proposition \ref{prop:level lowering} and \eqref{eq:cohomology non-canonical}, we know $\hc{t}(S_{K_1(\tilde\pi)},\sV_\lambda^\vee)^\epsilon_{\m_y} \neq 0$; then by \eqref{eq:double isomorphism} (cf.\ Proposition~\ref{prop:surjection}, Remark \ref{rem:non-Q-critical implies point}) there is a point $y(K_1(\tilde\pi)) \in \sE^{\epsilon}_{\Omega,h}(K_1(\tilde\pi))$ attached to the same Hecke eigensystem as $y$. The transfer is then $y \mapsto y(K_1(\tilde\pi))$ on the (Zariski-dense) subset of non-$Q$-critical slope $y$.
\end{proof}

\begin{corollary}\label{cor:etale bottom level}		Let $\tilde\pi$ satisfy the hypotheses of Theorem~\ref{thm:shalika family}(a--d), and let $\epsilon \in \{\pm1\}^\Sigma$. Then
	\begin{itemize}\setlength{\itemsep}{0pt}
		\item[(i)] 	The weight map $\sE_{\Omega,h}^\epsilon(K_1(\tilde\pi)) \to \Omega$ is \'etale at $x_{\tilde\pi}(K_1(\tilde\pi))$.
		\item[(ii)] The natural map $\sE^\epsilon_{\Omega,h}(K_1(\tilde\pi)) \hookrightarrow \sE_{\Omega, h}(K_1(\tilde\pi))$ is locally an isomorphism at $x_{\tilde\pi}(K_1(\tilde\pi))$.
		\item[(iii)] The weight map $\sE_{\Omega,h}(K_1(\tilde\pi)) \to \Omega$ is \'etale at $x_{\tilde\pi}(K_1(\tilde\pi))$.
	\end{itemize}
\end{corollary}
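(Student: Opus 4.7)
The plan is to deduce all three parts from Proposition~\ref{prop:cyclic S} combined with the closed immersion $\iota$ of Corollary~\ref{cor:langlands functoriality} and the $\dim(\Omega)$-dimensional Shalika family $\sI$ through $x_{\tilde\pi}(K(\tilde\pi))$ produced by Proposition~\ref{prop:zariski dense shalika no essential}.

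For (i), the closed immersion $\iota : \sC^{\mathrm{lgc}}(K(\tilde\pi)) \hookrightarrow \sE^\epsilon_{\Omega,h}(K_1(\tilde\pi))$ induces a surjection of local Noetherian rings $\bT^\epsilon_{\Omega,\tilde\pi}(K_1(\tilde\pi)) \twoheadrightarrow \cO_{\sC^{\mathrm{lgc}},x_{\tilde\pi}}$. By Proposition~\ref{prop:cyclic S} the source is $\Lambda/I^\epsilon_{\tilde\pi}$, and since $\sI \subset \sC^{\mathrm{lgc}}(K(\tilde\pi))$, the target has Krull dimension at least $\dim(\Omega)$. As $\Omega \subset \sW^Q_{\lambda_\pi}$ is smooth, $\Lambda$ is a regular local ring, hence a domain, so every nonzero proper ideal $I \subset \Lambda$ satisfies $\dim(\Lambda/I) < \dim(\Lambda)$. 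The dimension inequality thus forces $I^\epsilon_{\tilde\pi} = 0$, and hence $\bT^\epsilon_{\Omega,\tilde\pi}(K_1(\tilde\pi)) \cong \Lambda$ via the weight map, giving \'etaleness at $x_{\tilde\pi}(K_1(\tilde\pi))$.

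For (iii), after shrinking $\Omega$ I use (i) to identify $\bT^\epsilon_{\Omega,h}(K_1(\tilde\pi)) \cong \cO_\Omega$ for every $\epsilon \in \{\pm 1\}^\Sigma$. Since the action of $\{\pm 1\}^\Sigma$ at infinity commutes with the Hecke action, the $\epsilon$-decomposition of cohomology is Hecke-stable, and we obtain an injection $\bT_{\Omega,h}(K_1(\tilde\pi)) \hookrightarrow \prod_\epsilon \bT^\epsilon_{\Omega,h}(K_1(\tilde\pi)) = \prod_\epsilon \cO_\Omega$ whose image contains the diagonal $\Delta(\cO_\Omega)$ coming from the weight map. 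For each $T \in \cH$ write $\psi^\epsilon(T) \in \cO_\Omega$ for its image in the $\epsilon$-factor. The Shalika family $\sI$ embeds into $\sE^\epsilon$ via $\iota$ and dominates $\Omega$ there, so its Shalika points form a Zariski-dense subset of $\sE^\epsilon \cong \Omega$. At each such Shalika $y$, Proposition~\ref{prop:level lowering} gives $\dim_L \pi_{y,f}^{K_1(\tilde\pi)} = 1$; the proof of Proposition~\ref{prop:mult one S} then applies to $\pi_y$ to show that $\pi_y$ contributes non-trivially to $\hc{t}(S_{K_1(\tilde\pi)}, \sV_{w(y)}^\vee)^{\epsilon'}_{\m_y}$ for every $\epsilon'$. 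Hence $\psi^\epsilon(T)(y) = \psi^{\epsilon'}(T)(y) = \psi_y(T)$ for all $\epsilon, \epsilon'$, and by Zariski-density $\psi^\epsilon(T) = \psi^{\epsilon'}(T)$ in $\cO_\Omega$. Thus the image of $\bT_{\Omega,h}(K_1(\tilde\pi))$ in $\prod_\epsilon \cO_\Omega$ lies on $\Delta(\cO_\Omega)$, forcing $\bT_{\Omega,h}(K_1(\tilde\pi)) = \Delta(\cO_\Omega) \cong \cO_\Omega$ and \'etaleness for the full eigenvariety.

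Part (ii) is then immediate: the closed immersion $\sE^\epsilon(K_1(\tilde\pi)) \hookrightarrow \sE_{\Omega,h}(K_1(\tilde\pi))$ induces a $\Lambda$-algebra surjection $\bT_{\Omega,\tilde\pi}(K_1(\tilde\pi)) \twoheadrightarrow \bT^\epsilon_{\Omega,\tilde\pi}(K_1(\tilde\pi))$ between two copies of $\Lambda$ (by (iii) and (i)), which must be the identity. The main technical obstacle is the Zariski-density step in (iii): one must crucially invoke Proposition~\ref{prop:level lowering} to preserve the Whittaker conductor at nearby Shalika points, guaranteeing multiplicity one and thus the cross-$\epsilon$ compatibility of the eigensystems needed to collapse $\prod_\epsilon \cO_\Omega$ onto its diagonal.
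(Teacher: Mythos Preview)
Your argument for (i) is essentially the paper's own: both use Proposition~\ref{prop:cyclic S} to write the local Hecke algebra as $\Lambda/I^\epsilon_{\tilde\pi}$, then feed in the $\dim(\Omega)$-dimensional component coming from $\sI \subset \sC^{\mathrm{lgc}}(K(\tilde\pi))$ via the closed immersion $\iota$ of Corollary~\ref{cor:langlands functoriality} to force $I^\epsilon_{\tilde\pi}=0$. The paper phrases this as a contradiction on component dimensions, you as a Krull-dimension inequality in the regular local ring $\Lambda$; these are equivalent.

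For (ii)/(iii) you take a genuinely different, and somewhat more elementary, route. The paper proves (ii) first by invoking $p$-adic Langlands functoriality \cite[Thm.~3.2.1]{JoNew} a second time, producing mutual closed immersions $\sC^\epsilon \hookrightarrow \sC^{\epsilon'} \hookrightarrow \sC^\epsilon$ over $\Omega$ which must therefore be isomorphisms; (iii) then follows from (i) and (ii). You instead prove (iii) directly by embedding $\bT_{\Omega,h}(K_1(\tilde\pi))$ diagonally into $\prod_\epsilon \cO_\Omega$ and comparing Hecke eigenvalues pointwise: at each Shalika point $y$, the eigensystem is that of the single RACAR $\tilde\pi_y$ regardless of $\epsilon$, and Zariski-density of such points (transported via $\iota$) collapses the image onto the diagonal. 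Your (ii) is then immediate from (i) and (iii). Both approaches ultimately rest on the same fact (that a classical cuspidal eigensystem at level $K_1(\tilde\pi)$ is independent of $\epsilon$), but your argument avoids the second appeal to the interpolation theorem of \cite{JoNew}, at the cost of handling the delocalisation from $\Lambda$ to $\cO_\Omega$ (and the passage to connected components) a bit more carefully than you have written.
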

\begin{proof}
	(i) It suffices to prove that the ideal $I_{\tilde\pi}^{\epsilon}$ from Proposition~\ref{prop:cyclic S} is zero. Suppose it is not; then every irreducible component of $\sE_{\Omega, h}^{\epsilon}(K_1(\tilde\pi))$ through $x_{\tilde\pi}^{\epsilon}(K_1(\tilde\pi))$ has dimension less than $\dim(\Omega)$. But $\sC^{\mathrm{lgc}}(K(\tilde\pi))$ has a component of dimension $\dim(\Omega)$ through $x_{\tilde\pi}^{\epsilon}(K(\tilde\pi))$ by the discussion before Proposition~\ref{prop:level lowering}; under $\iota$ this maps to a component of dimension $\dim(\Omega)$, which is a contradiction.
	
	(ii) Let $\epsilon \neq \epsilon'$, and $\sC^\epsilon, \sC^{\epsilon'}$ be the connected components through $\tilde\pi$ of $\sE_{\Omega,  h}^{\epsilon}(K_1(\tilde\pi))$ and $\sE_{\Omega,  h}^{\epsilon'}(K_1(\tilde\pi))$ respectively. By above, $\sC^\epsilon$ and $\sC^{\epsilon'}$ are \'etale over $\Omega$ and contain Zariski-dense sets $\sC_{\mathrm{nc}}^{\epsilon}, \sC_{\mathrm{nc}}^{\epsilon'}$ of points corresponding to the same set of $Q$-refined RACARs $\{\tilde\pi_y\}_y$. By another application of \cite[Thm.~3.2.1]{JoNew}, there exist closed immersions 
	\[
	\sC^{\epsilon} \hookrightarrow \sC^{\epsilon'} \hookrightarrow \sC^{\epsilon}
	\]
	over $\Omega$ that are the identity on $\{\tilde\pi_y\}_y$; hence $\sC^\epsilon$ and $\sC^{\epsilon'}$ are canonically identified, and $\sC^{\epsilon}$ is independent of $\epsilon$. At $\tilde\pi$, since the Hecke algebra preserves $\epsilon$-parts in cohomology, this means that 
	\[
	\bT_{\Omega,\tilde\pi}(K_1(\tilde\pi)) = \bT_{\Omega,\tilde\pi}^{\epsilon}(K_1(\tilde\pi))
	\]
	as $\Lambda$-modules, and part (ii) follows. Part (iii) is immediate from (i) and (ii).
\end{proof}

Modulo Proposition \ref{prop:level lowering}, this proves Theorem \ref{thm:shalika family}(d1). For (d2), let $\sC^{\mathrm{Sha}}(K(\tilde\pi))^{\mathrm{red}}$ be the nilreduction of $\sC^{\mathrm{Sha}}(K(\tilde\pi))$. By the discussion before Proposition \ref{prop:level lowering}, and Corollary \ref{cor:langlands functoriality}, we have a diagram
\[
\xymatrix@R=5mm{
	\sC^{\mathrm{Sha}}(K(\tilde\pi))^{\mathrm{red}} \sar{r}{\subset}\ar[rd] & \sC^{\mathrm{lgc}}(K(\tilde\pi))\ar[d]\ar@{^{(}->}[r]^{\iota} & \sE^\epsilon_{\Omega,h}(K_1(\tilde\pi))\ar[ld]\\
	& \Omega &
}.
\] 
As $\sC^{\mathrm{Sha}}(K(\tilde\pi))^{\mathrm{red}}$ contains an irreducible component of dimension $\mathrm{dim}(\Omega)$, and $\sE_{\Omega,h}^\epsilon(K_1(\tilde\pi))$ is \'etale over $\Omega$, we deduce $\sC^{\mathrm{Sha}}(K(\tilde\pi))^{\mathrm{red}}$ is \'etale over $\Omega$; hence $\sC^{\mathrm{Sha}}(K(\tilde\pi))$ contains a unique irreducible component, giving (d2). If Local-Global Compatibility holds for all RACARs,  then $\sC^{\mathrm{lgc}}(K(\tilde\pi)) = \sC(K(\tilde\pi))^{\mathrm{red}}$, and the same argument shows this is \'etale over $\Omega$, giving (e).

\subsection{Level-switching: local constancy of conductors}
It remains to prove Proposition~\ref{prop:level lowering}. We use Galois theory. Let $y \in \sC_{\mathrm{nc}}^{\mathrm{lgc}}(K(\tilde\pi))$, with attached $p$-adic Galois representation 
\[
\rho_{\pi_y} : \mathrm{G}_F \to \GL_{2n}(L) \subset  \GL_{2n}(\overline{\Q}_p),
\]
depending on $\iota_p : \C \cong \overline{\Q}_p$. Attached to $\pi_y$ and $v \in S$, we have the Whittaker conductor $m(\pi_{y,v})$ from  \eqref{eq:new vector}, and the \emph{Artin conductor} $a(\rho_{\pi_y}|_{\mathrm{G}_{F_v}})$ of the local restriction, defined by Serre in \cite{SerreConductors}.

\begin{proposition}\label{prop:conductors equal}
	If $y \in \sC_{\mathrm{nc}}^{\mathrm{lgc}}(K(\tilde\pi))$, then for any $v \nmid p$, we have $m(\pi_{y,v}) = a(\rho_{\pi_y}|_{\mathrm{G}_{F_v}})$.
\end{proposition}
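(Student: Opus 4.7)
The plan is to chain together four standard facts: (1) the Artin conductor of a $p$-adic Galois representation equals the conductor of its associated Weil--Deligne representation, and is invariant under Frobenius-semisimplification; (2) Local--Global Compatibility at $v$, which is available by definition of $\sC_{\mathrm{nc}}^{\mathrm{lgc}}(K(\tilde\pi))$; (3) the conductor of a Weil--Deligne representation is unchanged under twisting by an unramified character; and (4) the local Langlands correspondence for $\GL_{2n}/F_v$ preserves conductors, i.e.\ $a(\mathrm{rec}_{F_v}(\pi_{y,v})) = m(\pi_{y,v})$, which is the theorem of Jacquet--Piatetski-Shapiro--Shalika \cite{JPSS} identifying the Whittaker (essential vector) conductor with the $\varepsilon$-factor conductor.

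First, I would reduce to a statement purely about the local Weil--Deligne representation: since $v\nmid p$, the restriction $\rho_{\pi_y}|_{\mathrm{G}_{F_v}}$ gives rise to a Weil--Deligne representation $\mathrm{WD}(\rho_{\pi_y}|_{\mathrm{G}_{F_v}})$, and the Artin conductor $a(\rho_{\pi_y}|_{\mathrm{G}_{F_v}})$ in the sense of Serre coincides with the conductor of this Weil--Deligne representation; moreover the conductor depends only on the Frobenius-semisimplification, so
\[
a(\rho_{\pi_y}|_{\mathrm{G}_{F_v}}) = a\big(\mathrm{WD}(\rho_{\pi_y}|_{\mathrm{G}_{F_v}})^{\mathrm{F\text{-}ss}}\big).
\]
Next, since $y \in \sC_{\mathrm{nc}}^{\mathrm{lgc}}(K(\tilde\pi))$, Local--Global Compatibility holds for $\pi_y$ at $v$, giving
\[
\mathrm{WD}\big(\rho_{\pi_y}|_{\mathrm{G}_{F_v}}\big)^{\mathrm{F\text{-}ss}} \;=\; \iota_p \,\mathrm{rec}_{F_v}\big(\pi_{y,v}\otimes |\cdot|^{(1-n)/2}\big).
\]
The twist $|\cdot|^{(1-n)/2}$ is unramified, so it does not affect the conductor; thus the right-hand side has conductor $a(\mathrm{rec}_{F_v}(\pi_{y,v}))$.

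Finally, I invoke the fact that the local Langlands correspondence for $\GL_{2n}/F_v$ matches the Whittaker conductor of $\pi_{y,v}$ with the Artin conductor of its parameter. This is precisely the content of the conductor theorem underlying the Jacquet--Piatetski-Shapiro--Shalika theory of the essential vector: $m(\pi_{y,v}) = a(\mathrm{rec}_{F_v}(\pi_{y,v}))$. Combining the three displayed equalities yields the claim $m(\pi_{y,v}) = a(\rho_{\pi_y}|_{\mathrm{G}_{F_v}})$. There is no serious obstacle; the only subtlety is to cite the correct normalisation conventions so that the unramified twist in LGC is indeed harmless for conductors, which is standard. This proposition will then feed into Proposition \ref{prop:level lowering} by combining it with a rigidity argument: up to shrinking $\Omega$, the Artin conductor at $v$ is locally constant in a family of Galois representations (by continuity of the inertia action on a lattice, since $v\nmid p$), so $a(\rho_{\pi_y}|_{\mathrm{G}_{F_v}}) = a(\rho_{\pi}|_{\mathrm{G}_{F_v}})$, whence $m(\pi_{y,v}) = m(\pi_v)$.
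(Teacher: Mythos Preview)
Your proposal is correct and follows essentially the same route as the paper. The only difference is one of packaging: where you invoke in one line that the local Langlands correspondence for $\GL_{2n}/F_v$ preserves conductors (your step (4)), the paper unpacks this via $\varepsilon$-factors --- using that $\mathrm{rec}_{F_v}$ preserves $\varepsilon$-factors (Harris--Taylor), then reading off the conductor exponent from Tate's formula on the Galois side and from \cite{JPSS} on the automorphic side --- which is precisely the standard proof of the statement you cite.
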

\begin{proof}
	Let $m$ and $a$ denote the conductors. Let $\rho_{y,v} = \rho_{\pi_y}|_{\mathrm{G}_{F_v}}$, and $\mathrm{WD}(\rho_{y,v})$ its associated Weil--Deligne representation. By Local-Global Compatibility 	(see \S\ref{sec:hypotheses}) we have 
	\[
	\mathrm{WD}(\rho_{y,v})^{\mathrm{F-ss}} = \iota_p\mathrm{rec}_{F_v}(\pi_{y,v}\otimes|\cdot|^{(1-n)/2}).
	\]
 Fix an unramified non-trivial additive character $\psi_v$ of $F_v$ and let $q_v = \#\cO_v/\varpi_v$. Then:
	\begin{itemize}\setlength{\itemsep}{0pt}
		\item[--] $\rho_{y,v}$ and $\mathrm{WD}(\rho_{y,v})^{\mathrm{F-ss}}$ have the same Artin conductor $a$ (e.g.\ \cite[\S8]{Ulm16});
		\item[--] the map $\mathrm{rec}_{F_v}$ preserves $\varepsilon$-factors \cite{Harris-Taylor}, so $\varepsilon(s,\pi_{y,v}|\cdot|^{(1-n)/2}, \psi_v) = \varepsilon(s,\mathrm{WD}(\rho_{y,v}),\psi_v)$;
		\item[--] by \cite[(3.4.5)]{Tat79}, we have $\varepsilon(s,\mathrm{WD}(\rho_{y,v}),\psi_v) = C\cdot q_v^{-as}$ for $C \in \C^\times$ independent of $s$;
		\item[--] by \cite[(1),Thm.~\S5]{JPSS}, $\varepsilon(s,\pi_{y,v}|\cdot|^{(1-n)/2},\psi_v) = \varepsilon(s+(1-n)/2,\pi_{y,v},\psi_v) = C'\cdot q_v^{-m(1-n)/2} \cdot q_v^{-ms}$, for $C' \in \C^\times$ independent of $s$.
	\end{itemize}
	Hence $C = C'\cdot q_v^{-m(1-n)/2}$ and $a = m$, as required. 
\end{proof}

The study of $m(\pi_{y,v})$ in families is thus reduced to that of $a(\rho_{\pi_y}|_{\mathrm{G}_{F_v}})$, and hence can be studied via Galois theory. 
For simplicity, let $\sC^{\mathrm{lgc}} = \sC^{\mathrm{lgc}}(K(\tilde\pi))$ and $\sC_{\mathrm{nc}}^{\mathrm{lgc}} = \sC_{\mathrm{nc}}^{\mathrm{lgc}}(K(\tilde\pi))$.

\begin{lemma}\label{lem:galois family}
	Suppose $\rho_{\pi}$ is irreducible. Then possibly shrinking $\Omega$, there exists a Galois representation 
	\[
	\rho_{\sC^{\mathrm{lgc}}}: \mathrm{G}_F \to \GL_2(\cO_{\sC^{\mathrm{lgc}}})
	\]
	such that for all $y \in \sC_{\mathrm{nc}}^{\mathrm{lgc}}$, we have $\rho_{\pi_y} = \rho_{\sC^{\mathrm{lgc}}} \newmod{\m_y}$.
\end{lemma}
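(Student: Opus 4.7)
The plan is to construct $\rho_{\sC^{\mathrm{lgc}}}$ via the theory of pseudorepresentations (or Chenevier determinants), following the standard template for families on eigenvarieties. Since the essential self-duality and local-global compatibility are available at a Zariski-dense subset, the argument reduces to an interpolation of traces of Frobenius combined with a deformation-theoretic lift.

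First I would set $S' = S \cup \{\fp \mid p\} \cup \{\infty\}$ and observe that for every $y \in \sC_{\mathrm{nc}}^{\mathrm{lgc}}$ the representation $\rho_{\pi_y}$ is unramified outside $S'$, hence factors through the profinite group $\mathrm{G}_{F,S'} \defeq \mathrm{Gal}(F^{S'}/F)$. For each finite place $v \notin S'$, the unramified Hecke operators $T_{v,i}$ ($1 \leqslant i \leqslant 2n$) lie in $\cH' \subset \cO_{\sC^{\mathrm{lgc}}}$, and by local-global compatibility (which holds at each $y \in \sC_{\mathrm{nc}}^{\mathrm{lgc}}$ by definition) their specialisations at $y$ compute, up to the standard normalising twist, the coefficients of the characteristic polynomial $\det(X - \rho_{\pi_y}(\mathrm{Frob}_v))$. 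This yields polynomials $P_v(X) \in \cO_{\sC^{\mathrm{lgc}}}[X]$ for $v \notin S'$ whose specialisation at each $y \in \sC_{\mathrm{nc}}^{\mathrm{lgc}}$ is $\det(X - \rho_{\pi_y}(\mathrm{Frob}_v))$.

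Next I would apply the machinery of Chenevier determinants: by Chebotarev density together with the $p$-adic continuity of the $\rho_{\pi_y}$, the collection $\{P_v\}_{v \notin S'}$ extends uniquely to a continuous $2n$-dimensional determinant
\[
D_{\sC^{\mathrm{lgc}}} : \mathrm{G}_{F,S'} \longrightarrow \cO_{\sC^{\mathrm{lgc}}}
\]
interpolating the determinants of $\rho_{\pi_y}$ for $y \in \sC_{\mathrm{nc}}^{\mathrm{lgc}}$. Since $\sC^{\mathrm{lgc}}$ was equipped with the reduced structure and $\sC_{\mathrm{nc}}^{\mathrm{lgc}}$ is Zariski-dense, uniqueness and well-definedness follow by comparing values at the dense set of classical points.

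Finally I would invoke the Nyssen--Rouquier lifting theorem: the residual determinant $D_{\sC^{\mathrm{lgc}}} \bmod \m_{x_{\tilde\pi}}$ is the determinant of $\rho_\pi$, which is absolutely irreducible by hypothesis. Therefore $D_{\sC^{\mathrm{lgc}}}$ lifts uniquely to a continuous representation
\[
\rho_{\sC^{\mathrm{lgc}}} : \mathrm{G}_{F,S'} \longrightarrow \GL_{2n}\big(\cO_{\sC^{\mathrm{lgc}}, x_{\tilde\pi}}\big),
\]
defined initially over the Henselisation (equivalently, the algebraic localisation at $x_{\tilde\pi}$). By a standard spreading out argument -- the defining matrix entries are rigid functions in a punctured neighbourhood, and the locus where $\rho_{\sC^{\mathrm{lgc}}}$ is valued in $\GL_{2n}$ is open -- after shrinking $\Omega$ (and hence $\sC^{\mathrm{lgc}}$) we may assume $\rho_{\sC^{\mathrm{lgc}}}$ is defined over $\cO_{\sC^{\mathrm{lgc}}}$, and inflating via $\mathrm{G}_F \twoheadrightarrow \mathrm{G}_{F,S'}$ yields the desired family. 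The compatibility $\rho_{\pi_y} = \rho_{\sC^{\mathrm{lgc}}} \bmod \m_y$ for all $y \in \sC_{\mathrm{nc}}^{\mathrm{lgc}}$ follows from the interpolation of traces at Frobenii together with irreducibility and Brauer--Nesbitt.

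The main obstacle is ensuring the pseudorepresentation is genuinely defined globally on $\mathrm{G}_F$ (in practice, on $\mathrm{G}_{F,S'}$) rather than only on the abstract set of Frobenii: this is handled by the continuity built into Chenevier's formalism combined with Chebotarev density, and it is crucial that $\sC^{\mathrm{lgc}}$ is defined as a reduced Zariski-closure of classical points, since this is what lets us rigidify the interpolation.
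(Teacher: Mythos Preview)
Your proposal is correct and follows essentially the same template as the paper's proof: interpolate traces (or full characteristic polynomials) of Frobenii via Hecke operators to obtain a pseudorepresentation/determinant over $\cO_{\sC^{\mathrm{lgc}}}$, then lift to a genuine representation using irreducibility of $\rho_\pi$ at the base point. The paper cites \cite[Lem.~7.1.1]{Che04} for the pseudocharacter and \cite[Lem.~4.3.7]{BC09} for the lift, whereas you phrase things in the (equivalent) language of Chenevier determinants and Nyssen--Rouquier; you are also a bit more explicit about factoring through $\mathrm{G}_{F,S'}$ and about the spreading-out step, but these are cosmetic differences.
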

\begin{proof}
	Let $\nu = (1,0,...,0) \in X_*^+(T_{2n})$. For each $v \notin S\cup\{\pri|p\}$, we have a Hecke operator $T_{\nu,v} \in \cH$ as in \S\ref{sec:unramified H}. If $y \in \sC_{\mathrm{nc}}^{\mathrm{lgc}}$ corresponds to the character $\Psi_y : \cH\otimes L \to L$, then we have 
	\[
	\Psi_y(T_{\nu,v}) = \mathrm{Tr}(\rho_{\pi_y}(\mathrm{Frob}_v))
	\]
	(see e.g.\ \cite[Cor.\ 7.3.4]{Che04}). In particular, property (H) of \cite[\S7.1]{Che04} holds, where we take $a_v$ \emph{ibid}.\  to be the image of $T_{\nu,v}$ in $\cO_{\sC^{\mathrm{lgc}}}$ under the natural map. Then by \cite[Lem.~7.1.1]{Che04}, there exists a $2n$-dimensional Galois pseudo-character 
	\disp{
	t_{\sC^{\mathrm{lgc}}} : \mathrm{G}_F \to \cO_{\sC^{\mathrm{lgc}}}
}
	over $\sC^{\mathrm{lgc}}$ such that for all $v \notin S \cup \{\pri|p\}$ and all $y \in \sC_{\mathrm{nc}}^{\mathrm{lgc}}$, we have 
	\disp{
	\mathrm{sp}_y(t_{\sC^{\mathrm{lgc}}}(\mathrm{Frob}_v)) = \rho_{\pi_y}(\mathrm{Frob}_v).
}

	As $\rho_{\pi}$ is irreducible, by \cite[Lem.~4.3.7]{BC09}, there exists a lift of $t_{\sC^{\mathrm{lgc}}}$ to a Galois representation 
	\disp{
	\rho_{\sC^{\mathrm{lgc}}}: \mathrm{G}_F \to \GL_{2n}(\cO_{\sC^{\mathrm{lgc}}})
}
	with $t_{\sC^{\mathrm{lgc}}} = \mathrm{tr}(\rho_{\sC})$; and $\rho_{\pi_y} = \rho_{\sC^{\mathrm{lgc}}} \newmod{\m_y}$.
\end{proof}

\begin{proposition}\label{prop:artin constant}
	Let $v \in S$ with residue characteristic $\ell \neq p$. After possibly shrinking $\Omega$, the Artin conductor $a(\rho_{\pi_y}|_{\mathrm{G}_{F_v}})$ is constant as $y$ varies in $\sC^{\mathrm{lgc}}$. Hence Proposition \ref{prop:level lowering} holds.
\end{proposition}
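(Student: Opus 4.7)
The plan is to combine Proposition~\ref{prop:conductors equal} with the family $\rho_{\sC^{\mathrm{lgc}}}: \mathrm{G}_F \to \GL_{2n}(\cO_{\sC^{\mathrm{lgc}}})$ provided by Lemma~\ref{lem:galois family}: it suffices to show that the function $y \mapsto a(\rho_{\pi_y}|_{\mathrm{G}_{F_v}})$ is locally constant on $\sC^{\mathrm{lgc}}$ in a neighbourhood of $\tilde\pi$. Once this is established, Proposition~\ref{prop:conductors equal} gives $m(\pi_{y,v}) = m(\pi_v)$ for every $v\nmid p$ and every classical $y \in \sC_{\mathrm{nc}}^{\mathrm{lgc}}$ near $\tilde\pi$, and essentiality of the Whittaker line \eqref{eq:essential vector} then forces $\pi_{y,v}^{K_{1,v}(m(\pi_v))} \neq 0$; by definition of $K_1(\tilde\pi)$ this yields $\pi_{y,f}^{K_1(\tilde\pi)} \neq 0$ as required.

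Restricting $\rho_{\sC^{\mathrm{lgc}}}$ to $\mathrm{G}_{F_v}$, I would apply Grothendieck's $\ell$-adic monodromy theorem pointwise to obtain Weil--Deligne data $(r_y, N_y)$, with conductor formula
\[
a(\rho_{\pi_y}|_{\mathrm{G}_{F_v}}) = \mathrm{sw}(r_y) + \dim V_y - \dim(\ker N_y)^{I_v}.
\]
For the Swan term, the wild inertia $P_v$ is pro-$\ell$, so its image in $\GL_{2n}(\cO_{\sC^{\mathrm{lgc}}})$—a compact subgroup of a $p$-adic analytic group—must be finite; by continuity and shrinking $\Omega$ around $\lambda_\pi$ I can arrange that this image is constant along $\sC^{\mathrm{lgc}}$ and equal to the image of $\rho_\pi|_{P_v}$, so that $\mathrm{sw}(r_y)$ is locally constant. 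For the tame part, the Frobenius traces on the Weil component are rigid analytic functions of $y$; combined with the now-constant wild image, Brauer--Nesbitt renders the Frobenius-semisimplified Weil representation $r_y^{\mathrm{F\text{-}ss}}$ locally constant, so $\dim V_y^{I_v}$ is as well.

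The hard part is controlling the rank of the monodromy operator $N_y$ in the family: \emph{a priori} this is only lower semicontinuous, so $a(\rho_{\pi_y}|_{\mathrm{G}_{F_v}})$ could drop along the Zariski-closed locus $Z := \{y \in \sC^{\mathrm{lgc}} : a(\rho_{\pi_y}|_{\mathrm{G}_{F_v}}) < a(\rho_\pi|_{\mathrm{G}_{F_v}})\}$ where the monodromy degenerates. The strategy to exclude $\tilde\pi \in Z$ is to exploit the Zariski-density of classical cuspidal Shalika points in $\sC^{\mathrm{lgc}}$ from Proposition~\ref{prop:zariski dense shalika no essential}: on such points Local-Global Compatibility holds and $a(\rho_{\pi_y}|_{\mathrm{G}_{F_v}}) = m(\pi_{y,v})$, and one argues—using that $\pi_{y,v}$ inherits genericity from the global Shalika structure, so that its local Langlands parameter lies in a rigid family of Weil--Deligne representations—that the monodromy rank cannot strictly drop below that of $\rho_\pi|_{\mathrm{G}_{F_v}}$ at $\tilde\pi$ itself. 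Up to shrinking $\Omega$, this forces $Z$ to avoid $\tilde\pi$, yielding the desired local constancy of the Artin conductor and hence Proposition~\ref{prop:level lowering}.
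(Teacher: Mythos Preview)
Your overall architecture matches the paper's: pass to the family of Weil--Deligne representations attached to $\rho_{\sC^{\mathrm{lgc}}}|_{\mathrm{G}_{F_v}}$, break the conductor into the Weil piece and the monodromy correction, and control each separately. Your treatment of the Swan conductor and the inertia-invariant dimension is essentially how one proves \cite[Lem.~7.8.17]{BC09}, which the paper simply cites; so that part is fine.

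The genuine gap is in your handling of the monodromy term. You correctly observe that $\dim(\ker N_y)^{I_v}$ is only upper semicontinuous, so the conductor can drop along a closed locus $Z$, and you need to exclude $\tilde\pi \in Z$. But your proposed mechanism---that ``$\pi_{y,v}$ inherits genericity from the global Shalika structure, so its local Langlands parameter lies in a rigid family''---does not do this. Genericity of $\pi_{y,v}$ (which every local component of a cuspidal $\GL_{2n}$-representation has anyway) places no constraint on the nilpotent rank of $N_y$; generic representations can have parameters with arbitrary monodromy. What you actually need is that $N$ is already \emph{maximal} at the base point $x = x_{\tilde\pi}$, so that it cannot jump up generically and then fall back at $x$. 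The precise condition is \emph{purity} of $\mathrm{WD}(\rho_\pi|_{\mathrm{G}_{F_v}})$ in the sense of weight--monodromy: the monodromy filtration is dictated by the Frobenius weights, so $N_x$ has the largest rank compatible with the Weil action. The paper obtains purity from temperedness of $\pi_v$, which for essentially self-dual RACARs follows (after a CM base-change and a twist to the self-dual case, via \cite[\S4]{CHT08}) from Caraiani's theorem \cite{Car12}. Purity at $x$ then feeds into \cite[Prop.~7.8.19]{BC09} and \cite[Thm.~3.1(2)]{Sah17}, which say precisely that $N_y \sim N_x$ for all $y$ in a neighbourhood, giving local constancy of $\dim(\ker N_y)^{I_v}$.

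In short: the missing idea is \emph{purity at $\tilde\pi$} (equivalently, temperedness of $\pi_v$), and the tool that converts purity into rigidity of $N$ in the family is \cite[Prop.~7.8.19]{BC09}. Your density-of-Shalika-points argument cannot substitute for this, because without purity you have no lower bound on the monodromy rank at those points either.
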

\begin{proof}
	Let $(r, N) = \mathrm{WD}(\rho_{\sC^{\mathrm{lgc}}}|_{\mathrm{G}_{F_v}})$ be the family of Weil--Deligne representations associated to $\rho_{\sC^{\mathrm{lgc}}}$ at $v$ \cite[Lem.\ 7.8.14]{BC09}. By construction, the specialisation $(r_y,N_y)$ of $(r,N)$ at $y \in \sC^{\mathrm{lgc}}$ is the Weil--Deligne representation attached to $\rho_{\pi_y}|_{\mathrm{G}_{F_v}}$, and then by definition (see \cite[\S7]{Ulm16}), we have
	\begin{equation}\label{eq:artin conductor formula}
		a(\rho_{\pi_y}|_{\mathrm{G}_{F_v}}) =  a(r_y) + \mathrm{dim} \left(r_y^{\mathrm{I}_v}\right) - \mathrm{dim}[\mathrm{ker}(N_y) \cap r_y^{\mathrm{I}_v}],
	\end{equation}
	with $a(r_y)$ the conductor of $r_y$ (depending only on $r_y|_{\mathrm{I}_v})$. By \cite[Lem.\ 7.8.17]{BC09}, $r|_{\mathrm{I}_v}$ is locally constant over $\sC^{\mathrm{lgc}}$, so we can shrink $\Omega$ so   $a(r_y)$ and $\mathrm{dim}(r_y^{\mathrm{I}_v})$ are constant as $y$ varies in $\sC^{\mathrm{lgc}}$.
	
	Now note that since $\pi$ is essentially self-dual, the specialisation $(r_x, N_x) = \mathrm{WD}(\rho_{\pi}|_{\mathrm{G}_{F_v}})$ is pure. Indeed, it suffices to check this after passing to the base-change $\Pi$ of $\pi$ to a quadratic CM extension $F'/F$ in which $v$ splits as $w\overline{w}$. By \cite[Lem.\ 4.1.4, \S4.3]{CHT08} there exists an algebraic Hecke character $\chi$ over $F'$ such that $\Pi' \defeq \Pi \otimes \chi$ is self-dual, and then \cite[Thm.\ 1.2]{Car12} shows that $\Pi'_w$ is tempered, so has pure Weil--Deligne representation. But purity is preserved by algebraic twist.
	
	Combining \cite[Prop.\ 7.8.19]{BC09} with \cite[Thm.\ 3.1(2)]{Sah17}, purity at $x$ implies that for all $y$ in a neighbourhood of $x$, we have $N_x \sim N_y$ in the sense of \cite[Defs.\ 6.5.1, 7.8.2]{BC09}. This implies that $\mathrm{dim}[\mathrm{ker}(N_y) \cap r_y^{\mathrm{I}_v}]$ -- and hence $a(\rho_{\pi_y}|_{\mathrm{G}_{F_v}})$, by \eqref{eq:artin conductor formula}  -- is constant for $y$ in a neighbourhood of $x$. 
	
	Proposition \ref{prop:level lowering} now follows by combining this with Proposition \ref{prop:conductors equal}.
\end{proof}

\subsection{Remarks on  symplectic components}\label{sec:symplectic components}

The space $\sE_{\Omega,h}(K)$ studied in this section is a local piece of a global parabolic eigenvariety $\sE_{\lambda_0}^{Q}(K)$ varying over $\sW_{\lambda_0}^Q$, constructed in \cite[\S5.2]{BW20}. (Precisely, we take $*=t$ in the notation \emph{op.\ cit}.; that is, this is a `top degree' eigenvariety). Here $\lambda_0$ is any algebraic weight in $\Omega$. We have described its local geometry at certain Shalika points. We now comment on global implications, proving:

\begin{theorem}\label{thm:all classical shalika}
Let $\sI \subset \sE_{\lambda_0}^Q(K)$ be an irreducible component, where $K$ is some parahoric-at-$p$ level. Suppose $\sI$ contains a Shalika point $x_{\tilde\pi}$ attached to a $Q$-refined RASCAR $\tilde\pi$ that is spherical and regular at $p$ and satisfies the hypotheses (a--d) of Theorem \ref{thm:shalika family}, and that $K = K_1(\tilde\pi)$. Then every classical point of $\sI$ with non-$Q$-critical slope and regular weight is a Shalika point.
\end{theorem}

We will prove (in Theorem \ref{thm:all points symplectic}) a stronger result. Let $\cG \defeq \mathrm{Res}_{F/\Q}\mathrm{GSpin}_{2n+1}$ be the split spin group. If $\tilde\pi_y$ is a Shalika point in $\sI$, then $\pi_y$ is the functorial transfer of a RACAR $\Pi_y$ of $\cG(\A)$ (see \S\ref{sec:set-up and previous work}). By \cite[\S3.1]{classical-locus}, there is a refinement $\tilde\Pi_y$ of $\Pi_y$ corresponding to $\tilde\pi_y$. We show there is an irreducible component $\sI^{\cG}$ in a parabolic eigenvariety for $\cG$, and rigid analytic maps between the (nilreductions) of $\sI$ and $\sI^{\cG}$ that interpolate the correspondence $\tilde{\pi}_y \leftrightarrow \tilde{\Pi}_y$ and induce bijections on their sets of points. Thus \emph{every} eigensystem in $\sI$, classical or not, is \emph{symplectic}, a functorial transfer from $\cG$. For non-$Q$-critical slope classical points of regular weight, symplectic is equivalent to Shalika (see Proposition \ref{prop:shalika = symplectic}), so the theorem follows. 

The proof occupies the rest of this section; we sketch it now. One has a natural map from the Hecke algebra for $G$ to that for $\cG$, compatible with Langlands functoriality, induced by a map $\jmath^\vee$ on cocharacters. It also admits a natural section $\iota^\vee$. Using $\iota^\vee$, and properties of Langlands functoriality, one can transfer a Zariski-dense set of Shalika points in $\sI$ from the eigenvariety for $G$ to that for $\cG$. Using an idea of Chenevier, this interpolates to a map $f$ on the nilreduction of $\sI$. Let $\sI^{\cG}$ be the irreducible component containing the (irreducible) image. Applying the same argument in reverse, with $\jmath^\vee$, gives a map $g$ the other way inverse to $f$ on points.

\begin{remark}
	A more detailed study of these phenomena, for all parahoric levels, is the subject of \cite{classical-locus}. As a flavour: in the Iwahori-level eigenvariety, the analogue of Theorem \ref{thm:all classical shalika} (for classical points) should hold, but the stronger analogue (on non-classical points) should not. In the language \emph{op.\ cit}., take a non-critical slope Iwahori refinement of $\pi$ that is optimally $Q$-spin. This varies in a $dn+1$-dimensional component $\sI$ in the Iwahori eigenvariety, but the symplectic locus is a closed $d+1$-dimensional subspace. In \cite{classical-locus} we conjecture that the classical points in $\sI$ lie in the symplectic subspace; but there should exist non-classical non-symplectic points in $\sI$.
\end{remark}

\subsubsection{Hecke algebras for $G$ and $\cG$}
Fix a Borel pair $(\cB,\cT)$ in $\cG$, as in \cite[\S2]{classical-locus}. Attached to $Q \subset G$ is a parabolic $\cQ \subset \cG$, described in \cite[\S2.1]{classical-locus}. Let $\cJ_{\pri}$ be the associated parahoric subgroup. Let $\cU_{\pri}^\circ$ be the associated normalised Hecke operator ($\cU_{\pri,n}^\circ$ in the notation \emph{op.\ cit}.).

Let $S$ be the set of finite primes $v\nmid p$ where $K_v$ is not maximal hyperspecial, and let $\cK = \prod_{v}\cK_v \subset \cG(\A_{F,f})$ be open compact such that $\cK_v$ is maximal hyperspecial for every $v \not\in S\cup\{\pri|p\}$, sufficiently small at $v \in S$, and $\cK_{\pri} = \cJ_{\pri}$. Let $(\cH^{\cG})' = \Qp[\cT_{\nu,v} : \nu \in X_*^+(\cT), v\not\in S\cup\{\pri|p\}]$ be the spherical Hecke algebra for $\cK$, where $X_*^+(\cT)$ is the space of $\cB$-dominant cocharacters of $\cT$ and  $\cT_{\nu,v} = [\cK_v \nu(\varpi_v) \cK_v]$ (as in Definition \ref{def:spherical hecke algebra}). Let $\cH^{\cG} = (\cH^{\cG})'[\cU_{\pri}^\circ : \pri|p]$. 

Henceforth replace the $\Z$-module  $\cH$ (from \S\ref{sec:hecke outside S}) with $\cH\otimes_{\Z}\Qp$. In \cite[\S2]{classical-locus}, a map $\jmath^\vee : X_*(T) \to X_*(\cT)$ is defined. This induces a map 
\[
\jmath^\vee : \cH \to \cH^{\cG}, \qquad U_{\pri}^\circ \mapsto \cU_{\pri}^\circ, \ \ T_{\nu,v} \mapsto \cT_{\jmath^\vee(\nu),v}.
\] 
In the other direction, there is a natural `section'
\[
\iota^\vee : X_*(\cT) \longrightarrow X_*(T) \otimes_{\Z}\Z[1/n]
\]
such that $\iota^\vee \circ \jmath^\vee$ is the identity, given in the notation \emph{op.\ cit}.\ by
\[
\iota^\vee(f_i^*) = e_i^*, \qquad \iota^\vee(f_0^*) = (e_1^* + \cdots + e_n^*)/n.
\]
The denominator means this does not, however, induce a map $\cH^{\cG} \to \cH$. To get around this, for $v \not\in S \cup \{\pri|p\}$ let
\[
Z_v \defeq T_{e_1^*+\cdots+ e_n^*, v} =  [K_v \mathrm{diag}(\varpi_v,...,\varpi_v) K_v], \qquad \cZ_v = \cT_{f_0^*,v}
\]
be the operators attached to $e_1^*+\cdots+e_n^*$ and $f_0^*$ respectively.  Then $\jmath^\vee(Z_v) = \cZ_v^n$.	Any map $\cH^{\cG} \to \cH$ induced by $\iota^\vee$ must send $\cZ_v$ to an $n$th root of $Z_v$. We now make sense of this.

The operators $Z_v$ and $\cZ_v$ act respectively by $\mathrm{diag}(\varpi_v,...,\varpi_v)$ and $f_0^*(\varpi_v)$, elements of the centre of $G(F_v)$ and $\cG(F_v)$ (by \cite[Prop.\ 2.3]{AS06}). Hence they act by the central character evaluated at $\varpi_v$. If $\pi$ is a RASCAR of $G(\A)$ with an $(\eta,\psi)$-Shalika model, then its central character is $\eta^n$, and it is the transfer of a RACAR $\Pi$ of $\cG(\A)$ whose central character is $\eta$ (by \cite[p.178]{AS06}).

This observation allows us to formally define an $n$th root of $Z_v$ over the irreducible component $\sI$. Note (as in Definition \ref{def:shalika family}) $Z_v$ acts on cohomology at any $(\eta_0,\psi)$-Shalika point $y$ by $\big[\eta_0(\varpi_v)|\varpi_v|^{\sw_y}\big]^n$. This varies analytically over any affinoid $\Omega \subset \sW_{\lambda_\pi}^Q$; let
\[
\eta_\Omega(\varpi_v) \defeq \eta_0(\varpi_v) \cdot \sw_\Omega(|\varpi_v|) \in \cO_\Omega^\times,
\]
for $\sw_\Omega$ as in \eqref{eq:sw_Omega}. Note this is well-defined as $v\nmid p$, so $|\varpi_v| \in \Zp^\times$. Then $\eta_\Omega(\varpi_v)^n$ interpolates the action of $Z_v$ on $(\eta_0,\psi)$-Shalika points $\pi_y$ in $\sI$ above $\Omega$. Such points are Zariski-dense in $\sI$ by Theorem \ref{thm:shalika family}, so we deduce $Z_v$ acts via the functions $\eta_\Omega(\varpi_v)^n$ over all of $\sI$.

\begin{definition}
	Let $z_v$ be a formal variable, and let 
	\[
	\tilde{\cH} \defeq \cH\Big[z_v : v\not\in S\cup\{\pri|p\}\Big]/(Z_v - z_v^n).
	\]
\end{definition}

We may summarise much of the above discussion via:

\begin{lemma}
	The map $\jmath^\vee : \cH \to \cH^{\cG}$ extends to a surjective map $\jmath^\vee : \tilde{\cH} \to \cH^{\cG}$. This map has a natural section given by
	\[
	\iota^\vee : \cH^{\cG} \to \tilde{\cH}, \qquad \cU_{\pri}^\circ \mapsto U_\pri^\circ,\ \  \cT_{\nu,v} \mapsto T_{\iota^\vee(\nu),v}.
	\]
\end{lemma}
\begin{proof}
	The extension is defined by $\jmath^\vee(z_v) = \cZ_v$. It is surjective as every generator $\cT_{\nu,v}$ and $\cU_{\pri}^\circ$ is hit. One sees from the definitions that $\iota^\vee$ is a section.
\end{proof} 

\begin{remark}\label{rem:extended action}
	For affinoids $\Omega \subset \cW_{\lambda_\pi}^Q$, and $M$ an $\cH\otimes \cO_\Omega$-module upon which $Z_v$ acts by $\eta_\Omega(\varpi_v)^n$, the action extends to $\tilde{\cH}\otimes \cO_\Omega$, where $z_v$ acts by  $\eta_\Omega(\varpi_v)$. 	From above, this is true for $M = \hc{t}(S_K,\sD_\Omega)\otimes_{\cO_\Omega}\cO_{\sI}$, the specialisation of the cohomology to $\sI$.
\end{remark}

\subsubsection{Eigenvariety data}
By \cite[Cor.\ 3.1.5]{JoNew}, we may recover $\sI$ as the eigenvariety attached to an eigenvariety datum 
\[
\cD = (\sW_{0,\lambda_\pi}^{Q}, \sZ_{\sI}, \sM_{\sI}^t, \cH, \psi)
\]
in the sense of Definition 3.1.1 \emph{op.\ cit}.\ (where the Fredholm hypersurface $\sZ_{\sI}$  and degree $t$ cohomology sheaf $\sM_{\sI}^t = \sM \otimes \cO_{\sI}$ are specialised to isolate $\sI$). We define a modified datum
\[
\tilde{\cD} = (\sW_{0,\lambda_\pi}^{Q}, \sZ_{\sI}, \sM_{\sI}^t, \tilde{\cH}, \tilde{\psi}).
\]
Here $\tilde{\cH}$ acts on $\sM_{\sI}^t$ by Remark \ref{rem:extended action} (giving $\tilde{\psi} : \tilde{\cH} \to \mathrm{End}(\sM_{\sI}^t)$). This gives an eigenvariety $\tilde{\sI}$.

\begin{lemma}\label{lem:tilde equals normal}
	The inclusion $\cH \hookrightarrow \tilde{\cH}$ induces an isomorphism $\tilde{\sI} \isorightarrow \sI$. 
\end{lemma} 
\begin{proof}
	The image of $z_v \otimes 1 \in \tilde{\cH}\otimes\cO_\Omega$ in $\mathrm{End}_{\cO_\Omega}(\hc{t}(S_K,\sD_\Omega)\otimes_{\cO_\Omega}\cO_{\sI}$ is, by definition, equal to $1 \otimes \eta_\Omega(\varpi_v)$, which is also in the image of $\cH \otimes \cO_\Omega$. As this is the only difference between $\tilde{\cH} \otimes \cO_\Omega$ and $\cH \otimes \cO_\Omega$, they have the same image in this endomorphism ring, so the local pieces of $\sI$ and $\tilde{\sI}$ are the same. As the gluing data in \cite[Thm.\ 4.2.2]{Han17} depends only the local pieces, not the abstract Hecke algebra, we conclude.
\end{proof}

Finally, as in \cite[\S5.2.2]{BW20}, at level $\cK$ there is an eigenvariety datum
\[
\cD^{\cG} = (\sW_{\lambda_\pi}^{Q}, \sZ^{\cG}, \sM^{\cG}, \cH^{\cG},\psi^{\cG})
\]
which gives the $\cQ$-parabolic eigenvariety $\sE_{\lambda_\pi}^{\cG,\cQ}(\cK)$ for $\cG$. (Note that $\jmath$, from \cite[\S2]{classical-locus}, identifies the $\cQ$-parabolic weight space for $\cG$ with the $Q$-parabolic weight space for $G$).

\subsubsection{Symplectic points}

\begin{definition}
	Let $y \in \sE_{\lambda_0}^{Q}(K)$ be a point with corresponding eigensystem $\phi_y : \cH \to L$. We say $x$ is \emph{symplectic} if there is a point $y^{\cG} \in \sE_{\lambda_0}^{\cG,\cQ}(\cK)$ for some $\cK$, such that $\phi_y$ factors as 
	\[
		\phi_y : \cH \xrightarrow{\jmath^\vee} \cH^{\cG} \xrightarrow{\phi^{\cG}_y} L,
	\]
	where $\phi_y^{\cG}$ is the eigensystem corresponding to $y$.
\end{definition}

\begin{proposition}\label{prop:shalika = symplectic}
	If $y \in \sE_{\lambda_0}^Q(K)$ is a classical point with non-$Q$-critical slope and regular weight, then $y$ is a symplectic point if and only if it is a Shalika point.
\end{proposition}
\begin{proof}	
	Suppose $y$ is a Shalika point. Let $\tilde\pi_y$ and $\tilde\Pi_y$ be as described after the statement of Theorem \ref{thm:all classical shalika}; then $\phi_y = \phi_{\tilde\pi_y}$. By compatibility of Langlands functoriality (at $v \nmid p$) and \cite[Prop.\ 3.7]{classical-locus} (at $\pri | p$) $\phi_{\tilde\pi_y}$ factors as
	\begin{equation}\label{eq:langlands factor 1}
		\phi_{\tilde\pi_y} : \cH \xrightarrow{\jmath^\vee} \cH^{\cG} \xrightarrow{\phi_{\tilde\Pi_y}} L.
	\end{equation}
	It remains to show $\tilde\Pi_y$ appears in an eigenvariety for $\cG$. Let $\cK \subset \cG(\A_{F,f})$ be open compact as above (maximal hyperspecial at $v \notin S\cup\{\pri|p\}$, parahoric at $\pri|p$) such that $\Pi_y^{\cK} \neq \{0\}$. By \cite[\S3.5]{classical-locus}, the refinement $\tilde\Pi_y$ has non-$\cQ$-critical slope, so by \cite[Prop.\ 5.8]{BW20}, yields a point $y^{\cG} \in \sE_{\lambda_\pi}^{\cG,\cQ}(\cK)$ corresponding to $\phi_{\tilde\Pi_y}$, and $y$ is symplectic.
	
	Conversely, suppose $y$ is symplectic; then by \cite[\S3.5]{classical-locus}, $y^{\cG}$ is non-$\cQ$-critical slope in $\sE_{\lambda_0}^{\cG,\cQ}(\cK)$. Using regular weight, as in the proof of \cite[Prop.\ 5.15]{BW20}, $y^{\cG}$ is classical cuspidal, corresponding to some RACAR $\Pi_y$ of $\cG(\A_F)$. At $v \notin S\cup\{\pri|p\}$, $\Pi_{y,v}$ is unramified; 
 by considering the Satake parameters and using \cite[\S6]{AS06} we see that $\pi_{y,v}$ is the functorial transfer of $\Pi_{y,v}$. By \cite{AS14} this ensures $\pi_y$ is globally the transfer of $\Pi_y$. Thus $\pi_y$ admits a Shalika model, as required.
\end{proof}

\begin{theorem}\label{thm:all points symplectic}
	Let $\sI \subset \sE_{\lambda_0}^Q(K)$ be an irreducible component satisfying the conditions of Theorem \ref{thm:all classical shalika}. Then every point of $\sI$ is a symplectic point.
\end{theorem}
\begin{proof}
We maintain the notation from the proof of Proposition \ref{prop:shalika = symplectic}. 
 By \cite[Prop.\ 5.1]{AS14}, which controls the image of functorial transfer at ramified places, we may choose $\cK \subset \cG(\A_{F,f})$ as above such that $\Pi_y^{\cK} \neq \{0\}$ for all such $y$; we work at this level for $\cG$.

We have the following `inverse' of \eqref{eq:langlands factor 1}; extend $\phi_{\tilde\pi_y}$ to $\tilde{\phi}_{\tilde\pi_y} : \tilde{\cH} \to L$ by sending $z_v \mapsto \eta_0(\varpi_v)|\varpi_v|^{\sw_y}$. As $\iota^\vee$ is a section of $\jmath^\vee$, $\phi_{\tilde\Pi_y}$ factors as
\begin{equation}\label{eq:langlands factor 2}
	\phi_{\tilde\Pi_y} : \cH^{\cG} \xrightarrow{\iota^\vee} \tilde{\cH} \xrightarrow{\tilde{\phi}_{\tilde\pi_y}} L.
\end{equation}
As in the proof of Proposition \ref{prop:zariski dense shalika no new}, a neighbourhood $\sU$ of the given point $\tilde\pi$ contains a Zariski-dense set of non-$Q$-critical slope $(\eta_0,\psi)$-Shalika points $y \in \sU$. By \cite[Lem.\ 2.2.3]{Con99}, this set is also Zariski-dense in $\sI$. By Proposition \ref{prop:shalika = symplectic}, we have associated points $y^{\cG} \in \sE_{\lambda_0}^{\cG,\cQ}(\cK)$.

Let $\tilde{\sI}^\circ$ denote the nilreduction of $\tilde{\sI}$. By \cite[Thm.\ 3.2.1]{JoNew} and \eqref{eq:langlands factor 2}, the map $\iota^\vee$ induces a map $g : \tilde{\sI}^{\circ} \to \sE_{\lambda_\pi}^{\cG,\cQ}(\cK)$ interpolating the association $y \mapsto y^{\cG}$ for the Zariski-dense set of $(\eta_0,\psi)$-Shalika points $y \in \sI$. Conversely let $\sI^{\cG}$ be the irreducible component containing $g(\tilde{\sI}^\circ)$, and $\sI^{\cG,\circ}$ its nilreduction. By the same theorem and \eqref{eq:langlands factor 1}, $\jmath^\vee$ induces a map $f : \sI^{\cG,\circ} \to \sI$. 

As nilreductions do not change closed points, and $\tilde{\sI}$ is isomorphic to $\sI$ by Lemma \ref{lem:tilde equals normal}, the maps $f$ and $g$ induce inverse bijections on the sets of closed points in $\sI$ and $\sI^{\cG}$. By \cite[Thm.\ 3.2.1]{JoNew} again this means every eigensystem in $\sI$ factors through $\jmath^\vee$, and hence is symplectic. 
\end{proof}

Theorem \ref{thm:all classical shalika} follows immediately by combining Theorem \ref{thm:all points symplectic} with Proposition \ref{prop:shalika = symplectic}.

\section{$p$-adic $L$-functions over the eigenvariety}\label{sec:families of p-adic L-functions}\label{ss:local structure}

Finally we construct $p$-adic $L$-functions in families and prove Theorem~\ref{thm:intro 3} of the introduction. 

In \S\ref{sec:shalika families no new}, we proved existence and \'etaleness of Shalika families, but had to consider and compare two separate levels $K(\tilde\pi)$ and $K_1(\tilde\pi)$ to do so. To vary $p$-adic $L$-functions over these families requires more precise control still, since for (2$'$) we must show not only that $\tilde\pi$ varies in a Shalika family, but that specific vectors inside these representations -- the cusp forms $W_f^{\mathrm{FJ}}$ from \S\ref{sec:periods} -- also vary $p$-adic analytically in this family. In this chapter, we prove such variation if $\pi$ satisfies an automorphic hypothesis (Hypothesis \ref{ass:shalika}), which is unconditional in tame level 1.

\subsection{On the choice of local test vectors}\label{sec:hypothesis}

Suppose $\tilde\pi$ satisfies (C1-2) of Conditions~\ref{cond:running assumptions}. Recall from \S\ref{sec:unramified H} that 
\disp{
S = \{v\nmid p\infty : \pi_v\text{ ramified}\}.
}
To vary the cusp form $W_f^{\mathrm{FJ}} =  \otimes_v W_v^{\mathrm{FJ}}$ in a $p$-adic family, we need control on the local vectors $W_v^{\mathrm{FJ}}$. 
		At $\pri|p$ and $v\not\in S$, we have described explicit test vectors. In tame level 1, this crucially this means $K(\tilde\pi) = K_1(\tilde\pi),$ and:
		\begin{proposition}\label{prop:tame level 1}
			Let $\tilde\pi$ be a non-$Q$-critical $Q$-refined RACAR satisfying (C1-2) of Conditions~\ref{cond:running assumptions}. Suppose $\pi$ has tame level 1. Then $\cS_{\psi_f}^{\eta_f}(\pi_f^{K(\tilde\pi)})[\![U_{\pri} - \alpha_{\pri} : \pri|p]\!]$ is a line, generated by
				$W_f^{\mathrm{FJ}} = \otimes_{\pri|p} W_{\pri} \otimes_{v\nmid p \infty} W_v^{\mathrm{FJ}},$
				where each $W_{\pri}$ is as in (C2) and each $W_v^{\mathrm{FJ}}$ is a Friedberg--Jacquet test vector.
		\end{proposition}

\subsection{Shalika new vectors}\label{sec:shalika-new-line}

It is natural to ask if there is a  Shalika analogue of the theory of Whittaker new vectors. We suggest a possible theory.

\subsubsection{Shalika conductors}\label{sec:shalika new vectors}
Let $c \geqslant 1$ be an integer.  Rather than the subgroups $K_{1,v}(c)$ used in the Whittaker theory, we consider the `$Q$-parahoric' analogue
	\begin{align*}
		J_{v}(c)\defeq  \left\{g \in \GL_{2n}(\cO_v) \ \big|\ g \newmod{\varpi_v^c} \in Q(\cO_v/\varpi_v^c)\right\}.
	\end{align*} 
We also set $J_{v}(0)=\GL_{2n}(\cO_v)$. Note that $J_v(1)$ is just the parahoric subgroup $J_v$.

\begin{definition}\label{def:shalika new vectors}
	Suppose $\pi_v$ is an irreducible admissible representation of $\GL_{2n}(F_v)$ that admits an $(\eta_v,\psi_v)$-Shalika model. 
	\begin{itemize}\s
		\item[(1)] The \emph{Shalika conductor} $c(\pi_v)$ of $\pi_v$ is the smallest $c \in \Z_{\geqslant 0}$ (if it exists) such that 
\[		\hspace{-7mm} 
\cS_{\psi_v}^{\eta_v}\big(\pi_v^{J_v(c),\eta_v}\big):=	\left\{ W_v\in \cS_{\psi_v}^{\eta_v}(\pi_v) \, \Big{|} \, 
	W_v(-\cdot k)= \eta_v(\det(k_2)) W_v(-)\ \  \forall k=\smallmatrd{k_1}{*}{ * }{k_2}\in J_{v}(c)\right\}\ne \{0\}.
	\]		
	\item[(2)] 	If  $\cS_{\psi_v}^{\eta_v}\big(\pi_v^{J_v(c(\pi_v)),\eta_v}\big)$ is a line, we call 
	a \emph{Shalika new vector} any  generator of this line.	
	\end{itemize} 
\end{definition}

\begin{lemma}\label{lem:shalika conductor} Suppose $\pi_v$ is an irreducible admissible representation of $\GL_{2n}(F_v)$ that admits an $(\eta_v,\psi_v)$-Shalika model. 
Then the   Shalika conductor $c(\pi_v) \in \Z_{\geqslant 0}$ exists. 

Moreover for any $c \geqslant c(\pi_v)$ one has $\dim(\pi_v^{J_v(c+1),\eta_v})>\dim(\pi_v^{J_v(c),\eta_v})$. 
\end{lemma}

\begin{proof} 
	As $\pi_v$  admits an $(\eta_v,\psi_v)$-Shalika model,  the Friedberg--Jacquet linear functional \cite{FJ93} is a non-zero element of $\Hom_{H(\cO_v)}(\pi_v, \eta_v)$. As $H(\cO_v)$ is compact, one therefore has $\Hom_{H(\cO_v)}(\eta_v, \pi_v)\ne\{0\}$; that is, there exists a non-zero vector $\varphi \in \pi_v^{H(\cO_v),\eta_v}$. 
	
	As $\varphi$ is smooth, there exists some $c \gg 0$ such that $\varphi$ is fixed by $N_Q(\varpi_v^c\cO_v)$ and $N_Q^-(\varpi_v^c\cO_v)$. Let $t_v = \mathrm{diag}(\varpi_v I_n, I_n)$. Since $J_v(2c) = t_v^{-c} N_Q(\varpi_v^c\cO_v) H(\cO_v)N_Q^-(\varpi_v^c\cO_v) t_v^c$, we deduce $t_v^{-c} \cdot \varphi \in \pi_v^{J_v(2c),\eta_v}$. Thus for $c \gg 0$, the space in Definition \ref{def:shalika new vectors}(1) is non-zero, so the conductor exists.

For the proof of the last claim,  to ease notation, we will drop  $\eta_v$ from the exponent. 
	As  $\pi_v^{J_v(c)} \subset \pi_v^{J_v(c +1)}$, it suffices to prove that the inclusion is strict. 
	\begin{itemize}
		\item	If $c = 0$, then $\pi_v$ is spherical, and $\dim(\pi_v^{J_v(1)})=\left(\begin{smallmatrix}2n \\ n\end{smallmatrix}\right) > 1$ (see e.g. \cite[\S3.1]{DJR18}).
	\end{itemize}
	
	 If $c \geqslant 1$, let $\varphi$ be an element of  $\pi_v^{J_v(c)}$ which we can inductively assume not in  
	 $\pi_v^{J_v(c-1)}$. 
	 
	  Suppose that $\pi_v^{J_v(c+1)} = \pi_v^{J_v(c)}$.
	 Note $t_v^{-1}\cdot \varphi \in \pi_v{}^{t_v^{-1}J_v(c)t_v}$, and
	\disp{	J_v(c+1) \subset t_v^{-1} \cdot J_v(c)  \cdot t_v,}  hence 
	\disp{	t_v^{-1} \cdot \varphi \in \pi_v^{J_v(c+1)} = \pi_v^{J_v(c)}.
	}
	We thus deduce
	\disp{
	\varphi \in \pi_v^{t_vJ_v(c) t_v^{-1}},
}
	so $\varphi$ is fixed by both $J_v(c)$ and $t_vJ_v(c)t_v^{-1}$, and hence by the group $J' \subset \GL_{2n}(F_v)$ that they generate. To obtain a contradiction with the assumption that $\varphi \notin \pi_v^{J_v(c-1)}$, it suffices to show 
	\begin{equation}\label{eq:J(c-1)}
		J_v(c-1) \subset J'. 
	\end{equation}
	\begin{itemize}\s
		\item Suppose $c \geqslant 2$. Then $J_v(c-1)$ admits a parahoric decomposition 
			$J_v(c-1) = [J_v(c-1)\cap N^-_Q(\cO_v)] \cdot H(\cO_v)N_Q(\cO_v) = t_v[J_v(c)\cap N^-_Q(\cO_v)]t_v^{-1} \cdot H(\cO_v)N_Q(\cO_v)$. This lies in $t_vJ_v(c)t_v^{-1} \cdot J_v(c) \subset J',$
		as required.
		
		\item  If $c=1$, then observe that 
		\disp{
		H(\cO_v) N_Q(\cO_v) \subset J_v(1) \subset J'}		and 
		\disp{		N^-_Q(\cO_v) = t_vN^-_Q(\varpi_v \cO_v)t_v^{-1} \subset t_vJ_v(1)t_v^{-1} \subset J',}
		hence
					\begin{align}
				J' \supset N^-_Q(\cO_v)\cdot H(\cO_v) \cdot N_Q(\cO_v) =  \smallmatrd{\GL_n(\cO_v)}{\mathrm{M}_n(\cO_v)}{\mathrm{M}_n(\cO_v)}{\mathrm{M}_n(\cO_v)} \cap \GL_{2n}(\cO_v).\label{eq:J''}
			\end{align}
		Let $J''$ be the group generated by this last set; it suffices to prove that $J''$ contains $\GL_{2n}(\cO_v)$. Since by definition $J''$ contains the Borel and Iwahori subgroups in $\GL_{2n}(\cO_v)$, by Bruhat decomposition it suffices to prove that $J''$ contains the Weyl group $S_{2n}$ of $G$ (that is, that it contains the subgroup of all permutation matrices). This subgroup in turn is generated by $S_n \times S_n$ (which is contained in $H(\cO_v)$, hence in $J''$) and the transposition $(n,n+1)$, given by the block-diagonal matrix
		
		\[
		\left(\begin{smallmatrix}I_{n-1} & & &\\
			& 0 & 1 &\\
			& 1 & 0 &\\
			&&& I_{n-1}
		\end{smallmatrix}\right) = 	\left(\begin{smallmatrix}I_{n-1} & & &\\
			& -1 & 1 &\\
			& 1 & 0 &\\
			&&& I_{n-1}
		\end{smallmatrix}\right) 	\left(\begin{smallmatrix}I_{n-1} & & &\\
			& 1 & 0 &\\
			& 1 & 1 &\\
			&&& I_{n-1}
		\end{smallmatrix}\right) 
		\]
		Both elements in the product are in \eqref{eq:J''}, so this element lies in $J''$, and we are done.\qedhere
	\end{itemize}

\end{proof}

For $c = c(\pi_v)$, let 
\disp{
J_{v}^{\eta}(c) \defeq \ker[\eta_v\circ\mathrm{det}_2: J_{v}(c)\to \C^\times]
}
and consider the (diamond) Hecke operators
\[
S_{\alpha_v}=[J_{v}^{\eta}(c)\ \mathrm{diag}( 1, \dots, 1, \alpha_v) \  J_{v}^{\eta}(c)],\ \ \ \ \  \alpha_v \in \cO_v^{\times}.  
\] 
\begin{lemma}\label{lem:equivalent shalika}
	Any $\pi_v$ as in Lemma \ref{lem:shalika conductor} admits a Shalika new vector if and only if 
	\begin{equation}\label{eq:SEL 2}
		\dim_\C\  \pi_v{}^{J_{v}^{\eta}(c(\pi_v))}\big[S_{\alpha_v} - \eta_v(\alpha_v) : \alpha_v \in \cO_v^\times\big] \ = 1.
	\end{equation}
\end{lemma}
\begin{proof}
	For $\alpha_v \in \cO_v^\times$, let $t_{\alpha_v} = \mathrm{diag}(1,...,1,\alpha_v)$. Via $\det_2$, one sees that $\{t_{\alpha_v} : \alpha_v \in \cO_v^\times\}$ contains a complete set of representatives for $J_{v}(c(\pi_v))/J_{v}^{\eta}(c(\pi_v))$. Additionally the Hecke operator $S_{\alpha_v}$ is simply right translation by $t_{\alpha_v}$. Hence \eqref{eq:SEL 2} is a reformulation of Definition~\ref{def:shalika new vectors}(2).
\end{proof}

\subsubsection{Shalika new vectors for parahoric-spherical representations}\label{sec:examples shalika}
If $\pi_v$ is spherical, then it has Shalika conductor $0$, and a spherical vector is a Shalika new vector. If $\pi_v$ is parahoric-spherical, then it has Shalika conductor $\leq 1$ (and conversely, if $\eta_v$ is unramified). We will now see that even in this simple case some
representations admit Shalika new vectors, while others do not.
	We thank David Loeffler and Andrei Jorza for their help in finding these examples.

\begin{lemma}\label{lem:examples} Let $\mathrm{St}_v$ denote the Steinberg representation of $\GL_2(F_v)$.
	\begin{enumerate}[(i)]\setlength{\itemsep}{0pt}
		\item Let $\pi_v$ be the full parabolic induction from $Q(F_v)$ to $\GL_4(F_v)$ of $\mathrm{St}_v \times \mathrm{St}_v$. 
		Then $\pi_v$ is parahoric-spherical and admits a Shalika new vector.  
		
		\item  Let  $P$ denote the $(1,2,1)$ parabolic of $\GL_4$ and let 
		$\pi'_v$ be the full parabolic induction from $P(F_v)$ to $\GL_4(F_v)$ of 
		$\mathbf{1}\times \mathrm{St}_v\times\mathbf{1}$. 
		Then $\pi'_v$ is parahoric spherical but does not admit a Shalika new vector. 
	\end{enumerate}
\end{lemma}
\begin{proof} Let us first observe that both $\pi_v$ and $\pi'_v$ are ramified representations admitting a Shalika model for $\eta_v=\mathbf{1}$. We realise the Weyl groups $W_Q$ and $W_P$ as the subgroups of the 
	Weyl group $S_4$ of $\GL_4$ generated respectively by $\{(12),(34)\}$  and $\{(23)\}$. 
	Let  $\{\beta_1, \beta_2, \beta_3\}$ denote the simple roots of $\GL_4$. Note the parabolic subgroups $Q$ and $P$ correspond to the subsets  $\{\beta_1, \beta_3\}$ and $\{\beta_2\}$ respectively. 
	
	\medskip
	
	(i) It suffices to show that $\dim_\C \pi_v^{J_v}=1$. One can easily check that a set of representatives of the double coset $W_Q\backslash S_4 / W_Q$ is given by $\overline{W}=\{(1), (23), (13)(24)\}$. By 
	\cite[\S1]{DJ-parahoric} the dimension of $\pi_v^{J_v}$ is given by the number of $w\in \overline{W}$ such that $w\cdot \{\beta_1, \beta_3\}\cap \{\beta_1, \beta_3\}=\varnothing$. This is only the case for $w=(23)$.  
	
	\medskip
	
	(ii) It suffices to show that $\dim_\C \pi_v^{J_v}>1$. A set of representatives of the double coset $W_P\backslash S_4 / W_Q$ is given by $\overline{W}'=\{(1), (123), (1243), (243)\}$. In this case, there are two elements 
	$w\in \overline{W}'$, namely $(1)$ and  $(1243)$, for which $w\cdot \{\beta_1, \beta_3\}\cap \{\beta_2\}=\varnothing$.
	The same argument as above shows that the space of $J_v$-invariants in $\pi'_v$ is $2$-dimensional. 
\end{proof}

We refer the interested reader to \cite[\S1]{DJ-parahoric} for a full classification of the parahoric-spherical 
generic representations of $\GL_{2n}$ admitting a Shalika model.

\subsubsection{A hypothesis on Shalika new vectors}\label{sec:shalika hypothesis}
Given the above theory of Shalika new vectors, it seems natural to make the following hypothesis.

\begin{hypothesis}\label{ass:shalika} 
	Let $c \in \Z_{\geqslant 0}$.  For any  $\pi_v$ admitting a Shalika new vector of  conductor $c$, 
a multiple of the latter is also a Friedberg--Jacquet test vector for $\pi_v$ (as in \S\ref{sec:shalika models}).
\end{hypothesis}

	As evidence towards this, we note that Friedberg--Jacquet \cite[Prop.~3.2]{FJ93}   proved that Hypothesis \ref{ass:shalika} holds for $c = 0$  (see also \S\ref{sec:shalika models}). 
	Further, in \cite[\S1]{DJ-parahoric} it is shown that the $\pi_v$ admitting a Shalika new vector of  conductor 
	$c = 1$ are precisely the parahoric-spherical representations which are maximally Steinberg, and further, 
	it is established in \cite[\S2]{DJ-parahoric} that for such $\pi_v$ Hypothesis \ref{ass:shalika} holds provided that 
	$\pi_v$ is regular (i.e., occurs in $\Ind_B^G\theta_v$ with $\theta_v$ regular).

\subsection{Shalika families, refined}\label{sec:shalika families refined}

\subsubsection{Set-up: Shalika Hecke algebra and the eigenvariety $\sE^\dec$}\label{sec:set-up}

Let $\tilde\pi$ be a non-$Q$-critical $Q$-refined RACAR of weight $\lambda_\pi$ satisfying (C1-2) of Conditions~\ref{cond:running assumptions}. Recall $S = \{v\nmid p\infty : \pi_v\text{ ramified}\}$. For the rest of this chapter, we assume that:
\begin{quote} \emph{for all $v \in S$, the local representation $\pi_v$ admits a Shalika new vector of conductor $c(\pi_v)$, and Hypothesis \ref{ass:shalika} holds for $c = c(\pi_v)$.}
\end{quote}

Given these assumptions,  for the rest of the paper we fix a sufficiently large coefficient field $L/\Qp$ as in \S\ref{sec:periods}, and drop it from notation. We also fix a precise choice of level subgroup
\begin{equation}\label{eq:shalika level}
	K(\tilde\pi)=\textstyle\prod_{\pri\mid p} J_\fp \cdot \prod_{v\in S} J_{v}^{\eta}(c(\pi_v))  \prod_{v\notin S\cup\{\pri|p\}}\GL_{2n}(\cO_v)\subset G(\A_f).
\end{equation}

\medskip

Recall the Hecke alegbra $\cH$ from Definition~\ref{def:hecke algebra}. In light of Lemma \ref{lem:equivalent shalika}, it is necessary to modify our Hecke algebra by adding the diamond operators $S_{\alpha_v}$. 

\begin{definition} \label{def:hecke algebra full}
	The \emph{Shalika Hecke algebra of level $K(\tilde\pi)$} is 
	\disp{
	\cH^{\dec} \defeq \cH[S_{\alpha_v} : v\in S, \alpha_v \in \cO_v^{\times}].
} 
This acts on $\pi^{K (\tilde\pi) }$ and $\hc{\bullet}(S_{K (\tilde\pi)},-)$.
\end{definition}

We define some modified objects exactly analogous to their incarnations in \S\ref{sec:shalika families no new}, but with $\cH^\dec$ replacing $\cH$.  Recall $\psi_{\tilde\pi}$ from Definition~\ref{def:gen eigenspace} (noting $E \subset L$).

\begin{definition}\label{def:gen eigenspace shalika}
	Define a character $\psi_{\tilde\pi}^{\dec} : \cH^{\dec}\otimes L \longrightarrow L$ extending $\psi_{\tilde\pi}$ by 
	sending $S_{\alpha_v} \mapsto \eta(\alpha_v)$. Let $\m_{\tilde\pi}^{\dec} \defeq \ker(\psi_{\tilde\pi}^{\dec})$ be the associated maximal ideal. For $\Omega\subset \Wlam$ a neighbourhood of $\lambda_\pi$, as in  \eqref{eq:m pi families} we get an associated maximal ideal, also denoted $\m_{\tilde\pi}^{\dec}$, in $\cH^{\dec} \otimes  \cO_\Omega$. 
\end{definition}

\begin{proposition}\label{thm:mult one} 
	Let $\tilde\pi$ satisfy Conditions~\ref{cond:running assumptions} and assume Hypothesis~\ref{ass:shalika} for  $v \in S$. For any $\epsilon \in \{\pm1\}^{\Sigma}$, 
	\disp{
	\dim_L\htc(S_{K(\tilde\pi)},\sV_{\lambda_\pi}^\vee)^{\epsilon}\locpiS = 1.
}
	If $\tilde\pi$ is non-$Q$-critical, then 
	\disp{
	\dim_L\htc(S_{K(\tilde\pi)},\sD_{\lambda_\pi})^{\epsilon}\locpiS = 1.
}
\end{proposition}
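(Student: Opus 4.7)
The plan is to pull everything back to the automorphic side via the Hecke-equivariant isomorphism of Proposition~\ref{prop:non-canonical}, and then compute the dimension of the relevant subspace of $\pi_f^{K(\tilde\pi)}$ factor by factor using the restricted tensor product decomposition.

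More precisely, Proposition~\ref{prop:non-canonical} provides a Hecke-equivariant isomorphism
\[
\pi_f^{K(\tilde\pi)} \isorightarrow \hc{t}(S_{K(\tilde\pi)},\sV_{\lambda_\pi}^\vee(\overline{\Q}_p))^\epsilon_{\m_\pi}.
\]
Since every operator in $\cH^{\dec}$ acts on both sides and the isomorphism is $\cH^{\dec}$-equivariant (the Shalika operators $S_{\alpha_v}$ are induced by right-translation, and at $\pri|p$ the operator $U_\pri$ acts by right-translation in the automorphic description), further localisation at the maximal ideal $\m_{\tilde\pi}^{\dec}$ gives an identification of $\hc{t}(S_{K(\tilde\pi)},\sV_{\lambda_\pi}^\vee(\overline{\Q}_p))^\epsilon_{\m_{\tilde\pi}^{\dec}}$ with the subspace of $\pi_f^{K(\tilde\pi)}$ on which $\cH^{\dec}$ acts by $\psi_{\tilde\pi}^{\dec}$.

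The next step is to decompose $\pi_f^{K(\tilde\pi)}\cong \bigotimes_v \pi_v^{K_v}$ via the restricted tensor product and check dimensions place-by-place. At $v\notin S\cup\{\pri|p\}$, $K_v=\GL_{2n}(\cO_v)$ and $\pi_v^{K_v}$ is already a line by sphericality. At $\pri|p$, $K_\pri = J_\pri$ and we further impose $U_\pri=\alpha_\pri$; the resulting line is exactly the one in \eqref{eq:Up refined line 2} of (C2) of Conditions~\ref{cond:running assumptions}. At $v\in S$, $K_v = J_{v^{c(\pi_v)}}^\eta$ and we further impose $S_{\alpha_v}=\eta_v(\alpha_v)$ for all $\alpha_v\in\cO_v^\times$; by Lemma~\ref{lem:equivalent shalika}, the resulting subspace is identified with \eqref{eq:SEL 1}, which is one-dimensional by Hypothesis~\ref{ass:shalika}(i). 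Multiplying, the generalised eigenspace $(\pi_f^{K(\tilde\pi)})_{\m_{\tilde\pi}^{\dec}}$ is one-dimensional over $\overline{\Q}_p$. Descent from $\overline{\Q}_p$ to $L$ then proceeds via the diagram \eqref{eq:rational diagram} of \S\ref{sec:periods}: $\psi_{\tilde\pi}^{\dec}$ is defined over $E\subset L$, so the generalised eigenspace is defined and one-dimensional over $L$. This settles the classical statement.

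For the distribution part, the specialisation map $r_{\lambda_\pi}$ of \eqref{eqn:specialisation} is $\cH^{\dec}$-equivariant, as the operators $S_{\alpha_v}$ and $U_\pri^\circ$ act by level-preserving data that commute with the surjection $\cD_{\lambda_\pi}\twoheadrightarrow V_{\lambda_\pi}^\vee$. Non-$Q$-criticality (Definition~\ref{def:non-Q-critical}) yields an $\cH$-equivariant isomorphism on generalised $\m_{\tilde\pi}$-eigenspaces, and since $\m_{\tilde\pi}^{\dec}$ is obtained from $\m_{\tilde\pi}$ by adjoining the (commuting) constraints $S_{\alpha_v}-\eta_v(\alpha_v)$, this isomorphism restricts to one on generalised $\m_{\tilde\pi}^{\dec}$-eigenspaces. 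Taking $\epsilon$-parts (which is also preserved, as $K_\infty/K_\infty^\circ$ commutes with the Hecke action) and invoking the just-established classical dimension count gives $\dim_L\htc(S_{K(\tilde\pi)},\sD_{\lambda_\pi})^\epsilon_{\m_{\tilde\pi}^{\dec}}=1$.

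The only nontrivial input is the one-dimensionality of the Shalika essential line at bad places, which is exactly the content of Hypothesis~\ref{ass:shalika}(i); everything else is bookkeeping with commuting Hecke operators and restricted tensor products. The potential subtle point to watch is the $L$-rationality of the resulting eigenline, but this is handled by the fact that all Hecke eigenvalues appearing in $\psi_{\tilde\pi}^{\dec}$ lie in $E\subset L$ by construction, so the generalised eigenspace is already defined over $L$ and hence its $L$-dimension agrees with the $\overline{\Q}_p$-dimension computed above.
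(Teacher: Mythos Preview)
Your proof is correct and follows essentially the same approach as the paper: identify the cohomology eigenspace with the corresponding line in $\pi_f^{K(\tilde\pi)}$ via Proposition~\ref{prop:non-canonical}, verify one-dimensionality place-by-place (spherical at $v\notin S\cup\{\pri|p\}$, (C2) at $\pri|p$, Hypothesis~\ref{ass:shalika}(i) via Lemma~\ref{lem:equivalent shalika} at $v\in S$), descend to $L$ via \S\ref{sec:periods}, and then use non-$Q$-criticality together with the $\cH^{\dec}$-equivariance of $r_{\lambda_\pi}$ for the distribution statement. The paper is more terse but the content is the same.
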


\begin{proof}
		The first equality is identical to Propostion \ref{prop:mult one S}. The second is non-$Q$-criticality.
\end{proof}

Via \S\ref{sec:slope-decomp}, let $\Omega$ be an affinoid neigbourhood of $\lambda_\pi$ in $\Wlam$  such that $\htc(S_{K(\tilde\pi)},\sD_{\Omega})$ admits a slope $\leqslant h$ decomposition with respect to the $U_p$ operator.

\begin{definition}\label{def:local piece}
		Let $\T\USha$ be the image of $\cH^\dec\otimes\cO_\Omega \longrightarrow \End_{\cO_\Omega}\big(\htc(S_{K(\tilde\pi)},\sD_{\Omega})^{\leqslant h}\big).$ Let $\sE\USha$, $\bT\USh$ and $\sE\USh$ be the obvious analogues of Definition \ref{def:local piece S}. For a classical cuspidal point $y \in \cE\USha$, we write $\m_y^{\dec} \defeq \m_{\tilde\pi_y}^{\dec}$.
\end{definition}

\subsubsection{Shalika families, refined: statement}

The following is a more precise version of Theorem \ref{thm:intro shalika family 2}, refining Theorem \ref{thm:shalika family}. We recall that  we have fixed the level subgroup $K(\tilde\pi)$ and coefficient field $L$ (in \S\ref{sec:set-up}), and we drop both from further notation. Let $\alpha_p^\circ=\prod_{\pri\mid p}(\alpha_\pri^\circ)^{e_\pri}$, and fix $h\geqslant v_p(\alpha_p^\circ)$ and $\epsilon \in \{\pm1\}^\Sigma$. By Proposition~\ref{thm:mult one}, $\tilde\pi$ contributes to  $\htc(S_{K(\tilde\pi)},\sD_{\lambda_\pi})\ssh$. 

\begin{theorem}\label{thm:section 7 main theorem}
	Let $\tilde\pi$ be non-$Q$-critical satisfying (C1-2), and suppose that $\pi$ admits a non-zero Deligne-critical $L$-value at $p$ (Definition~\ref{def:non-vanishing}), that $\lambda_\pi$ is $H$-regular \eqref{eq:H-regular}, and that $\tilde\pi$ is strongly non-$Q$-critical (Definition~\ref{def:non-Q-critical}). 
			Suppose that for all $v \in S$, $\pi_v$ admits a Shalika new vector of conductor $c(\pi_v)$, and Hypothesis \ref{ass:shalika} holds for $c = c(\pi_v)$.

	Then there exists a point $x_{\tilde\pi}^{\dec}$ attached to $\tilde\pi$ in $\sE\USha$. 
	Let $\sC$ be the connected component of $\sE\USha$ through $x_{\tilde\pi}^{\dec}$. Then, after possibly shrinking $\Omega$,
	\begin{itemize}\setlength{\itemsep}{0pt}
		\item \textbf{\emph{(\'etaleness)}} the weight map $\sC \to \Omega$ is \'etale,  
		\item \textbf{\emph{(density of Shalika points)}} $\sC$ contains a Zariski-dense set $\sC_{\mathrm{nc}}$ of classical cuspidal points $y$ corresponding to non-$Q$-critical $Q$-refined RACARs $\tilde\pi_y$ satisfying (C1-2) of Conditions~\ref{cond:running assumptions},
		\item \textbf{\emph{(Shalika new vectors in families)}} for each $y \in \sC_{\mathrm{nc}}$ and for all  $v \in S$, $\pi_{y,v}$ admits a Shalika new vector of conductor $c(\pi_{y,v}) = c(\pi_v)$, and
		\item \textbf{\emph{(family of eigenclasses)}} for each $\epsilon \in \{\pm1\}^\Sigma$, there exists a Hecke eigenclass $\Phi_{\sC}^\epsilon \in \htc(S_{K(\tilde\pi)},\sD_{\Omega})^{\epsilon}$ such that for every $y \in \sC_{\mathrm{nc}}$ with $w(y) = \lambda_y$, the specialisation $\mathrm{sp}_{\lambda_y}(\Phi_{\sC}^\epsilon)$ generates $\htc(S_{K(\tilde\pi)},\sD_{\lambda_y})^\epsilon_{\m_y^{\dec}}$.
	\end{itemize}
\end{theorem}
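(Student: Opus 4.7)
The plan is to adapt the proof of Theorem~\ref{thm:shalika family} to the enriched Hecke algebra $\cH^{\dec}$, exploiting the fact that Hypothesis~\ref{ass:shalika} -- via Proposition~\ref{thm:mult one} -- gives a one-dimensional classical eigenspace \emph{directly at level} $K(\tilde\pi)$, thereby obviating the delicate level-switching of \S\ref{sec:etaleness S}. Four structural steps are required: (1) existence of the point $x_{\tilde\pi}^{\dec}$; (2) \'etaleness of $\sE^{\dec}_{\Omega,h}(K(\tilde\pi))$ at this point; (3) Zariski density of classical points with matching Shalika conductors; (4) construction of the interpolating family class.

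First, Proposition~\ref{prop:surjection} carries through verbatim localising at $\m_{\tilde\pi}^{\dec}$ in place of $\m_{\tilde\pi}$, as its proof only invoked vanishing above degree $t$ and non-$Q$-criticality. Combined with non-$Q$-criticality and Proposition~\ref{thm:mult one} this gives $\hc{t}(S_{K(\tilde\pi)},\sD_{\lambda_\pi})\locpiS \cong \hc{t}(S_{K(\tilde\pi)},\sV_{\lambda_\pi}^\vee)\locpiS \neq 0$, whence $\m_{\tilde\pi}^{\dec}$ is maximal in $\bT^{\dec}_{\Omega,h}(K(\tilde\pi))$ and $x_{\tilde\pi}^{\dec}$ exists. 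For \'etaleness, Proposition~\ref{thm:mult one} together with Nakayama's lemma yields, by the argument of Proposition~\ref{prop:cyclic S}, that $\bT^{\dec,\epsilon}_{\Omega,\tilde\pi}(K(\tilde\pi))$ is cyclic over $\Lambda$, isomorphic to $\Lambda/I^{\dec,\epsilon}_{\tilde\pi}$. To kill $I^{\dec,\epsilon}_{\tilde\pi}$ I invoke an analogue of Proposition~\ref{prop:ann = zero}: lifting $\Phi_{\tilde\pi}^\epsilon$ through the refined surjection yields a class $\Phi_{\sC}^\epsilon$, and the hypothesis that $\pi$ has a non-zero Deligne-critical $L$-value at $p$ (of some sign $\epsilon_0$, via Theorem~\ref{thm:non-ordinary}) forces $\mathrm{Ev}_\beta^{\eta_0}(\Phi_{\tilde\pi}^{\epsilon_0}) \neq 0$ for suitable $\beta$, hence $\mathrm{Ann}_{\cO_\Omega}(\Phi_\sC^{\epsilon_0}) = 0$ and $I^{\dec,\epsilon_0}_{\tilde\pi} = 0$. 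A $p$-adic Langlands functoriality argument in the spirit of Corollary~\ref{cor:etale bottom level}(ii), now applied to the eigenvarieties $\sE^{\dec,\epsilon}_{\Omega,h}(K(\tilde\pi))$ (which carry the same $\cH^{\dec}$-action), identifies them in a neighbourhood of $x_{\tilde\pi}^{\dec}$, upgrading \'etaleness across all signs and to the full $\sE^{\dec}_{\Omega,h}(K(\tilde\pi))$.

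For Zariski density and conductor constancy I follow \S\ref{sec:zariski dense classical}. Up to shrinking $\Omega$, the rigid function $\mathrm{Ev}_{\chi,j}^{\Omega}(\Phi_\sC^{\epsilon_0})$ on $\Omega$ is everywhere non-vanishing, and via Lemma~\ref{lem:compatibility} together with Proposition~\ref{prop:shalika non-vanishing} this forces, above each $\lambda \in \Omega_{\mathrm{ncs}}$, the existence of a classical cuspidal point $y \in \sC$ with $\pi_y$ a RASCAR satisfying (C1). For $v \in S$ the eigensystem at $y$ forces $\pi_{y,v}^{J_{v^{c(\pi_v)}}^{\eta}}$ to contain a non-zero vector on which $S_{\alpha_v}$ acts by $\eta(\alpha_v)$, so by Hypothesis~\ref{ass:shalika} we already have $c(\pi_{y,v}) \leqslant c(\pi_v)$; the reverse inequality, after possibly shrinking $\Omega$, follows from a Galois-theoretic upper semi-continuity argument modelled on Proposition~\ref{prop:artin constant}, using Lemma~\ref{lem:galois family} to spread out the Galois representations over $\sC$ and bounding the Shalika conductor above by (an appropriate normalisation of) the Artin conductor of $\rho_{\pi_y}|_{\mathrm{G}_{F_v}}$, which is locally constant on $\sC$. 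Finally, \'etaleness implies that (after shrinking $\Omega$) each $\hc{t}(S_{K(\tilde\pi)},\sD_\Omega)^\epsilon\locpiS$ is a free $\Lambda$-module of rank one; taking $\Phi_\sC^\epsilon$ to be a generator (rescaled to have $\mathrm{sp}_{\lambda_\pi}(\Phi_\sC^\epsilon) = \Phi_{\tilde\pi}^\epsilon$) and applying Proposition~\ref{thm:mult one} to each $\tilde\pi_y$, the specialisation $\mathrm{sp}_{\lambda_y}(\Phi_\sC^\epsilon)$ generates the one-dimensional space $\hc{t}(S_{K(\tilde\pi)},\sD_{\lambda_y})^\epsilon_{\m_y^{\dec}}$.

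The hard part will be the equality $c(\pi_{y,v}) = c(\pi_v)$ for the classical points of $\sC_{\mathrm{nc}}$: the upper bound is automatic from the fixed level, but equality requires semi-continuity of \emph{Shalika} conductors in $p$-adic families. I expect to obtain this by comparing the Shalika conductor to the Artin conductor of the associated local Galois representation (bounded via Proposition~\ref{prop:conductors equal} on a dense subset, where Local-Global Compatibility is available for RASCARs) and then invoking the constancy of the latter from Proposition~\ref{prop:artin constant}; any residual discrepancy at non-essentially-self-dual points should be controlled by the rigidity of the Shalika eigensystem under $\{S_{\alpha_v}\}$.
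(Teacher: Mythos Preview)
Your overall architecture matches the paper's for existence, cyclicity, \'etaleness at one sign, and upgrading across all $\epsilon$: these are handled in Propositions~\ref{thm:free rank one}, \ref{prop:delocalise 1}--\ref{prop:free neighbourhood}, \ref{thm:I=0} and \ref{prop:ind of eps}, essentially as you describe. The eigenclass $\Phi_{\sC}^\epsilon$ is then taken to be a generator of the rank-one module $\htc(S_{K(\tilde\pi)},\sD_\Omega)\sshe\otimes_{\T\USh}\bT_{\Omega,\sC}^{\deceps}$, again as you say.

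The substantive gap is your treatment of the Shalika conductor constancy $c(\pi_{y,v})=c(\pi_v)$. Your proposed Galois route is problematic on two counts. First, it invokes Lemma~\ref{lem:galois family} to spread the Galois representation over $\sC$, but that lemma requires $\rho_\pi$ irreducible --- a hypothesis \emph{not} present in Theorem~\ref{thm:section 7 main theorem} (it was needed for Theorem~\ref{thm:shalika family}(d) precisely because of the level-switching, which you correctly observe is no longer necessary). Second, there is no comparison in the paper between the \emph{Shalika} conductor $c(\pi_v)$ and the Artin conductor: Proposition~\ref{prop:conductors equal} treats only the \emph{Whittaker} conductor. Your claimed upper bound ``$c(\pi_{y,v})\leqslant c(\pi_v)$ from the fixed level'' is also not immediate: Hypothesis~\ref{ass:shalika}(i) posits a \emph{unique} $c$ for which the space is a line, with no monotonicity statement, so merely having a non-zero vector at level $J_{v^{c(\pi_v)}}^{\eta}$ does not bound $c(\pi_{y,v})$.

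The paper's argument (Proposition~\ref{prop: shalika families}(ii)) is purely automorphic and much shorter: \'etaleness already forces $\htc(S_{K(\tilde\pi)},\sD_{\lambda_y})^\epsilon_{\m_y^{\dec}}$ to be one-dimensional, hence via non-$Q$-criticality and Proposition~\ref{prop:non-canonical} the local eigenspace $(\pi_{y,v})^{J_{v^c}^{\eta_y}}[S_{\alpha_v}-\eta_{y,v}(\alpha_v)]$ at $v\in S$ is \emph{exactly} one-dimensional, and then Lemma~\ref{lem:equivalent shalika} (the reformulation of Hypothesis~\ref{ass:shalika}(i)) gives $c(\pi_{y,v})=c$ directly. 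No inequalities, no Galois input. You should also note that establishing (C2) at $y$ --- the non-vanishing $W_{y,\pri}(t_{\pri}^{-\delta_{\pri}})\neq 0$ --- is not automatic; the paper deduces it from the everywhere non-vanishing of $\mathrm{Ev}_{\chi,j}^\Omega(\Phi_{\sC}^\epsilon)$ combined with Lemma~\ref{lem:zeta at p}, which expresses the evaluation at $y$ as a scalar multiple of $\prod_{\pri|p}W_{y,\pri}(t_{\pri}^{-\delta_{\pri}})$.
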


The non-vanishing, $H$-regular and strongly non-$Q$-critical hypotheses hold if $\tilde\pi$ has non-$Q$-critical slope and $\lambda_\pi$ is regular (Theorem~\ref{thm:control}, Lemma~\ref{lem:regular weight implies non-vanishing}), so this implies Theorem~\ref{thm:intro shalika family 2} of the introduction.
The proof of Theorem~\ref{thm:section 7 main theorem} will occupy the rest of \S\ref{sec:shalika families refined}; it is similar to the methods of \S\ref{sec:shalika families no new}, with the addition of some standard arguments, which we highlight.

\subsubsection{Cyclicity results} \label{sec:existence of x}
Let $\T\USpi = (\T_{\Omega, h}^\deceps)\locpiS$. Recall $\Lambda = \cO_{\Omega,\m_{\lambda_\pi}}$.

\begin{proposition}\label{thm:free rank one}
	\begin{itemize}\setlength{\itemsep}{0pt}
		\item[(i)] There exists a proper ideal $I_{\tilde\pi} \subset \Lambda$ such that 
		\disp{
		\T\USpi \cong \Lambda/I_{\tilde\pi}.
	}
		\item[(ii)] The space $\htc(S_{K(\tilde\pi)},\sD_{\Omega})\locpiS^\eps$ is free of rank one over $\T\USpi$. 
	\end{itemize}
\end{proposition}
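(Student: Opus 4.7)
The plan is to carry out a Shalika-refined version of the argument of Proposition~\ref{prop:cyclic S}, combining the specialisation isomorphism at the refined maximal ideal with the one-dimensionality from Proposition~\ref{thm:mult one}, and then to deduce both (i) and (ii) simultaneously by a single application of Nakayama's lemma.

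First, I would extend Proposition~\ref{prop:surjection} (as advertised in Remark~\ref{rem:non-Q-critical implies point}) to localisation at $\m_{\tilde\pi}^\dec \subset \cH^\dec \otimes \cO_\Omega$ in place of $\m_{\tilde\pi}$. The cuspidal eigensystem $\tilde\pi$ contributes classically only in degrees $\leqslant t$, so the vanishing $\hc{i}(S_{K(\tilde\pi)},\sD_{\Omega_j})_{\m_{\tilde\pi}^\dec}^\eps = 0$ for $i > t$ goes through verbatim by the same descending induction on a regular sequence of generators of $\m_{\lambda_\pi}$, using the short exact sequence \eqref{eqn:regular distributions} (noting that exactness of slope decomposition, localisation at $\m_{\tilde\pi}^\dec$, and taking $\eps$-parts is unchanged). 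The step at the top degree then yields a natural isomorphism
\[
M \otimes_\Lambda \Lambda/\m_{\lambda_\pi} \isorightarrow \htc(S_{K(\tilde\pi)},\sD_{\lambda_\pi})_{\m_{\tilde\pi}^\dec}^\eps,
\]
where $M \defeq \htc(S_{K(\tilde\pi)},\sD_\Omega)_{\m_{\tilde\pi}^\dec}^\eps$. By Proposition~\ref{thm:mult one}, the right-hand side is a one-dimensional $L$-vector space; this is the \emph{only} place Hypothesis~\ref{ass:shalika}(i) enters the argument.

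Since $M$ is finitely generated over $\Lambda$ (as $\hc{t}(S_{K(\tilde\pi)},\sD_\Omega)^{\leqslant h}$ is finite over $\cO_\Omega$), Nakayama's lemma now produces a single generator of $M$ over $\Lambda$, and hence an isomorphism $M \cong \Lambda/J$ for some ideal $J \subset \Lambda$, with $J$ proper since $M \neq 0$.

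Both (i) and (ii) then follow formally. By construction, $\T\USpi$ is the image of $\cH^\dec \otimes \Lambda$ inside $\End_\Lambda(M) \cong \End_\Lambda(\Lambda/J) \cong \Lambda/J$, so $\T\USpi$ embeds into $\Lambda/J$ as an $\cO_\Omega$-subalgebra. This subalgebra already contains the image of the structure map $\Lambda \to \Lambda/J$, which is all of $\Lambda/J$; consequently $\T\USpi = \Lambda/J$, proving (i) with $I_{\tilde\pi} \defeq J$. Under this identification $M \cong \Lambda/I_{\tilde\pi} \cong \T\USpi$ as $\T\USpi$-modules, giving (ii). The only substantive obstacle is verifying the specialisation isomorphism at the refined ideal $\m_{\tilde\pi}^\dec$; once that is in place, the rest is standard commutative algebra on finitely generated modules over the Noetherian local ring $\Lambda$.
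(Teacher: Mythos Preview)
Your proof is correct and follows essentially the same route as the paper: the paper's proof simply cites Proposition~\ref{thm:mult one} and says to argue ``exactly as in Proposition~\ref{prop:cyclic S}'', which unpacks to precisely the Nakayama-plus-endomorphism-ring argument you give. You are more explicit than the paper about the need to carry the specialisation isomorphism of Proposition~\ref{prop:surjection} over to the refined ideal $\m_{\tilde\pi}^{\dec}$, which the paper leaves implicit but which indeed goes through verbatim.
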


\begin{proof}
Part (i) is identical to Proposition \ref{prop:cyclic S}. 		The actions of $\Lambda$ and $\T\USpi$ are compatible, so $\htc(S_{K(\tilde\pi)},\sD_{\Omega})^\epsilon\locpiS$ is free of rank one over $\T\USpi$, giving (ii).
\end{proof}

Since $\hc{t}(S_{K(\tilde\pi)}, \sD_\Omega)\locpiS \neq 0$, there exists a point $x_{\tilde\pi}^{\deceps} \in \cE\USh$ attached to $\tilde\pi$.

	To construct the eigenclasses $\Phi_{\sC}^{\epsilon}$ of Theorem~\ref{thm:section 7 main theorem}, we want to delocalise Proposition \ref{thm:free rank one} to a neighbourhood of $x_{\tilde\pi}^{\deceps}$ in $\sE\USh$. A standard procedure using rigid localisations (in the sense of \cite[\S7.3.2]{BGR}; see \cite[Lem.\ 2.10]{BDJ17}) shows that:
	
\begin{proposition}\label{prop:delocalise 1}
Let 
		\disp{
			\sC^\epsilon = \Sp(\bT_{\Omega,\sC}^\deceps) \subset \sE\USh
		}
		be the connected component containing $x_{\tilde\pi}^{\deceps}$. After possibly shrinking $\Omega \subset \Wlam$, there exists an ideal $I_{\sC^\epsilon} \subset \cO_\Omega$ such that  
		\disp{
			\bT_{\Omega,\sC}^{\deceps} \cong \cO_\Omega/I_{\sC^\epsilon}.
		}
\end{proposition}

	From this, another standard argument (cf.\ \cite[Cor.\ 4.8]{BW18} or \cite[\S2.7]{BW-Iwasawa}) yields:

	\begin{proposition}\label{prop:free neighbourhood}
		Let $\sC^\epsilon$ be the connected component from Proposition~\ref{prop:delocalise 1}. Perhaps after further shrinking $\Omega$, we have $\htc(S_{K(\tilde\pi)},\sD_{\Omega})\sshe\otimes_{\T\USh}\bT_{\Omega,\sC}^{\deceps}$ is free of rank $1$ over $\bT_{\Omega,\sC}^{\deceps}$.
	\end{proposition}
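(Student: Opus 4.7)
The plan is to deduce the statement from Proposition~\ref{thm:free rank one}(ii) by combining faithful flatness of the rigid localisation over the algebraic localisation with the fact that freeness of a coherent sheaf spreads from a stalk to an affinoid neighbourhood. Essentially, I will upgrade the statement at the algebraic localisation at $\m_{\tilde\pi}^{\dec}$ to a statement at the rigid localisation at $x_{\tilde\pi}^{\deceps}$, then spread this to a connected component by standard rigid-analytic techniques.

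First, by Proposition~\ref{thm:free rank one}(ii), the module $\htc(S_{K(\tilde\pi)},\sD_\Omega)^\epsilon\locpiS$ is free of rank one over $\T\USpi$. As noted before Proposition~\ref{prop:delocalise 1}, the rigid local ring $\bT_{\Omega,x_{\tilde\pi}}^\deceps$ is faithfully flat over the algebraic local ring $\T\USpi$. Tensoring, the rigid stalk
\[
\sM_{x_{\tilde\pi}} \ = \ \htc(S_{K(\tilde\pi)},\sD_\Omega)\sshe\otimes_{\T_{\Omega,h}^{\deceps}}\bT_{\Omega,x_{\tilde\pi}}^\deceps
\]
is free of rank one over $\bT_{\Omega,x_{\tilde\pi}}^\deceps$.

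Second, freeness of a coherent sheaf on a rigid space is an open condition on the base. Concretely, let $\Phi_{x_{\tilde\pi}}$ be a generator of $\sM_{x_{\tilde\pi}}$; we may lift it to a section of $\sM$ over some affinoid neighbourhood $C$ of $x_{\tilde\pi}^{\deceps}$ in $\sE\USh$. The cokernel of the map $\cO_C \to \sM(C)$ given by $\Phi_{x_{\tilde\pi}}$ is a coherent sheaf whose stalk at $x_{\tilde\pi}^{\deceps}$ vanishes (by rank-one generation of $\sM_{x_{\tilde\pi}}$ and Nakayama), so it vanishes on a possibly smaller affinoid neighbourhood; similarly the kernel is a coherent ideal whose stalk at $x_{\tilde\pi}^{\deceps}$ vanishes by rank-one freeness, so vanishes on a further shrinking. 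After these shrinkings, $\sM|_C$ is free of rank one over $\cO_C$.

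Finally, we translate this into a statement about $\sC^\epsilon$ by shrinking $\Omega$. Since the weight map $w:\sE\USh \to \Omega$ is finite, the fibre $w^{-1}(\lambda_\pi)$ is a finite set, and we may choose an affinoid neighbourhood of $\lambda_\pi$ in $\Omega$ whose preimage in $\sE\USh$ decomposes as a disjoint union of affinoid neighbourhoods of each point in $w^{-1}(\lambda_\pi)$, with the component containing $x_{\tilde\pi}^{\deceps}$ contained in $C$. Replacing $\Omega$ with this neighbourhood, the connected component $\sC^\epsilon$ of $x_{\tilde\pi}^{\deceps}$ in the resulting eigenvariety is then contained in $C$, and so $\sM(\sC^\epsilon)$ is free of rank one over $\bT_{\Omega,\sC}^{\deceps}$, as required. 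The only genuinely technical point is the last one: ensuring that shrinking $\Omega$ (which changes both $\sE\USh$ and $\sC^\epsilon$) genuinely produces a component of the new eigenvariety lying inside $C$, but this follows from standard finiteness and separation properties of the weight map.
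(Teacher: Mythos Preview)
Your proof is correct and follows essentially the same approach as the paper: both first identify the rigid stalk $\sM_{x_{\tilde\pi}}$ with $\htc(S_{K(\tilde\pi)},\sD_\Omega)^\epsilon\locpiS\otimes_{\T\USpi}\bT_{\Omega,x_{\tilde\pi}}^\deceps$, apply Proposition~\ref{thm:free rank one}(ii) to conclude this is free of rank one over $\bT_{\Omega,x_{\tilde\pi}}^\deceps$, and then spread freeness from the stalk to a connected component after shrinking $\Omega$. The paper simply invokes \cite[Lem.~2.10]{BDJ17} for this last spreading step, whereas you have unfolded the argument explicitly via kernel/cokernel coherence and Nakayama; your version is a perfectly good proof of that lemma.
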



\subsubsection{\'Etaleness of families}\label{sec:etaleness} We now use Hypothesis~\ref{ass:shalika}.

\begin{proposition}\label{thm:I=0}
	\begin{itemize}\setlength{\itemsep}{0pt}
		\item[(i)] Let 
		\disp{
		\sC^\epsilon = \Sp(\bT_{\Omega,\sC}^{\deceps})\subset \cE\USh
	}
		be as in Proposition~\ref{prop:free neighbourhood}. If $\mathrm{Ev}_{\beta}^{\eta_0}(\Phi_{\tilde{\pi}}^{\epsilon}) \neq 0$ for some $\beta$, then $w : \sC^\epsilon \to \Omega$ is \'etale. 
		\item[(ii)] If $\pi$ admits a non-zero Deligne-critical $L$-value at $p$ with sign $\epsilon$, then $w : \sC^\epsilon \to \Omega$ is \'etale.
	\end{itemize}
\end{proposition}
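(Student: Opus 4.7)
The plan is to argue essentially as in Proposition~\ref{prop:ann = zero} and Corollary~\ref{cor:T is faithful}(i), but applied to the cyclic, rank~$1$ situation provided by Propositions~\ref{prop:delocalise 1}(ii) and \ref{prop:free neighbourhood}. This turns faithfulness of the module into faithfulness of the Hecke algebra, which, combined with cyclicity, yields \'etaleness.

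For part (i), up to shrinking $\Omega$, Proposition~\ref{prop:delocalise 1}(ii) gives an ideal $I_{\sC^\epsilon}\subset\cO_\Omega$ with $\bT_{\Omega,\sC}^{\deceps}\cong \cO_\Omega/I_{\sC^\epsilon}$, and Proposition~\ref{prop:free neighbourhood} tells us that
\[
M\defeq \hc{t}(S_{K(\tilde\pi)},\sD_\Omega)^{\leqslant h,\epsilon}\otimes_{\bT_{\Omega,h}^{\deceps}}\bT_{\Omega,\sC}^{\deceps}
\]
is free of rank one over $\bT_{\Omega,\sC}^{\deceps}$. Fix a generator; since $\mathrm{sp}_{\lambda_\pi}$ specialises $M$ onto the line $\hc{t}(S_{K(\tilde\pi)},\sD_{\lambda_\pi})^{\epsilon}_{\m_{\tilde\pi}^{\dec}}$ by Proposition~\ref{prop:surjection} and Proposition~\ref{thm:mult one}, after rescaling this generator we obtain $\Phi_{\sC}^\epsilon\in M$ with $\mathrm{sp}_{\lambda_\pi}(\Phi_{\sC}^\epsilon)=\Phi_{\tilde\pi}^\epsilon$. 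Restricting \eqref{eq:final evaluation} gives an $\cO_\Omega$-linear map $\mathrm{Ev}_\beta^{\eta_0}:M\to \cD(\Galp,\cO_\Omega)$, and by Proposition~\ref{prop:evaluations in families}
\[
\mathrm{sp}_{\lambda_\pi}\big(\mathrm{Ev}_\beta^{\eta_0}(\Phi_{\sC}^\epsilon)\big)=\mathrm{Ev}_\beta^{\eta_0}(\Phi_{\tilde\pi}^\epsilon)\neq 0,
\]
so $\mathrm{Ev}_\beta^{\eta_0}(\Phi_{\sC}^\epsilon)\neq 0$. Now let $u\in I_{\sC^\epsilon}\subset\cO_\Omega$. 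By freeness of $M$ over $\bT_{\Omega,\sC}^{\deceps}$, the action of $u$ on the generator $\Phi_{\sC}^\epsilon$ factors through the image of $u$ in $\bT_{\Omega,\sC}^{\deceps}=\cO_\Omega/I_{\sC^\epsilon}$, so $u\Phi_{\sC}^\epsilon=0$. Applying $\mathrm{Ev}_\beta^{\eta_0}$ and using that $\cD(\Galp,\cO_\Omega)$ is $\cO_\Omega$-torsion-free, we conclude $u=0$. Hence $I_{\sC^\epsilon}=0$, i.e.\ $\bT_{\Omega,\sC}^{\deceps}\cong\cO_\Omega$, which is precisely \'etaleness of $w:\sC^\epsilon\to\Omega$ at $x_{\tilde\pi}^{\deceps}$.

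For part (ii), the idea is simply to deduce the hypothesis of (i) from the non-vanishing of an $L$-value of the prescribed sign. Choose $\chi$ of conductor $p^\beta$ with $\beta_{\pri}\geqslant 1$ for all $\pri|p$ and $j\in\mathrm{Crit}(\lambda_\pi)$ with $L(\pi\otimes\chi,j+\tfrac12)\neq 0$ and $\epsilon=(\chi\chi_{\cyc}^j\eta)_\infty$. By Lemma~\ref{lem:compatibility} and Theorem~\ref{thm:critical value},
\[
\int_{\Galp}\chi\,\chi_{\cyc}^j\cdot \mathrm{Ev}_\beta^{\eta_0}(\Phi_{\tilde\pi}^\epsilon)=\cE_\chi^{j,\eta_0}(\phi_{\tilde\pi}^\epsilon)=(\ast)\,L(\pi\otimes\chi,j+\tfrac12)/\Omega_\pi^\epsilon\neq 0
\]
with $(\ast)\neq 0$ (the sign-matching condition in Theorem~\ref{thm:critical value} being exactly our choice of $\epsilon$). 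In particular $\mathrm{Ev}_\beta^{\eta_0}(\Phi_{\tilde\pi}^\epsilon)\neq 0$, and part (i) applies. The main technical ingredient is the rank-one freeness of Proposition~\ref{prop:free neighbourhood} (itself a consequence of Hypothesis~\ref{ass:shalika}(i) via Proposition~\ref{thm:mult one}); once this is in hand, the rest is a direct imitation of the faithfulness argument of Proposition~\ref{prop:ann = zero} using that $\cD(\Galp,\cO_\Omega)$ is torsion-free over $\cO_\Omega$.
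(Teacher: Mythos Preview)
Your proposal is correct and follows essentially the same approach as the paper: take a generator $\Phi_{\sC}^\epsilon$ of the rank-one module from Proposition~\ref{prop:free neighbourhood}, note that $\mathrm{Ann}_{\cO_\Omega}(\Phi_{\sC}^\epsilon)=I_{\sC^\epsilon}$ by Propositions~\ref{prop:delocalise 1} and \ref{prop:free neighbourhood}, and then run the torsion-freeness argument of Proposition~\ref{prop:ann = zero} to conclude $I_{\sC^\epsilon}=0$; for (ii) the paper likewise reduces to (i) via the $L$-value computation. One small point the paper makes explicit that you do not: in (ii), the use of Theorem~\ref{thm:critical value} at the specific level $K(\tilde\pi)$ of \eqref{eq:shalika level} relies on Hypothesis~\ref{ass:shalika}(ii), since it is this part of the hypothesis that guarantees the generator of the Shalika essential line at $v\in S$ is a Friedberg--Jacquet test vector.
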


\begin{proof} 
	Let $\Phi_{\sC}^\epsilon$ be a generator of $\htc(S_{K(\tilde\pi)},\sD_{\Omega})\sshe\otimes_{\T\USh}\bT_{\Omega,\sC}^{\deceps}$ over $\bT_{\Omega,\sC}^{\deceps}$, normalised so that $\mathrm{sp}_{\lambda_\pi}(\Phi_{\sC}^\epsilon) = \Phi_{\tilde\pi}^{\epsilon}$. Combining Propositions~\ref{prop:delocalise 1} and \ref{prop:free neighbourhood}, we have $\mathrm{Ann}_{\cO_\Omega}(\Phi_{\sC}^\epsilon) = I_{\sC^\epsilon}$. Exactly as in Proposition~\ref{prop:ann = zero}, the non-vanishing hypothesis gives 
\disp{
	0 = \mathrm{Ann}_{\cO_\Omega}(\Phi_{\sC}^\epsilon) = I_{\sC^\epsilon},
}
	giving (i). For (ii), we argue exactly as in Corollary~\ref{cor:ann = zero}; the sign condition (Definition~\ref{def:non-vanishing}) is now necessary due to the support of $\cE_{\chi}^{j,\eta_0}$ (see Theorem~\ref{thm:critical value}). In (ii) we have used Hypothesis~\ref{ass:shalika} for $\pi$.
\end{proof}

\begin{proposition}\label{cor:zariski dense cuspidal}
	Suppose $\tilde\pi$ is strongly non-$Q$-critical, $\lambda_{\pi}$ is $H$-regular and $\mathrm{Ev}_\beta^{\eta_0}(\Phi_{\tilde\pi}^{\epsilon_0}) \neq 0$ for some $\epsilon_0, \beta$. Then $\sC^{\epsilon_0}$ contains a Zariski-dense set $\sC_{\mathrm{nc}}^{\epsilon_0}$ of classical cuspidal non-$Q$-critical points.
\end{proposition}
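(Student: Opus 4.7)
The plan is to mirror the proof of Proposition~\ref{prop:zariski dense cuspidal}, substituting Proposition~\ref{thm:I=0}(i) for the more delicate dimension-counting argument used there. Since the non-vanishing hypothesis $\mathrm{Ev}_\beta^{\eta_0}(\Phi_{\tilde\pi}^{\epsilon_0})\neq 0$ is assumed, I would first shrink $\Omega$ so that Proposition~\ref{thm:I=0}(i) applies, giving that the weight map $w:\sC^{\epsilon_0}\to\Omega$ is \'etale. In particular $\dim \sC^{\epsilon_0}=\dim\Omega$, and $w$ is open, finite, and flat near $x_{\tilde\pi}^{\dec,\epsilon_0}$.

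Next, I would reuse the Zariski-dense subset $\Omega_{\mathrm{ncs}}\subset\Omega$ introduced in \S\ref{sec:zariski dense classical}, consisting of algebraic dominant regular weights $\lambda$ for which $h$ is a non-$Q$-critical slope. Its Zariski-density in $\Omega$ follows from Lemma~\ref{lem:zariski regular}, whose hypothesis is met by the assumed $H$-regularity of $\lambda_\pi$. Defining
\[
\sC_{\mathrm{nc}}^{\epsilon_0}\defeq \sC^{\epsilon_0}\cap w^{-1}(\Omega_{\mathrm{ncs}}),
\]
\'etaleness (hence openness) of $w$ immediately gives that $\sC_{\mathrm{nc}}^{\epsilon_0}$ is Zariski-dense in $\sC^{\epsilon_0}$.

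It then remains to verify that every $y\in\sC_{\mathrm{nc}}^{\epsilon_0}$ is classical, cuspidal, and non-$Q$-critical. For non-$Q$-criticality, by construction the slope at $y$ is at most $h$ and $w(y)\in\Omega_{\mathrm{ncs}}$, so the non-$Q$-critical slope condition of Definition~\ref{def:non-critical slope} is satisfied; Theorem~\ref{thm:control} then gives strong non-$Q$-criticality. For classicality and cuspidality, I would invoke \cite[Prop.~5.15]{BW20} verbatim, exactly as in the proof of Proposition~\ref{prop:zariski dense cuspidal}, after pushing $y$ forward along the natural finite map $\sE\USha\to\sE\Uha$ induced by the inclusion $\cH\hookrightarrow\cH^\dec$.

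The main (very mild) obstacle is checking that the shift from the smaller Hecke algebra $\cH$ to the enlarged Shalika Hecke algebra $\cH^\dec$ does not invalidate the classicality/cuspidality conclusion. This is essentially cosmetic: classicality and cuspidality are properties of the Hecke eigensystem on $\hc{t}(S_{K(\tilde\pi)},\sD_{\lambda_y})$ and of the underlying automorphic representation $\pi_y$, respectively, and depend only on the image of the $\cH$-eigensystem carried by $y$, not on the eigenvalues of the auxiliary operators $S_{\alpha_v}$ for $v\in S$ that enrich $\cH^\dec$. Once this is observed, the proof terminates.
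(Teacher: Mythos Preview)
Your proposal is correct and follows essentially the same approach as the paper: use Proposition~\ref{thm:I=0}(i) to obtain \'etaleness (hence $\dim\sC^{\epsilon_0}=\dim\Omega$), then invoke \cite[Prop.~5.15]{BW20} via $\Omega_{\mathrm{ncs}}$ exactly as in Proposition~\ref{prop:zariski dense cuspidal}. The paper's proof is a two-line reference to that earlier proposition; your remarks on passing from $\cH^\dec$ to $\cH$ are a reasonable elaboration of what the paper leaves implicit.
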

\begin{proof} 
	The conditions ensure $\sC^{\epsilon_0} \to \Omega$ is \'etale, so $\dim(\sC^{\epsilon_0}) = \dim(\Omega)$. We conclude exactly as in Proposition \ref{prop:zariski dense cuspidal} using \cite[Prop.~5.15]{BW20}. 
\end{proof}

\begin{proposition}\label{prop:ind of eps}
	Suppose $\tilde\pi$ is strongly non-$Q$-critical, $\lambda_{\pi}$ is $H$-regular and that $\mathrm{Ev}_\beta^{\eta_0}(\Phi_{\tilde\pi}^{\epsilon_0}) \neq 0$ for some $\epsilon_0$ and some $\beta$. Then $\sC^\epsilon$ is independent of $\epsilon \in \{\pm1\}^\Sigma$, in the sense that for any such $\epsilon$, there is a canonical isomorphism $\sC^{\epsilon_0} \isorightarrow \sC^{\epsilon}$ over $\Omega$.
\end{proposition}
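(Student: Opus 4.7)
The plan is to emulate the strategy used in Corollary~\ref{cor:etale bottom level}(ii): construct a closed immersion $\sC^{\epsilon_0} \hookrightarrow \sC^{\epsilon}$ via $p$-adic Langlands functoriality, then exploit the cyclicity results of \S\ref{sec:existence of x} to deduce that both components are étale over $\Omega$ and hence identified canonically. The hypothesis $\mathrm{Ev}_\beta^{\eta_0}(\Phi_{\tilde\pi}^{\epsilon_0}) \neq 0$ is exactly the input needed to apply Proposition~\ref{thm:I=0}(i) to $\sC^{\epsilon_0}$, producing étaleness and thereby, via Proposition~\ref{cor:zariski dense cuspidal}, the Zariski-density of classical cuspidal non-$Q$-critical points needed to activate the functoriality machinery.

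First I would fix $\epsilon \in \{\pm 1\}^\Sigma$ and use Proposition~\ref{cor:zariski dense cuspidal} to obtain a Zariski-dense set $\sC_{\mathrm{nc}}^{\epsilon_0} \subset \sC^{\epsilon_0}$ of classical cuspidal non-$Q$-critical points. For $y \in \sC_{\mathrm{nc}}^{\epsilon_0}$ with attached $Q$-refined RACAR $\tilde\pi_y$, the decomposition \eqref{eq:cuspidal cohomology 2} implies that $\pi_y$ contributes to classical cohomology in every sign $\epsilon$. Combining non-$Q$-criticality with Theorem~\ref{thm:control} and Remark~\ref{rem:non-Q-critical implies point} then yields a classical point $y^\epsilon \in \sE_{\Omega,h}^{\dec,\epsilon}(K(\tilde\pi))$ sharing the same Hecke eigensystem as $y$.

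Next I would apply $p$-adic Langlands functoriality \cite[Thm.~3.2.1]{JoNew} to the identity on $\cH^\dec$ and $\Omega$: since $\sC^{\epsilon_0}$ carries a Zariski-dense set of classical points transferred into $\sE_{\Omega,h}^{\dec,\epsilon}(K(\tilde\pi))$ by $y \mapsto y^\epsilon$, there is a closed immersion $\iota : \sC^{\epsilon_0} \hookrightarrow \sE_{\Omega,h}^{\dec,\epsilon}(K(\tilde\pi))$ over $\Omega$ realising this assignment. In particular $\iota(x_{\tilde\pi}^{\dec,\epsilon_0}) = x_{\tilde\pi}^{\dec,\epsilon}$, and by connectedness $\iota$ factors through $\sC^\epsilon$. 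The surjection on local rings $\cO_{\sC^\epsilon,x_{\tilde\pi}^{\dec,\epsilon}} \twoheadrightarrow \cO_{\sC^{\epsilon_0},x_{\tilde\pi}^{\dec,\epsilon_0}} = \Lambda_{\lambda_\pi}$ (the equality from Proposition~\ref{thm:I=0}(i) applied to $\epsilon_0$), combined with Proposition~\ref{prop:delocalise 1}(i) for $\epsilon$, forces the defining ideal of $\cO_{\sC^\epsilon,x_{\tilde\pi}^{\dec,\epsilon}}$ over $\Lambda_{\lambda_\pi}$ to vanish. Hence $\sC^\epsilon \to \Omega$ is also étale at $x_{\tilde\pi}^{\dec,\epsilon}$, and after shrinking $\Omega$ both $\sC^{\epsilon_0}$ and $\sC^\epsilon$ are isomorphic to $\Omega$ via the weight map in a manner compatible with $\iota$, giving the canonical isomorphism $\sC^{\epsilon_0} \isorightarrow \sC^\epsilon$.

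The main obstacle I anticipate is verifying that the transferred points $y^\epsilon$ indeed lie in $\sE_{\Omega,h}^{\dec,\epsilon}(K(\tilde\pi))$, i.e.\ that each $\tilde\pi_y$ is detected by the Shalika-type Hecke operators $S_{\alpha_v}$ at level $K(\tilde\pi)$ (rather than at some strictly smaller level adapted to $\pi_y$). This reduces to checking a mild compatibility between the Shalika conductors of $\pi_y$ and $\pi$, which should follow from Hypothesis~\ref{ass:shalika} together with the fact that the classical points produced by \cite[Prop.~5.15]{BW20} satisfy the full Conditions~\ref{cond:running assumptions}; one then observes that the eigensystem $\m_{\tilde\pi_y}^\dec$ on $\cH^\dec$ is well-defined with the same global normalisation of $\eta_0$ as that of $\tilde\pi$, so the $S_{\alpha_v}$-eigenvalues automatically match.
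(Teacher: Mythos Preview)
Your proposal is correct and follows essentially the same route as the paper: use Proposition~\ref{thm:I=0} and Proposition~\ref{cor:zariski dense cuspidal} to get \'etaleness and Zariski-density of classical cuspidal non-$Q$-critical points on $\sC^{\epsilon_0}$, transfer these points to the $\epsilon$-part via $p$-adic Langlands functoriality, and then use the cyclicity result (Proposition~\ref{prop:delocalise 1}) to force $I_{\sC^\epsilon} = 0$.

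One comment on your final paragraph: the obstacle you flag is not a genuine concern, and your proposed resolution (matching Shalika conductors, checking Conditions~\ref{cond:running assumptions} for $\pi_y$) is unnecessary. The point is that the eigensystem $\m_y^{\dec}$, including the $S_{\alpha_v}$- and $U_{\pri}^\circ$-eigenvalues, is determined purely by the action of $\cH^{\dec}$ on $\pi_{y,f}^{K(\tilde\pi)}$, and this action is independent of $\epsilon$. Since Proposition~\ref{prop:non-canonical} gives a $\cH^{\dec}$-equivariant isomorphism $\pi_{y,f}^{K(\tilde\pi)} \isorightarrow \hc{t}(S_{K(\tilde\pi)},\sV_{\lambda_y}^\vee)^{\epsilon}_{\m_{\pi_y}}$ for \emph{every} $\epsilon$, the localisation at $\m_y^{\dec}$ is nonzero in every $\epsilon$-part automatically. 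You do not need to know anything about Shalika models or conductors of $\pi_y$ at this stage; that comes later in Proposition~\ref{prop: shalika families}.
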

\begin{proof}
	By Propositions~\ref{thm:I=0} and \ref{cor:zariski dense cuspidal}, $\sC^{\epsilon_0}$ is \'etale over $\Omega$ and $\sC_{\mathrm{nc}}^{\epsilon_0} \subset \sC^{\epsilon_0}$ is Zariski-dense. Now let $\epsilon$ be arbitrary. At any $y^{\epsilon_0} \in \sC_{\mathrm{nc}}^{\epsilon_0}$ of weight $\lambda_y$ corresponding to $\tilde\pi_y$, by Proposition~\ref{prop:non-canonical} and non-$Q$-criticality we have
	\disp{
	0 \neq \htc(S_{K(\tilde\pi)},\sV_{\lambda_y}^\vee)^\epsilon_{\m_y^{\dec}} \cong \htc(S_{K(\tilde\pi)},\sD_{\lambda_y})^\epsilon_{\m_y^{\dec}}.
} 
	By Remark~\ref{rem:non-Q-critical implies point}, since $y^{\epsilon_0}$ is cuspidal there exists $y^\epsilon\in\sE\USh$ corresponding to $\tilde\pi_y$. As in Corollary~\ref{cor:etale bottom level}, the map 
	\[
	\sC_{\mathrm{nc}}^{\epsilon_0} \to \sE\USh, \qquad y^{\epsilon_0} \mapsto y^\epsilon
	\]
	interpolates to a closed immersion 
	\[
	\iota^\epsilon : \sC^{\epsilon_0} \hookrightarrow \sE\USh
	\]
	sending $x_{\tilde\pi}^{\dec, \epsilon_0}$ to $x_{\tilde\pi}^{\deceps}$. Thus $\iota^\epsilon(\sC^{\epsilon_0}) \subset \sC^\epsilon$, so $\sC^\epsilon$ contains an irreducible component of dimension $\dim(\Omega)$. As $\cO_{\sC^\epsilon} \cong \cO_\Omega/I_{\sC^\epsilon}$, we deduce $I_{\sC^\epsilon} = 0$, so $\sC^\epsilon \to \Omega$ is \'etale at $\tilde\pi$, and conclude that $\iota^\epsilon$ is an isomorphism as in Corollary~\ref{cor:etale bottom level}.
\end{proof}

Hence $\sE\USha$ and all the $\sE\USh$ are locally isomorphic at $\tilde\pi$, so we drop $\epsilon$ from notation. We deduce existence and \'etaleness of the component $\sC \subset \sE\USha$ in Theorem~\ref{thm:section 7 main theorem}; we take it to be any of the $\sC^\epsilon$. The isomorphisms between the $\sC^\epsilon$ identify all the $x_{\tilde\pi}^{\deceps}$ with a single point $x_{\tilde\pi}^{\dec} \in \sE\USha$. 

\begin{remark}
	Having a family of \emph{cuspidal} automorphic representations is essential here; e.g.\ for $\GL_2$, an Eisenstein series will appear in only one of the $\pm$-eigencurves (see \cite[\S3.2.6]{Bel12}).
\end{remark}

\subsubsection{Eigenclasses for Shalika families}
We now refine Proposition~\ref{cor:zariski dense cuspidal}. Suppose $\tilde\pi$ satisfying (C1-2) of Conditions \ref{cond:running assumptions} is strongly non-$Q$-critical. For $v \in S$, assume $\pi_v$ admits a Shalika new vector of conductor $c(\pi_v)$ and Hypothesis~\ref{ass:shalika} holds for $c = c(\pi_v)$. Suppose $\lambda_\pi$ is $H$-regular and $\pi$ admits a non-zero Deligne-critical $L$-value at $p$. Then by \S\ref{sec:etaleness}, we know:
\begin{itemize}\s
	\item[(1)] that there is a unique irreducible component $\sC$ of $\sE\USha$ through $x_{\tilde\pi}^{\dec}$; 
	\item[(2)] that $\sC = \Sp(\bT_{\Omega,\sC}^{\dec}) = \Sp(\bT_{\Omega,\sC}^{\deceps})$ for all $\epsilon$; and 
	\item[(3)] that $w : \sC\to \Omega$ is \'etale. 
\end{itemize}

As in Proposition~\ref{prop:zariski dense cuspidal}, we deduce that $\sC$ contains a Zariski-dense set $\sC_{\mathrm{nc}}$ of classical cuspidal non-$Q$-critical points. If $y \in \sC_{\mathrm{nc}}$ it corresponds to a $Q$-refined RACAR $\tilde\pi_y$, since by construction $y$ appears in cohomology at (parahoric-at-$p$) level $K(\tilde\pi)$.

\begin{proposition} \label{prop: shalika families}
	For each $\epsilon \in \{\pm1\}^\Sigma$, up to shrinking $\Omega$, there exists a Hecke eigenclass $\Phi_{\sC}^\epsilon\in \htc(S_{K(\tilde\pi)}, \sD_{\Omega})^{\epsilon}$ such that for each $y \in \sC_{\mathrm{nc}}$:
	\begin{itemize}\setlength{\itemsep}{0pt}
		\item[(i)]  $\htc(S_{K(\tilde\pi)},\sD_{\lambda_y})^\epsilon_{\m_y^{\dec}}$ is a line generated by  $\mathrm{sp}_{\lambda_y}(\Phi_{\sC}^\epsilon)$, where $\lambda_y \defeq w(y)$,
		\item[(ii)] the $Q$-refined RACAR $\tilde\pi_y$ satisfies (C1),
		\item[(iii)] for all $v \in S$, $\pi_{y,v}$ admits a Shalika new vector of conductor $c(\pi_{y,v}) = c(\pi_v)$, and
		\item[(iv)] $\tilde\pi_y$ satisfies (C2).
	\end{itemize}
\end{proposition}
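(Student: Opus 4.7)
The strategy is to build $\Phi_\sC^\epsilon$ from the rank-one freeness of Proposition~\ref{prop:free neighbourhood}, propagate it across classical weights $\lambda_y$ via the étaleness of $w : \sC \to \Omega$ from Propositions~\ref{thm:I=0} and~\ref{prop:ind of eps}, and use the non-vanishing $L$-value hypothesis on $\pi$ to produce the Shalika models at $\pi_y$. After shrinking $\Omega$, pick a generator of the rank-one free $\bT_{\Omega,\sC}^{\deceps}$-module $\hc{t}(S_{K(\tilde\pi)},\sD_\Omega)^{\leqslant h,\epsilon} \otimes_{\bT\USh} \bT_{\Omega,\sC}^{\deceps}$ and rescale it so that $\mathrm{sp}_{\lambda_\pi}(\Phi_\sC^\epsilon) = \Phi_{\tilde\pi}^\epsilon$ inside the one-dimensional space $\hc{t}(S_{K(\tilde\pi)},\sD_{\lambda_\pi})^\epsilon_{\m_{\tilde\pi}^{\dec}}$ supplied by Proposition~\ref{thm:mult one}. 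Granted~(ii), part~(i) is immediate: rank-one freeness and étaleness force $\mathrm{sp}_{\lambda_y}(\Phi_\sC^\epsilon)$ to be a non-zero element of $\hc{t}(S_{K(\tilde\pi)},\sD_{\lambda_y})^\epsilon_{\m_y^{\dec}}$, and Proposition~\ref{thm:mult one} applied to $\tilde\pi_y$ shows this space is one-dimensional, so the class generates.

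For~(ii), the global $(\eta_y,\psi)$-Shalika model on $\pi_y$ (condition~(C1)) is produced by adapting the argument behind Proposition~\ref{prop:zariski dense shalika no essential}. Fix $(\chi,j)$ with $L(\pi\otimes\chi,j+\tfrac{1}{2})\neq 0$ (furnished by the non-vanishing hypothesis on $\pi$) and consider the rigid function on $\Omega$ defined by $\lambda \mapsto \int_{\Galp}\chi\chi_{\cyc}^j \cdot \mathrm{Ev}_\beta^{\eta_0}(\mathrm{sp}_\lambda \Phi_\sC^\epsilon)$. By Theorem~\ref{thm:critical value} it is non-zero at $\lambda_\pi$, hence on a Zariski-open neighbourhood. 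Lemma~\ref{lem:compatibility} combined with Proposition~\ref{prop:evaluations in families} and the étaleness of $w$ (which forces $w^{-1}(\lambda_y)\cap\sC = \{y\}$ locally) translates this to non-vanishing of the classical evaluation $\cE_\chi^{j,\eta_0}(r_{\lambda_y}\mathrm{sp}_{\lambda_y}\Phi_\sC^\epsilon)$, and Proposition~\ref{prop:shalika non-vanishing} then supplies the desired Shalika model. Condition~(C2) is obtained by openness: $\pi_{y,\fp}$ is parahoric spherical because the class has level $J_\fp$ at $\fp$, the one-dimensionality in~\eqref{eq:Up refined line 2} is an open condition on $\Omega$, the $U_\fp^\circ$-eigenvalue equals $\psi_y^{\dec}(U_\fp^\circ)$ by construction of $y$, and the normalisation $W_\fp(t_\fp^{-\delta_\fp}) \neq 0$ propagates from $\tilde\pi$ by continuity of the local zeta integral in Lemma~\ref{lem:zeta at p}.

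The main obstacle is the Shalika conductor equality $c(\pi_{y,v}) = c(\pi_v)$ at $v \in S$. The upper bound $c(\pi_{y,v}) \leqslant c(\pi_v)$ is immediate: the local $(\eta_{y,v},\psi_v)$-Shalika model on $\pi_{y,v}$ inherited from~(C1), together with the $S_{\alpha_v}$-eigenvalue $\eta_0(\alpha_v) = \eta_{y,v}(\alpha_v)$ on $\mathrm{sp}_{\lambda_y}\Phi_\sC^\epsilon$ for every $\alpha_v \in \cO_v^\times$, force the Shalika essential line at level $J_{v^{c(\pi_v)}}$ to be non-zero, whence Lemma~\ref{lem:equivalent shalika} gives the bound. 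The reverse inequality is subtler: the natural route is to read the Shalika conductor off the inertial action of $\mathrm{WD}(\rho_{\pi_y,v})$ via local--global compatibility for essentially self-dual RACARs (Shin--Caraiani), and then to exploit local constancy of the Weil--Deligne representation on inertia in a pseudo-character family, in the spirit of \cite[Lem.~7.8.17]{BC09} as used for the Whittaker conductor in the proofs of Propositions~\ref{prop:conductors equal}--\ref{prop:artin constant}. The subtle point is relating the Shalika-side invariant of $\pi_{y,v}$ directly to $\rho_{\pi_y,v}|_{\mathrm{I}_v}$, for which there is no ready-made analogue of the Whittaker conductor formula of \cite{JPSS}.
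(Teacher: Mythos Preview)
Your overall framework for (C1) via Proposition~\ref{prop:shalika non-vanishing} matches the paper, but the logical ordering is reversed and this creates a genuine gap at the Shalika conductor step.

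The paper proves (i) \emph{first} and \emph{independently of (ii)}, without invoking Proposition~\ref{thm:mult one} for $\tilde\pi_y$. From rank-one freeness and the isomorphism $\cO_\Omega \isorightarrow \bT_{\Omega,\sC}^{\deceps}$, reduction modulo $\m_{\lambda_y}$ is identified with reduction modulo $\m_y^{\dec}$, and a short chain of isomorphisms (using Remark~\ref{rem:non-Q-critical implies point} at the end) shows directly that
\[
\htc(S_{K(\tilde\pi)},\sD_\Omega)\sshe \otimes_{\bT\USh}\bT_{\Omega,\sC}^{\deceps}/\m_{\lambda_y} \ \cong\ \htc(S_{K(\tilde\pi)},\sD_{\lambda_y})^\epsilon_{\m_y^{\dec}}
\]
is one-dimensional, generated by $\mathrm{sp}_{\lambda_y}(\Phi_\sC^\epsilon)$. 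No properties of $\tilde\pi_y$ beyond non-$Q$-criticality are used.

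This one-dimensionality is then the \emph{input} for the Shalika conductor equality, not the other way round. Combining it with non-$Q$-criticality and Proposition~\ref{prop:non-canonical}, the generalised $\cH^{\dec}$-eigenspace in $\pi_{y,f}^{K(\tilde\pi)}$ at $\m_y^{\dec}$ is a line over $\C$. Factoring at $v \in S$ gives
\[
\dim_\C\ (\pi_{y,v})^{J_{v^c}^{\eta_y}}\big[S_{\alpha_v} - \eta_{y,v}(\alpha_v) : \alpha_v \in \cO_v^\times\big] = 1,
\]
and Lemma~\ref{lem:equivalent shalika} then gives $c(\pi_{y,v}) = c$ \emph{on the nose}. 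No Galois theory, no local-global compatibility, no continuity of Weil--Deligne data is needed here; your proposed route via inertial constancy is unnecessary and, as you yourself note, lacks the Shalika analogue of the conductor formula. The same one-dimensionality at $\pri\mid p$ gives the regularity in (C2); the non-vanishing $W_{y,\pri}(t_\pri^{-\delta_\pri})\neq 0$ is not obtained by a continuity argument but by computing $\cE_\chi^{j,\eta_0}$ at $y$ via Lemma~\ref{lem:zeta at p} and observing that the product of the $W_{y,\pri}(t_\pri^{-\delta_\pri})$ appears as a factor of a non-zero quantity.
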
 
\begin{proof} 
			(i) This is a standard consequence of Propositions \ref{thm:I=0} and \ref{prop:ind of eps}; see e.g.\ \cite[\S4.2]{BDJ17}, \cite[Prop.\ 6.7]{BW18} or \cite[Prop.\ 2.18]{BW-Iwasawa}.
			
			(ii)  We can argue as in Proposition~\ref{prop:zariski dense shalika no new}; indeed here we already have \'etaleness, so this case is easier, and we are terse with details. Fix $(\chi,j)$ with $L^{(p)}(\pi\otimes\chi, j+\tfrac{1}{2}) \neq 0$ (by hypothesis), where $\chi$ has conductor $p^\beta$, with $\beta_{\pri} \geqslant 1$ for all $\pri|p$. Let $\epsilon = (\chi\chi_{\cyc}^j\eta)_\infty$, and define (up to shrinking $\Omega$) an everywhere nonvanishing map $\mathrm{Ev}_{\chi,j}^\Omega : \hc{t}(S_{K(\tilde\pi)},\sD_\Omega) \to \cO_\Omega$. Then for all $y \in \sC_{\mathrm{nc}}$, we have
			\begin{align}\label{eq:ev at y}
				\mathrm{Ev}_{\chi,j}^\Omega(\Phi_{\sC}^\epsilon)(\lambda_y)	&= \cE_\chi^{j,\eta_0}\left(r_{\lambda_y} \circ \mathrm{sp}_{\lambda_y}(\Phi_{\sC}^\epsilon)\right)\neq 0. 
			\end{align}
			As \eqref{eq:ev at y} is non-zero, each $\pi_y$ satisfies (C1) by Proposition~\ref{prop:shalika non-vanishing}, showing (ii).

	We need the following in proving both (iii) and (iv). Combining (i) with non-$Q$-criticality shows
	\disp{
	\dim_L \htc(S_{K(\tilde\pi)},\sV_{\lambda_y}^\vee(L))^\epsilon_{\m_y^{\dec}} = 1.
}
	Base-changing, the same is true with $\overline{\Q}_p$-coefficients. By Proposition~\ref{prop:non-canonical}, we see:
	\[
	(\dagger) \ \text{ the generalised $\cH^{\dec}$-eigenspace in $\pi_{y,f}^{K(\tilde\pi)}$ at $\m_{y}^{\dec}$ is a line over $\C$}.
	\]

	(iii) We now study $\pi_y$ at $v \in S$. Letting $\eta_{y} = \eta_0|\cdot|^{\sw_y}$, by ($\dagger)$ we have
	\[
	\dim_\C\  (\pi_{y,v})^{J_{v}^{\eta_{y}}(c)}\big[S_{\alpha_v} - \eta_{y,v}(\alpha_v) : \alpha_v \in \cO_v^\times \big] \ = 1.
	\]
	Lemma~\ref{lem:equivalent shalika} implies that $\pi_{y,v}$ admits a Shalika new vector of conductor $c = c(\pi_v)$, giving (iii). 
	
	\medskip
	
	(iv) As in (iii), $\pi_{\pri,y}$ is parahoric-spherical as $(\dagger)$ implies
	\begin{equation}\label{eq:Up line family}
		\dim_\C\  \pi_{\pri,y}^{J_{\pri}}\big[U_{\pri}^\circ - \alpha_{\pri,y}^\circ\big] = \dim_\C\  \cS_{\psi_{\pri}}^{\eta_{\pri,y}}(\pi_{\pri,y}^{J_{\pri}})\big[U_{\pri}^\circ - \alpha_{\pri,y}^\circ\big]\ = 1,
	\end{equation}
	where $\alpha_{\pri,y}^\circ$ is the $U_{\pri}^\circ$-eigenvalue of $\tilde\pi_y$. It remains to show the non-vanishing in (C2).
	
	Let $W_{\pri,y}$ be a generator of \eqref{eq:Up line family} for $\pri|p$. Using (C1) at $v \notin S\cup\{\pri|p\}$, and Hypothesis~\ref{ass:shalika} and the equality $c(\pi_{y,v}) = c(\pi_v)$ for $v \in S$, we may take Friedberg--Jacquet test vectors $W_{y,v}^{\mathrm{FJ}}$ for $v\nmid p$ such that
	\[
	W_{y,f}^{\mathrm{FJ}} = \otimes_{v\nmid\pri}W_{y,v}^{\mathrm{FJ}} \otimes_{\pri|p}W_{\pri,y}  \ \in \cS_{\psi_f}^{\eta_{y,f}}\left(\pi_{y,f}^{K(\tilde\pi)}\right)
	\]
	is fixed by $K(\tilde\pi)$. For $\epsilon$ as in \eqref{eq:ev at y}, let 
	\[
	\phi_{y}^\epsilon \defeq \Theta_{i_p}^{K(\tilde\pi),\epsilon}(W_{y,f}^{\mathrm{FJ}}) \in \hc{t}(S_{K(\tilde\pi)}, \sV_{\lambda_y}^\vee(\overline{\Q}_p))_{\m_y^{\dec}}^\epsilon.
	\]
	This line contains $r_{\lambda_y}\circ \mathrm{sp}_{\lambda_y}(\Phi_{\sC}^\epsilon)$, so there is $c_y^\epsilon \in \overline{\Q}_p^\times$ such that $c_y^\epsilon\phi_{y}^\epsilon = r_{\lambda_y}\circ \mathrm{sp}_{\lambda_y}(\Phi_{\sC}^\epsilon)$. Then
	\begin{align*}
		c_y^\epsilon \cdot \cE_\chi^{j,\eta_0}\left(\phi_{y}^\epsilon\right) = \cE_\chi^{j,\eta_0}\left(r_{\lambda_y} \circ \mathrm{sp}_{\lambda_y}(\Phi_{\sC}^\epsilon)\right)  \neq 0,
	\end{align*}
	where non-vanishing is \eqref{eq:ev at y}. As in Theorem~\ref{thm:critical value} (via Lemma \ref{lem:local zeta integrals} and Proposition~\ref{lem:zeta at p}), the left-hand side is of the form $i_p[(*)L^{(p)}(\pi_y\otimes\chi,j+1/2)\prod_{\pri|p} W_{y,\pri}(t_{\pri}^{-\delta_{\pri}})] \neq 0$, for $(*)$ a non-zero scalar. 	As the $L$-function is analytic, we deduce that each $W_{y,\pri}(t_{\pri}^{-\delta_{\pri}}) \neq 0$. We can renormalise $W_{y,\pri}$ (and hence $c_y^\epsilon$) so that $W_{y,\pri}(t_{\pri}^{-\delta_{\pri}}) = 1$, so (C2) holds.
\end{proof}

With this, we have completed the proof of Theorem~\ref{thm:section 7 main theorem}.

\subsection{Families of $p$-adic $L$-functions}\label{sec:definition family Lp}
Let $\tilde\pi$ satisfy (C1-2) of Conditions~\ref{cond:running assumptions}. We also assume that all the hypotheses of Theorem~\ref{thm:section 7 main theorem} are satisfied. 

Let $\sC \subset \sE_{\Omega, h}^{\dec}$ be the unique (Shalika) family through $x_{\tilde\pi}^{\dec}$, and $\sC_{\mathrm{nc}}$ the Zariski-dense subset of classical points, both from Theorem~\ref{thm:section 7 main theorem}. For each $\epsilon \in \{\pm 1\}^\Sigma$ let $\Phi_{\sC}^\epsilon \in \hc{t}(S_{K(\tilde\pi)},\sD_\Omega)^{\epsilon}$ be the resulting Hecke eigenclass. We may renormalise $\Phi_{\sC}^\epsilon $ so that $\mathrm{sp}_\lambda(\Phi_{\sC}^\epsilon) = \Phi_{\tilde\pi}^\epsilon$. The following is an analogue of Definition~\ref{def:non-critical slope Lp} for families.

\begin{definition} \label{def:Lp families} Let $\cL_{p}^{\sC, \epsilon} \defeq \mu^{\eta_0}(\Phi_{\sC}^\epsilon)$. Also let 
	\disp{
	\Phi_{\sC} = \sum_{\epsilon} \Phi_{\sC}^\epsilon \in \hc{t}(S_{K(\tilde\pi)},\sD_\Omega),
}
 which is also a Hecke eigenclass. Define the \emph{$p$-adic $L$-function over $\sC$} to be 
	\[
	\cL_p^{\sC} \defeq \mu^{\eta_0}(\Phi_{\sC}) = \sum_{\epsilon\in\{\pm1\}^\Sigma} \cL_p^{\sC,\epsilon} \in \cD(\Galp,\cO_\Omega).
	\]
	Via the Amice transform, as in Definition~\ref{def:non-critical slope Lp}, after identifying $\sC$ with $\Omega$ via $w$ we may consider $\cL_p^{\sC}$ as a rigid function $\sC \times \sX(\Galp) \to \overline{\Q}_p$.
\end{definition}

The following implies Theorem~\ref{thm:intro 3} of the introduction. The hard/novel part of the proof has already been handled; given Theorem \ref{thm:section 7 main theorem}, the remainder is standard.

\begin{theorem}\label{thm:family p-adic L-functions}
	Suppose $\tilde\pi$ satisfies the hypotheses of Theorem~\ref{thm:section 7 main theorem}. Let $y \in \sC_{\mathrm{nc}}$ be a classical cuspidal point attached to a non-$Q$-critical $Q$-refined RACAR $\tilde\pi_y$ satisfying (C1-2). For each $\epsilon$, there exists a $p$-adic period $c_y^\epsilon \in L^\times$ such that
	\begin{equation}\label{eq:spec in families}
	\cL_p^{\sC,\epsilon}(y, -) = c_y^\epsilon \cdot \cL_p^\epsilon(\tilde\pi_y, -)
	\end{equation}
	as functions $\sX(\Galp) \to \overline{\Q}_p$. In particular, $\cL_p^{\sC}$ satisfies the following interpolation: for any $j \in \mathrm{Crit}(w(y))$, and for any Hecke character $\chi$ of conductor $p^\beta$ with $\beta_{\pri} > 1$ for all $\pri|p$, we have
\[
		i_p^{-1}(\cL_p^{\sC}(y,\chi\chi_{\cyc}^j)) = c_y^\epsilon A \tau(\chi_f)^n \mathrm{N}_{F/\Q}(\fd)^{jn} \prod_{\pri|p}\epy \einfy \tfrac{L^{(p)}(\pi_y\times\chi, j+1/2)}{\Omega_{\pi_y}^\epsilon},
	\]
	where $\epsilon = (\chi\chi_{\cyc}^j\eta)_\infty$ and other notation is as in Theorem~\ref{thm:non-ordinary}. Finally $c_{x_{\tilde\pi}^\dec}^\epsilon = 1$ for all $\epsilon$.
\end{theorem}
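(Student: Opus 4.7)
\medskip

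The plan is to exploit the one-dimensionality of the relevant classical eigenspace at each $y \in \sC_{\mathrm{nc}}$ and the functoriality of the evaluation maps in the weight. First, I would fix $y \in \sC_{\mathrm{nc}}$ with $w(y) = \lambda_y$ and work one sign $\epsilon \in \{\pm 1\}^{\Sigma}$ at a time. By Proposition~\ref{prop: shalika families}(i), the class $\mathrm{sp}_{\lambda_y}(\Phi_{\sC}^\epsilon)$ generates the line $\hc{t}(S_{K(\tilde\pi)},\sD_{\lambda_y})^\epsilon_{\m_y^{\dec}}$. Since $\tilde\pi_y$ satisfies (C1--2) by Proposition~\ref{prop: shalika families}(ii) and is non-$Q$-critical, the class $\Phi_{\tilde\pi_y}^\epsilon$ of Definition~\ref{def:non-critical slope Lp} is a non-zero element of that same line. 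Both being non-zero elements of a one-dimensional $L$-vector space, there exists a unique scalar $c_y^\epsilon \in L^\times$ with
\[
\mathrm{sp}_{\lambda_y}(\Phi_{\sC}^\epsilon) = c_y^\epsilon \cdot \Phi_{\tilde\pi_y}^\epsilon.
\]

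Next, I would push this equality through the evaluation machinery. Applying $\mu^{\eta_0}$ and using the interpolation property of Proposition~\ref{prop:evaluations in families} (which says $\mathrm{sp}_{\lambda_y} \circ \mathrm{Ev}_\beta^{\eta_0} = \mathrm{Ev}_\beta^{\eta_0} \circ \mathrm{sp}_{\lambda_y}$, and hence the analogous compatibility for $\mu^{\eta_0}$ after dividing by the $U_{\pri}^\circ$-eigenvalues, which interpolate across $\sC$), one obtains
\[
\mathrm{sp}_{\lambda_y}\bigl(\cL_p^{\sC,\epsilon}\bigr) = \mathrm{sp}_{\lambda_y}\bigl(\mu^{\eta_0}(\Phi_{\sC}^\epsilon)\bigr) = \mu^{\eta_0}\bigl(\mathrm{sp}_{\lambda_y}(\Phi_{\sC}^\epsilon)\bigr) = c_y^\epsilon \cdot \mu^{\eta_0}(\Phi_{\tilde\pi_y}^\epsilon) = c_y^\epsilon \cdot \cL_p^\epsilon(\tilde\pi_y),
\]
which is exactly the first assertion. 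The interpolation formula \eqref{eq:spec in families} then follows immediately by pairing both sides against $\chi\chi_{\cyc}^j$, invoking Theorem~\ref{thm:non-ordinary} together with Remark~\ref{rem:signed interpolation}, which ensures $\cL_p^\epsilon(\tilde\pi_y,\chi\chi_{\cyc}^j)$ equals $A(\tilde\pi_y,\chi,j,\epsilon) L(\pi_y\otimes\chi,j+\tfrac12)/\Omega_{\pi_y}^\epsilon$ precisely when $\epsilon = (\chi\chi_{\cyc}^j\eta)_\infty$ (and vanishes otherwise, consistent with the sign condition of Theorem~\ref{thm:critical value}).

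Finally, for the normalisation at $y = x_{\tilde\pi}^{\dec}$, recall from Definition~\ref{def:Lp families} that we have renormalised $\Phi_{\sC}^\epsilon$ so that $\mathrm{sp}_{\lambda_\pi}(\Phi_{\sC}^\epsilon) = \Phi_{\tilde\pi}^\epsilon$. Plugging $y = x_{\tilde\pi}^{\dec}$ into the displayed equality above forces $c_{x_{\tilde\pi}^\dec}^\epsilon = 1$, as claimed.

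In this argument there is no genuine obstacle: everything reduces to the structural results already established in Theorem~\ref{thm:section 7 main theorem} (\'etaleness of $\sC$, existence of $\Phi_{\sC}^\epsilon$, and generation of the classical $\m_y^{\dec}$-eigenline) together with the functoriality of the evaluation maps in Proposition~\ref{prop:evaluations in families}. The only delicate point to verify carefully is that the map $\mu^{\eta_0}$ itself (not merely the auxiliary maps $\mathrm{Ev}_\beta^{\eta_0}$) interpolates under specialisation; this requires checking that the $U_{\pri}^\circ$-eigenvalues $\alpha_{\pri,y}^\circ$ of $\Phi_{\sC}^\epsilon$ at $y$ agree with those used in normalising $\cL_p^\epsilon(\tilde\pi_y)$, which is automatic since the Hecke algebra $\cH^{\dec}\otimes\cO_\Omega$ acts through $\cO_\sC$ by construction.
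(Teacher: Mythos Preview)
Your proposal is correct and follows essentially the same route as the paper's own proof: both compare $\mathrm{sp}_{\lambda_y}(\Phi_{\sC}^\epsilon)$ with the canonical overconvergent lift $\Phi_{\tilde\pi_y}^\epsilon$ inside the one-dimensional eigenspace $\hc{t}(S_{K(\tilde\pi)},\sD_{\lambda_y})^\epsilon_{\m_y^{\dec}}$, extract the scalar $c_y^\epsilon$, and then push through $\mu^{\eta_0}$ using Proposition~\ref{prop:evaluations in families}. The paper is slightly more explicit in constructing $\Phi_{\tilde\pi_y}^\epsilon$ from a Friedberg--Jacquet test vector at level $K(\tilde\pi)$ (which is legitimate precisely because the Shalika conductors are constant along $\sC_{\mathrm{nc}}$, a fact you cite via Proposition~\ref{prop: shalika families}(ii)), but otherwise the arguments coincide.
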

\begin{remark}
	The complex periods $\Omega_{\pi_y}^\epsilon$ are only well-defined up to multiplication by $E^\times$, where $E$ is the number field from Definition~\ref{def:gen eigenspace}; the numbers $c_y^\epsilon$ are $p$-adic analogues.
\end{remark}

\begin{proof}
	Let $y$ be as in the theorem and put $\lambda_y= w(y)$. As in \S\ref{sec:level group} (using Hypothesis~\ref{ass:shalika}), fix a Friedberg--Jacquet test vector 
	\[
	W_{y,f}^{\mathrm{FJ}} \in \cS_{\psi_f}^{\eta_{y,f}}(\pi_{y,f})^{K(\tilde\pi)},
	\]
	and for each $\epsilon$ a complex period $\Omega_{\pi_y}^\epsilon$ as in \S\ref{sec:periods}. Since $y \in \sC$ is defined over $L$, as in \S\ref{sec:periods} there exists a class 
	\[
	\phi_{y}^\epsilon \defeq \Theta_{i_p}^{K,\epsilon}(W_{y,f}^{\mathrm{FJ}})\big/i_p(\Omega_{\pi_y}^\epsilon) \in \hc{t}(S_{K(\tilde\pi)}, \sV_{\lambda_y}^\vee(L))_{\m_y^{\dec}}^\epsilon.
	\] 
	Via non-$Q$-criticality, we lift $\phi_{y}^\epsilon$ to a non-zero class $\Phi_{y}^\epsilon \in \hc{t}(S_{K(\tilde\pi)},\sD_{\lambda_y}(L))^\epsilon_{\m_y^{\dec}}$. By Theorem~\ref{thm:section 7 main theorem}, this space is equal to $L\cdot \mathrm{sp}_{\lambda_y}(\Phi_{\sC}^\epsilon)$, so there exists $c_y^\epsilon \in L^\times$ such that 
	\disp{
	\mathrm{sp}_{\lambda_y}(\Phi_{\sC}^\epsilon) = c_y^\epsilon \cdot \Phi_{y}^\epsilon.
}
	By definition, $\cL_p^\epsilon(\tilde\pi_y) = \mu^{\eta_0}(\Phi_{y}^\epsilon).$ As evaluation maps commute with weight specialisation (Proposition~\ref{prop:evaluations in families}), we find 
	\disp{
	\mathrm{sp}_{\lambda_y}(\cL_p^{\sC,\epsilon}) = c_y^\epsilon \cdot \cL_p^\epsilon(\tilde\pi_y),
}
	which is a reformulation of \eqref{eq:spec in families}. The interpolation formula then follows from Remark~\ref{rem:signed interpolation}. Finally, our normalisation of $\Phi_{\sC}^\epsilon$ ensures $c_{x_{\tilde\pi}^{\dec}}^\epsilon = 1$.
\end{proof}

\subsection{Unicity of non-$Q$-critical $p$-adic $L$-functions}\label{sec:uniqueness 2}

Let $\alpha_p^\circ = \prod_{\pri|p}(\alpha_{\pri}^\circ)^{e_{\pri}}$ and $h_p \defeq v_p(\alpha_p^\circ)$. For $\epsilon \in \{\pm1\}^\Sigma$, let $\sX(\Galp)^\epsilon$ be the component of characters $\chi$ with $\epsilon = (\chi\eta)_\infty$. Then 
\disp{
\sX(\Galp) = \bigsqcup_\epsilon \sX(\Galp)^\epsilon
}
(e.g.\ \cite[Rem.~7.3.4]{BH17}). If $\cL :\sC \times \sX(\Galp) \to L$ is a rigid analytic function, then $\cL = \sum_{\epsilon} \cL^\epsilon$, with $\cL^\epsilon$ supported on $\sC\times \sX(\Galp)^\epsilon$.

\begin{proposition}\label{prop:family implies unique}
	Suppose $\tilde\pi$ satisfies the hypotheses of Theorem~\ref{thm:section 7 main theorem}. Suppose Leopoldt's conjecture holds for $F$ at $p$ and that $\cL_p^\epsilon(\tilde\pi) \neq 0$. Let 
	\disp{
	\cL^\epsilon : \sC \times \sX(\Galp)^\epsilon \to L
}
	be any rigid analytic function such that for all $y \in \sC_{\mathrm{nc}}$, the specialisation $\cL^\epsilon(y,-)$ is admissible of growth $h_p$ and there exists $C_y^\epsilon \in L^\times$ such that 
	\begin{equation}\label{eq:spec in families 2}
i_p^{-1}(\cL_p^{\sC}(y,\chi\chi_{\cyc}^j)) = C_y^\epsilon A \tau(\chi_f)^n \mathrm{N}_{F/\Q}(\fd)^{jn} \prod_{\pri|p}\epy \cdot \einfy \tfrac{L^{(p)}(\pi_y\times\chi, j+1/2)}{\Omega_{\pi_y}^\epsilon},
	\end{equation}
	for all finite order $\chi \in \sX(\Galp)$ and $j \in \mathrm{Crit}(\lambda_y)$ such that $(\chi\chi_{\cyc}^j\eta)_\infty = \epsilon$. Then there exists $C \in L$ such that
	\[
	\cL^{\epsilon}\big(x_{\tilde\pi}^{\dec}, -\big) = C\cdot  \cL_p^{\epsilon}(\tilde\pi) \in \cD(\Galp, L).
	\]
\end{proposition}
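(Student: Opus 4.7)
The plan is to compare $\cL^\epsilon$ with $\cL_p^{\sC,\epsilon}$ of Definition \ref{def:Lp families} globally on the family $\sC$, and then specialize at $x_{\tilde\pi}^{\dec}$.

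Since $\cL_p^\epsilon(\tilde\pi) \neq 0$, I would first pick $\chi_1 \in \sX(\Galp)^\epsilon$ with $\cL_p^\epsilon(\tilde\pi,\chi_1) \neq 0$. By Theorem \ref{thm:family p-adic L-functions} together with the equality $c_{x_{\tilde\pi}^{\dec}}^\epsilon = 1$, this gives $\cL_p^{\sC,\epsilon}(x_{\tilde\pi}^{\dec},\chi_1) \neq 0$. After shrinking $\sC$, I may assume the rigid function $y \mapsto \cL_p^{\sC,\epsilon}(y,\chi_1)$ is everywhere non-vanishing on $\sC$, so that
\[
D^\epsilon(y) := \cL^\epsilon(y,\chi_1)\big/\cL_p^{\sC,\epsilon}(y,\chi_1)
\]
defines a rigid analytic function on $\sC$.

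Next, let $\sC_{\mathrm{nc}}^{\mathrm{ss}} \subset \sC_{\mathrm{nc}}$ denote the set of classical points $y$ with $h_p < \#\mathrm{Crit}(w(y))$. A short Zariski-density argument, using that $\#\mathrm{Crit}(\lambda)$ can be made arbitrarily large along the direction of $\sW_{\lambda_\pi}^Q$ controlling the difference $\lambda_{\sigma,n}-\lambda_{\sigma,n+1}$ (the purity direction does not affect this difference), will show that $\sC_{\mathrm{nc}}^{\mathrm{ss}}$ is Zariski-dense in $\sC$. For each such $y$, both $\cL^\epsilon(y,-)$ and $\cL_p^{\sC,\epsilon}(y,-) = c_y^\epsilon \cdot \cL_p^\epsilon(\tilde\pi_y)$ are $h_p$-admissible distributions on $\Galp$ (the second by Theorems \ref{thm:family p-adic L-functions} and \ref{thm:non-ordinary}, since the slope at $y$ is at most $h_p$), and they share the same interpolation formula up to the $y$-dependent scalar $C_y^\epsilon/c_y^\epsilon$. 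Since Leopoldt forces $\Galp$ to be a $1$-dimensional $p$-adic Lie group, the Vishik/Amice--V\'elu uniqueness theorem (as invoked in Proposition \ref{prop:non-critical slope unique}) then gives
\[
\cL^\epsilon(y,-) = \frac{C_y^\epsilon}{c_y^\epsilon}\cdot \cL_p^{\sC,\epsilon}(y,-) \quad \text{in } \cD(\Galp,L),
\]
and evaluating at $\chi_1$ shows $D^\epsilon(y) = C_y^\epsilon/c_y^\epsilon$ for every $y \in \sC_{\mathrm{nc}}^{\mathrm{ss}}$.

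Finally, I will consider the rigid analytic function
\[
G(y,\chi) := \cL^\epsilon(y,\chi) - D^\epsilon(y)\cdot \cL_p^{\sC,\epsilon}(y,\chi)
\]
on $\sC \times \sX(\Galp)^\epsilon$. By the previous step, $G(y,-) \equiv 0$ for all $y \in \sC_{\mathrm{nc}}^{\mathrm{ss}}$; fixing any $\chi$ and varying $y$, the rigid function $G(-,\chi)$ vanishes on the Zariski-dense subset $\sC_{\mathrm{nc}}^{\mathrm{ss}} \subset \sC$, so $G(-,\chi) \equiv 0$, and hence $G \equiv 0$. Specializing at $y = x_{\tilde\pi}^{\dec}$ and using $c_{x_{\tilde\pi}^{\dec}}^\epsilon = 1$ yields $\cL^\epsilon(x_{\tilde\pi}^{\dec},-) = D^\epsilon(x_{\tilde\pi}^{\dec})\cdot \cL_p^\epsilon(\tilde\pi)$, which gives the claim with $C := D^\epsilon(x_{\tilde\pi}^{\dec})$. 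The main technical point is verifying the Zariski-density of $\sC_{\mathrm{nc}}^{\mathrm{ss}}$ in $\sC$; beyond this, the argument is essentially a family version of the single-weight Vishik uniqueness, exploiting the fact that weight variation in $\sW_{\lambda_\pi}^Q$ provides nearby classical specializations to which Vishik applies, even when it fails at $\lambda_\pi$ itself.
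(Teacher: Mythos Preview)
Your proof is correct and follows essentially the same strategy as the paper: apply Vishik/Amice--V\'elu uniqueness at the Zariski-dense set of classical points $y$ with $h_p < \#\mathrm{Crit}(\lambda_y)$ to force proportionality of the two distributions there, then propagate to $x_{\tilde\pi}^{\dec}$ by rigidity. The paper also asserts (without detailed justification) the Zariski-density of such $y$, and observes that the slope is constant over $\sC$, so your handling of these points matches theirs.

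The one organisational difference is that the paper forms the global quotient $\cB^\epsilon = \cL^\epsilon/\cL_p^{\sC,\epsilon}$ in $\mathrm{Frac}\big(\cO(\sC\times\sX(\Galp)^\epsilon)\big)$, which requires first checking that $\cL_p^{\sC,\epsilon}$ is not a zero-divisor (done via non-vanishing of top critical $L$-values at regular-weight points), and then arguing that $\cB^\epsilon$ has no pole at $x_{\tilde\pi}^{\dec}$ using the hypothesis $\cL_p^\epsilon(\tilde\pi)\neq 0$. Your device of fixing a single $\chi_1$ with $\cL_p^{\sC,\epsilon}(-,\chi_1)$ non-vanishing (after shrinking $\sC$) sidesteps both issues: $D^\epsilon$ is by construction a genuine rigid function on $\sC$, and the comparison function $G$ lives in $\cO(\sC\times\sX(\Galp)^\epsilon)$ rather than its fraction field. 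This is a slightly cleaner packaging of the same idea; conversely, the paper's formulation makes more transparent that the ratio $\cB^\epsilon(y,-)$ is constant in the character variable for \emph{every} $y\in\sC$, not just the base point.
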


\begin{proof}
		This is a standard argument (exactly analogous to \cite[Prop.\ 6.15]{BW18}).
\end{proof}

\begin{corollary}
	Suppose $\tilde\pi$ satisfies the hypotheses of Theorem~\ref{thm:section 7 main theorem}. Assume Leopoldt's conjecture for $F$ at $p$. Up to scaling the $p$-adic periods, $\cL_p(\tilde\pi)$ is uniquely determined by interpolation of $L$-values over the unique Shalika family $\sC$ of level $K(\tilde\pi)$ through $\tilde\pi$.
\end{corollary}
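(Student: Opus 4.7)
The plan is to reduce the corollary to Proposition~\ref{prop:family implies unique}, applied separately to each sign component $\epsilon \in \{\pm 1\}^\Sigma$. Let $\cL$ be any rigid analytic function on $\sC \times \sX(\Galp)$ whose classical specialisations $\cL(y,-)$ for $y \in \sC_{\mathrm{nc}}$ are $h_p$-admissible and satisfy the interpolation property \eqref{eq:spec in families} with some alternative set of $p$-adic periods $\{C_y^\epsilon\}$ in place of $\{c_y^\epsilon\}$. I would show that, after rescaling the $p$-adic periods $i_p(\Omega_\pi^\epsilon)$, the specialisation $\cL(x_{\tilde\pi}^{\dec},-) \in \cD(\Galp, L)$ coincides with $\cL_p(\tilde\pi)$.

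First, decompose $\cL = \sum_\epsilon \cL^\epsilon$ by restricting to each connected component $\sC \times \sX(\Galp)^\epsilon$ of $\sC \times \sX(\Galp)$; each $\cL^\epsilon$ then inherits the interpolation hypothesis of Proposition~\ref{prop:family implies unique} at critical characters of sign $\epsilon$. For each $\epsilon$ with $\cL_p^\epsilon(\tilde\pi) \neq 0$, Proposition~\ref{prop:family implies unique} directly supplies a constant $C^\epsilon \in L^\times$ with $\cL^\epsilon(x_{\tilde\pi}^{\dec},-) = C^\epsilon \cdot \cL_p^\epsilon(\tilde\pi)$. For $\epsilon$ with $\cL_p^\epsilon(\tilde\pi) = 0$, Proposition~\ref{prop:family implies unique} does not apply verbatim, but its proof carries over: the ratio $\cB^\epsilon \defeq \cL^\epsilon/\cL_p^{\sC,\epsilon}$ remains a well-defined element of $\mathrm{Frac}(\cO(\sC \times \sX(\Galp)^\epsilon))$, since $\cL_p^{\sC,\epsilon}$ is still a non-zero-divisor (the argument at \eqref{eq:non-zero divisor} uses only Zariski-density of regular-weight classical points in $\sC$ and non-vanishing of their critical $L$-values, ensured by $H$-regularity of $\lambda_\pi$ through Lemmas~\ref{lem:zariski regular} and \ref{lem:regular weight implies non-vanishing}). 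The Zariski-density and Vishik unicity arguments then force $\cB^\epsilon$ to descend to a meromorphic function $\cC^\epsilon$ on $\sC$, taking nonzero finite values $C_y^\epsilon/c_y^\epsilon \in L^\times$ on a Zariski-dense set of classical $y$; since $\sC$ is regular at $x_{\tilde\pi}^{\dec}$ (being \'etale over the smooth space $\Omega$), $\cC^\epsilon$ is a unit in the local ring, so $C^\epsilon \defeq \cC^\epsilon(x_{\tilde\pi}^{\dec}) \in L^\times$, whence $\cL^\epsilon(x_{\tilde\pi}^{\dec},-) = C^\epsilon \cdot \cL_p^\epsilon(\tilde\pi) = 0$.

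Finally, rescaling $i_p(\Omega_\pi^\epsilon) \mapsto i_p(\Omega_\pi^\epsilon)/C^\epsilon$ for each $\epsilon$ replaces $\cL_p^\epsilon(\tilde\pi)$ by $C^\epsilon \cdot \cL_p^\epsilon(\tilde\pi) = \cL^\epsilon(x_{\tilde\pi}^{\dec},-)$; summing over $\epsilon$ then gives $\cL_p(\tilde\pi) = \cL(x_{\tilde\pi}^{\dec},-)$ after this allowable rescaling of periods. The main technical obstacle is the case $\cL_p^\epsilon(\tilde\pi) = 0$, in which Proposition~\ref{prop:family implies unique} is silent; the delicate point there is controlling the pole/zero behaviour of $\cC^\epsilon$ at $x_{\tilde\pi}^{\dec}$, which is handled by regularity of $\sC$ at this point together with the fact that the values $C_y^\epsilon/c_y^\epsilon$ are finite units on a Zariski-dense set of nearby classical $y$.
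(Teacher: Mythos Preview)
Your overall approach—decomposing by sign and applying Proposition~\ref{prop:family implies unique} to each $\epsilon$-component—is exactly what the paper intends; the corollary is stated there without proof as an immediate consequence of that proposition.

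There is, however, a genuine gap in your handling of the case $\cL_p^\epsilon(\tilde\pi)=0$. The inference ``$\cC^\epsilon$ takes nonzero finite values on a Zariski-dense set of $y$, and $\sC$ is regular at $x_{\tilde\pi}^{\dec}$, hence $\cC^\epsilon$ is a unit in the local ring'' is simply false: a meromorphic function on a smooth rigid curve can have a zero or a pole at a given point while being a unit at a Zariski-dense set of other points (think of the coordinate function on the affine line at the origin). Regularity of $\sC$ buys you nothing here. In particular you have not excluded the possibility that $\cC^\epsilon$ has a pole at $x_{\tilde\pi}^{\dec}$, i.e.\ that $\cL^\epsilon(x_{\tilde\pi}^{\dec},-)\neq 0$ while $\cL_p^\epsilon(\tilde\pi)=0$; swapping the roles of $\cL^\epsilon$ and $\cL_p^{\sC,\epsilon}$ in the proposition's proof does not resolve this either, as it only yields $\cL_p^\epsilon(\tilde\pi)=C'\cdot\cL^\epsilon(x_{\tilde\pi}^{\dec},-)$ with $C'=0$, which is no contradiction.

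The paper itself does not address this case: the Remarks immediately following the corollary note that $\cL_p^\epsilon(\tilde\pi)\neq 0$ is always expected, and is automatic for $\epsilon$ of the form $(\chi_{\cyc}^j\chi\eta)_\infty$ with $j$ strictly above the centre of $\mathrm{Crit}(\lambda_\pi)$ when $\lambda_\pi$ is regular. So the corollary is best read under this (expected) non-vanishing, in which case your reduction is clean. One smaller point: in the non-vanishing case Proposition~\ref{prop:family implies unique} only yields $C^\epsilon\in L$, not $L^\times$; to upgrade this you should additionally invoke the interpolation hypothesis at the point $y=x_{\tilde\pi}^{\dec}\in\sC_{\mathrm{nc}}$ together with a non-vanishing critical $L$-value of sign $\epsilon$ for $\pi$ itself.
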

In particular, up to these assumptions $\cL_p(\tilde\pi)$ does not depend on our method of construction.

\begin{remarks}
	We expect that $\cL_p^\epsilon(\tilde\pi)$ should always be non-zero. By \eqref{eq:spec in families 2} and Lemma~\ref{lem:regular weight implies non-vanishing}, if $\lambda_\pi$ is regular this is automatic for any $\epsilon$ such that there exists a finite order Hecke character $\chi$ such that $(\chi_{\cyc}^j\chi\eta)_\infty = \epsilon$, where $j$ is any integer strictly above the centre of $\mathrm{Crit}(\lambda_\pi)$.
	
	Without Leopoldt, there is still an analogue for the restriction to 1-dimensional slices of $\Galp$ (cf.\ \cite[Thm.\ 4.7(ii)]{BDJ17} or  \S\ref{sec:non-Q-critical p-adic L-functions}). Thus the restriction of $\cL_p(\tilde\pi)$ to the cyclotomic line is unique. 
\end{remarks}

\begin{appendix}
	\section{Errata for earlier works}
	
	Whilst writing this paper, we found errors in our earlier publications. We clarify them here.
	
	\begin{itemize}
		\item[(1)] In \cite[Rem.\ 4.19]{BW20}, which compared the right actions used in that paper with the left actions used in this, in the final sentence $U_p^*$ should have been $\lambda(\sigma(t)^{-1}t) U_p^\cdot$ (not $\lambda^\vee(\sigma(t)^{-1}t)U_p^\cdot$). This was not used elsewhere \emph{ibid}.; we have used the correct formulation here.
		
		\item[(2)] The power of $q$ in the statement of \cite[Prop.\ 3.4]{DJR18} is incorrect. In the proof, one can reduce the support of the integral in the penultimate displayed equation to the Iwahori subgroup, not to $N_n^-(\cP^\beta)T_n(\cO)N_n(\cP^\beta)$ as stated, so the final volume term is wrong. The proof otherwise holds. A corrected statement is Proposition \ref{lem:zeta at p} of the present paper. (This ensures the final interpolation result is consistent with the Coates--Perrin-Riou conjecture on existence of $p$-adic $L$-functions; see \cite[\S3]{AG94}. Indeed, \cite[Thm.\ B]{DJR18} is not consistent with Coates--Perrin-Riou). The powers of $q$ in Theorem B and Theorem 4.7 of \cite{DJR18} are thus incorrect. The interpolation formulas there should be replaced by that of Theorem \ref{thm:intro non-ord} here.
	\end{itemize}
	
\end{appendix}

\section*{Glossary of key notation/terminology}
\footnotesize \hspace{1pt}
\addcontentsline{toc}{section}{Glossary of notation}
\begin{multicols}{2}
	\noindent $\cA = \cA^Q$ \dotfill Locally analytic function space (\S\ref{sec:parabolic distributions})\\
	$\alpha_{\pri}, \alpha_p$ \dotfill $U_{\pri}, U_p$ eigenvalues (\S\ref{ss:the U_p-refined line}) \\
	$\alpha_{\pri}^\circ, \alpha_p^\circ$ \dotfill $U_{\pri}^\circ, U_p^\circ$ eigenvalues (\S\ref{sec:slope-decomp})\\
	$\beta = (\beta_{\pri})_{\pri|p} \in \Z^{{\pri|p}}$\dotfill Multi-index (\S\ref{sec:auto cycles})\\
	(C1),(C2) \dotfill Assumptions on $\tilde\pi$ (Cond.~\ref{cond:running assumptions})\\
	$\sC$ \dotfill Connected component of $\sE_{\Omega, h}$ (\S\ref{sec:maximal dimension})\\
	$\Cl(I)$\dotfill Narrow ray class gp.\ cond.\ $I$ (\S\ref{sec:notation})\\ 
	$\mathrm{Crit}(\lambda)$\dotfill Deligne-critical $L$-values for $\lambda$ (eqn.\ \eqref{eqn:crit lambda})\\
	$\cD$ \dotfill Locally analaytic distributions (\S\ref{sec:galois groups})\\
	$\cD_\lambda = \cD_\lambda^Q$\dotfill $Q$-parahoric dists.\ of wt. $\lambda$ (\S\ref{sec:parabolic functions})\\
	$\cD_\Omega = \cD_\Omega^Q$ \dotfill $Q$-parahoric dists.\ over $\Omega$ (\S\ref{sec:distributions in families})\\
	$\sD$ \dotfill Local system of distributions (\S\ref{sec:non-arch ls})\\
	$\Delta_p$ \dotfill Monoid in $G(\Qp)$ gen.\ by $J_p, t_{\pri}$ (\S\ref{sec:slope-decomp})\\
	$\delta$ \dotfill Representative of $\pi_0(X_\beta)$ (\S\ref{sec:auto cycles})\\
	$\mathrm{Ev}_{\beta,\delta}^M$ \dotfill Abstract evaluation map (Def.~\ref{def:abstract evaluation})\\
	$\mathrm{Ev}_\beta^{\eta_0}$\dotfill Galois evaluation (eqn.\ \eqref{eq:final evaluation})\\
	$\cE_\chi^{j,\eta_0}$ \dotfill Classical evaluation map at $\chi,j$ (eqn.\ \eqref{eq:classical evaluation})\\
	$\sE\Uha$ \dotfill $Q$-parabolic eigenvariety for $G$ (Def.~\ref{def:local piece S})\\
	$\sE\USh$ \dotfill `modified' $Q$-par.\ eigenvariety (Def.~\ref{def:local piece})\\
	$\ep$ \dotfill C--PR factor at $p$ (Thm.\ \ref{thm:non-ordinary})\\
	$\epp$ \dotfill Def.\ \ref{def:e'_p} \\
	$\einf$\dotfill C--PR factor at $\infty$ (Def.\ \ref{def:e_infty}\\
	$\epsilon$ \dotfill Character $K_\infty/K_\infty^\circ \to \{\pm 1\}^\Sigma$ (\S\ref{sec:decomp at infinity})\\
	$F$ \dotfill Totally real field of degree $d$\\
	$\phi_{\tilde\pi}^\epsilon$ \dotfill Classical class in $\hc{t}$ (Def.~\ref{def:phi})\\
	$\Phi_{\tilde\pi}^\epsilon, \Phi_{\tilde\pi}$ \dotfill Overconvergent classes in $\hc{t}$ (\S\ref{sec:non-Q-critical p-adic L-functions})\\
	$G$ \dotfill $\mathrm{Res}_{\cO_F/\Z}\GL_{2n}$\\
	$G_n$ \dotfill $\mathrm{Res}_{\cO_F/\Z}\GL_n$\\
	$\Galp$\dotfill $\mathrm{Gal}(F^{p\infty}/F)$ (\S\ref{sec:notation})\\
	$\Galp^\cyc$\dotfill $\mathrm{Gal}(\Q^{p\infty}/\Q)$ (\S\ref{sec:notation})\\
	$\Gamma_{\beta,\delta}$ \dotfill Arithmetic group in automorphic cycle (\S\ref{sec:passing to components})\\
	$H$ \dotfill $\mathrm{Res}_{\cO_F/\Z}[\GL_n \times \GL_n$]\\
	$\cH',\cH$ \dotfill Universal Hecke algebras (\S\ref{sec:unramified H}, Def.~\ref{def:hecke algebra})\\
	$\cH^{\dec}$ \dotfill Universal Hecke algebra at all primes (Def.~\ref{def:hecke algebra full})\\
	$\leqslant h$ \dotfill Slope $U_p^\circ \leqslant h$ part (\S\ref{sec:slope-decomp 2})\\
	$\sI$ \dotfill Irreducible component of $\sE_{\Omega, h}$\\
	$i_p$ \dotfill Fixed isomorphism $\C \isorightarrow \overline{\Q}_p$\\
	$\iota$ \dotfill Map $H \hookrightarrow G, (h_1,h_2) \mapsto \mathrm{diag}(h_1,h_2)$\\
	$\iota_\beta$ \dotfill Map in automorphic cycle (eqn. \eqref{eq:iota beta})\\
	$\eta$ \dotfill Shalika character (\S\ref{sec:shalika models})\\
	$J_p$\dotfill Parahoric subgroup of type $Q$ (\S\ref{ss:the U_p-refined line})\\
	$\Jpbeta$ \dotfill Eqn. \eqref{eq:Jp minus}\\
	$K$ \dotfill Open cpct.\ subgp.\ of $G(\A_f)$ (eqn.\ \eqref{eq:general K}) \\
	$K(\tilde\pi)$\dotfill Friedberg--Jacquet level (eqn.\ \eqref{eq:level group},\S\ref{sec:set-up})\\
	$K_1(\tilde\pi)$ \dotfill Whittaker new level (eqn.\ \eqref{eq:K_1})\\
	$\kappaj$ \dotfill Map $V_{\lambda}^\vee \to V^H_{(j,-\sw-j)}$ (\S\ref{sec:choice of basis})\\
	$\kappaj^\circ$ \dotfill Normalised map $V_\lambda^\vee(L) \to L$ (Def.\ \ref{def:kappa_j})\\
	$L$ \dotfill Extension of $\Qp$, coefficient field (\S\ref{sec:periods})\\
	$L_\beta$ \dotfill Open compact in $H(\A_f)$ (Def.~\ref{def:xi})\\
	$L(\pi,s)$\dotfill Standard $L$-fn.\ of $\pi$\\
	$L^{(p)}(\pi,s)$ \dotfill $L()$ with no Euler factors at $p$\\
	$\cL_p(\tilde\pi)$\dotfill $p$-adic $L$-function of $\tilde\pi$ (\S\ref{sec:non-Q-critical p-adic L-functions})\\
	$\cL_p^{\sC}$ \dotfill $p$-adic $L$-function in family over $\sC$ (Def.~\ref{def:Lp families})\\
	$\Lambda$ \dotfill Localisation of $\cO_\Omega$ at $\lambda_\pi$ (\S\ref{sec:existence of x})\\
	$\lambda_\pi$ \dotfill (Pure, dominant, integral) weight of $\pi$ (Def.~\ref{def:Wlam})\\
	$\m_{\tilde\pi}$ \dotfill Max.\ ideal in $\cH$ (Def.~\ref{def:gen eigenspace})\\
	$\m_{\tilde\pi}^{\dec}$ \dotfill Max.\ ideal in $\cH^{\dec}$ (Def.~\ref{def:gen eigenspace shalika})\\
	$\m_\lambda$ \dotfill Max.\ ideal in $\cO_\Omega$\\
	Non-$Q$-critical\dotfill Def.~\ref{def:non-Q-critical}\\
	Non-$Q$-critical slope \dotfill Def.~\ref{def:non-critical slope}\\
	$N_Q$ \dotfill Unipotent radical of $Q$\\
	$\nuj$ \dotfill Element of $V_\lambda(L)$ (\S\ref{sec:choice of basis})\\
	$\Omega$ \dotfill Affinoid in $\Wlam$\\
	$\Omega_\pi^\epsilon$ \dotfill Complex period (\S\ref{sec:periods})\\
	$\cO_\Omega$ \dotfill Ring of rigid functions on $\Omega$\\
	$\OFp$ \dotfill $\cO_F \otimes \Zp$\\
	$p^\beta$\dotfill $\prod \pri^{\beta_{\pri}}$ (Def. \ref{def:xi})\\
	$\mathrm{pr}_\beta$ \dotfill Map $\pi_0(X_\beta) \to \Cl(p^\beta)$ \eqref{eq:pr_beta}\\
	$\pi$\dotfill Auto.\ repn.\ of $G(\A)$ (Conditions~\ref{cond:running assumptions})\\
	$\tilde\pi$ \dotfill $p$-refinement of $\pi$ (\S\ref{ss:the U_p-refined line}, Conditions~\ref{cond:running assumptions})\\
	$\pi_0(X_\beta)$ \dotfill Component group of auto. cycle (before \eqref{eq:pr_beta})\\
	$\varpi_v$ \dotfill Uniformiser of $F_v$\\
	$\psi$ \dotfill Additive character of $F \backslash \A_F$ (\S\ref{sec:shalika models})\\
	$Q$ \dotfill Parabolic subgroup with Levi $H$\\
	$Q$-refined RACAR \dotfill Choice of $Q$-ref't $\tilde\pi_{\pri}$ $\forall \pri|p$ (\S\ref{ss:the U_p-refined line})\\
	$q_{\pri}$ \dotfill $\mathrm{N}_{F/\Q}(\pri)$\\
	RACAR\dotfill regular algebraic cuspidal auto. repn.\\
	RASCAR \dotfill $\eta$-symplectic RACAR (Intro.)\\
	$r_\lambda$ \dotfill Specialisation map $\cD_\lambda \to V_\lambda^\vee$ (eqn.\ \eqref{eqn:specialisation})\\
	Shalika model \dotfill \S\ref{sec:shalika models}\\
	Strongly non-$Q$-critical \dotfill Def.~\ref{def:non-Q-critical} \\
	$S_K$ \dotfill Loc.\ symm.\ space for $G$ of level $K$ (eqn.\ \eqref{eq:loc sym space})\\
	$S$ \dotfill $\{v\nmid p\infty: \pi_v\text{ not spherical}\}$ (\S\ref{sec:unramified H})\\
	$\cS^{\eta}_{\psi}$ \dotfill Shalika model (\S\ref{sec:shalika models}) \\
	$\mathrm{sp}_\lambda$ \dotfill Any map induced from $\newmod{\m_\lambda} : \cO_\Omega \rightarrow L$\\
	$\Sigma$ \dotfill Set of real embeddings of $F$\\
	$\sigma$ \dotfill Real embedding $F \hookrightarrow \overline{\Q}$ \\
	$\sigma(\pri)$ \dotfill Real embedding attached to $\pri|p$ (\S\ref{sec:notation})\\
	$\bT_{\Omega, h}$ \dotfill Hecke algebra using $\cH$ (Def.~\ref{def:local piece S})\\
	$\bT_{\Omega, h}^{\dec}$ \dotfill Hecke algebra using $\cH^{\dec}$ (Def.~\ref{def:local piece})\\
	$t=d(n^2 +n - 1)$ \dotfill Top degree of cusp.\ cohomology  \\
	$t_{\pri}$ \dotfill $\mathrm{diag}(\varpi_{\pri}I_n, I_n) \in \GL_{2n}(F_{\pri})$\\
	$t_p^\beta$ \dotfill $\prod t_{\pri}^{\beta_{\pri}}$ (Def.~\ref{def:xi})\\
	$\tau(\chi_f)$ \dotfill Gauss sum of $\chi_f$ (Thm.~\ref{thm:critical value})\\
	$\tau_\beta^\circ$ \dotfill Twisting map (\S\ref{sec:pulling back to cycles})\\
	$\Theta^{K,\epsilon},\Theta^{K,\epsilon}_{i_p}$ \dotfill Maps $\cS_{\psi_f}^{\eta_f}(\pi_f^K)\to \hc{t}$ (\S\ref{sec:periods})\\
	$U_{\pri} = U_{\pri}^\cdot$ \dotfill Automorphic $U_{\pri}$-operator (\S\ref{ss:the U_p-refined line})\\
	$U_{\pri}^\circ$ \dotfill Integrally normalised $U_{\pri}$ (\S\ref{ss:the U_p-refined line},\S\ref{sec:slope-decomp})\\
	$\sU(I)$\dotfill Integral ideles $\equiv 1 \newmod{I}$ (\S\ref{sec:notation})\\
	$V_\lambda$ \dotfill Alg.\ repn.\ of $G$ of weight $\lambda$ (\S\ref{sec:algebraic weights})\\
	$V_\lambda^H$ \dotfill Alg.\ repn.\ of $H$ of weight $\lambda$\\
	$V_\Omega^H$ \dotfill (\S\ref{sec:distributions in families})\\
	$\cV$ \dotfill Archimedean local system on $S_K$ (\S\ref{sec:arch ls})\\
	$\sV$ \dotfill Non-archimedean local system on $S_K$ (\S\ref{sec:non-arch ls})\\
	$\sv_\lambda, \sv_\Omega$ \dotfill Elements of $V_\lambda^H$ and $V_\Omega^H$ (Not. \ref{not:v_lambda},\ref{not:v_omega})\\
	$\sW_0$ \dotfill Pure weight space (\S\ref{sec:weight spaces})\\
	$\Wlam$\dotfill (Parabolic) Weight space (Def.~\ref{def:Wlam})\\
	$W^{\mathrm{FJ}}$ \dotfill Friedberg--Jacquet test vector (\S\ref{sec:shalika models})\\
	$\sw$ (or $\sw_\lambda,\sw_\Omega)$ \dotfill Purity weight (\S\ref{sec:algebraic weights})\\
	$w_n$ \dotfill Longest Weyl element for $\GL_n$ (Def.~\ref{def:xi})\\
	$X_\beta$ \dotfill Automorphic cycle of level $\beta$ (\S\ref{sec:auto cycles})\\
	$\chi$ \dotfill Finite order Hecke character\\
	$\xi,\xi_{\pri}$\dotfill Twisting operator (Def.~\ref{def:xi})\\
	$x_{\tilde\pi}$\dotfill Point of $\sE\Uha$ corr.\ to $\tilde\pi$ (Thm.~\ref{thm:shalika family})\\
	$x_{\tilde\pi}^{\dec}$\dotfill Point of $\sE\USha$ corr.\ to $\tilde\pi$ (Thm.~\ref{thm:section 7 main theorem})\\
	$\chi_{\cyc}$\dotfill Cyclotomic character of $\Galp$\\
	$Z$ \dotfill Centre of $G$\\
	$\mathbf{x}$ \dotfill Element of $\Cl(p^\beta)$\\
	$*$-action of $\Delta_p$ \dotfill \S\ref{sec:slope-decomp}\\
	$\langle-\rangle_\lambda, \langle-\rangle_\Omega$\dotfill  actions of $H(\Zp)$ (\eqref{eq:langle-rangle},\eqref{eq:action V_Omega})\\
	$-_{\m_{\tilde\pi}}$ \dotfill Localisation at $\m_{\tilde\pi}$ (Def.~\ref{def:gen eigenspace})\\
	$-_{\m_{\tilde\pi}^{\dec}}$ \dotfill Localisation at $\m_{\tilde\pi}^{\dec}$ (Def.~\ref{def:gen eigenspace shalika})
\end{multicols}

\bibliographystyle{hsiam}
\bibliography{master_references}
\end{document}